\numberwithin{equation}{section}
\newtheorem{theorem}{Theorem}[section]
\newtheorem{thmdef}[theorem]{Theorem/Definition}
\newtheorem{corollary}[theorem]{Corollary}
\newtheorem{lemma}[theorem]{Lemma}
\newtheorem{proposition}[theorem]{Proposition}
\newtheorem{definition}[theorem]{Definition}
\newtheorem{example}[theorem]{Example}
\newtheorem{remark}[theorem]{Remark}
\newcommand{\N}{\mathbb{N}}
\newcommand{\R}{\mathbb{R}}
\newcommand{\Z}{\mathbb{Z}}
\newcommand{\ppi}{{\mbox{\boldmath$\pi$}}}
\newcommand{\restr}[1]{\lower3pt\hbox{$|_{#1}$}}
\newcommand{\e}{{\rm e}}
\newcommand{\X}{{\rm X}}
\newcommand{\Y}{{\rm Y}}
\renewcommand{\Z}{{\rm Z}}
\newcommand{\sfd}{{\sf d}}
\newcommand{\mm}{{\mathfrak m}}
\renewcommand{\L}{{\rm L}}
\newcommand{\LIP}{{\rm Lip}}
\newcommand{\CD}{{\sf CD}}
\newcommand{\RCD}{{\sf RCD}}
\renewcommand{\d}{{\rm d}}
\newcommand{\lip}{{\rm lip}}
\DeclareMathOperator{\aplip}{{\rm ap\,-lip}}
\newcommand{\Lip}{{\rm Lip}}
\newcommand{\weakto}{\rightharpoonup}
\newcommand{\nchi}{{\raise.3ex\hbox{$\chi$}}}
\newcommand{\supp}{{\rm supp}}
\newcommand{\lims}{\varlimsup}
\newcommand{\limi}{\varliminf}
\newcommand{\eps}{\varepsilon}
\DeclareMathOperator*{\aplims}{{\rm ap\,-}\lims}
\newcommand{\md}{{\sf md}}
\newcommand{\D}{{\sf D}}
\newcommand{\G}{{\sf G}}
\newcommand{\nn}{{\mathfrak n}}
\newcommand{\dou}{{\sf Doub}}
\newcommand{\E}{{\sf E}} 
\newcommand{\fr}{\penalty-20\null\hfill$\blacksquare$}         
\DeclareMathOperator*{\esssup}{\rm ess-sup}
\newcommand{\ks}{{\sf KS}}
\newcommand{\hs}{{\sf HS}}
\newcommand{\kse}{{\sf ks}}
\newcommand{\la}{\langle}
\newcommand{\ra}{\rangle}
\newcommand{\CAT}{{\sf CAT}}
\newcommand{\sn}{{\sf sn}}
\newcommand{\ud}{\underline\d}
\newcommand{\vv}{{\sf v}}
\begin{document}

\title{Korevaar-Schoen's energy on  strongly rectifiable spaces}

\author{Nicola Gigli}
\address{SISSA, Via Bonomea 265, 34136 Trieste,  Italy}
\email{ngigli@sissa.it}

\author{Alexander Tyulenev}
\address{Steklov Mathematical Institute of Russian Academy of Sciences, 8 Gubkina St., Moscow 119991, Russia}
\email{tyulenev-math@yandex.ru}

\maketitle

\begin{abstract}
We extend Korevaar-Schoen's theory of metric valued Sobolev maps to cover the case of the source space being an $\RCD$ space. 

In this situation it appears that no version of the `subpartition lemma' holds: to obtain both existence of the limit of the approximated energies and the lower semicontinuity of the limit energy we shall rely on:
\begin{itemize}
\item[-] the fact that such spaces are `strongly rectifiable' a notion which is first-order in nature (as opposed to measure-contraction-like properties, which are of second order). This fact is particularly useful in combination with Kirchheim's metric differentiability theorem, as it allows to obtain an approximate metric differentiability result which in turn quickly provides a representation for the energy density,
\item[-] the differential calculus developed by the first author which allows, thanks to a representation formula for the energy that we prove here, to obtain the desired lower semicontinuity from the closure of the abstract differential.
\end{itemize}
When the target space is $\CAT(0)$ we can also identify the energy density as the Hilbert-Schmidt norm of the differential, in line with the smooth situation.
\end{abstract}

\tableofcontents

\section{Introduction}

A seminal paper in the study of the regularity of harmonic maps between Riemannian manifolds is the one \cite{ES64} by Eells and Sampson. A crucial result that they obtain is a local Lipschitz estimate in terms of a lower bound on the Ricci curvature of the source manifold and an upper bound on its dimension under the assumption that the target manifold has non-positive sectional curvature and is simply connected.

An interesting feature of their statement is that it does not rely on the smoothness of the manifolds but only on  the stated curvature bounds. It is therefore natural to wonder whether the same Lipschitz estimates can be obtained in the non-smooth setting under the appropriate weak curvature condition: we refer to \cite{GS92}, \cite{KS93}, \cite{Jost1997}, \cite{Jost98}, \cite{Gregori98}, \cite{Sturm97}, \cite{Sturm99}, \cite{KS03}, \cite{ZZ18}, \cite{Guo17}  for a non-exhaustive list of papers studying this issue at various levels of generality. In connection with the kind of program developed here, we mention also the recent \cite{Freidin19}, where the Bochner-Eells-Sampson formula has been established for harmonic maps from a smooth Riemannian manifold to a $\CAT(0)$ space.

One of the first tasks to accomplish in order to move from the smooth to the non-smooth setting is that of finding the appropriate replacement for the notion of energy that is minimized by harmonic maps. Recall that for smooth maps $u$ between smooth Riemannian manifolds such energy is given by the formula
\begin{equation}
\label{eq:defenintro}
\E(u):=\int |\d u|^2_{\sf HS}\,\d{\rm vol}.
\end{equation}
It is not clear a priori how to adapt this to the case where either the source or the target space are non-smooth (and actually even in this case some thoughts are required to handle the case of $u$ Sobolev): a turning point of the theory has been the paper \cite{KS93} by Korevaar and Schoen where the energy of maps from a smooth manifold $M$ to a general metric space $\Y$ has been defined. The idea - that here we briefly recall with some simplifications -  is that given such a map $u$ and a positive `scale' $r$ one defines first the `energy density at scale $r$' $\kse_{r}[u](x)$ of $u$ at $x$ by putting
\[
\kse_{r}[u](x):=\sqrt{\fint_{B_r(x)} \frac{\sfd_\Y(u(x),u(y))^2}{r^2}\,\d {\rm vol}(y)},
\]
then the total energy $\E_r(u)$ at scale $r$ as $\E_r(u):=\int \kse^2_{r}[u] \,\d{\rm vol} $ and finally the energy as
\begin{equation}
\label{eq:limenintro}
\E(u):=\lim_{r\downarrow0}\E_r(u).
\end{equation}
While this procedure indeed recovers the energy \eqref{eq:defenintro} in the case of smooth maps between smooth manifolds, it is non-trivial to check that such an energy is well defined in the generality of \cite{KS93}: to check that this is the case one should prove that the limit in \eqref{eq:limenintro} exists and that $\E$ is lower semicontinuous w.r.t.\ $L^2$-convergence of maps.

Both these fact are deduced in \cite{KS93} as a consequence of the so-called   \emph{subpartition lemma}, which roughly saying can be formulated as
\begin{equation}
\label{eq:suble}
\sqrt{\E_r(u)}\leq\sum_i\lambda_i\sqrt{\E_{\lambda_ir}(u)}+{\rm Err}(r,u)\qquad\text{ for }\lambda_i\geq 0,\ \sum_i\lambda_i=1,
\end{equation}
where the `error term' ${\rm Err}(r,u)$ goes to 0 as $r\downarrow0$. This inequality grants approximate monotonicity in $r$ of $\E_r(u)$, and in turn this implies  at once both the existence of the limit in \eqref{eq:limenintro}   and - since the energies at positive scale  are trivially $L^2$-continuous -  $L^2$-lower semicontinuity the limit energy $\E$.

In \cite{KS93}, the inequality \eqref{eq:suble} is obtained by relying, for the most part, on the fact that thanks to  the smoothness of the source space, the Ricci curvature is bounded from below. While this approach does not directly work in the non-smooth context, the basic idea  does and the argument can be stretched to cover much more general situation: this kind of work has been carried out in \cite{KS03}, where the notion of space with the strong measure contraction property of Bishop-Gromov type (SMCPBG-spaces, in short) has been introduced and it has been proved that a suitable version of \eqref{eq:suble} holds on SMCPBG-spaces. In particular, on these spaces the approximated energies converge and the limit energy is lower semicontinuous. Notable example of non-smooth SMCPBG-spaces     are finite dimensional Alexandrov spaces with curvature bounded from below. In this direction we remark that in the recent paper  \cite{ZZ18} it has been proved that $\CAT(0)$-valued harmonic maps on Alexandrov spaces are locally Lipschitz, thus greatly extending the original result by Eells-Sampson  \cite{ES64} and in particular providing the first extension of their Lipschitz estimates to the case where both the source  and target spaces are non-smooth. 

\bigskip

As said, the result by Eells-Sampson provides local Lipschitz estimates in terms of a lower Ricci and an upper dimension bound on the source space, therefore the natural non-smooth setting where to expect it to hold is that of maps on $\RCD(K,N)$ spaces (introduced in \cite{Gigli12} - see also \cite{AmbrosioGigliSavare11-2} for a previous contribution in the infinite dimensional case and \cite{Lott-Villani09}, \cite{Sturm06I,Sturm06II} for the original works on the $\CD$ condition via optimal transport) for $K\in\R$ and $N\in[1,\infty)$. Unfortunately, $\RCD$ spaces are not SMCPBG-spaces in general: informally speaking, this is due to the fact that the SMCPBG condition asks for the measure of balls to increase at a given power both  at small and at large scales. In the context of lower Ricci bounds, this kind of behaviour is related to a \emph{non-collapsing} condition (see \cite{Cheeger-Colding97I} for the original definition in the setting of Ricci-limit spaces and \cite{GDP17} for the adaptation in the synthetic setting). 

If one has the goal of generalizing Eells-Sampson's result to the $\RCD$ setting, it would be unnatural to impose such a non-collapsing assumption. Indeed, the typical example of \emph{collapsed} and smooth space is that of a weighted Riemannian manifold, i.e.\ of a smooth Riemannian manifold equipped with a measure different from the volume one. In this setting the relevant notion of Ricci curvature tensor is the so-called $N$-Bakry-\'Emery-Ricci tensor and it turns out that, by closely following Eells-Sampson's argument, Lipschitz estimates for harmonic maps can be obtained in  terms of  lower bounds on such tensor.  Alternatively, to see that it is not natural to impose a non-collapsing condition one can notice that Eells-Sampson's estimates pass to the limit under measured-Gromov-Hausdorff convergence even in the collapsing case.

\bigskip

Aim of this paper is to adapt  the constructions in \cite{KS93} to the case of $\RCD(K,N)$ spaces. Our main results can be described as follows: under suitable assumptions on the source space $\X$ (see \eqref{eq:assintro} below) which cover the $\RCD(K,N)$ case and for arbitrary complete spaces $\Y$ as target spaces we have:
\begin{itemize}
\item[i)] The energy $\E$ of a map is well defined by the formula \eqref{eq:limenintro}, i.e.\ the limit exists (Theorem \ref{thm:exlim}). Also, we improve, w.r.t.\ what previously known, the convergence results for the energy densities at positive scale to the limit energy density.
\item[ii)] The energy $\E$ is lower semicontinuous w.r.t.\ $L^2$ convergence of maps (Theorem \ref{thm:lscks}),
\item[iii)] A formula like \eqref{eq:defenintro} holds, i.e.\ the energy $\E$ can be written as
\begin{equation}
\label{eq:repintro}
\E(u)=\int S_2(\d u)\,\d\mm,
\end{equation}
where $S_2(\d u)$ is  a natural replacement of the squared Hilbert-Schmidt norm of the abstract differential of the given map (see Theorem \ref{thm:repr} and formula \eqref{eq:endensintro2} below for the simplified case $\X=\R^d$).  We remark that in the case $\X=\R^d$ this formula was already established in \cite{LS12} (and our proof read in the Euclidean case reduces to that of  \cite{LS12}).
\end{itemize}
Once this is done, following standard ideas in the field we can
\begin{itemize}
\item[iv)] Define the energy of a map from an open subset $\Omega$ of $\X$ and show that in this setting it is still possible to prescribe the value at the boundary (Definitions \ref{def:ksloc}, \ref{def:boundval}). In the case of $\CAT(0)$ spaces as target, we also show that the problem of minimizing the energy among maps with given boundary value has unique solution  (Theorem \ref{thm:unicat}).
\end{itemize}
We remark that since, as said, we cannot rely on the monotonicity granted by the subpartition lemma, we shall obtain  existence of the limit and  lower semicontinuity of the energy via two different means. 

Let us illustrate our strategy in the simplified case $\X=\R^d$. As starting point we recall the known fact that if $u:\R^d\to\Y$ is such that $\limi_{r\downarrow 0}\E_r(u)<\infty$ then, using only the fact that $\R^d$ is doubling and supports a local Poincar\'e inequality, for some $G\in L^2$ it holds
\begin{equation}
\label{eq:domintro}
\sfd_\Y(u(x),u(y))\leq |x-y|\big(G(x)+G(y)\big)\quad\forall x,y\in A
\end{equation}
for some Borel set $A\subset\R^d$ with negligible complement. In particular, this shows  that   $u$ has the Lusin-Lipschitz property. We couple this information with (a simplified version of) a result by Kirchheim \cite{Kir94} which says:  for $u:\R^d\to\Y$ Lipschitz we have that for $\mathcal L^d$-a.e.\ $x\in\R^d$ there exists a seminorm $\md_x(u)$, called metric differential of $u$ at $x$, such that 
\begin{equation}
\label{eq:mdintro}
\sfd_\Y(u(y),u(x))=\md_x(u)(y-x)+o(|y-x|).
\end{equation}
It is then possible to see (as done in \cite{Karma07}) that for maps having the Lusin-Lipschitz property, an appropriate approximate (in the measure theoretic sense) version of \eqref{eq:mdintro} holds and this fact coupled with the domination \eqref{eq:domintro} easily gives that 
\[
\kse^2_{r}[u](x)\quad\to\quad \fint_{B_1(0)}\md_x^2(u)(z)\,\d z\quad\text{as $r\downarrow0$ for a.e.\ $x$}
\]
and that the limit of $\E_r(u)$ as $r\downarrow0$ exists. The argument also gives the explicit expression  for the energy density 
\begin{equation}
\label{eq:endensintro}
\e_2^2[u](x)=\fint_{B_1(0)}\md_x^2(u)(v)\,\d v\qquad a.e.\ x
\end{equation}
and convergence in $L^2$ of $\kse_{r}[u]$ to it. This settles $(i)$. Then we turn to $(iii)$ and recall that the notion of differential for a Sobolev and metric-valued map has been defined in \cite{GPS18} by building up on the theory developed in \cite{Gigli14}. We won't enter into technicalities here and refer instead to Section \ref{se:remdif} for all the details; for the moment we just recall that in \cite{GPS18}  the following natural link between such abstract differential $\d u$ and the metric differential $\md_\cdot(u)$ has been established, at least for Lipschitz maps: for any $v\in \R^d$ it holds
\begin{equation}
\label{eq:linkintro}
|\d u(v)|(x)=\md_x(u)(v)\qquad a.e.\ x\in\R^d.
\end{equation}
Using the Lusin-Lipschitz property of Sobolev maps that we already mentioned it is not hard to extend this to the Sobolev case and thus to obtain from \eqref{eq:endensintro} the representation formula
\begin{equation}
\label{eq:endensintro2}
\e_2^2[u](x)=\fint_{B_1(0)}|\d u(v)|^2(x)\,\d v=:S_2(\d u)(x)\qquad a.e.\ x\in\R^d.
\end{equation}
This gives $(iii)$. The advantage of having formula \eqref{eq:endensintro2} at disposal in place of \eqref{eq:endensintro} is that we can rely on the closure properties of the differential to deduce the desired lower-semicontinuity. Specifically, one starts from the duality formula
\begin{equation}
\label{eq:dualintro}
|\d u(v)|=\esssup_{f:\Y\to\R\atop\Lip(f)\leq 1}\d(f\circ u)(v)\qquad\forall v\in\R^d
\end{equation}
and  uses the closure of the differential of scalar valued maps to deduce that: if   $u_n\to u$ in $L^2(\R^d,\Y)$ and $\sup_n\E(u_n)<\infty$ then 
\[
\d (f\circ u_n)(v)\quad\to \quad \d (f\circ u)(v)\qquad\text{ in the weak topology of $L^2$}
\]
 for any $f:\Y\to\R$ Lipschitz and $v\in\R^d$. From this and \eqref{eq:dualintro} it is not hard to check that under the same assumptions it holds
\[
|\d u(v)|\leq g\quad\mathcal L^d-a.e.\qquad \text{ for any weak $L^2$-limit $g$ of $(|\d u_n(v)|)$}
\]
which together with the representation formula \eqref{eq:endensintro2} easily gives the desired lower semicontinuity of the energy, thus obtaining  $(ii)$.
 
\bigskip

All this in the case $\X=\R^d$. The  observation that allows to extend the results to the non-smooth setting is that all the arguments that we used are first-order in nature. Thus for instance to obtain the same conclusions  in the case of $\X$ being a Riemannian manifold it is sufficient to notice that for every $\eps>0$ we have that $\X$ can be covered by open sets which are $(1+\eps)$-biLipschitz to open sets in $\R^d$. Then, roughly said, we can run the above arguments by locally replacing the metric in each of these open sets with the Euclidean one, thus committing errors of order $\eps$, and then let $\eps\downarrow0$.

A technically more involved - but conceptually similar - argument allows to extend the above line of thought  to metric measure spaces $(\X,\sfd,\mm)$ which are
\begin{equation}
\label{eq:assintro}
\text{(uniformly locally) doubling, support a Poincar\'e inequality and  strongly rectifiable,}
\end{equation}
the latter meaning that: there is $d\in\N$ such that $\mm$ is absolutely continuous w.r.t.\ the $d$-dimensional Hausdorff measure $\mathcal H^d$ and for every $\eps>0$ we can cover $\mm$-almost all $\X$ by Borel sets which are $(1+\eps)$-biLipschitz to Borel subsets of  $\R^d$ (see Section \ref{se:srs} for more on this). 
For the purpose of the original problem of studying harmonic maps from $\RCD(K,N)$ to $\CAT(0)$ spaces it is important to remark that  $\RCD(K,N)$ spaces are known to satisfy the assumptions \eqref{eq:assintro} as: they are uniformly locally doubling (\cite{Lott-Villani07}, \cite{Sturm06I,Sturm06II}), support a local Poincar\'e inequality (\cite{Lott-Villani07}, \cite{Rajala12}) and to  be strongly rectifiable (\cite{Mondino-Naber14}, \cite{MK16}, \cite{GP16-2}, \cite{BS18}), i.e.\ they satisfy the assumptions in \eqref{eq:assintro}.

As said, our set of assumptions is of first order in nature, but while they cover the case of the original paper \cite{KS93} and the one of Lipschitz manifolds studied in \cite{Gregori98},  they do not cover the one studied in \cite{KS03}, even if the hypotheses therein, being related to measure-contraction-like properties, are of second-order in spirit.

We also point out that there is nothing really special about the exponent $p=2$ here: everything can be generalized to arbitrary $p\in(1,\infty)$. Still, for simplicity for the most part of the manuscript we shall stick to the case $p=2$, see Remark \ref{re:depp} for more about this.

\bigskip

We conclude with a comment about the quantity $S_2(\d u)$ appearing in \eqref{eq:repintro}. The fact that in general something different from the Hilbert-Schmidt norm must appear is easily seen by considering the case of a smooth map from a smooth Riemannian manifold to a smooth Finsler manifold: in this case the differential of such map at any given point is a linear operator from a Hilbert to a Banach space and as such its Hilbert-Schmidt norm is not well defined. 

The quantity $S_2$, that we call 2-size, serves as replacement of the Hilbert-Schmidt norm. It can then be seen that whenever the target space has the appropriate kind of Hilbert-like behaviour on small scales - the relevant concept is that of `universally infinitesimally Hilbertian metric spaces' - then $S_2(\d u)$ coincides, up to a dimensional constant, with the squared Hilbert-Schmidt norm $|\d u|_{\hs}^2$ of $\d u$, as expected. 

For our case this is interesting because in \cite{DMGSP18} it has been proved that $\CAT(0)$ spaces (and more generally spaces that are locally $\CAT(\kappa)$) are universally infinitesimally Hilbertian (see Theorem \ref{thm:uih} for the rigorous meaning of this) and thus the energy of a Sobolev map $u$ from an $\RCD(K,N)$ space to a $\CAT(0)$ space can be written as
\[
\E(u)=c(d)\int|\d u|^2_\hs\,\d\mm,
\]
thus providing a close analogue of the defining formula \eqref{eq:defenintro},  where here $c(d)$ is a dimensional constant and $d$ the dimension of the source space when seen as a strongly rectifiable space. As mentioned, the above formula is valid for targets that are locally $\CAT(\kappa)$ spaces, but in Section \ref{se:cattarg} we shall stick to the case of $\CAT(0)$ targets because it is in this setting that we are able to prove existence and uniqueness of harmonic functions (Theorem \ref{thm:unicat}). K.T.\ Sturm pointed out to us that the same is expected to hold for target spaces that are $\CAT(1)$ and with diameter $<\pi$, but investigating in this direction is outside the scope of this paper.

\bigskip

Finally, we mention that building on top of the content of this paper, in \cite{GN20} it has been defined a suitable notion of `Laplacian' for maps from (open subsets of) $\RCD(K,N)$ to $\CAT(0)$ spaces.

\bigskip

{\bf Acknowledgement.} This research has been supported by the MIUR SIR-grant `Nonsmooth Differential Geometry' (RBSI147UG4). The work of A.\ I.\ Tyulenev was in part performed at the Steklov International Mathematical Center and supported by the Ministry of Science and Higher Education of the Russian Federation (agreement no. 075-15-2019-1614).

\section{Preliminaries}
\subsection{Doubling spaces, Poincar\'e inequalities and metric-valued $L^p$ spaces}
Throughout this paper by \emph{metric measure space} we will always mean a triple $(\X,\sfd,\mm)$ where $(\X,\sfd)$ is a complete and separable metric space and $\mm$ is  a non-negative and non-zero Borel measure giving finite mass to bounded sets. 

Given such a space and a pointed complete metric space $(\Y,\sfd_\Y,{\bar y})$ we denote by $L^0(\X,\Y)$ the collection of all equivalence classes up to $\mm$-a.e.\ equality of Borel maps from $\X$ to $\Y$ with separable range. Then for any $p\in(1,\infty)$  we put
\[
L^p(\X,\Y_{\bar y}):=\Big\{u\in L^0(\X,\Y)\ :\  \int\sfd_\Y^p(u(x),\bar y)\,\d\mm(x)<\infty\Big\}.
\]
Similar definitions can be given for maps defined only on some Borel subset $E$ of $\X$, leading to the spaces $L^0(E,\Y)$ and $L^p(E,\Y_{\bar y})$. If $\Y$ is a Banach space, we shall always pick ${\bar y}=0$ and avoid explicitly referring to such point. Notice that if $\mm(E)<\infty$ then the particular choice of $\bar y$ is irrelevant for the definition of  $L^p(\X,\Y_{\bar y})$.

It is easy to see that the distance 
\[
\sfd_{L^p}(u,v):=\bigg|\int\sfd_\Y^p\big(u(x),v(x)\big)\,\d\mm(x)\bigg|^{\frac1p}
\]
makes $L^p(E,\Y_{\bar y})$ a complete metric space. Notice that  if $\iota:\Y\to\Z$ is an isometric embedding, then $f\mapsto\iota\circ f$ is an isometric embedding of $L^p(\X,\Y_{\bar y})$ into $L^p(\X,\Z_{\iota(\bar y)})$. We shall occasionally use such embedding when it is convenient to deal with a Banach space target, a situation to which we can always reduce thanks to the Kuratowski's embedding that we now recall. Given  a set $\Y$, the Banach space $\ell^\infty(\Y)$ consists of all real valued bounded maps on $\Y$ endowed with the norm
\[
\|f\|_{\ell^\infty}:=\sup_{y\in\Y}|f(y)|.
\]
Then the following is well known and easy to prove:
\begin{lemma}[Kuratowski's embedding]\label{le:kur}
Let $(\Y,\sfd_\Y,{\bar y})$ be a pointed metric space. Then the map $\iota:\Y\to\ell^\infty(\Y)$ given by
\[
\iota_y(z):=\sfd_\Y(z,y)-\sfd_\Y(z,{\bar y})\qquad\forall z\in\Y
\]
is an isometry of $\Y$ with its image sending ${\bar y}$ to 0.
\end{lemma}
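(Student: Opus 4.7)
The plan is to verify three things in turn: that $\iota$ actually lands in $\ell^\infty(\Y)$, that it sends the basepoint to $0$, and that it preserves distances. None of these is hard; the only tool needed is the reverse triangle inequality.

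First I would check well-definedness. For fixed $y \in \Y$, the reverse triangle inequality gives $|\sfd_\Y(z,y) - \sfd_\Y(z,\bar y)| \leq \sfd_\Y(y,\bar y)$ for every $z \in \Y$, so $\iota_y$ is a bounded real-valued function on $\Y$ with $\|\iota_y\|_{\ell^\infty} \leq \sfd_\Y(y,\bar y)$. The statement $\iota_{\bar y} = 0$ is immediate from the definition, since $\sfd_\Y(z,\bar y) - \sfd_\Y(z,\bar y) = 0$ for every $z$.

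For the isometry property, I would show $\|\iota_{y_1} - \iota_{y_2}\|_{\ell^\infty} = \sfd_\Y(y_1,y_2)$ by a two-sided bound. The $\leq$ direction follows from the reverse triangle inequality applied to the function $z \mapsto \sfd_\Y(z,y_1) - \sfd_\Y(z,y_2)$, whose absolute value is bounded by $\sfd_\Y(y_1,y_2)$ uniformly in $z$. For the $\geq$ direction I would simply evaluate at $z = y_2$: this gives $|\iota_{y_1}(y_2) - \iota_{y_2}(y_2)| = |\sfd_\Y(y_2,y_1) - 0| = \sfd_\Y(y_1,y_2)$, so the supremum defining the $\ell^\infty$-norm is attained and equals $\sfd_\Y(y_1,y_2)$.

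There is no real obstacle here; the entire argument is a matter of unwinding the definition of $\iota$ and invoking the triangle inequality twice. The only conceptual point worth flagging is that the subtraction of $\sfd_\Y(z,\bar y)$ is what ensures boundedness of $\iota_y$ (without which $\iota_y$ would merely be Lipschitz but typically unbounded on an unbounded $\Y$), and simultaneously guarantees that the basepoint $\bar y$ is sent to the zero vector of $\ell^\infty(\Y)$.
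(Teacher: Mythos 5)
Your proof is correct and complete. The paper does not actually supply a proof of this lemma — it records it as "well known and easy to prove" — and what you have written is precisely the standard argument the authors have in mind: well-definedness and the upper bound both come from the reverse triangle inequality, and the lower bound comes from evaluating at $z=y_2$.
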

In what follows, given $E\subset\X$ we shall denote by $\nchi_E:\X\to\{0,1\}$ the function equal to 1 on $E$ and 0 outside.

A simple application of the above lemma is in the following density-like result:
\begin{lemma}[`Density' of continuous functions]\label{le:condenselp} Let $(\X,\sfd,\mm)$ be a metric measure space and $(\Y,\sfd_\Y,{\bar y})$ a pointed complete space. 

Then there exists another pointed complete space $(\Z,\sfd_\Z,{\bar z})$ and a pointed (i.e.\ sending ${\bar y}$ to ${\bar z}$) isometric immersion $\iota:\Y\to\Z$ such that the image of $L^p(\X,\Y_{\bar y})$ under the isometry $f\mapsto\iota\circ f$ is contained in the $L^p(\X,\Z_{\bar z})$-closure of $C_b(\X,\Z)\cap L^p(\X,\Z_{\bar z})$.
\end{lemma}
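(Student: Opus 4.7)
The plan is to take $(\Z,\sfd_\Z,\bar z):=(\ell^\infty(\Y),\|\cdot\|_{\ell^\infty},0)$ with $\iota$ the Kuratowski embedding of Lemma \ref{le:kur}, so that $\iota(\bar y)=0$ and a direct inspection of the definition gives $\|\iota(y)\|_{\ell^\infty}=\sfd_\Y(y,\bar y)$ for every $y\in\Y$. Consequently $f\mapsto\iota\circ f$ is a pointed isometric embedding of $L^p(\X,\Y_{\bar y})$ into $L^p(\X,\Z_{\bar z})$, and the problem is reduced to showing that every Borel $v:\X\to\Z$ with separable range and $\int\|v\|_\Z^p\,\d\mm<\infty$ is an $L^p$-limit of elements of $C_b(\X,\Z)\cap L^p(\X,\Z_{\bar z})$. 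The essential gain in passing to $\Z$ is that $\Z$ is a Banach space, so that linear combinations of $\Z$-valued maps and the scalar-valued Urysohn cutoffs below can be multiplied and added in the natural way.

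I would proceed in the two classical reductions from Bochner integration. First, approximate $v$ in $L^p$ by simple maps of the form $s=\sum_{i=1}^N z_i\,\nchi_{E_i}$ with the $E_i\subset\X$ pairwise disjoint Borel sets of finite $\mm$-measure: pick a countable dense sequence $(z_k)$ in the essential range of $v$, at scale $n$ assign each $x$ to the index $k\le n$ minimizing $\|v(x)-z_k\|_\Z$, intersect with a ball $B_n(x_0)$ to guarantee finite-measure support, and conclude convergence to $v$ by dominated convergence using $\|s_n-v\|_\Z\le 2\|v\|_\Z$ pointwise once $n$ is large enough.

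Second, approximate each such $s$ in $L^p$ by a map in $C_b(\X,\Z)$. Since $\mm$ is a Borel measure on the complete separable metric space $\X$ which is finite on bounded sets, its restriction to each $E_i$ is Radon, hence inner regular by compact sets and outer regular by open sets. Given $\eps>0$, select compact $K_i\subset E_i$ with $\mm(E_i\setminus K_i)<\eps$ and open $U_i\supset K_i$ with $\mm(U_i\setminus K_i)<\eps$; the $K_i$ are disjoint compacts and hence pairwise at positive distance, so the $U_i$ can be shrunk to remain open neighbourhoods of the respective $K_i$ and at the same time be pairwise disjoint. Define the continuous cutoff
\[
\phi_i(x):=\frac{\sfd(x,\X\setminus U_i)}{\sfd(x,K_i)+\sfd(x,\X\setminus U_i)},
\]
which takes values in $[0,1]$, equals $1$ on $K_i$ and $0$ off $U_i$, and set $g:=\sum_{i=1}^N z_i\phi_i\in C_b(\X,\Z)$. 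By construction $g=s$ on $\bigcup_i K_i$ and $\|g(x)\|_\Z\le\max_i\|z_i\|_\Z$ everywhere, so
\[
\|g-s\|_{L^p}^p\le(2\max_i\|z_i\|_\Z)^p\cdot 2N\eps,
\]
which is arbitrarily small. The only mildly delicate point in the whole argument is the joint disjointification of the $U_i$'s, which is routine once one uses that disjoint compacts in a metric space are at positive distance; everything else is standard once one has committed to the Banach-valued target provided by Kuratowski.
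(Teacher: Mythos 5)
Your proof takes the same basic route as the paper: pass to $\Z=\ell^\infty(\Y)$ via Kuratowski, then show density of $C_b(\X,\Z)\cap L^p(\X,\Z)$ in the Lebesgue--Bochner space by first reducing to simple functions and then approximating those by continuous maps. The difference lies in the last step: the paper reduces further to a single term $\nchi_E z$ and invokes the density of $C_b(\X,\R)\cap L^p(\X)$ in $L^p(\X)$ as a known scalar fact, whereas you build the continuous approximant by hand, using inner/outer regularity of $\mm$ to sandwich each level set between a compact $K_i$ and an open $U_i$ and pasting in Urysohn-type bump functions. Your version is more self-contained (it implicitly reproves the scalar density result), and it does work; note only that it uses that any two disjoint compacts in a metric space are at positive distance, and that $\phi_i$ should be declared $\equiv 1$ if ever $U_i=\X$ so that the quotient is defined.

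There is, however, a small gap in the first reduction. You claim dominated convergence ``using $\|s_n-v\|_\Z\le 2\|v\|_\Z$ pointwise once $n$ is large enough,'' but the threshold on $n$ depends on the point $x$, so this is not a domination in the sense of the theorem: for $x\in B_n(x_0)$ one has only $\|s_n(x)-v(x)\|=\min_{k\le n}\|v(x)-z_k\|$, which can exceed $2\|v(x)\|$ for any fixed $n$ on a set of positive measure. Two standard repairs are available. One can redefine $s_n(x):=0$ whenever $\min_{k\le n}\|v(x)-z_k\|>\|v(x)\|$ (always allowing the value $\bar z=0$), which gives the uniform bound $\|s_n-v\|_\Z\le\|v\|_\Z$ and keeps pointwise convergence, so dominated convergence then applies. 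Alternatively, first replace $v$ by $v\,\nchi_{\{\|v\|_\Z\ge 1/m\}\cap B_m(x_0)}$, which converges to $v$ in $L^p$ as $m\to\infty$ and lives on a set of finite measure, and then run the nearest-point construction there, where the bound $\|s_n-v\|_\Z\le\|z_1\|_\Z+\|v\|_\Z$ is integrable. With either fix the argument is complete.
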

\begin{proof}
We pick $\Z:=\ell^\infty(\Y)$ and $\iota:\Y\to\Z$ the Kuratowski embedding. Clearly, it is sufficient to prove that $C_b(\X,\Z_0)\cap L^p(\X,\Z_0)$ is dense  in $L^p(\X,\Z_0)$. To this aim we notice that our definition of $L^p(\X,\Z_0)$ reduces to the case of the Lebesgue-Bochner space $L^p(\X,\Z)$ and in particular by well-known approximation procedures we know that the space of functions attaining only  a finite number of values is dense in $L^p(\X,\Z)$. 

By linearity, it is now sufficient to prove that any function of the form $\nchi_E v$ for $E\subset\X$ Borel and $v\in\Z$ belongs to the $L^p$-closure of $C_b(\X,\Z)\cap L^p(\X,\Z)$. To see this, just pick $(g_n)\subset C_b(\X,\R)\cap L^p(\X)$ be converging to $\nchi_E$ in $L^p(\X)$ and notice that $(g_n v)\subset C_b(\X,\Z)\cap L^p(\X,\Z)$ converges to $\nchi_Ev$ in $L^p(\X,\Z)$.
\end{proof}

\bigskip

The space $(\X,\sfd,\mm)$ is said to be \emph{uniformly locally  doubling} provided for any $R>0$ there is a constant $\dou(R)>0$ such that
\[
\mm(B_{2r}(x))\leq \dou(R)\,\mm(B_r(x))\qquad\forall x\in\X,\ r\in(0,R).
\]
On such spaces we shall occasionally consider the maximal function $M_R(f)$ of a function $f\in L^1_{loc}(\X)$ defined, for any given $R>0$, as 
\[
M_R(f)(x):=\sup_{r\in(0,R)}\fint_{B_r(x)}|f|\,\d\mm.
\]
It is well known that the doubling condition coupled with Vitali's covering lemma gives the following estimates:
\begin{proposition}\label{prop:maxest}
Let $(\X,\sfd,\mm)$ be a uniformly locally doubling space and $p\in(1,\infty)$. Then for every $R>0$ there is a constant $C(R,p)>0$ depending on $p$ and $\dou(R)$ only such that for any $f\in L^p(\X)$ it holds
\begin{equation}
\label{eq:maxest}
\|M_R(f)\|_{L^p(\mm)}\leq C(R,p)\|f\|_{L^p(\mm)}.
\end{equation}
\end{proposition}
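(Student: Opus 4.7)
The plan is the classical two-step argument: first establish a weak-type $(1,1)$ bound for $M_R$, then combine it with the trivial bound $\|M_R(f)\|_{L^\infty}\leq\|f\|_{L^\infty}$ via Marcinkiewicz-style truncation to get the full $L^p$ estimate. The doubling assumption only enters through the Vitali covering step, and the restriction $r<R$ together with uniform local doubling on scale $5R$ (or any fixed multiple of $R$) is exactly what makes the argument go through without global doubling.

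For the weak-type bound I would fix $\lambda>0$ and $f\in L^1(\X)$, set $E_\lambda:=\{M_R(f)>\lambda\}$, and for each $x\in E_\lambda$ choose some radius $r_x\in(0,R)$ with
\[
\mm(B_{r_x}(x))<\frac1\lambda\int_{B_{r_x}(x)}|f|\,\d\mm.
\]
The family $\{B_{r_x}(x)\}_{x\in E_\lambda}$ has uniformly bounded radii, so the $5r$-Vitali covering lemma (valid in any metric space, no measure theoretic assumption required) produces a pairwise disjoint subfamily $\{B_{r_i}(x_i)\}$ with $E_\lambda\subset\bigcup_i B_{5r_i}(x_i)$. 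Iterating the doubling inequality three times gives $\mm(B_{5r_i}(x_i))\leq \dou(R)^3\,\mm(B_{r_i}(x_i))$, and summing the defining inequality over $i$ and using disjointness yields
\[
\mm(E_\lambda)\leq \dou(R)^3\sum_i\mm(B_{r_i}(x_i))\leq \frac{\dou(R)^3}{\lambda}\int_\X|f|\,\d\mm,
\]
which is the desired weak-type $(1,1)$ bound with constant $C_1:=\dou(R)^3$.

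To pass to $L^p$ for $p\in(1,\infty)$ I would use the standard truncation. Given $f\in L^p(\X)$ and $\lambda>0$, write $f=f_\lambda+f^\lambda$ where $f_\lambda:=f\nchi_{\{|f|>\lambda/2\}}$ and $f^\lambda:=f-f_\lambda$. Since $\|f^\lambda\|_{L^\infty}\leq\lambda/2$, we have $M_R(f^\lambda)\leq\lambda/2$ everywhere, hence $\{M_R(f)>\lambda\}\subset\{M_R(f_\lambda)>\lambda/2\}$. Applying the weak-type bound just proved to $f_\lambda$ gives
\[
\mm(\{M_R(f)>\lambda\})\leq \frac{2C_1}{\lambda}\int_{\{|f|>\lambda/2\}}|f|\,\d\mm.
\]
Then using the layer cake representation
\[
\|M_R(f)\|_{L^p}^p=p\int_0^\infty\lambda^{p-1}\mm(\{M_R(f)>\lambda\})\,\d\lambda
\]
and Fubini to interchange the $\lambda$-integration with the $\mm$-integration, the right-hand side is bounded by $\frac{2^p\,p\,C_1}{p-1}\|f\|_{L^p}^p$, which gives \eqref{eq:maxest} with $C(R,p)$ depending only on $p$ and $\dou(R)$.

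The only real point of care is the Vitali step: one must verify that a countable pairwise disjoint subfamily can be extracted when the ambient space is not assumed compact. This is handled by the standard greedy/exhaustion argument that works in any metric space provided radii are uniformly bounded, which is guaranteed here by $r<R$. Everything else is routine bookkeeping of constants; I do not expect any conceptual obstacle.
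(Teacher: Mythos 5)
The paper does not actually spell out a proof of this proposition --- it simply attributes \eqref{eq:maxest} to the ``doubling condition coupled with Vitali's covering lemma'' --- and your write-up is precisely that classical argument, correctly executed via the weak-type $(1,1)$ estimate followed by Marcinkiewicz truncation and the layer-cake formula, with the separability of $\X$ (assumed for every metric measure space in this paper) taking care of the countability of the disjoint Vitali subfamily. The only small slip is in the bookkeeping of the constant: to pass from $\mm(B_{r_i}(x_i))$ to $\mm(B_{5r_i}(x_i))\leq\mm(B_{8r_i}(x_i))$ one doubles at radii $r_i$, $2r_i$, $4r_i$, which lie in $(0,R)$, $(0,2R)$, $(0,4R)$ respectively, so the factor is $\dou(R)\dou(2R)\dou(4R)$ rather than $\dou(R)^3$; the proposition's own phrasing ``depending on $\dou(R)$ only'' carries the same minor imprecision, so this is harmless.
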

A direct consequence of such estimates is the validity of the Lebesgue differentiation theorem. In particular, for any $E\subset\X$ Borel we have that $\mm$-a.e.\ point $x\in E$ is a density point for $E$, i.e.\ such that $\lim_{r\downarrow0}\frac{\mm(B_r(x)\cap E)}{\mm(B_r(x))}=1$.  Also, the set of density points of a Borel set is Borel itself.

We shall also use the fact that
\begin{equation}
\label{eq:denspor}
\text{if $x$ is a density point of $E$ and $x_n\to x$, $x_n\neq x$ then there is $(y_n)\subset E$ with }\frac{\sfd(x_n,y_n)}{\sfd(x_n,x)}\to0.
\end{equation}
Indeed, if not up to pass to a subsequence we could find $\alpha\in(0,1)$ such that $B_{\alpha \sfd(x,x_n)}(x_n)\cap E=\emptyset$. Then putting $r_n:=\sfd(x,x_n)$ the doubling condition grants the existence of $c>0$ such that
\begin{equation}
\label{eq:por}
\mm(B_{\alpha r_n}(x_n))\geq c\,\mm(B_{4r_n}(x_n))\geq c\,\mm(B_{2r_n}(x))
\end{equation}
for $n>>1$ and thus taking into account the inclusion  $B_{\alpha r_n}(x_n)\subset B_{2r_n}(x)$ we obtain
\[
\mm(B_{2r_n}(x)\cap E)\leq \mm(B_{2r_n}(x)\setminus B_{\alpha r_n}(x_n))=\mm(B_{2r_n}(x))-\mm( B_{\alpha r_n}(x_n))\stackrel{\eqref{eq:por}}\leq (1-c)\mm(B_{2r_n}(x)),
\]
which contradicts the fact that $x$ is a density point of $E$.

\bigskip

Another basic property of doubling spaces that we shall use is  the following simple and known result about partitions of unity (see also \cite[Section 4.1]{HKST15}):
\begin{lemma}\label{le:partun}
Let $(\X,\sfd,\mm)$ be a uniformly locally  doubling space. Then there exists a constant $C>0$ depending only on $\dou(1)$ such that for any $r\in(0,1/4)$ the following holds. 

There is an at most countable cover of $\X$ made of balls $B_i$ of radius $r$ such that each point $x\in\X$ belongs to at most $C$ balls, i.e.\ $\sum_i\nchi_{B_i}\leq C$. Moreover, there are functions $\varphi_i:\X\to[0,1]$ with $\supp(\varphi_i)\subset B_i$, $\sum_i\varphi_i=1$ and with $\Lip(\varphi_i)\leq \frac Cr$ for every $i\in\N$. The collection of these $\varphi_i$'s is called partition of  unity subordinate to $(B_i)$.
\end{lemma}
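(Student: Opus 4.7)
The plan is to realize the cover via a maximal separated net and to build the partition of unity by normalizing truncated distance functions, with the doubling condition entering only at the step where we need bounded overlap. By a standard Zorn or greedy argument (using separability of $\X$), I would first pick a maximal $\tfrac{r}{3}$-separated set $\{x_i\}_{i\in I}\subset\X$, meaning $\sfd(x_i,x_j)\ge r/3$ for $i\ne j$ and no further point can be added. Setting $B_i:=B_r(x_i)$, maximality forces $\bigcup_iB_{r/3}(x_i)=\X$ and hence a fortiori $\bigcup_iB_i=\X$; meanwhile the balls $B_{r/6}(x_i)$ are pairwise disjoint and each has positive $\mm$-measure, so since $\X$ is separable with $\mm$ finite on bounded sets the index set $I$ is at most countable.

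For the bounded overlap, fix $x\in\X$ and let $J:=\{i\in I:x\in B_i\}$. The balls $B_{r/6}(x_i)$ for $i\in J$ are pairwise disjoint and, fixing any $j\in J$, all contained in $B_{13r/6}(x_j)$ by the triangle inequality. Since $r<1/4$ we have $13r/6<1$, so iterating the uniform doubling condition at scale $R=1$ a bounded (depending only on $\log_2 13$) number of times yields
\[
\mm(B_{13r/6}(x_j))\le C'\,\mm(B_{r/6}(x_i))\qquad\text{for every }i\in J,
\]
with $C'$ depending only on $\dou(1)$. Summing the disjoint pieces $B_{r/6}(x_i)\subset B_{13r/6}(x_j)$ then gives $\#J\le C'$.

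Finally I would build the partition by setting $\tilde\psi_i(x):=\max\{0,\,r/2-\sfd(x,x_i)\}$, which is $1$-Lipschitz and supported in $\overline{B_{r/2}(x_i)}\subset B_i$. By maximality of the net, every $x\in\X$ admits some $i$ with $\sfd(x,x_i)<r/3$, and hence $\tilde\psi_i(x)>r/6$; therefore $S:=\sum_i\tilde\psi_i\ge r/6$ everywhere. Putting $\varphi_i:=\tilde\psi_i/S$ gives a non-negative partition of unity with $\supp(\varphi_i)\subset B_i$. For the Lipschitz estimate, $S$ is locally a sum of at most $C'$ of the $1$-Lipschitz functions $\tilde\psi_i$, hence is $C'$-Lipschitz, and combining this with $\tilde\psi_i\le r/2$ and $S\ge r/6$ via the standard quotient estimate
\[
|\varphi_i(x)-\varphi_i(y)|\le\frac{|\tilde\psi_i(x)-\tilde\psi_i(y)|}{S(x)}+\tilde\psi_i(y)\,\frac{|S(y)-S(x)|}{S(x)\,S(y)}
\]
yields $\Lip(\varphi_i)\le C/r$ with $C$ depending only on $\dou(1)$. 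The only non-routine point is the overlap count: one must exploit the bound $r<1/4$ to keep all relevant balls at scales $\le 1$ so that $\dou(1)$ alone controls their measures; everything else is routine manipulation of truncated distance functions and a product-rule computation.
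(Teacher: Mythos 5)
Your proposal is correct and follows essentially the same route as the paper: a maximal separated net gives the cover, disjoint small concentric balls plus the doubling property give the bounded overlap, and normalized truncated distance functions give the partition of unity (the paper phrases it with an $r$-separated net and balls $B_{2r}$ for $r<1/8$, which is your construction after rescaling). One small wobble worth tightening: the inference ``$S$ is locally a sum of at most $C'$ $1$-Lipschitz functions, hence $C'$-Lipschitz'' is not a valid local-to-global step on a general metric space; instead observe directly that for any $x,y$ only those $i$ with $x\in\supp\tilde\psi_i$ or $y\in\supp\tilde\psi_i$ contribute to $S(x)-S(y)$, so at most $2C'$ terms each bounded by $\sfd(x,y)$, giving $\Lip(S)\le 2C'$, which feeds into your quotient estimate with a harmless change of constant.
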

\begin{proof} Put for brevity ${\sf D}:=\dou(1)$.
Fix $r\in(0,1/8)$ and let $(x_n)\subset\X$ be countable and dense. Define an at most countable set  $(y_n)$ by putting $y_0:=x_0$ and then recursively putting $y_n:=x_{k}$ where $k$ is the least index $i\in\N$ such that $x_i\notin \cup_{j<n}B_r(y_j)$. If no such $k$ exists, we do not define $y_n$ (in other words, we built a maximal $r$-separated set). The definition and the density of $(x_n)$ ensure that the balls $B_i:=B_{2r}(y_i)$ cover $\X$. 

Now we claim that 
\begin{equation}
\label{eq:overl}
\text{For every $x\in\X$ the ball $B_{2r}(x)$ meets at most ${\sf D}^5$ balls $B_i$.}
\end{equation}
Indeed, if $B_{2r}(x)\cap B_i\neq\emptyset$ then $B_{2^5\cdot r/2}(y_i)=B_{16r}(y_i)\supset B_{8r}(x)$ and thus taking into account  the doubling condition we get 
\begin{equation}
\label{eq:percov}
\mm(B_{8r}(x))\leq{\sf D}^5\mm(B_{r/2}(y_i))\qquad\forall i\ s.t.\ B_r(x)\cap B_i\neq\emptyset .
\end{equation}
On the other hand, by construction the balls $B_{r/2}(y_i)$ are all disjoint (because  $\sfd(y_i,y_j)\geq r$ for any $i\neq j$) and if $B_{2r}(x)\cap B_i\neq\emptyset$ then  $B_i\subset B_{8r}(x)$. Thus if ${i_1},\ldots,{i_N}$ are such that  $B_{2r}(x)\cap B_{i_j}\neq\emptyset$ for every $j=1,\ldots,N$, we have
\[
N\sum_{j=1}^N\mm(B_{r/2}(y_{i_j}))\leq N\,\mm(B_{8r}(x))\stackrel{\eqref{eq:percov}}\leq{\sf D}^5 \sum_{j=1}^N\mm(B_{r/2}(y_{i_j})),
\]
which gives \eqref{eq:overl}.

Now let $\psi_i:=(\frac32r-\sfd(\cdot,y_i))^+$, where $(z)^+$ denotes the positive part of the real number $z$, and notice that $\supp(\psi_i)\subset B_i$ and that $\Lip(\psi_i)\leq 1$. Therefore by \eqref{eq:overl} we deduce $\Lip(\sum_j\psi_j\restr{B_i})\leq {\sf D}^5$ for every $i$.  Also, since by construction every $x\in \X$ is at distance $\leq r$ from some of the $y_i$'s, we have $\sum_j\psi_j\geq \frac r2$ on $\X$.  Hence from the trivial bound $\Lip(\frac1f)\leq \frac{\Lip(f)}{|\inf f|^2}$ we deduce $\Lip(\frac{1}{\sum_j\psi_j}\restr{B_i})\leq4\frac{{\sf D}^5}{r^2}$ for every $i$.

To conclude put $\varphi_i:=\frac{\psi_i}{\sum_j\psi_j}$. It is clear that $\supp(\varphi_i)\subset B_i$, $\varphi_i\geq 0$ and $\sum_i\varphi_i=1$. Thus we also have $\varphi_i\leq 1$ everywhere for any $i$. Let us now bound from above
\[
\Lip(\varphi_i)=\sup_{x,y\in\X}\frac{|\frac{\psi_i(y)}{\sum_j\psi_j(y)}-\frac{\psi_i(x)}{\sum_j\psi_j(x)}|}{\sfd(x,y)}.
\]
For $x,y\notin \supp(\psi_i)$ the expression at the right hand side is 0. For $x,y\in B_i$ we can use   the trivial bound $\Lip(fg)\leq \sup|g|\Lip(f)+\sup|f|\Lip(g)$ to obtain
\[
\begin{split}
\Lip(\varphi_i\restr{B_i})&\leq \sup|\psi_i|\Lip\Big(\frac{1}{\sum_j\psi_j}\restr{B_i}\Big)+\sup\Big|\frac1{\sum_j\psi_j}\Big|\Lip(\psi_i)\leq 2r\frac{4{\sf D}^5}{r^2}+\frac2r\cdot 1 =\frac{8{\sf D}^5+2}r.
\end{split}
\]
Finally, if $x\in \supp(\psi_i)$ and $y\notin B_i$ we have $\sfd(x,y)\geq\frac r2$ and since $|\psi_i|\leq \frac r2$ and  $\frac{1}{\sum_j\psi_j}\leq\frac2r$ we obtain
\[
\begin{split}
\frac{|\frac{\psi_i(x)}{\sum_j\psi_j(x)}|}{\sfd(x,y)}\leq \frac2r
\end{split}
\]
and the conclusion follows.
\end{proof}
Recall   that given a Borel function $u:\X\to\R\cup\{\pm\infty\}$ and $x\in\X$, the approximate $\lims$ of $u$ at $x$ is defined as 
\[
\aplims_{y\to x}u(y):=\inf\{\lambda\in\R\cup\{+\infty\}\ :\ x\text{ is a density point of }\{u\leq\lambda\}\}
\]
and it is easy to verify that
\begin{equation}
\label{eq:altroaplim}
\aplims_{y\to x}u(y)=\inf_U\lims_{y\to x\atop y\in U}u(y)=\inf_U\inf_{r>0}\sup_{B_r(x)\cap U}u,
\end{equation}
the $\inf$ being made among Borel sets $U$ for which $x$ is a density point.  Also, if $x$ is a density point of $U\subset \X$, then the value of $u$ outside the set $U$ is irrelevant for what concerns the value of $\aplims u(x)$, as seen by the very definition of this latter object. Therefore in this case it makes sense to define the quantity $\aplims u(x)$ as $\aplims v(x)$ for any Borel extension $v$ of $u$ to the whole $\X$: what just said ensures that the result does not depend on the chosen extension. From \eqref{eq:altroaplim} we also see that in this case it holds
\begin{equation}
\label{eq:altroaplim2}
\aplims_{y\to x}u(y)=\inf\lims_{y\to x\atop y\in V\cap U}u(y),
\end{equation}
the $\inf$ being made among Borel sets $V$ for which $x$ is a density point.

Using the notion of approximate $\lims$ we can introduce the one of \emph{approximate local Lipschitz constant} $\aplip(u):\X\to[0,\infty]$ of a Borel function $u:\X\to\Y$ as
\[
\aplip(u)(x):=\aplims_{y\to x}\frac{\sfd_\Y(u(y),u(x))}{\sfd(y,x)},\qquad\forall x\in\X.
\]
This notion should be compared with that of  \emph{local Lipschitz constant} $\lip(u):\X\to[0,\infty]$ defined as
\[
\lip (u)(x):=\lims_{y\to x}\frac{\sfd_\Y(u(y),u(x)))}{\sfd(x,y)}
\]
and with that of \emph{asymptotic Lipschitz constant}  $\lip_a(u):\X\to[0,\infty]$ defined as
\[
\lip_a (u)(x):=\lims_{y,z\to x}\frac{\sfd_\Y(u(y),u(z)))}{\sfd(y,z)}=\inf_{r>0}\Lip(u\restr{B_r(x)}).
\]
All these notions are intended to be 0 if $x$ is an isolated point. We shall denote by $\Lip(\X)$ (resp.\ $\Lip_{bs}(\X)$) the space of Lipschitz (resp.\ Lipschitz and with bounded support) real-valued functions on $\X$.

On uniformly locally  doubling spaces, for Lipschitz functions we have $\aplip(u)=\lip(u)$ as we are going to show now:
\begin{proposition}\label{prop:liplocap}
Let $(\X,\sfd,\mm)$ be a uniformly locally  doubling space, $(\Y,\sfd_\Y)$ a complete space, $U\subset \X$ Borel and $u:U\to\Y$ a Lipschitz map. Then for every $x\in U$ density point we have
\[
\lip(u)(x)=\aplip(u)(x).
\]
In particular,  if $u$ is defined on the whole $\X$ then such identity holds for every $x\in\X$.
\end{proposition}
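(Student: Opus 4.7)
The inequality $\aplip(u)(x)\le\lip(u)(x)$ is built into the definitions: from \eqref{eq:altroaplim} (or its restricted form \eqref{eq:altroaplim2} with $V=\X$), the approximate $\lims$ is at most the ordinary one. So the content of the statement is the opposite bound $\lip(u)(x)\le\aplip(u)(x)$, which we may assume has finite right-hand side. If $x$ is isolated both quantities vanish by convention, so fix a non-isolated density point $x\in U$ and let $\mu>\aplip(u)(x)$ be arbitrary.

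By the characterization \eqref{eq:altroaplim2} of the approximate $\lims$ at a density point, we can find a Borel set $V\subset\X$ with $x$ as density point and a radius $r_0>0$ such that
\[
\frac{\sfd_\Y(u(y),u(x))}{\sfd(y,x)}<\mu\qquad\text{for every }y\in V\cap U\text{ with }0<\sfd(y,x)<r_0.
\]
Since $x$ is a density point of both $V$ and $U$, it is a density point of $V\cap U$ as well, so without loss of generality we may replace $V$ by $V\cap U$ and assume $V\subset U$. This is where the structure assumption is used: having a "good set" $V$ on which the difference quotient is genuinely $<\mu$ in a whole punctured neighborhood of $x$.

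Now take any sequence $y_n\to x$ with $y_n\ne x$ (we may take $y_n\in U$ since that is where $\lip(u)(x)$ is defined). The key step is to move each $y_n$ into the good set $V$ at negligible relative cost, which is exactly what the doubling-based porosity lemma \eqref{eq:denspor} provides: there exist $z_n\in V$ with $\sfd(y_n,z_n)/\sfd(y_n,x)\to0$. In particular $z_n\to x$, and for $n$ large we have $z_n\in V\subset U$ and $0<\sfd(z_n,x)<r_0$, so $\sfd_\Y(u(z_n),u(x))<\mu\,\sfd(z_n,x)$.

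Finally, combining this with the global Lipschitz bound $\sfd_\Y(u(y_n),u(z_n))\le L\,\sfd(y_n,z_n)$ and the triangle inequality $\sfd(z_n,x)\le\sfd(y_n,x)+\sfd(y_n,z_n)$ yields
\[
\frac{\sfd_\Y(u(y_n),u(x))}{\sfd(y_n,x)}\le (L+\mu)\frac{\sfd(y_n,z_n)}{\sfd(y_n,x)}+\mu,
\]
whose $\limsup$ is at most $\mu$. Hence $\lip(u)(x)\le\mu$, and letting $\mu\downarrow\aplip(u)(x)$ gives the desired inequality. The statement about $u$ defined on all of $\X$ follows because every point of $\X$ is trivially a density point of $\X$. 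The main, and essentially only, obstacle is setting up the good set $V$ correctly so that \eqref{eq:denspor} applies inside $U$; everything else is soft.
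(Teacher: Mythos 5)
Your proof is correct and takes essentially the same route as the paper: both reduce the $\ge$ direction to the porosity consequence \eqref{eq:denspor} of the doubling condition, replacing an arbitrary sequence $y_n\to x$ by nearby points $z_n$ in a "good set" of density $1$ at $x$, and then use the global Lipschitz bound on $u$ together with the triangle inequality to absorb the correction. The only cosmetic difference is that the paper keeps the Borel set $V$ arbitrary and takes the infimum via \eqref{eq:altroaplim2} at the end, whereas you fix $\mu>\aplip(u)(x)$ first and extract $V$ from the infimum characterization before estimating; the two are logically equivalent.
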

\begin{proof} The inequality $\geq$ is obvious, so we turn to the other one. Fix $x\in\X$ and notice that if $\mm(x)>0$ then the doubling property (and the fact that $\mm$ gives finite mass to bounded sets) forces $\X=\{x\}$ and in this case the conclusion is obvious. Thus we may assume that $\mm(x)=0$, so that the fact that it is a density point of $U$ implies that it is not an isolated point of $U$. Then pick $V\subset \X$ Borel having $x$ as density point and let $(x_n)\subset U$ be an  arbitrary sequence converging to $x$, with $x_n\neq x$ for every $n\in\N$. Since, trivially, $x$ is a density point of $V\cap U$,  by \eqref{eq:denspor} there is a sequence $(y_n)\subset V\cap U$ such that $\frac{\sfd(x_n,y_n)}{\sfd(x_n,x)}\to 0$ and therefore
\[
\begin{split}
\lims_{n\to\infty}\frac{|u(x_n)-u(x)|}{\sfd(x_n,x)}&\leq \lims_{n\to\infty}\frac{|u(y_n)-u(x)|}{\sfd(x_n,x)}+\lims_{n\to\infty}\Lip(u) \frac{\sfd(x_n,y_n)}{\sfd(x_n,x)}\leq  \lims_{y\to x\atop y\in  V\cap U}\frac{|u(y)-u(x)|}{\sfd(y,x)},
\end{split}
\]
so that the claim follows from the arbitrariness of $(x_n)$, the definition of $\lip(u)$ and the characterization \eqref{eq:altroaplim2} of the approximate-$\lims$.
\end{proof}
We shall be mainly interested in approximate local Lipschitz constants for maps $u:\X\to\Y$ having the \emph{Lusin Lipschitz property}, i.e.\ such that  we can find Borel sets $N,U_n$, $n\in\N$, with $\X=N\cup(\cup_nU_n)$,  $N$ which is $\mm$-negligible and for which $u\restr{U_n}$ is Lipschitz for every $n\in\N$. 

Notice that a trivial consequence of the definition and of Proposition \ref{prop:liplocap} above is that
\begin{equation}
\label{eq:aplipfin}
\aplip (u)<\infty\quad\mm-a.e.\quad\text{ if $u$ has the Lusin Lipschitz property and $\X$ is unif.loc.doubling}.
\end{equation}

We conclude this section recalling that $(\X,\sfd,\mm)$ is said to support a  (weak, local, 1-1) \emph{Poincar\'e inequality} provided for any $R>0$ there are constants $C(R),\lambda (R)>0$ such that for any Lipschitz function $f:\X\to\R$ it holds
\begin{equation}
\label{eq:poincarelip}
\fint_{B_r(x)}|f-f_{B_r(x)}|\,\d\mm\leq C(R)r\fint_{ B_{\lambda r}(x)} \lip f\,\d\mm\qquad\forall x\in\X,\ r\in(0,R),
\end{equation}
where $f_B:=\fint_B f\,\d\mm$. Notice that in the literature this inequality is typically required to hold for continuous functions and upper gradients: our formulation is equivalent to that one, see \cite{AmbrosioGigliSavare11-3}. Also, for our results regarding the Korevaar-Schoen space $\ks^{1,p}(\X,\Y_{\bar y})$ it would be sufficient to require the (weaker) $1-p$ Poincar\'e inequality, see Remark \ref{re:poip} for further comments in this direction, but given that the main application that we have in mind is that of $\X$ being a $\RCD(K,N)$ space, where \eqref{eq:poincarelip} holds (see \cite{Lott-Villani07} and \cite{Rajala12}), for simplicity we preferred to deal just with it.

\subsection{Sobolev functions in the non-smooth setting} In this section we recall the concept of Sobolev function over a metric measure space with both real and metric target. For what concerns the real valued case, we shall mostly focus on the approach based on \emph{test plans} introduced in \cite{AmbrosioGigliSavare11}, but we recall (\cite{AmbrosioGigliSavare11,AmbrosioGigliSavare11-3}) that this is equivalent to the original definition given in \cite{Cheeger00} and thus also to the variant proposed in \cite{Shanmugalingam00}. Both for this and for more detailed references for the metric valued case we refer to \cite{HKST15},

Let us fix a metric measure space $(\X,\sfd,\mm)$ and $p,q\in(1,\infty)$ with $\frac1p+\frac1q=1$. We shall denote by $C([0,1],\X])$ the (complete and separable) space of continuous curves in $\X$ defined on $[0,1]$ equipped with the `$\sup$' distance. For $t\in[0,1]$ the evaluation map $\e_t:C([0,1],\X])\to\X$ sends $\gamma$ to $\gamma_t$. A curve $\gamma\in C([0,1],\X])$ is said to be absolutely continuous provided there is $f\in L^1(0,1)$ such that
\begin{equation}
\label{eq:defms}
\sfd(\gamma_t,\gamma_s)\leq \int_t^s f(r)\,\d r\qquad\forall t\leq s,\ t,s\in[0,1].
\end{equation}
The least - in the a.e.\ sense - such $f$ is called \emph{metric speed} of $\gamma$ and denoted $|\dot\gamma_t|$. In what follows, when writing $\int_0^1|\dot\gamma_t|\,\d t$ it will be intended that such integral is $+\infty$ by definition if $\gamma$ is not absolutely continuous.   We shall also define the \emph{metric speed} functional  ${\rm ms}: C([0,1],\X)\times[0,1]\to [0,+\infty]$ by putting
\[
{\rm ms}(\gamma,t):=\lim_{h\to 0}\frac{\sfd(\gamma_{t+h},\gamma_t)}{|h|}
\]
provided $\gamma$ is absolutely continuous and the limit exists, ${\rm ms}(\gamma,t):=\infty$ otherwise. It can be proved, see for instance \cite[Theorem 1.1.2]{AmbrosioGigliSavare08}, that for any absolutely continuous curve $\gamma$ it holds ${\rm ms}(\gamma,t)=|\dot\gamma_t|$ for a.e.\ $t$.

The notion of Sobolev function is given in duality with that of test plan:
\begin{definition}[$q$-test plan]
A $q$-test plan on $\X$ is a Borel probability measure $\ppi$ on $C([0,1],\X)$ such that
\[
\begin{split}
\int_0^1\int|\dot\gamma_t|^q\,\d\ppi(\gamma)\,\d t&<\infty,\\
(\e_t)_*\ppi&\leq C\mm\qquad\forall t\in[0,1],
\end{split}
\]
for some $C>0$. 
\end{definition}
\begin{definition}[Sobolev functions]\label{def:sobf}
Let $p\in(1,\infty)$ and $f\in L^0(\X)$. We say that $f$ belongs to the Sobolev class $S^p(\X)$ provided there is $G\in L^p(\X)$, $G\geq 0$ such that
\[
\int |f(\gamma_1)-f(\gamma_0)|\,\d\ppi(\gamma)\leq\iint_0^1G(\gamma_t)|\dot\gamma_t|\,\d t\,\d\ppi(\gamma)\qquad\forall \ppi\ \text{$q$-test plan}.
\]
Any such $G$ is called $p$-weak upper gradient of $f$.

We define $W^{1,p}(\X):=L^p(\X)\cap S^p(\X)$.
\end{definition}
It is possible to check that for $f\in S^p(\X)$ there is a minimal, in the $\mm$-a.e.\ sense,  $p$-weak upper gradient $G$: it will be denoted by $|Df|$. Notice that in principle $|Df|$ depends on $p$, but in what follows we shall omit to insist on such dependence, see also Remark \ref{re:depp}.

We shall equip $W^{1,p}(\X)$ with the norm
\[
\|f\|_{W^{1,p}}^p:=\|f\|^p_{L^p}+\||Df|\|^p_{L^p}
\]
and recall that $W^{1,p}(\X)$, which is easily seen to be a vector space, is a Banach space when equipped with this norm.

The following theorem collects some important properties of real-valued Sobolev functions on a metric measure space:
\begin{theorem}\label{thm:basesob}
Let $(\X,\sfd,\mm)$ be a metric measure space and $p\in(1,\infty)$. Then the following hold:
\begin{itemize}
\item[i)] Let $(f_n)\subset L^0(\X)$ and $(G_n)\subset  L^p(\X)$. Assume that $f_n\to f$ in $L^0(\X)$, that $G_n\weakto G$ in $L^p(\X)$ and that $G_n$ is a $p$-weak upper gradient of $f_n$ for every $n$. Then $G$ is a $p$-weak upper gradient of $f$.
\item[ii)] For any $f,g\in S^p(\X)$ we have
\[
|D f|=|Dg|\qquad\mm-a.e.\ on\  \{f=g\}.
\]
\item[iii)] Let $f\in W^{1,p}(\X)$. Then there is a sequence $(f_n)\subset \Lip_{bs}(\X)$ such that $(f_n),(\lip_a(f_n))$ converge to $f,|Df|$ in $L^p(\X)$ as $n\to\infty$.
\item[iv)] Let $f\in L^0(\X)$ and $G\in L^p(\X)$, $G\geq 0$. Then $G$ is a $p$-weak upper gradient for $f$ if and only if for any $q$-test plan $\ppi$ the following holds: for $\ppi$-a.e.\ $\gamma$ the function $f\circ\gamma$ belongs to $W^{1,1}(0,1)$ and
\[
|(f\circ \gamma)'|(t)\leq G(\gamma_t)\,|\dot\gamma_t|\qquad \ppi\times\mathcal L^1\restr{[0,1]}-a.e.\ (\gamma,t).
\]
\item[v)] Suppose that $(\X,\sfd,\mm)$ is uniformly locally  doubling. Then $W^{1,p}(\X)$ is reflexive.
\item[vi)] Suppose that $(\X,\sfd,\mm)$ supports a Poincar\'e inequality. Then for every $f\in W^{1,p}(\X)$   it holds
\begin{equation}
\label{eq:poincaresob}
\fint_{B_r(x)}|f-f_{B_r(x)}|\,\d\mm\leq C(R)r\fint_{ B_{\lambda r}(x)} |Df|\,\d\mm\qquad\forall x\in\X,\ r\in(0,R),
\end{equation}
where $C(R),\lambda$ are the same constants appearing in \eqref{eq:poincarelip}.
\end{itemize}
\end{theorem}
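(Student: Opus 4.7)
The statement collects several standard properties; I would handle each item separately, working within the test-plan formalism for (i)--(iv) and citing external results for (v).

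For (i), fix a $q$-test plan $\ppi$. Since $(\e_j)_*\ppi\leq C\mm$ for $j=0,1$, up to extracting a subsequence $f_n\circ\e_j\to f\circ\e_j$ holds $\ppi$-a.e.\ for $j=0,1$, so Fatou yields
\[
\int|f(\gamma_1)-f(\gamma_0)|\,\d\ppi(\gamma)\leq\liminf_n\int|f_n(\gamma_1)-f_n(\gamma_0)|\,\d\ppi(\gamma).
\]
Next, by Fubini the right-hand side of the defining inequality for $G_n$ equals $\int G_n\,\d\eta$, where $\eta(A):=\iint_0^1\nchi_A(\gamma_t)|\dot\gamma_t|\,\d t\,\d\ppi(\gamma)$; combining the bound $(\e_t)_*\ppi\leq C\mm$ with H\"older's inequality and the finiteness of $\iint_0^1|\dot\gamma_t|^q\,\d t\,\d\ppi$ shows $\eta\ll\mm$ with density in $L^q(\mm)$. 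Weak $L^p$-convergence $G_n\weakto G$ then gives $\int G_n\,\d\eta\to\int G\,\d\eta$, completing the argument. Item (iv) follows from the same circle of ideas: applying the defining inequality of Definition \ref{def:sobf} to restriction/rescaling plans $(\Restr{s}{t})_*\ppi$ for all rational $0\leq s<t\leq 1$ and using Fubini promotes the integral bound to the pointwise bound for $\ppi$-a.e.\ $\gamma$; the converse is immediate by integration.

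For (ii), given $f,g\in S^p(\X)$ and any Borel $A\subset\{f=g\}$ I would verify that $G:=\nchi_A|Dg|+\nchi_{A^c}|Df|$ is a $p$-weak upper gradient of $f$. By the pointwise characterization of (iv), on the preimage of $A$ along any curve the identity $f\circ\gamma=g\circ\gamma$ allows one to replace $|Df|$ by $|Dg|$, while on its complement the original bound for $f$ is used. Minimality of $|Df|$ then forces $|Df|\leq|Dg|$ $\mm$-a.e.\ on $A$; swapping $f$ and $g$ yields the equality.

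Item (iii) is the deepest point. My plan is to invoke the identification, established in \cite{AmbrosioGigliSavare11,AmbrosioGigliSavare11-3}, of $\int|Df|^p\,\d\mm$ with the $L^p$-relaxation at $f$ of $h\mapsto\int\lip_a^p(h)\,\d\mm$ defined on $\Lip_{bs}(\X)$. This produces a recovery sequence $(f_n)\subset\Lip_{bs}(\X)$ with $f_n\to f$ in $L^p$ and $\limsup_n\|\lip_a(f_n)\|_{L^p}\leq\||Df|\|_{L^p}$. Since each $\lip_a(f_n)$ is trivially a $p$-weak upper gradient of $f_n$, part (i) applied to any weak $L^p$-cluster point $G$ of $(\lip_a(f_n))$ gives $|Df|\leq G$ $\mm$-a.e.; chaining with the lower semicontinuity of the $L^p$-norm under weak convergence forces $\|\lip_a(f_n)\|_{L^p}\to\||Df|\|_{L^p}$, and uniform convexity of $L^p$ upgrades weak to strong convergence. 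The main obstacle here is the relaxation identity itself, whose proof relies on Hopf-Lax estimates and on a nontrivial duality between gradient flows and test plans. Finally, (v) is Cheeger's reflexivity theorem for $W^{1,p}$ on doubling spaces, which I would cite (\cite{Cheeger00} and subsequent refinements); and (vi) follows at once from (iii) by applying \eqref{eq:poincarelip} to the approximants $f_n\in\Lip_{bs}(\X)$ (noting $\lip\leq\lip_a$) and passing to the limit on each ball using $L^p$-convergence of both $f_n\to f$ and $\lip_a(f_n)\to|Df|$.
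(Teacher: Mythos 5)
Your proposal is essentially correct and spells out the arguments that the paper handles purely by citation (its proof of this theorem is a list of references to \cite{AmbrosioGigliSavare11}, \cite{AmbrosioGigliSavare11-3}, \cite{Gigli12}, \cite{ACM14}). The test-plan arguments you sketch for (i)--(iv) are the standard ones: the occupation-measure reformulation of the right-hand side for (i), restriction/rescaling plans together with the locality of $W^{1,1}(0,1)$-derivatives on $\{f\circ\gamma=g\circ\gamma\}$ for (ii) and (iv), and the relaxation characterization of the energy as the key input for (iii), which you correctly flag as the deep point. Your route to (vi) through (iii) -- apply \eqref{eq:poincarelip} to the Lipschitz approximants, use $\lip f_n\le\lip_a f_n$, and pass to the limit in $L^p$ on each ball -- is the natural one (and in fact what the paper must intend, its reference to item (i) there notwithstanding).

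The one place your proposal is off is (v). You attribute the reflexivity of $W^{1,p}(\X)$ to Cheeger's theorem "on doubling spaces", but Cheeger's differentiation theory requires a Poincar\'e inequality in addition to doubling, and item (v) assumes only that $(\X,\sfd,\mm)$ is uniformly locally doubling. The result actually needed is the one of Ambrosio--Colombo--Di Marino \cite{ACM14}, which proves reflexivity via a discretization argument under a doubling hypothesis alone (and which the paper cites, with the remark that their global doubling assumption is easily replaced by the local one used here). As stated, a citation to \cite{Cheeger00} would leave the case without a Poincar\'e inequality uncovered, so this is a genuine gap in your proof of (v), albeit one that is repaired by swapping in the correct reference rather than by a new idea.
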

\begin{proof}
For $(i),(ii)$ see \cite{AmbrosioGigliSavare11}, for $(iii)$ see \cite{AmbrosioGigliSavare11} and \cite{AmbrosioGigliSavare11-3}. $(iv)$ is proved - by slightly modifying arguments in \cite{AmbrosioGigliSavare11} - in \cite[Theorem B.4]{Gigli12}. $(v)$ has been obtained in \cite{ACM14} under a global doubling assumption, but the argument works without modifications even under our assumption. Finally, $(vi)$ follows trivially from $(i)$ and the fact that for $f\in\Lip_{bs}(\X)$ the local Lipschitz constant $\lip(f)$ is a $p$-weak upper gradient (see also \cite{AmbrosioGigliSavare11-3}).
\end{proof}

The definition of Sobolev function can  be  adapted to the case of metric valued functions via a post-composition procedure (as proposed first by Ambrosio in \cite{Ambr90} for the case of BV functions and then by Reshetnyak in \cite{Resh97} for the Sobolev case - see \cite{HKST15} for more on the topic and detailed bibliography):
\begin{definition}\label{def:sobmetr}
Let  $(\X,\sfd,\mm)$ be a metric measure space, $(\Y,\sfd_\Y,{\bar y})$ a pointed  complete space, $p\in(1,\infty)$ and $u\in L^p(\X,\Y_{\bar y})$. We say that $u\in W^{1,p}(\X,\Y_{\bar y})$ provided there is $G\in L^p(X)$, $G\geq 0$ such that for every $\varphi:\Y\to\R$ 1-Lipschitz it holds $\varphi\circ u\in S^{p}(\X)$ with $|D (\varphi\circ u)|\leq G$ $\mm$-a.e..  Any such $G$ is called $p$-weak upper gradient of $u$.
\end{definition}
Fix $p\in(1,\infty)$. It is clear that the essential supremum of $|D(\varphi\circ u)|$ as $\varphi$ varies among 1-Lipschitz functions from $\Y$ to $\R$ is a  $p$-weak upper gradient of $u$ and that is the minimal one in the $\mm$-a.e. sense: such function is called minimal weak upper gradient of $u$ and denoted $|Du|$ (we will omit the dependence on $p$ of such object from our notation, as we shall only work with one fixed $p$ at the time).  We remark that in the smooth setting $|Du|$ would be the operator norm of the differential of $u$.

Some basic properties of metric-valued Sobolev functions are collected in the following proposition:
\begin{proposition}\label{prop:basesobmet}
Let $(\X,\sfd,\mm)$ be a metric measure space, $(\Y,\sfd_\Y,{\bar y})$ a pointed complete space and $p,q\in(1,\infty)$ with $\frac1p+\frac1q=1$. Then:
\begin{itemize}
\item[i)] Let $u\in L^p(\X,\Y_{\bar y})$ and $G\in L^p(\X)$, $G\geq 0$. Then the following are equivalent:
\begin{itemize}
\item[a)] $u\in W^{1,p}(\X,\Y_{\bar y})$ and $G$ is a $p$-weak upper gradient of $u$.
\item[b)] For every $q$-test plan $\ppi$ on $\X$ it holds
\[
\int\sfd_\Y(u(\gamma_1),u(\gamma_0))\,\d\ppi(\gamma)\leq\iint_0^1G(\gamma_t)|\dot\gamma_t|\,\d t\,\d\ppi(\gamma)
\]
\item[c)] For every $q$-test plan $\ppi$ on $\X$ the following holds. For $\ppi$-a.e.\ $\gamma$ the curve $[0,1]\ni t\mapsto u(\gamma_t)\in\Y$ has an absolutely continuous representative $u_\gamma$ and the bound\linebreak  ${\rm ms}(u_\gamma,t)\leq G(\gamma_t)|\dot\gamma_t|$ holds for $\ppi\times\mathcal L^1\restr{[0,1]}$-a.e.\ $(\gamma,t)$.
\end{itemize}
\item[ii)] Let $u\in L^p(\X,\Y_{\bar y})$, $(\Z,\sfd_\Z)$ a complete space and $\iota:\Y\to \Z$ be an isometric embedding. Then $u\in W^{1,p}(\X,\Y_{\bar y})$ if and only if $\iota\circ u\in W^{1,p}(\X,\Z_{\iota(\bar y)})$ and in this case $|D u|=|D(\iota\circ u)|$ $\mm$-a.e.. 
\item[iii)] Let $u_n\in L^p(\X,\Y_{\bar y})$ for every $n\in\N$ be such that $u_n\to u$ in $L^p(\X,\Y_{\bar y})$. Assume that $u_n\in W^{1,p}(\X,\Y_{\bar y})$ for every $n\in \N$ and that for some $G\in L^p(\X)$ we have $|Du_n|\weakto G$ in $L^p(\X)$. Then $u\in W^{1,p}(\X,\Y_{\bar y})$ as well with $|Du|\leq G$ $\mm$-a.e..
\item[iv)] For any $u,v\in W^{1,p}(\X,\Y_{\bar y})$ we have
\[
|Du|=|Dv|\qquad\mm-a.e.\ on\ \{u=v\}.
\]
\end{itemize}
\end{proposition}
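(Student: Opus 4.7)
The overall strategy is to reduce each of the four claims to the corresponding scalar-valued results collected in Theorem \ref{thm:basesob}, by post-composing $u$ with a suitable countable family of $1$-Lipschitz real-valued maps. Since $u \in L^0(\X,\Y)$ has separable essential range, we can fix a countable set $(y_n)$ dense in this range and set $\varphi_n(\cdot) := \sfd_\Y(\cdot,y_n) - \sfd_\Y({\bar y},y_n)$; each $\varphi_n$ is $1$-Lipschitz with $\varphi_n({\bar y})=0$, and on the essential image one has $\sfd_\Y(y,y') = \sup_n |\varphi_n(y)-\varphi_n(y')|$.

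For part (i), the implication (a) $\Rightarrow$ (b) goes as follows: hypothesis (a) gives $|D(\varphi_n\circ u)|\leq G$ for every $n$, so Theorem \ref{thm:basesob}(iv) produces a $\ppi$-full set $\Gamma_n$ of curves on which $|(\varphi_n\circ u\circ\gamma)'|(t)\leq G(\gamma_t)|\dot\gamma_t|$ for $\mathcal L^1$-a.e.\ $t$; intersecting the $\Gamma_n$'s yields a single $\ppi$-full set on which $|\varphi_n(u(\gamma_1))-\varphi_n(u(\gamma_0))|\leq \int_0^1 G(\gamma_t)|\dot\gamma_t|\,\d t$ for every $n$, and taking the supremum in $n$ and integrating gives (b). For (b) $\Rightarrow$ (c), one applies (b) to the restriction plans $(\Restr_s^t)_*\ppi$ (which remain $q$-test plans up to a constant) to get, for each fixed $s<t$,
\[
\sfd_\Y(u(\gamma_t),u(\gamma_s)) \leq \int_s^t G(\gamma_r)|\dot\gamma_r|\,\d r \qquad \text{for } \ppi\text{-a.e. } \gamma;
\]
running this over rational $s<t$ and discarding the countable union of $\ppi$-null exceptional sets, one upgrades the curve $t\mapsto u(\gamma_t)$ to a genuine absolutely continuous representative with the claimed bound on metric speed. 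Finally (c) $\Rightarrow$ (a) is essentially tautological: for any $1$-Lipschitz $\varphi:\Y\to\R$, composition preserves absolute continuity and bounds the classical derivative by the metric speed, so Theorem \ref{thm:basesob}(iv) delivers (a).

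For (ii), McShane's extension theorem shows that any $1$-Lipschitz $\varphi:\Y\to\R$ can be extended to a $1$-Lipschitz $\tilde\varphi:\Z\to\R$ with $\tilde\varphi\circ\iota=\varphi$, while any $1$-Lipschitz $\psi:\Z\to\R$ restricts via $\iota$ to a $1$-Lipschitz map on $\Y$; this two-sided correspondence between the families of post-compositions used to define $|Du|$ and $|D(\iota\circ u)|$ yields the a.e.\ identity. For (iii), fix a $1$-Lipschitz $\varphi$: continuity of $\varphi$ and the $L^p$-convergence of $u_n$ to $u$ give $\varphi\circ u_n\to \varphi\circ u$ in $L^0(\X)$, while $|D(\varphi\circ u_n)|\leq |Du_n|$ makes $|Du_n|$ a $p$-weak upper gradient of $\varphi\circ u_n$; Theorem \ref{thm:basesob}(i) then shows the weak $L^p$-limit $G$ is a $p$-weak upper gradient of $\varphi\circ u$, and arbitrariness of $\varphi$ yields $u\in W^{1,p}(\X,\Y_{\bar y})$ with $|Du|\leq G$. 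For (iv), apply Theorem \ref{thm:basesob}(ii) to the scalar pair $\varphi_n\circ u$, $\varphi_n\circ v$, which agree on $\{u=v\}$, and take the essential supremum in $n$.

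I expect the main technical point to be the step (b) $\Rightarrow$ (c) in part (i): one must combine the restriction machinery on $q$-test plans with a density argument valid simultaneously for all rational sub-intervals in order to produce a genuine absolutely continuous representative of the whole curve $t\mapsto u(\gamma_t)$, rather than only a dense family of inequalities between sub-endpoints. Once this is in place the other parts reduce to routine bookkeeping over the countable family $(\varphi_n)$.
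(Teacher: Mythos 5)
Your proof is correct in spirit, and parts (ii)--(iv) match the paper's argument essentially verbatim, but for part (i) you run the cycle in a genuinely different order than the paper. The paper proves $(b)\Rightarrow(a)\Rightarrow(c)\Rightarrow(b)$, whereas you do $(a)\Rightarrow(b)\Rightarrow(c)\Rightarrow(a)$. The paper's hard step is $(c)\Rightarrow(b)$: it integrates the pointwise bound, uses Fubini, and then needs to show that $(t,s)\mapsto\int\sfd_\Y(u(\gamma_t),u(\gamma_s))\,\d\ppi$ is continuous; this is done by isometrically embedding $\Y$ into $\ell^\infty(\Y)$, approximating $\iota\circ u$ in $L^p$ by bounded continuous maps (Lemma~\ref{le:condenselp}), and invoking the uniform bound coming from the test-plan compression constant. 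Your hard step is $(b)\Rightarrow(c)$ via restriction plans. This is a viable route, but as written it has a small gap: applying (b) to $(\Restr_s^t)_*\ppi$ only yields an \emph{integral} inequality over $\ppi$, not a pointwise one, so you also need to localize $\ppi$ to Borel subsets of $C([0,1],\X)$ to deduce the pointwise bound for $\ppi$-a.e.\ $\gamma$ for each fixed $(s,t)$. More importantly, collecting the bound only over \emph{rational} pairs $s<t$ does not by itself produce a representative agreeing with $u\circ\gamma$ at $\mathcal L^1$-a.e.\ $t$, since the rationals are $\mathcal L^1$-null: you should instead integrate the fixed-$(s,t)$ statement over $(s,t)\in[0,1]^2$ and apply Fubini, obtaining, for $\ppi$-a.e.\ $\gamma$, the inequality for a.e.\ $(s,t)$; from there the absolutely continuous representative is constructed exactly as in the paper's $(a)\Rightarrow(c)$ step (uniform continuity off a negligible $N_\gamma$, then continuous extension). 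You flag the right difficulty in your closing remark, but the rationals-only version of the density argument needs to be replaced by the Fubini/a.e.-pair version to close the gap. Given this fix, both your cycle order and the paper's give valid proofs; the trade-off is that the paper avoids the restriction-plan localization by paying for it with the continuity lemma and the $\ell^\infty$ embedding, while yours avoids those at the cost of the localization argument.
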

\begin{proof} \ \\
\noindent{(i)} Up to modify $u$ in a negligible set we can, and will, assume that it takes values in a separable subset of $\Y$. Let $(y_n)\subset \Y$ be dense in such subset and put $\varphi_n:=\sfd_\Y(\cdot,y_n)$ for every $n\in\N$.

\noindent{$(b)\Rightarrow(a)$} If $\varphi:\Y\to\R$ is 1-Lipschitz we have 
\[
|\varphi(u(\gamma_1))-\varphi(u(\gamma_0))|\leq \sfd_Y(u(\gamma_1),u(\gamma_0))
\]
for any curve $\gamma$, thus the conclusion follows from our assumption by a direct verification of Definition \ref{def:sobf}

\noindent{$(a)\Rightarrow(c)$} Fix a $q$-test plan $\ppi$ and use $(iv)$ of Theorem \ref{thm:basesob} and the well-known existence of absolutely continuous representatives for real valued Sobolev functions to deduce: for $\ppi$-a.e.\ $\gamma$ there is a Borel negligible set $N_\gamma\subset[0,1]$ such that
\[
|\varphi_n(u(\gamma_s))-\varphi_n(u(\gamma_t))|\leq \int_t^s G(\gamma_r)|\dot\gamma_r|\,\d r\qquad\forall t,s\in[0,1]\setminus N_\gamma,\ \forall n\in\N.
\]
Taking the supremum in $n\in\N$ we deduce that
\[
\sfd_\Y\big(u(\gamma_s), u(\gamma_t)\big)\leq \int_t^s G(\gamma_r)|\dot\gamma_r|\,\d r\qquad\forall t,s\in[0,1]\setminus N_\gamma,
\]
which in particular grants that the restriction of $u\circ\gamma$ to $[0,1]\setminus N_\gamma$ is uniformly continuous. It is then clear that its continuous extension $u_\gamma$ is absolutely continuous and that  the desired bound on its metric speed comes from the characterization of the latter as least function $f$ for which \eqref{eq:defms} holds.

\noindent{$(c)\Rightarrow(b)$}  We know that for $\ppi$-a.e.\ $\gamma$ it holds
\[
\sfd_\Y(u(\gamma_s),u(\gamma_t))\leq \int_t^sG(\gamma_r)|\dot\gamma_r|\,\d r\qquad a.e.\ t,s\in[0,1],\ t<s
\]
and thus integrating w.r.t.\ $\ppi$ and using Fubini's theorem we deduce
\begin{equation}
\label{eq:econt}
\int \sfd_\Y(u(\gamma_s),u(\gamma_t))\,\d\ppi(\gamma)\leq \iint_t^sG(\gamma_r)|\dot\gamma_r|\,\d r\,\d\ppi(\gamma)\qquad a.e.\ t,s\in[0,1],\ t<s.
\end{equation}
Now, observe that the right hand side is continuous in $t,s$ and thus to conclude it is then sufficient to prove that the left hand side is also continuous in $t,s$. Use Lemma \ref{le:condenselp} to find $\iota:\Y\to\Z$ as in the statement and  recall the defining property of a test plan to obtain that for any $v_1,v_2\in L^p(\X,\Z_{\bar z})$ it holds
\begin{equation}
\label{eq:unifcont}
\begin{split}
\int| \sfd_\Z(v_1(\gamma_s),v_1(\gamma_t))- \sfd_\Z(v_2(\gamma_s),v_2(\gamma_t))|\,\d\ppi(\gamma)&\leq\int \sfd_\Z(v_1(\gamma_s),v_2(\gamma_s))+ \sfd_\Z(v_1(\gamma_t),v_2(\gamma_t))\,\d\ppi(\gamma)\\
&\leq 2C^{\frac1p}\sfd_{L^p}(v_1,v_2).
\end{split}
\end{equation}
Now fix $u\in L^p(\X,\Y_{\bar y})$ and   find  $(v_n)\subset C_b(\X,\Z)\cap L^p(\X,\Z_{\bar z})$ converging to $\iota\circ u$ in  $L^p(\X,\Z_{\bar z})$. Since $v_n\in C_b(\X,\Z_{\bar z})$ it is easy to check that  the quantity $\int  \sfd_\Z(v_n(\gamma_s),v_n(\gamma_t))\,\d\ppi(\gamma)$ is continuous in $t,s$. Then from \eqref{eq:unifcont} it follows that the left hand side of \eqref{eq:econt} is continuous in $t,s$, being the uniform limit of continuous functions.

\noindent{(ii)} Assume $u\in W^{1,p}(\X,\Y_{\bar y})$ and let $\varphi:\Z\to\R$ be 1-Lipschitz. Then $\varphi\circ\iota:\Y\to\R$ is 1-Lipschitz and thus our assumption and the defining property of $|Du|$ ensure that $\varphi\circ\iota\circ u\in W^{1,p}(\X)$ with $|D(\varphi\circ\iota\circ u)|\leq |Du|$. The arbitrariness of $\varphi$ then ensures that $\iota\circ u\in W^{1,p}(\X,\Z_{\iota(\bar y)})$ with $|D(\iota\circ u)|\leq |D u|$.

Now assume $\iota\circ u\in W^{1,p}(\X,\Z_{\iota(\bar y)})$ and let $\psi:\Y\to\R$ be 1-Lipschitz. Then there exists (e.g.\ as a consequence of McShane extension lemma) a 1-Lipschitz function $\varphi:\Z\to\R$ such that $\psi=\varphi\circ\iota$ on $\Y$.  Therefore our assumption grants that $\psi\circ u=\varphi\circ\iota\circ u$ belongs to $W^{1,p}(\X)$ with $|D(\psi\circ u)|=|D(\varphi\circ\iota\circ u)|\leq |D(\iota\circ u)|$ and the conclusion comes from the arbitrariness of $\psi$.

\noindent{(iii)} Let $\varphi:\Y\to\R$ be 1-Lipschitz and notice that $\varphi\circ u_n\to\varphi\circ u$ in $L^p(\X)$ and that since $|D(\varphi\circ u_n)|\leq |Du_n|$ the sequence $(|D(\varphi\circ u_n)|)$ is bounded in $L^p(\X)$. Letting $g$ be any  $L^p$-weak limit of a subsequence we clearly have $g\leq G$ and thus, by $(i)$ of Theorem \ref{thm:basesob}, we conclude  that $\varphi\circ u\in S^{p}(\X)$ with $|D(\varphi\circ u)|\leq G$.

The conclusion follows by the arbitrariness of $\varphi$.

\noindent{(iv)} Direct consequence of the analogous property in the real valued case.
\end{proof}
In some circumstances it is convenient to operate a cut-off procedure for metric-valued Sobolev maps. While this seems hard to do for arbitrary target spaces, at least in the case of Banach-valued maps the situation resembles that of real-valued function, thanks to the `differential' characterization of Sobolev maps given in point $(i-c)$ above:
\begin{lemma}\label{le:metcutoff}
Let $u\in W^{1,p}(\X,\Y)$ with $\Y$ being a Banach space. Let $\eta:\X\to\R$ be Lipschitz and with bounded support. Then $\eta u\in W^{1,p}(\X,\Y)$ with
\[
|D(\eta u)|\leq (\sup|\eta|)|Du|+\Lip(\eta)|u|\qquad\mm-a.e..
\]
\end{lemma}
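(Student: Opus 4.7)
The cleanest route is to verify the characterization in item $(i-c)$ of Proposition \ref{prop:basesobmet}. First, since $\eta$ is bounded, $\|\eta u\|_\Y \leq \sup|\eta|\cdot\|u\|_\Y$ is in $L^p$, so $\eta u \in L^p(\X,\Y)$. Next, set $G := (\sup|\eta|)\,|Du| + \Lip(\eta)\,|u|$; clearly $G \in L^p(\X)$. It remains to produce, for every $q$-test plan $\ppi$, an absolutely continuous representative of $t \mapsto (\eta u)(\gamma_t)$ whose metric speed is bounded by $G(\gamma_t)|\dot\gamma_t|$ for $\ppi$-a.e.\ $\gamma$.

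Fix a $q$-test plan $\ppi$. Since $\eta$ is Lipschitz, for every absolutely continuous $\gamma$ the composition $\eta\circ\gamma$ is absolutely continuous with
\[
|\eta(\gamma_s) - \eta(\gamma_t)| \leq \Lip(\eta)\int_t^s |\dot\gamma_r|\,\d r.
\]
On the other hand, by the assumption $u \in W^{1,p}(\X,\Y)$ and item $(i-c)$ of Proposition \ref{prop:basesobmet}, for $\ppi$-a.e.\ $\gamma$ the curve $t \mapsto u(\gamma_t)$ has an absolutely continuous representative $u_\gamma:[0,1]\to\Y$ with ${\rm ms}(u_\gamma,t) \leq |Du|(\gamma_t)|\dot\gamma_t|$ for a.e.\ $t$. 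In particular $u_\gamma$ is continuous on $[0,1]$, hence bounded.

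The candidate representative of $(\eta u)\circ\gamma$ is the map $v_\gamma(t) := \eta(\gamma_t)\,u_\gamma(t)$, which coincides with $(\eta u)(\gamma_t)$ for a.e.\ $t$. Being a product of a bounded Lipschitz scalar function with a bounded absolutely continuous $\Y$-valued function, $v_\gamma$ is absolutely continuous. The triangle inequality gives
\[
\|v_\gamma(t+h) - v_\gamma(t)\|_\Y \leq |\eta(\gamma_{t+h}) - \eta(\gamma_t)|\,\|u_\gamma(t+h)\|_\Y + |\eta(\gamma_t)|\,\|u_\gamma(t+h) - u_\gamma(t)\|_\Y,
\]
and dividing by $|h|$ and sending $h \to 0$ at a point where the relevant limits exist yields
\[
{\rm ms}(v_\gamma,t) \leq \Lip(\eta)\,|\dot\gamma_t|\,\|u_\gamma(t)\|_\Y + (\sup|\eta|)\,{\rm ms}(u_\gamma,t) \leq \Lip(\eta)\,\|u_\gamma(t)\|_\Y\,|\dot\gamma_t| + (\sup|\eta|)\,|Du|(\gamma_t)\,|\dot\gamma_t|
\]
for a.e.\ $t$.

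Finally, a Fubini argument (using that for $\ppi$-a.e.\ $\gamma$ one has $u_\gamma(t) = u(\gamma_t)$ for a.e.\ $t$) shows that $\|u_\gamma(t)\|_\Y = |u|(\gamma_t)$ holds for $\ppi \otimes \mathcal{L}^1$-a.e.\ $(\gamma,t)$. Substituting gives ${\rm ms}(v_\gamma,t) \leq G(\gamma_t)\,|\dot\gamma_t|$ as required, and the conclusion follows from $(i-c)$ of Proposition \ref{prop:basesobmet}. The only point requiring any care is the justification that the pointwise metric-speed bound for the product transfers to the weak-upper-gradient bound $G$ -- everything else is a routine application of the triangle inequality and Fubini.
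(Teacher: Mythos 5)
Your proof is correct and follows essentially the same route as the paper's: both verify condition $(i-c)$ of Proposition \ref{prop:basesobmet} by taking the absolutely continuous representative $u_\gamma$ furnished by that proposition and differentiating the product $(\eta\circ\gamma)\,u_\gamma$, the paper simply compressing the product-rule and the final a.e.\ identification into the phrase ``by direct computation.'' One small slip worth noting: $\eta\circ\gamma$ is absolutely continuous but not Lipschitz in general (since $\gamma$ is merely absolutely continuous), so $v_\gamma$ is a product of two bounded absolutely continuous functions --- which is still absolutely continuous, so the argument stands unchanged.
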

\begin{proof} Let $\ppi$ be a $q$-test plan on $\X$. Then keep in mind point $(i-c)$ of Proposition  \ref{prop:basesobmet} and notice that for $\ppi$-a.e.\ $\gamma$ the curve $t\mapsto\eta(\gamma_t)u(\gamma_t)$ is a.e.\ equal to the absolutely continuous one $(\eta\circ\gamma )\,u_\gamma$ whose metric speed is - by direct computation - bounded by
\[
{\rm ms}((\eta\circ\gamma) \,u_\gamma,t)\leq |\dot\gamma_t|\big((\sup|\eta|)|D u|(\gamma_t)+\Lip(\eta)u(\gamma_t)\big)\quad a.e.\ t\in[0,1].
\]
The conclusion follows from  point $(i-c)$ of Proposition  \ref{prop:basesobmet}  again.
\end{proof}

\subsection{The Hajlasz-Sobolev space $\hs^{1,p}(\X,\Y_{\bar y})$ }
Here we recall Hajlasz's  definition (see \cite{Hajlasz2009}) of Sobolev functions in the non-smooth setting and its links with the  $W^{1,p}$ spaces that we have just seen.

\begin{definition}[Real-valued Hajlasz-Sobolev space]
Let $(\X,\sfd,\mm)$ be a metric measure space and $p\in(1,\infty)$. Then  $\hs^{1,p}(\X)$ is the subspace of $L^p(\X)$ made of functions $f$ such that there is $A\subset\X$ Borel of full measure and, for every $R>0$, a function $G_R\in L^p(\X)$ such that
\begin{equation}
\label{eq:defhs}
|f(y)-f(x)|\leq \sfd(x,y)\big(G_R(x)+G_R(y)\big)\qquad\forall x,y\in A \text{ such that }\sfd(x,y)\leq R.
\end{equation}
 Any such function $G_R$ is called $p$-Hajlasz gradient of $u$, or simply Hajlasz gradient if it is clear the Sobolev exponent we are working with, at scale $R$.
\end{definition}
\begin{remark}{\rm The standard definition of the Hajlasz-Sobolev space asks for a single function $G$ to satisfy \eqref{eq:defhs} for any $R>0$ (see \cite{Haj03}, \cite{HKST15}). The choice of our variant is motivated by the fact that we shall not work with (globally) doubling spaces but only with a (uniform) local doubling condition and in this case our phrasing is better linked to the notion of $W^{1,p}(\X)$ space, see Proposition \ref{prop:whs}.
}\fr\end{remark}

Much like the case of functions in $S^p(\X)$, a natural metric-valued variant of the notion of Hajlasz-Sobolev map can be obtained via post-composition with 1-Lipschitz maps:
\begin{definition}[Metric-valued Hajlasz-Sobolev space] Let  $(\X,\sfd,\mm)$ be a metric measure space, $(\Y,\sfd_\Y,{\bar y})$ a pointed complete space, $p\in(1,\infty)$ and $u\in L^p(\X,\Y_{\bar y})$. We say that  $u\in\hs^{1,p}(\X,\Y_{\bar y})$ if for any $R>0$ there is $G_R\in L^p(\X)$ which is an Hajlasz upper gradient at scale $R$ of $\varphi\circ u$ for any 1-Lipschitz function $\varphi:\Y\to\R$ such that $\varphi(\bar y)=0$. 
Any such $G_R$ is called Hajlasz gradient of $u$ at scale $R$.
\end{definition}
In other words, we require that for any 1-Lipschitz function $\varphi:\Y\to\R$ with $\varphi(\bar y)=0$ (but this requirement is in fact irrelevant) it holds
\[
|\varphi(u(y))-\varphi(u(x))|\leq  \sfd(x,y)\big(G_R(x)+G_R(y)\big)\qquad\forall x,y\in A_\varphi\text{ s.t. }\sfd(x,y)\leq R,
\]
where $A_\varphi\subset\X$ is some Borel set of full measure. Picking $\varphi_n:=\sfd_\Y(\cdot,y_n)-\sfd_\Y(\bar y,y_n)$ where $(y_n)\subset\Y$ is countable and dense in the essential image of $u$, and putting $A:=\cap_nA_{\varphi_n}$ we see that this is the same as requiring that
\[
\sfd_\Y(u(x),u(y))\leq \sfd(x,y)\big(G_R(x)+G_R(y)\big)\qquad\forall x,y\in A \text{ s.t. }\sfd(x,y)\leq R.
\]
From this bound it directly follows that the restriction of $u$ to $B_{R/2}(x)\cap A\cap \{G_R\leq c\}$ is Lipschitz for any $x\in\X$ and $c,R\geq 0$. In particular
\begin{equation}
\label{eq:hsluslip}
\text{every map in $\hs^{1,p}(\X,\Y_{\bar y})$ has the Lusin-Lipschitz property.}
\end{equation}
Let us now discuss the relation between $\hs^{1,p}(\X,\Y_{\bar y})$ and $W^{1,p}(\X,\Y_{\bar y})$. The inclusion $\hs^{1,p}(\X,\Y_{\bar y})\subset W^{1,p}(\X,\Y_{\bar y})$ holds without any assumption on the source space:
\begin{proposition}
Let $(\X,\sfd,\mm)$ be a metric measure space , $(\Y,\sfd_\Y,\bar y)$ a pointed complete space and $p\in(1,\infty)$. Then $\hs^{1,p}(\X,\Y_{\bar y})\subset W^{1,p}(\X,\Y_{\bar y})$ and for every $f\in \hs^{1,p}(\X,\Y_{\bar y})$, $R>0$ and Hajlasz-upper gradient $G_R$ at scale $R$ it holds
\[
|D f|\leq 2G_R\qquad\mm-a.e..
\]
\end{proposition}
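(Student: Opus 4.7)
The plan is to reduce to the real-valued case and then verify the test-plan characterization of weak upper gradients via a telescoping argument along test-plan curves. By the metric-valued Sobolev definition, it is enough to show: if $f\in L^p(\X)$ is real-valued and satisfies \eqref{eq:defhs} with Hajlasz gradient $G_R$ at scale $R$ on a Borel set $A$ of full measure, then $f\in S^p(\X)$ with $|Df|\leq 2G_R$; indeed, for any 1-Lipschitz $\varphi:\Y\to\R$, the composition $\varphi\circ u$ inherits the same Hajlasz gradient $G_R$ on the same $A$, and $|Du|$ is the essential supremum of $|D(\varphi\circ u)|$ over such $\varphi$.

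To verify the real-valued case, fix a $q$-test plan $\ppi$ with $(\e_t)_*\ppi\leq C\mm$. By Fubini, for $\ppi$-a.e.\ $\gamma$ the following properties hold simultaneously: $\gamma$ is absolutely continuous with $|\dot\gamma|\in L^q(0,1)$; $\gamma_0,\gamma_1\in A$; the set $T_\gamma:=\{t\in[0,1]:\gamma_t\in A\}$ has full Lebesgue measure in $[0,1]$; and, by H\"older, both $G_R\circ\gamma\in L^p(0,1)$ and $(G_R\circ\gamma)|\dot\gamma|\in L^1(0,1)$. By uniform continuity of $\gamma$ on the compact interval $[0,1]$, pick $\delta=\delta(\gamma)>0$ with $\sfd(\gamma_t,\gamma_s)\leq R$ whenever $|t-s|\leq\delta$.

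For each $n$ with $1/n<\delta$, choose a partition $0=t_0^n<t_1^n<\ldots<t_{k_n}^n=1$ of mesh $\leq 1/n$ whose interior points lie in $T_\gamma$ and are Lebesgue points of $G_R\circ\gamma$ (both sets are of full measure so such a choice exists). Applying \eqref{eq:defhs} to each consecutive pair (both endpoints lie in $A$ and are at distance $\leq R$) and telescoping yields
\[
|f(\gamma_1)-f(\gamma_0)|\leq\sum_{i=0}^{k_n-1}\sfd(\gamma_{t_i^n},\gamma_{t_{i+1}^n})\bigl(G_R(\gamma_{t_i^n})+G_R(\gamma_{t_{i+1}^n})\bigr)\leq\int_0^1\phi_n(r)|\dot\gamma_r|\,\d r,
\]
where $\phi_n(r):=G_R(\gamma_{t_i^n})+G_R(\gamma_{t_{i+1}^n})$ for $r\in[t_i^n,t_{i+1}^n)$. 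The main obstacle is the limit passage as $n\to\infty$: since $G_R\circ\gamma$ is only $L^p$, pointwise evaluation at nearby partition endpoints need not converge, so care is needed. The standard remedy is to dominate $\phi_n|\dot\gamma|$ uniformly in $n$ by an $L^1$-integrable majorant built from the one-dimensional Hardy--Littlewood maximal function of $G_R\circ\gamma$ (which lies in $L^p(0,1)$ for $p>1$), so that dominated convergence gives $\int_0^1\phi_n|\dot\gamma|\,\d r\to 2\int_0^1 G_R(\gamma_t)|\dot\gamma_t|\,\d t$. Integrating the resulting pointwise inequality w.r.t.\ $\ppi$ then verifies Definition \ref{def:sobf} with $G=2G_R$, yielding $|Df|\leq 2G_R$ $\mm$-a.e.
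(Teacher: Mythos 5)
Your reduction to the real-valued case, the idea of testing against a $q$-test plan $\ppi$, and the telescoping inequality along $\ppi$-a.e.\ curve are all the same moves the paper makes, and that part is fine (including the use of $(\e_t)_*\ppi\leq C\mm$ to guarantee that $\gamma_0,\gamma_1\in A$ and that $T_\gamma$ has full measure for $\ppi$-a.e.\ $\gamma$). The gap is precisely in the limit passage, and the fix you sketch does not close it. Dominated convergence along a fixed $\gamma$ requires two things: pointwise (or at least a.e.) convergence of $\phi_n$ and a dominating majorant. Neither is available. First, $\phi_n(r)=G_R(\gamma_{t_i^n})+G_R(\gamma_{t_{i+1}^n})\to 2G_R(\gamma_r)$ is false in general for $L^p$ data: $t_i^n,t_{i+1}^n\to r$ being Lebesgue points controls averages over shrinking intervals \emph{around those points}, not the pointwise values at nearby points, and an $L^p$ function has no pointwise continuity. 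Second, the one-dimensional maximal function of $G_R\circ\gamma$ does not bound $\phi_n$: for $r\in[t_i^n,t_{i+1}^n)$ the quantity $G_R(\gamma_{t_i^n})$ is the limit of averages over intervals of length $\to 0$ centred at $t_i^n$, and such intervals eventually exclude $r$, so $M(G_R\circ\gamma)(r)$ (which only averages over intervals containing $r$) gives no control. Thus there is no $n$-uniform integrable majorant for $\phi_n|\dot\gamma|$ of the kind you invoke.

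The paper's proof resolves exactly this difficulty by not trying to pass to the limit curve by curve. It integrates against $\ppi$ \emph{first}: setting $\hat G(\gamma,t):=G(\gamma_t)$ and $\tilde G_n$ as the piecewise-constant sampling at dyadic-type nodes, it proves $\tilde G_n\to 2\hat G$ in $L^p(C([0,1],\X)\times[0,1];\hat\ppi)$, together with ${\rm IR}_n\to{\rm ms}$ in $L^q$. The convergence of $\tilde G_n$ rests on the continuity of $t\mapsto\hat G(\cdot,t)\in L^p(\ppi)$, which is a continuity-of-translation-type statement available precisely because $(\e_t)_*\ppi\leq C\mm$ uniformly in $t$; it does not require any pointwise regularity of $G_R\circ\gamma$. (An alternative repair of your pathwise argument, in the same spirit, would be to average over all translates $\{s+i/n:i\}$, $s\in[0,1/n)$, of the partition, converting the pointwise samples into integrals of $G_R\circ\gamma$ before letting $n\to\infty$; but this is essentially the paper's computation in disguise.) As written, your proposal does not establish the claimed convergence $\int_0^1\phi_n|\dot\gamma|\,\d r\to 2\int_0^1 G_R(\gamma_t)|\dot\gamma_t|\,\d t$, so the final verification of Definition \ref{def:sobf} is not reached.
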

\begin{proof} By the very definitions of $\hs^{1,p}(\X,\Y_{\bar y})$ and  $W^{1,p}(\X,\Y_{\bar y})$ it is sufficient to deal with the real-valued case.

 Fix $R>0$, let $q\in(1,\infty)$ be such that $\frac1p+\frac1q=1$, fix a $q$-test plan $\ppi$ and put $\hat\ppi:=\ppi\times\mathcal L^1\restr{[0,1]}$. For every $n\in\N$, $n> 0$,   let $\Gamma_n\subset C([0,1],\X)$ be the set of curves $\gamma$ such that $\sfd(\gamma_t,\gamma_{t+h})\leq R$ for every $t\in[0,1-\frac1n]$, $h\in[0,\frac1n]$, and observe that $\Gamma_n\subset\Gamma_{n+1}$ and $\cup_n\Gamma_n=C([0,1],\X)$.

Then define the `incremental ratio' functional ${\rm IR}_n:C([0,1],\X)\times[0,1]\to\R^+$ as
\[
{\rm IR}_n(\gamma,t):=\left\{\begin{array}{ll}
n\,\sfd(\gamma_\frac in,\gamma_{\frac{i+1}n}),&\qquad\text{ if }\gamma\in \Gamma_n\text{ and }t\in[\frac in,\frac{i+1}n)\text{ for some }i=0,\ldots,n-1,\\
0,&\qquad\text{ otherwise.}
\end{array}
\right.
\]
It is well known and easy to check (see for instance the arguments in \cite{Lisini07} and \cite{GT18}) that if $\gamma\in C([0,1],\X)$ is absolutely continuous with $|\dot\gamma_t|\in L^q(0,1)$, then ${\rm IR}_n(\gamma,\cdot)\to {\rm ms}(\gamma,\cdot) $ in $L^q(0,1)$ - we omit the proof of this fact. Also, from the trivial bound $\sfd(\gamma_t,\gamma_{t+\frac1n})\leq\int_t^{t+\frac1n}|\dot\gamma_s|\,\d s$  it follows that 
\[
\|{\rm IR}_n(\gamma,\cdot)\|_{L^q(0,1)}\leq \|{\rm ms}(\gamma,\cdot) \|_{L^q(0,1)}\qquad\forall \gamma\in C([0,1],\X),
\]
therefore using the fact that ${\rm ms}$ belongs to $L^q(\hat \ppi)$ (which follows from the fact that $\ppi$ is a $q$-test plan), by the dominated convergence theorem applied to the Lebesgue-Bochner space $L^q(C([0,1],\X),L^q([0,1];\mathcal L^1\restr{[0,1]});\ppi)\sim L^q(C([0,1],\X)\times[0,1];\hat\ppi)$ we conclude that 
\begin{equation}
\label{eq:convlq}
{\rm IR}_n\to {\rm ms}\quad{\rm in }\ L^q(C([0,1],\X)\times[0,1];\hat\ppi).
\end{equation}
Now consider $G\in L^p(\X)$ and for $n\in\N$, $n>0$, define $\hat G\in L^p(C([0,1],\X)\times[0,1];\hat\ppi)$ as
\[
\hat G(\gamma,t):=G(\gamma_t)\qquad\forall \gamma\in C([0,1],\X]),\ t\in[0,1],
\]
and then $\tilde G_n\in L^p(C([0,1],\X)\times[0,1];\hat\ppi)$ as
\[
\tilde G_n(\gamma,t)
:=\left\{\begin{array}{ll}
\hat G(\gamma, {\frac in})+\hat G(\gamma,{\frac{i+1}n}),&\qquad\text{ if }\gamma\in \Gamma_n\text{ and }t\in[\frac in,\frac{i+1}n]\text{ for some }i=0,\ldots,n-1,\\
0,&\qquad\text{ otherwise.}
\end{array}
\right.
\]
Notice that 
the continuity of $t\mapsto\hat G(\cdot,t)\in L^p(\ppi)$ (which is well-known and easy to establish - see also the proof of the implication $(c)\Rightarrow(b)$ in Proposition \ref{prop:basesobmet}) gives
\begin{equation}
\label{eq:convlp}
\tilde G_n\to2\hat G\quad {\rm in }\ L^p(C([0,1],\X),L^p([0,1];\mathcal L^1\restr{[0,1]});\ppi)\sim L^p(C([0,1],\X)\times[0,1];\hat\ppi)
\end{equation}
as $n\to\infty$. To conclude the proof, let $f\in \hs^{1,p}(\X)$, $G\in L^p(\X)$ an Hajlasz-upper gradient at scale $R$ and $m\geq n>0$. Then a simple telescopic argument shows that
\[
\int_{\Gamma_n}|f(\gamma_1)-f(\gamma_0)|\,\d\ppi(\gamma)\leq \iint_0^1\tilde G_m\,{\rm IR}_m\,\d\hat \ppi
\]
and thus passing to the limit first as $m\to\infty$ recalling \eqref{eq:convlq} and \eqref{eq:convlp} and then as $n\to\infty$ we deduce that
\[
\int |f(\gamma_1)-f(\gamma_0)|\,\d\ppi(\gamma)\leq 2\iint_0^1\hat G\,{\rm ms} \,\d\hat \ppi,
\]
which, by the arbitrariness of the $q$-test plan $\ppi$, is the conclusion.
\end{proof}

We have already seen that  the inclusion $\hs^{1,p}(\X,\Y_{\bar y})\subset W^{1,p}(\X,\Y_{\bar y})$ always holds. The converse one is false in general, as shown in the following simple example:
\begin{example}{\rm Let $a_n\downarrow0$ be a sequence to be fixed later and $\X:=\{0\}\cup\{a_n:n\in\N\}$ equipped with the Euclidean distance and the measure  $\mm:=\delta_0+\sum_{n>0}2^{-n}\delta_{a_n}$.  From the fact that the space is totally disconnected it easily follows that any test plan is concentrated on constant curves, and thus that any $L^p(\X)$ function is actually in $W^{1,p}(\X)$ with null minimal upper gradient. Now consider the function $f:\X\to\R^+$ defined as $f(0):=0$ and $f(a_n):=n$ for every $n\in\N$. If $G$ is an Hajlasz gradient we must have $G(a_n)\geq \frac n{a_n}-G(0)$ and thus choosing the $a_n$'s small enough we see that $G$ does not belong to $L^p(\X)$. 
}\fr\end{example} 
Nevertheless, the isomorphism of $\hs^{1,p}(\X,\Y_{\bar y})$ and  $W^{1,p}(\X,\Y_{\bar y})$ as Banach spaces is true under assumptions that we shall often make in this manuscript (the proof we report is taken from \cite{HKST15}):
\begin{proposition}\label{prop:whs}
Let $(\X,\sfd,\mm)$ be locally uniformly doubling and supporting a Poincar\'e inequality, $(\Y,\sfd_\Y,{\bar y})$ a pointed complete space and  $p\in(1,\infty)$. Then $W^{1,p}(\X,\Y_{\bar y})\subset \hs^{1,p}(\X,\Y_{\bar y})$ and for every $u\in W^{1,p}(\X,\Y_{\bar y})$ a choice of Hajlasz-gradient for $u$ at scale $R$ is given by 
\[
G_R:=C(R)M_{2\lambda R}(|D u|),
\] 
where $\lambda$ is the constant appearing in the Poincar\'e inequality \eqref{eq:poincarelip} and the constant $C(R)>0$ depends only on the doubling and Poincar\'e constants of $\X$ and the chosen $R>0$.
\end{proposition}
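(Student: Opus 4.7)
The first reduction is to the real-valued case: if $u\in W^{1,p}(\X,\Y_{\bar y})$ and $\varphi:\Y\to\R$ is $1$-Lipschitz with $\varphi(\bar y)=0$, then by the very definition of $|Du|$ we have $\varphi\circ u\in W^{1,p}(\X)$ with $|D(\varphi\circ u)|\leq |Du|$ $\mm$-a.e.. Hence it suffices to find a function $G_R$ of the claimed form which is a Hajlasz gradient at scale $R$ of every such $\varphi\circ u$, with a constant independent of $\varphi$, and this in turn will follow once we exhibit such $G_R$ for an arbitrary $f\in W^{1,p}(\X)$ in terms of $|Df|$, applied then to $f:=\varphi\circ u$. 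Moreover, by Proposition \ref{prop:maxest} the function $M_{2\lambda R}(|Du|)$ lies in $L^p(\X)$, so the integrability requirement on the Hajlasz gradient will be automatic.

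The core estimate is a standard telescoping argument based on the Poincar\'e inequality \eqref{eq:poincaresob} from Theorem \ref{thm:basesob}$(vi)$. Fix $f\in W^{1,p}(\X)$, $R>0$, and take density points $x$ of $f$ in the usual Lebesgue-differentiation sense; by Proposition \ref{prop:maxest} and the Lebesgue differentiation theorem these form a set of full $\mm$-measure. For $r\in(0,R]$ I would estimate
\[
|f(x)-f_{B_r(x)}|\leq \sum_{i\geq 0}|f_{B_{2^{-i-1}r}(x)}-f_{B_{2^{-i}r}(x)}|\leq \dou(R)\sum_{i\geq 0}\fint_{B_{2^{-i}r}(x)}|f-f_{B_{2^{-i}r}(x)}|\,\d\mm,
\]
where the inner averages are controlled via \eqref{eq:poincaresob} by $C(R)\,2^{-i}r\fint_{B_{\lambda 2^{-i}r}(x)}|Df|\,\d\mm\leq C(R)\,2^{-i}r\,M_{\lambda R}(|Df|)(x)$. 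Summing the geometric series yields
\[
|f(x)-f_{B_r(x)}|\leq C_1(R)\,r\,M_{\lambda R}(|Df|)(x),
\]
with an analogous estimate at $y$.

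To glue these, given $x,y$ density points with $r:=\sfd(x,y)\leq R$, I would pass through the enlarged ball $B_{2r}(x)\supset B_r(x)\cup B_r(y)$. The triangle inequality gives $|f_{B_r(x)}-f_{B_r(y)}|\leq |f_{B_r(x)}-f_{B_{2r}(x)}|+|f_{B_r(y)}-f_{B_{2r}(x)}|$; the first term is controlled as above by $C_1(R)\,r\,M_{2\lambda R}(|Df|)(x)$, and for the second we use $B_r(y)\subset B_{2r}(x)$ together with the inclusion $B_{2r}(x)\subset B_{3r}(y)$ and the doubling condition to deduce $\mm(B_{2r}(x))\leq \dou(R)^2\,\mm(B_r(y))$, whence
\[
|f_{B_r(y)}-f_{B_{2r}(x)}|\leq \dou(R)^2\fint_{B_{2r}(x)}|f-f_{B_{2r}(x)}|\,\d\mm\leq C_2(R)\,r\,M_{2\lambda R}(|Df|)(y),
\]
after one more application of \eqref{eq:poincaresob} and a further doubling comparison between the balls $B_{2\lambda r}(x)$ and $B_{3\lambda r}(y)$. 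Combining these three ingredients produces
\[
|f(x)-f(y)|\leq C(R)\,\sfd(x,y)\bigl(M_{2\lambda R}(|Df|)(x)+M_{2\lambda R}(|Df|)(y)\bigr)
\]
for all density points $x,y$ with $\sfd(x,y)\leq R$, which shows that $G_R:=C(R)\,M_{2\lambda R}(|Df|)$ is a Hajlasz gradient of $f$ at scale $R$. Applied to $f=\varphi\circ u$ with $|D(\varphi\circ u)|\leq |Du|$, this gives the desired conclusion. The only slightly delicate point is the bookkeeping of the doubling constants that transfer the maximal function from balls centered at $x$ to balls centered at $y$; all of this is purely dimensional and absorbed into $C(R)$.
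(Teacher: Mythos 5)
Your proof is correct and follows the same line as the paper's: a dyadic telescoping argument via the Poincar\'e inequality to control $|f(x)-f_{B_R(x)}|$, followed by a single Poincar\'e step on the enlarged ball $B_{2R}(x)$ to bridge the averages $f_{B_R(x)}$ and $f_{B_R(y)}$. The one small imprecision is in your recentering of the bridge term to balls around $y$: since $B_{2\lambda r}(x)\subset B_{3\lambda r}(y)$, this step actually produces $M_{3\lambda R}(|Df|)(y)$ rather than $M_{2\lambda R}(|Df|)(y)$ as you claim (a harmless constant, since any fixed multiple of $\lambda R$ serves); the paper sidesteps this by estimating $|f_{B_R(x)}-f_{B_R(y)}|$ entirely by $M_{2\lambda R}(|Df|)(x)$, and only symmetrizing the first, telescoping estimate over $x$ and $y$.
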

Note that the fact that $G_R\in L^p(\X)$ follows from Proposition \ref{prop:maxest}
\begin{proof} By the very definitions of  $W^{1,p}(\X,\Y_{\bar y})$ and  $\hs^{1,p}(\X,\Y_{\bar y})$ it is sufficient to consider the real-valued case.
 
In the course of the proof we shall denote by $C(R)$ a positive constant depending on a parameter $R>0$ and the doubling and Poincar\'e constants only, whose value may change from line to line. 
Let  $x\in\X$ be such that  $u_{B_r(x)}\to u(x)$ as $r\downarrow0$ ($\mm$-a.e.\ $x\in \X$ has this property),   $R>0$, put $B_i:=B_{2^{-i}R}(x)$ and notice that
\begin{equation}
\label{eq:stimax}
\begin{split}
|u(x)-u_{B_R(x)}|&\leq\sum_{i=0}^\infty|u_{B_i}-u_{B_{i+1}}|\leq \sum_{i=0}^\infty\fint_{B_{i+1}}|u-u_{B_{i}}|\,\d\mm\\
&\leq C(R)\sum_{i=0}^\infty\fint_{B_{i}}|u-u_{B_{i}}|\,\d\mm\stackrel{\eqref{eq:poincaresob}}\leq C(R)\sum_{i=0}^\infty 2^{-i}R\fint_{\lambda B_{i}}|Du|\,\d\mm\\
&\leq C(R)RM_{\lambda R}(|Du|)(x).
\end{split}
\end{equation}
Now observe that if $R:=\sfd(x,y) $, from similar arguments and the inclusion $B_R(y)\subset B_{2R}(x)$ we get
\[
\begin{split}
|u_{B_R(x)}-u_{B_R(y)}|&\leq |u_{B_R(x)}-u_{B_{2R}(x)}|+|u_{B_{2R}(x)}-u_{B_R(y)}|\\
&\leq \fint_{B_R(x)}|u-u_{B_{2R}(x)}|\,\d\mm+\fint_{B_R(y)}|u-u_{B_{2R}(x)}|\,\d\mm \\
&\leq 2C(R)\fint_{B_{2R}(x)}|u-u_{B_{2R}(x)}|\,\d\mm\stackrel{\eqref{eq:poincaresob}}\leq 2C(R)\fint_{B_{2\lambda R}(x)}|Du|\,\d\mm\\
&\leq C(R)RM_{2\lambda R}(|Du|)(x).
\end{split}
\]
The conclusion comes combining  this bound and \eqref{eq:stimax} written for both $x$ and $y$.
\end{proof}

\subsection{Strongly rectifiable spaces}\label{se:srs}

Here we recall the notion of strongly rectifiable space by slightly modifying the original approach given in \cite{GP16}.

\begin{definition}[Strongly rectifiable spaces and aligned set of atlases]\label{def:srs} We say that a metric measure space $(\X,\sfd,\mm)$ is strongly rectifiable provided there is  $d\in\N$, called dimension of $\X$, such that  for every $\eps>0$ there exists a collection $\mathcal A^\eps:=\{(U^\eps_i,\varphi^\eps_i): i\in\N\}$, called $\eps$-atlas, such that:
\begin{itemize}
\item[i)] $U^\eps_i$ is a Borel subset of $\X$ for every $i$ and the $U^\eps_i$'s form a partition of $\X\setminus N$ for some $\mm$-negligible Borel set $N$,
\item[ii)] $\varphi^\eps_i$ is a $(1+\eps)$-biLipschitz map from $U^\eps_i$ to $\varphi^\eps_i(U^\eps_i)\subset \R^d$,
\item[iii)] it holds
\begin{equation}
\label{eq:measchart}
c_i\mathcal L^d\restr{\varphi^\eps_i(U^\eps_i)}\leq \mm_i\leq (1+\eps)c_i\mathcal L^d\restr{\varphi^\eps_i(U^\eps_i)}\quad \text{for some $c_i>0$, where\quad}\mm_i:=(\varphi^\eps_i)_*\mm\restr{U^\eps_i}.
\end{equation}
\end{itemize}
Given $\eps_n\downarrow0$, a family $\{\mathcal A^{\eps_n}\}_{n\in\N}$ of atlases is said \emph{aligned} provided:
\begin{itemize}
\item[iv)] for any $n,m\in\N $ and $(U^{\eps_n}_i,\varphi^{\eps_n}_i)\in\mathcal A^{\eps_n}$, $(U^{\eps_m}_j,\varphi^{\eps_m}_j)\in\mathcal A^{\eps_m}$ we have that
\begin{equation}
\label{eq:peralign}
\text{the map}\quad \varphi^{\eps_n}_i-\varphi^{\eps_m}_j:U^{\eps_n}_i\cap U^{\eps_m}_j\to \R^d\quad\text{ is $\eps_n+\eps_m$-Lipschitz}.
\end{equation}
\end{itemize}
\end{definition}
A relevant class of strongly rectifiable spaces is that of $\RCD(K,N)$ spaces:
\begin{theorem}
Let $K\in\R$, $N\in[1,\infty)$ and $(\X,\sfd,\mm)$ a $\RCD(K,N)$ space. Then  $(\X,\sfd,\mm)$  is strongly rectifiable.
\end{theorem}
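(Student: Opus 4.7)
The plan is to combine known rectifiability results for $\RCD(K,N)$ spaces with a diagonal refinement argument to secure the alignment condition (iv). Properties (i)--(iii) essentially follow from the literature, while (iv) requires the bulk of the work.

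For (i)--(iii), I would appeal to the Mondino--Naber rectifiability theorem together with its subsequent refinements by Kell--Mondino, Gigli--Pasqualetto and Br\'u\'e--Semola (all cited in the introduction for precisely this purpose). These provide a single integer $d \in [1,N]$ such that $\mm$-a.e.\ $x \in \X$ has a unique tangent isomorphic to $(\R^d, |\cdot|, c_x \mathcal{L}^d)$ for some $c_x > 0$, that $\mm \ll \mathcal{H}^d$ on the regular set, and that for every $\eps > 0$ the regular set can be covered by countably many Borel pieces $U^\eps_i$ on each of which some natural chart (for instance a normalized $\eps$-splitting map, or an ad-hoc tuple of distance functions) is $(1+\eps)$-biLipschitz onto its image in $\R^d$. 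Disjointifying gives (i) and (ii). The measure estimate \eqref{eq:measchart} is then obtained by further decomposing each $U^\eps_i$ into level sets of the density $\frac{\d \mm}{\d \mathcal{H}^d}$ of width $\eps$, using the Lebesgue differentiation theorem (valid thanks to uniform local doubling) together with the fact that $(1+\eps)$-biLipschitz maps distort $\mathcal{H}^d$ by at most $(1+\eps)^d$; the resulting multiplicative error can be absorbed by starting from a slightly smaller $\eps$.

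The main issue is establishing \eqref{eq:peralign}. Starting from atlases $\mathcal{A}^{\eps_n}$ for some sequence $\eps_n \downarrow 0$, I would argue that at $\mm$-a.e.\ $x \in U^{\eps_n}_i \cap U^{\eps_m}_j$ both charts $\varphi^{\eps_n}_i$ and $\varphi^{\eps_m}_j$ admit a metric differential in the sense of Kirchheim which, because the charts are $(1+\eps_n)$- respectively $(1+\eps_m)$-biLipschitz and the tangent at $x$ is Euclidean of dimension $d$, is a linear isometry of $\R^d$ up to a multiplicative factor of order $1 + O(\eps_n)$, resp.\ $1 + O(\eps_m)$. Composing the inverses, the differential of $\varphi^{\eps_n}_i - \varphi^{\eps_m}_j$ at $x$ is an orthogonal transformation modulo $O(\eps_n + \eps_m)$, so after post-composing $\varphi^{\eps_m}_j$ on $U^{\eps_n}_i \cap U^{\eps_m}_j$ with an appropriate piecewise rigid motion of $\R^d$ (which preserves (i)--(iii)) the pointwise asymptotic Lipschitz constant of the difference is at most $\eps_n + \eps_m$ for $\mm$-a.e.\ $x$. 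A Lusin-type argument in the uniformly locally doubling setting then upgrades this pointwise bound to a genuine Lipschitz estimate on countably many Borel pieces.

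The hard part is the bookkeeping: a single countable refinement must be simultaneously compatible with every pair $(n,m) \in \N^2$. I would resolve this by a diagonal enumeration of pairs and, at each step, subtract an $\mm$-negligible exceptional set and subdivide each $U^{\eps_n}_i$ along the countable partition dictated by its intersections with $\{U^{\eps_m}_j\}_j$. Since countable unions of $\mm$-negligible sets remain $\mm$-negligible, the final family covers $\X$ up to an $\mm$-null set, each $\mathcal{A}^{\eps_n}$ is still a countable Borel partition whose charts are $(1+\eps_n)$-biLipschitz, and \eqref{eq:peralign} holds by construction.
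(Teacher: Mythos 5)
Your citations of \cite{Mondino-Naber14}, \cite{MK16}, \cite{GP16-2} and \cite{BS18} for conditions (i)--(iii) reproduce the paper's proof exactly, and for the theorem as stated this is all that is required. However, you have misread Definition~\ref{def:srs}: strong rectifiability is defined \emph{only} by (i)--(iii). Condition (iv), i.e.\ \eqref{eq:peralign}, is the defining property of the separate notion of an \emph{aligned family of atlases}. The fact that such a family exists on any strongly rectifiable space is a nontrivial, independent result, which the paper invokes later from \cite[Theorem 3.9]{GP16} (see the paragraph following the definition). It is not part of the theorem you are asked to prove, so the bulk of your argument is aimed at a different statement.

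Beyond the misreading, the route you sketch for (iv) has a real gap. Taking arbitrary $\eps_n$-atlases $\mathcal A^{\eps_n}$ and then trying to align them \emph{a posteriori} runs into the following obstruction: at a common density point $x$, the Kirchheim differentials of $\varphi^{\eps_n}_i$ and $\varphi^{\eps_m}_j$ are each close to \emph{some} linear isometry of $\R^d$, but there is no reason those isometries should be close to each other. Consequently the differential of the difference $\varphi^{\eps_n}_i-\varphi^{\eps_m}_j$ need not be small; it is only close to $O-O'$ for two possibly unrelated orthogonal maps $O,O'$, which can have norm up to $2$. Your proposed fix --- post-composing $\varphi^{\eps_m}_j$ with a piecewise rigid motion chosen to match $\varphi^{\eps_n}_i$ --- modifies $\varphi^{\eps_m}_j$ globally on (pieces of) $U^{\eps_m}_j$ and therefore disturbs every other intersection $U^{\eps_{n'}}_{i'}\cap U^{\eps_m}_j$; the diagonal refinement you invoke does not resolve this interference, since a single chart must be simultaneously aligned with countably many others. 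This is exactly where \cite{GP16} does genuine work: the aligned atlases are \emph{constructed} with the compatibility condition built in, not extracted from pre-existing arbitrary ones.
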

\begin{proof}
The existence of $(1+\eps)$-biLipschitz charts has been proved in \cite{Mondino-Naber14}. The fact that the measure is absolutely continuous w.r.t.\ the Hausdorff measure of relevant dimension (which is easily seen to be equivalent to \eqref{eq:measchart} - see also the discussion below) has been obtained independently in \cite{MK16} and \cite{GP16-2}. Finally, the fact that the dimension of the target Euclidean space is independent on the particular chart is the main result of \cite{BS18}.
\end{proof}

Let us compare the definition just given with the one appeared in \cite{GP16}, there called strong $m$-rectifiability. A first difference is in the fact that here we imposed the space to have a given dimension $d$, while in \cite{GP16} the possibility of it being the union of parts with different dimensions was allowed. Strictly speaking, even for the theory developed in this manuscript we could deal with such more general situation, but that would only be an unnecessary complication. In fact, both here and in \cite{GP16} the main class of spaces we have in mind to work with is that of $\RCD(K,N)$ spaces and, as just recalled,  it is now known that they have constant dimension (a fact which was not clear at the time of \cite{GP16}).

Beside this, here  we require \eqref{eq:measchart} in place of the apparently weaker $(\varphi_i)_*\mm\restr{U_i}\ll\mathcal L^d$, but it is clear that up to refine the partition in such a way that the density $\frac{\d (\varphi_i)_*\mm\restr{U_i}}{\d\mathcal L^d}$ has small oscillations on $\varphi_i(U_i)$ and including the regions where such density is 0 in the negligible set $N$, the two approaches are equivalent.

Concerning the aligned family of atlases, in \cite{GP16} the condition \eqref{eq:peralign}  is replaced by
\begin{equation}
\label{eq:peralign2}
\|\d(\varphi^{\eps_n}_i\circ(\varphi^{\eps_m}_j)^{-1}-{\rm Id})(z)\|\leq \delta_{n,m}\quad\mathcal L^d-a.e.\ z\in \varphi_j(U^{\eps_n}_i\cap U^{\eps_m}_j)
\end{equation}
for every $ i,j\in\N$, where $\delta_{n,m}\to 0$ as $n,m\to\infty$. It is obvious that \eqref{eq:peralign} implies \eqref{eq:peralign2}. In fact, also the  converse implication holds, indeed, up to a relabeling of the atlases in the sequence and recalling the fact that $\varphi_j$ is $1+\eps_m$-biLipschitz, by refining the charts to conclude it is sufficient to show that
\[
\begin{split}
&\text{if $f:K\subset\R^d\to\R^d$ is a Lipschitz function with $\|\d f(z)\|< c$ for $\mathcal L^d$-a.e.\ $z\in K$, then for }\\
&\text{some sequence $(K_i)$ of Borel sets with $\mathcal L^d(K\setminus\cup_iK_i)=0$,  we have $\Lip(f\restr{K_i})\leq c$ $\forall i\in\N$.}
\end{split}
\]
To see this let 
\[
K_n:=\{z\in K\ :\ |f(w)-f(z)|<c|w-z|\quad\forall w\in B_{1/n}(z)\}
\]
and notice that these are Borel sets and that our assumption gives  $\mathcal L^d(K\setminus \cup_nK_n)=0$. To conclude just write $K_n=\cup_mK_{n,m}$ with the $K_{n,m}$'s Borel and with diameter $\leq \frac1n$ and notice that by construction $\Lip(f\restr{K_{n,m}})\leq c$ for every $n,m\in\N$.

\bigskip

In particular, by \cite[Theorem 3.9]{GP16} we know that on a strongly rectifiable space, for any sequence $\eps_n\downarrow0$  an aligned family of atlases $(\mathcal A^{\eps_n})$ exists. We shall often use this fact in what comes next without further notification.

\bigskip

From the assumptions on the strongly rectifiable space $\X$ it follows that $\mm\ll \mathcal H^d$, where  $\mathcal H^d$ is the $d$-dimensional Hausdorff measure. The Radon-Nikodym density can be computed via differentiation as discussed in the following well-known result (see e.g.\ \cite[Theorem 2.13]{GDP17} for the proof):
\begin{theorem}[Density w.r.t.\ the Hausdorff measure]\label{thm:denshaus}
Let $(\X,\sfd,\mm)$ be a strongly rectifiable space of dimension $d\in\N$. Then the function  $\vartheta^d:\X\to[0,\infty]$ defined by
\[
\vartheta^d(x)=\lim_{r\downarrow0}\frac{\mm(B_r(x))}{\omega_dr^d}\quad\text{provided the limit exists and $0$ otherwise,}
\]
is a Borel representative of the Radon-Nikodym density $\frac{\d\mm}{\d\mathcal H^d}$.
\end{theorem}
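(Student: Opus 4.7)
The plan is to reduce the statement to the classical Lebesgue differentiation theorem on $\R^d$ using the $(1+\eps)$-biLipschitz charts provided by Definition \ref{def:srs}, and then let $\eps\downarrow 0$ along an aligned family of atlases. I would proceed in four steps.

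\textbf{Step 1 (Absolute continuity and charts).} Fix $\eps>0$ and the $\eps$-atlas $\{(U_i^\eps,\varphi_i^\eps)\}$. By \eqref{eq:measchart} the push-forward $\mm_i:=(\varphi_i^\eps)_*(\mm\restr{U_i^\eps})$ is absolutely continuous with respect to $\mathcal L^d$ on $\varphi_i^\eps(U_i^\eps)$, with density $\rho_i^\eps$ satisfying $c_i\le \rho_i^\eps\le(1+\eps)c_i$. Since the $\varphi_i^\eps$ are biLipschitz and hence send $\mathcal H^d$-null sets to $\mathcal H^d$-null sets, pulling back and summing over $i$ yields $\mm\ll\mathcal H^d$. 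The classical Lebesgue differentiation theorem on $\R^d$ then applies to each $\mm_i$.

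\textbf{Step 2 (Sandwich in a single chart).} For $\mm$-a.e.\ $x\in U_i^\eps$ two properties hold simultaneously: $(a)$ $x$ is a density point of $U_i^\eps$ (so that $\mm(B_r(x)\setminus U_i^\eps)=o(\mm(B_r(x)))$), and $(b)$ $y:=\varphi_i^\eps(x)$ is a Lebesgue point of $\rho_i^\eps$ w.r.t.\ $\mathcal L^d$. For such $x$ the biLipschitz inclusions
\[
B_{r/(1+\eps)}(y)\cap \varphi_i^\eps(U_i^\eps)\subset \varphi_i^\eps(B_r(x)\cap U_i^\eps)\subset B_{(1+\eps)r}(y)
\]
combined with $(a)$ and $(b)$ yield, after dividing by $\omega_d r^d$ and sending $r\downarrow0$,
\[
(1+\eps)^{-d}\rho_i^\eps(y)\ \le\ \liminf_{r\downarrow 0}\frac{\mm(B_r(x))}{\omega_d r^d}\ \le\ \limsup_{r\downarrow 0}\frac{\mm(B_r(x))}{\omega_d r^d}\ \le\ (1+\eps)^{d}\rho_i^\eps(y).
\]

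\textbf{Step 3 (Existence of the limit).} Pick $\eps_n\downarrow 0$ and an aligned family of atlases $\{\mathcal A^{\eps_n}\}$; let $E\subset\X$ be the intersection of the $\mm$-full measure sets produced in Step 2 for each $n$. On $E$ the $\liminf$ and $\limsup$ of $\mm(B_r(x))/(\omega_dr^d)$ are squeezed between $(1+\eps_n)^{\pm d}$ multiples of the same value, forcing the limit to exist and be finite $\mm$-a.e.; this limit is precisely $\vartheta^d(x)$, which is Borel because the lower and upper limits of $r\mapsto\mm(B_r(x))/(\omega_dr^d)$ are Borel in $x$ (by lower semicontinuity of $r\mapsto\mm(B_r(x))$ in $x$).

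\textbf{Step 4 (Identification of $\vartheta^d$ with $d\mm/d\mathcal H^d$).} Applying Steps 1--3 verbatim to $\mathcal H^d$ in place of $\mm$, and using that $(\varphi_i^\eps)_*(\mathcal H^d\restr{U_i^\eps})$ and $\mathcal L^d=\mathcal H^d$ coincide up to factors $(1+\eps)^{\pm d}$ on $\varphi_i^\eps(U_i^\eps)$, I get $\lim_{r\downarrow 0}\mathcal H^d(B_r(x))/(\omega_dr^d)=1$ at $\mm$-a.e.\ $x$. Since \eqref{eq:measchart} combined with the biLipschitz charts shows that $\mm$ is locally doubling on small scales, the classical Besicovitch differentiation theorem gives
\[
\frac{\d\mm}{\d\mathcal H^d}(x)=\lim_{r\downarrow 0}\frac{\mm(B_r(x))}{\mathcal H^d(B_r(x))}=\vartheta^d(x)\qquad\mm-a.e.\ x,
\]
which is the claim. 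The main technical point I expect to be the obstacle is Step 4, as the Besicovitch-type differentiation in the metric setting requires some care; the key input that makes it work is that the chart-based doubling estimate on small scales transfers from $\R^d$ back to $\X$ uniformly in $i$.
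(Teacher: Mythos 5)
Your overall strategy — push the differentiation problem to $\R^d$ through the $(1+\eps)$-biLipschitz charts, sandwich using \eqref{eq:measchart}, and let $\eps\downarrow 0$ along an aligned family — is the natural one, and is almost certainly what the cited reference does; the paper itself only quotes this result from \cite{GDP17} without reproducing a proof. Steps 1 and 3 are fine. However, the argument has a genuine gap in Step 2(a), and Step 4 as written is both circular and relies on objects that are not available.

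\textbf{Step 2(a) is not free.} You assert that for $\mm$-a.e.\ $x\in U_i^\eps$ one has $\mm(B_r(x)\setminus U_i^\eps)=o(\mm(B_r(x)))$, and the entire sandwich in Step 2 depends on it. This is a Lebesgue-type density statement \emph{on the metric space} $\X$, not on $\R^d$, and it is not automatic from the stated hypotheses. Theorem \ref{thm:denshaus} assumes only strong rectifiability, not a doubling condition, and the only place the paper establishes that $\mm$-a.e.\ point of a Borel set is a density point is immediately after Proposition \ref{prop:maxest}, under the uniformly locally doubling hypothesis (via maximal function estimates and Vitali covering). So as written you are using a differentiation theorem on $\X$ that you have not earned. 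The fix is either to invoke doubling explicitly (it holds for the $\RCD(K,N)$ spaces where the theorem is actually used, and then the paper's Lebesgue differentiation remark applies), or to derive the density-point property from rectifiability alone, which requires a real argument: something along the lines of showing $\mathcal H^d(B_r(x)\cap U_j^\eps)=o(r^d)$ for $\mm$-a.e.\ $x\in U_i^\eps$ and each fixed $j\neq i$ via Kirchheim-type density theorems for rectifiable sets, together with a truncation/summability argument to handle the countably many charts $j\neq i$. None of this appears in your proof. Note also that naively pushing $\mm\restr{\X\setminus U_i^\eps}$ to $\R^d$ by a Lipschitz extension of $\varphi_i^\eps$ does not resolve it, since the extension is only Lipschitz (not biLipschitz) off $U_i^\eps$ and the resulting pushforward need not be absolutely continuous w.r.t.\ $\mathcal L^d$.

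\textbf{Step 4 is circular and uses unavailable tools.} ``Applying Steps 1--3 verbatim to $\mathcal H^d$'' is not legitimate: you only know $\mathcal H^d\restr{U_i^\eps}$ is locally finite chart by chart, and $\mm\ll\mathcal H^d$, but you do not know that $\mathcal H^d$ itself is a locally finite (or even Radon) measure on $\X$, so it makes no sense to speak of $\mathcal H^d$-density points on $\X$ or of differentiating $\mm$ against $\mathcal H^d$ by balls. The justification ``\eqref{eq:measchart} combined with the biLipschitz charts shows that $\mm$ is locally doubling on small scales'' is circular: it presupposes precisely the density-point property questioned above. And Besicovitch differentiation is a Euclidean theorem; the covering theorem behind it does not transfer to general metric spaces. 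The clean way to finish is to compute the Radon–Nikod\'ym density chart by chart, with no differentiation against $\mathcal H^d$. For $A\subset U_i^\eps$ Borel,
\begin{equation*}
\mm(A)=\int_{\varphi_i^\eps(A)}\rho_i^\eps\,\d\mathcal L^d,
\end{equation*}
and since $\varphi_i^\eps$ is $(1+\eps)$-biLipschitz one has $(1+\eps)^{-d}\mathcal H^d(A)\le\mathcal L^d(\varphi_i^\eps(A))\le(1+\eps)^d\mathcal H^d(A)$, whence $\frac{\d\mm}{\d\mathcal H^d}$ and $\rho_i^\eps\circ\varphi_i^\eps$ agree up to $(1+\eps)^{\pm d}$ factors $\mathcal H^d$-a.e.\ on $U_i^\eps$. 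Combining with the sandwich from Steps 2--3 (which gives $\vartheta^d=(1+\eps)^{\pm d}\rho_i^\eps\circ\varphi_i^\eps$ $\mm$-a.e.\ on $U_i^\eps$) and letting $\eps\downarrow 0$ along the aligned atlases yields $\vartheta^d=\frac{\d\mm}{\d\mathcal H^d}$ $\mm$-a.e.\ — but only once Step 2(a) has been properly established.
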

Finally, we recall that when the space under consideration is the Euclidean one $\R^d$, it is well known that the Hausdorff measure $\mathcal H^d$ coincides with the Lebesgue measure $\mathcal L^d$; to emphasize the fact that we are working on $\R^d$ we shall speak about the Lebesgue measure $\mathcal L^d$ in this context.

\section{The Korevaar-Schoen space}

\subsection{Approximate metric differentiability on strongly rectifiable spaces} We shall denote by $\sn^d$ the set of seminorms on $\R^d$ and equip it  with the complete and separable distance $\D $ defined by
\[
\D(\nn_1,\nn_2):=\sup_{z:|z|\leq 1}|\nn_1(z)-\nn_2(z)|=\Lip(\nn_1-\nn_2)=\lip(\nn_1-\nn_2)(0),
\]
where here and below by $|\cdot|$ we intend the classical Euclidean norm. We shall also put 
\[
|||\nn|||:=\D(\nn,0)=\sup_{z:|z|\leq 1}|\nn(z)|=\Lip(\nn)=\lip(\nn)(0).
\]

We start recalling the following result, due to Kirchheim \cite{Kir94}:
\begin{theorem}[Kirchheim's metric differential]\label{thm:kir}
Let $(\Y,\sfd_\Y)$ be a complete space $U\subset\R^d$  Borel and $u:U\to\Y$ a Lipschitz map. Then for $\mathcal L^d$-a.e.\ $x\in U$ there is a seminorm $\md_x(u)$ on $\R^d$, called metric differential of $u$ at $x$, such that
\begin{equation}
\label{eq:md}
\lims_{y\to x\atop y\in U}\frac{|\sfd_\Y(u(y),u(x))-\md_x(u)(y-x)|}{|y-x|}=0.
\end{equation}
Moreover, the $\mathcal L^d$-a.e.\ defined map $x\mapsto \md_x(u)\in\sn^d$ is Borel.
\end{theorem}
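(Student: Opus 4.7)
My plan is to follow Kirchheim's original approach based on Rademacher's theorem applied to a countable family of scalar distance functions. Since $u$ is Lipschitz on the separable set $U$, its image $u(U)$ is separable; fix a countable dense sequence $(y_n)\subset u(U)$ and set
$$\phi_n(x):=\sfd_\Y(u(x),y_n),\qquad x\in U,$$
each $L$-Lipschitz with $L:=\Lip(u)$ by the triangle inequality. Extending every $\phi_n$ to an $L$-Lipschitz function on $\R^d$ via McShane's theorem and applying Rademacher's theorem, I obtain a Borel set $A\subset U$ of full $\mathcal L^d$-measure at which every $\phi_n$ is classically differentiable. For $x\in A$ I define
$$\md_x(u)(v):=\sup_{n\in\N}|\nabla\phi_n(x)\cdot v|,$$
which is automatically a seminorm in $v$ bounded by $L|v|$, being a supremum of absolute values of linear forms of norm at most $L$.

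The argument rests on the pointwise identity
$$\sfd_\Y(u(y),u(x))=\sup_{n\in\N}|\phi_n(y)-\phi_n(x)|\qquad\forall x,y\in U,$$
which follows from the reverse triangle inequality (saturated at $y'=u(x)$) combined with density of $(y_n)$ in $u(U)$ and uniform Lipschitz continuity in $y'$. From this identity the lower bound in \eqref{eq:md} is immediate: at $x\in A$, differentiability of each $\phi_n$ individually gives $|\phi_n(y)-\phi_n(x)|\geq|\nabla\phi_n(x)\cdot(y-x)|-o_n(|y-x|)$, and taking the supremum over $n$ yields $\sfd_\Y(u(y),u(x))\geq\md_x(u)(y-x)-o(|y-x|)$.

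The matching upper bound is the main obstacle, since the Rademacher remainder depends on $n$ and the supremum cannot be naively interchanged with the infinitesimal limit. Reformulated via the sup identity, it amounts to proving a \emph{uniform-in-$n$} version of Rademacher differentiability at a.e.\ $x\in A$, namely
$$\lim_{y\to x}\sup_{n\in\N}\frac{|\phi_n(y)-\phi_n(x)-\nabla\phi_n(x)\cdot(y-x)|}{|y-x|}=0.$$
To address this I would follow Kirchheim's uniformization: for each rational $\eps>0$ and $k\in\N$ consider the Borel sets
$$E_k^\eps:=\bigcap_{n\in\N}\Big\{x\in A:\,|\phi_n(z)-\phi_n(x)-\nabla\phi_n(x)\cdot(z-x)|\leq\eps|z-x|\ \text{for all}\ |z-x|\leq 1/k\Big\},$$
and combine a Severini--Egorov/Lusin-type argument (exploiting the common Lipschitz bound $L$) with a careful choice of the distinguished index $n=n(r)$ such that $y_{n(r)}$ is close to $u(x)$ at scale $r$ (using density of $(y_n)$ in $u(U)$ to dominate the tail over large $n$ by the distinguished term) to show that $A=\bigcup_k E_k^\eps$ up to an $\mathcal L^d$-null set for every $\eps>0$. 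Intersecting over rational $\eps$ then gives the desired uniform convergence at $\mathcal L^d$-a.e.\ $x\in A$.

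Finally, Borel measurability of $x\mapsto\md_x(u)\in(\sn^d,\D)$ is routine: each $x\mapsto\nabla\phi_n(x)$ is Borel on $A$ as a pointwise limit of difference quotients, so $x\mapsto\md_x(u)(v)$ is Borel for every $v\in\R^d$, and because $\D$ can be evaluated over a countable dense subset of the unit ball this transfers to Borel measurability into $(\sn^d,\D)$. The genuinely delicate part of the whole proof is the Lusin-type uniformization closing the sup-limit interchange above---everything else reduces to Rademacher's theorem, McShane extension, and the reverse triangle identity.
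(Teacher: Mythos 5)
You take a genuinely different route from the paper: whereas the paper simply reduces the Borel--domain case to Kirchheim's original theorem (stated for $u$ defined on all of $\R^d$) by composing with the Kuratowski embedding $\iota:\Y\to\ell^\infty(\Y)$, extending $\iota\circ u$ Lipschitzly to $\R^d$ via Lemma~\ref{le:ext}, and observing that the metric differential of the extension agrees with that of $u$ at points of $U$ (with Borel measurability handled via $\md_x(u)(v)=\lim_n n\,\sfd_\Y(u(x+n^{-1}v),u(x))$), you instead propose to \emph{re-prove} Kirchheim's theorem from first principles following his original strategy through the countable family of distance functions $\phi_n=\sfd_\Y(u(\cdot),y_n)$ and Rademacher. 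This is a legitimate strategy, but it amounts to redoing the cited result rather than the reduction the paper carries out, and it is substantially heavier.

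Within your sketch there is one concrete error and one acknowledged but unfilled gap. The error is the claim that the lower bound $\sfd_\Y(u(y),u(x))\ge\md_x(u)(y-x)-o(|y-x|)$ is ``immediate'' by taking the supremum over $n$ in $|\phi_n(y)-\phi_n(x)|\ge|\nabla\phi_n(x)\cdot(y-x)|-o_n(|y-x|)$. This step is invalid as written: the remainders $o_n(|y-x|)$ are \emph{not} uniform in $n$, and $\sup_n\big(a_n-o_n\big)$ cannot be split into $\sup_n a_n$ minus a single $o(|y-x|)$. In fact, both the lower and the upper bound in \eqref{eq:md} are consequences of the very same uniform-in-$n$ Rademacher estimate you then formulate (via $\big|\sup_n a_n-\sup_n b_n\big|\le\sup_n|a_n-b_n|$ applied with $a_n=|\phi_n(y)-\phi_n(x)|$, $b_n=|\nabla\phi_n(x)\cdot(y-x)|$); alternatively, the lower bound alone can be rescued by a compactness argument on $S^{d-1}$ together with the uniform Lipschitz bound $|\nabla\phi_n(x)|\le L$, selecting finitely many indices to approximate $\md_x(u)$ up to $\eps$ on the sphere. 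Either way it is not free. The acknowledged gap is precisely that uniformization: the identity $A=\bigcup_k E_k^\eps$ up to a null set is the actual content of Kirchheim's proof and depends crucially on the special structure of the $\phi_n$'s as distance functions (your ``distinguished index'' device); as written it remains a sketch. If you wish to pursue this route you must carry out that Egorov/Lusin argument in full; otherwise the paper's reduction-to-the-literature proof is the economical choice.
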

\begin{proof}
In \cite{Kir94} the existence of the metric differential is given for functions defined in the whole $\R^d$. This variant is easily obtainable by considering a Kuratowski embedding $\iota:\Y\to\ell^\infty(\Y)$ (Lemma \ref{le:kur}), a Lipschitz extension $v:\R^d\to\ell^\infty(\Y)$ of $\iota\circ u$ (Lemma \ref{le:ext} below), applying the original statement to such function $v$ and noticing that since $\iota$ is an isometry, the metric differential of $v$ at $x$ coincides with that of $u$ at $x$ for any $x\in U$.

This argument also shows that to prove the stated Borel regularity it is sufficient to deal with the case of maps $u$ defined on $\R^d$. Also, from the identity
\[
\D(\nn_1,\nn_2)=\sup_n\nn_1(v_n)-\nn_2(v_n)\qquad\text{where $(v_n)\subset B_1(0)$ is dense}
\]
we see that it is sufficient to prove that for any $v\in\R^d$ the $\mathcal L^d$-a.e.\ defined map $x\mapsto \md_x(u)(v)$ is Borel. Then from the identity
\[
\md_x(u)(v)=\lim_{n\to\infty}n\,\md_x(u)((x+n^{-1}v)-x)\stackrel{\eqref{eq:md}}=\lim_{n\to\infty}n\,{\sfd_\Y(u(x+n^{-1}v),u(x))}
\]
valid for $\mathcal L^d$-a.e.\ $x$ the conclusion easily follows.
\end{proof}
The well known McShane extension lemma can easily be adapted to the case of $\ell^\infty$-valued maps:
\begin{lemma}[Lipschitz extension]\label{le:ext}
Let $(\X,\sfd)$ be a metric space, $\Y$ a set, $U\subset \X$ and $f:U\to \ell^\infty(\Y)$ a Lipschitz function. Then there exists an extension $F$ of $f$ to the whole $\X$ with the same Lipschitz constant, i.e.\ a map $F:\X\to  \ell^\infty(\Y)$ such that $F\restr U=f$ and $\Lip(F)=\Lip(f)$.
\end{lemma}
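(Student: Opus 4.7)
The plan is to apply the classical scalar McShane construction coordinate-by-coordinate. Let $L:=\Lip(f)$ and, for each $y\in\Y$, define the scalar evaluation $f_y:U\to\R$ by $f_y(x):=f(x)(y)$. Since $\|v\|_{\ell^\infty}=\sup_{y\in\Y}|v(y)|$, the hypothesis on $f$ gives $|f_y(x_1)-f_y(x_2)|\leq L\,\sfd(x_1,x_2)$ for every $y\in\Y$ and every $x_1,x_2\in U$, so each $f_y$ is $L$-Lipschitz on $U$.

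Next I apply the standard real-valued McShane extension to each $f_y$: set
\[
F_y(x):=\inf_{u\in U}\bigl(f_y(u)+L\,\sfd(x,u)\bigr),\qquad x\in\X,
\]
which yields an $L$-Lipschitz function $F_y:\X\to\R$ with $F_y\restr{U}=f_y$. Define the candidate extension $F:\X\to\R^\Y$ by $F(x)(y):=F_y(x)$.

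The main point to check, and where a naive coordinatewise extension could in principle fail, is that $F(x)$ actually lies in $\ell^\infty(\Y)$, i.e.\ that $\sup_{y\in\Y}|F_y(x)|<\infty$; once this is established, $L$-Lipschitzness as an $\ell^\infty(\Y)$-valued map follows at once from the scalar bound by taking $\sup_y$. To verify boundedness, fix any $u_0\in U$. The scalar Lipschitz property of $F_y$ gives $|F_y(x)-f_y(u_0)|\leq L\,\sfd(x,u_0)$, hence
\[
|F_y(x)|\leq |f_y(u_0)|+L\,\sfd(x,u_0)\leq \|f(u_0)\|_{\ell^\infty(\Y)}+L\,\sfd(x,u_0),
\]
and the right-hand side is independent of $y\in\Y$, so $F(x)\in\ell^\infty(\Y)$ as required.

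Finally, taking the supremum over $y\in\Y$ in $|F_y(x_1)-F_y(x_2)|\leq L\,\sfd(x_1,x_2)$ yields $\|F(x_1)-F(x_2)\|_{\ell^\infty(\Y)}\leq L\,\sfd(x_1,x_2)$, so $\Lip(F)\leq L=\Lip(f)$; since $F$ extends $f$, the reverse inequality is automatic, and therefore $\Lip(F)=\Lip(f)$.
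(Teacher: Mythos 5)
Your construction is identical to the paper's: the paper defines $F_y(x):=\inf_{z\in U}\bigl(f_y(z)+\sfd(x,z)\Lip(f)\bigr)$ and simply states that it "has the required properties." You spell out the verification — in particular the boundedness check ensuring $F(x)\in\ell^\infty(\Y)$, which is the one non-obvious point — so this is a correct, fully elaborated version of the same argument.
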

\begin{proof}
For every $y\in\Y$ define 
\[
F_y(x):=\inf_{z\in U}f_y(z)+\sfd(x,z)\Lip(f)\qquad\forall x\in\X.
\]
It is readily verified that this function has the required properties.
\end{proof}

\bigskip

The main goal of this section is to extend Kirchheim's result to maps defined on strongly rectifiable spaces. The basic idea is simple: we use the charts to reduce the differentiability problem to a problem on $\R^d$ for which we can apply the known result; if the charts are $(1+\eps)$-biLipschitz, in doing so we will make an error of order $\eps$ and if we consider a different atlas we shall obtain a metric differential close to the previous one provided the charts are somehow aligned (the relevant notion being the one introduced in Definition \ref{def:srs}).  Then the conclusion follows by considering an aligned family of atlases $(\mathcal A^{\eps_n})$ and passing to the limit as $n\to\infty$.

\bigskip

We now turn to the rigorous definition of `approximate metric differentiability' on strongly rectifiable space. It  is based on, for given $(U_i,\varphi_i)$ belonging to some atlas, thinking at   the map $y\mapsto\varphi_i(y)-\varphi_i(x)\in\R^d$ as a sort of `$\eps$-inverse of the exponential map at $x\in U_i$' (see also \cite[Theorem 6.6]{GP16} for more about the interpretation of $\R^d$ as the tangent space at a given point of a strongly rectifiable space).
\begin{definition}[Approximate metric differentiability]\label{def:apmd}
Let $(\X,\sfd,\mm)$ be a strongly rectifiable  space, $\eps_n\downarrow0$ and $\{\mathcal A^{\eps_n}\}$ an aligned family of atlases. Also, let $(\Y,\sfd_\Y)$ be a metric space and $u:\X\to\Y$. We say that $u$ is approximately metrically differentiable at $x\in\X$ relatively to $(\mathcal A^{\eps_n})$ provided:
\begin{itemize}
\item[i)] For every $n\in\N$ there is $i(x,n)\in\N$ such that $x$ belongs to $U_{i(x,n)}^{\eps_n}$, is a density point of such set and $\varphi^{\eps_n}_{i(x,n)}(x)$ is a density point of $\varphi^{\eps_n}_{i(x,n)}(U^{\eps_n}_{i(x,n)})$.
\item[ii)] There is a seminorm $\md_x(u)$ on $\R^d$, called metric differential of $u$ at $x$, such that
\[
\lims_{n\to\infty}\aplims_{y\to x\atop y\in U_{i}^{n}}\frac{\big|\sfd_\Y(u(y),u(x))-\md_x(u)(\varphi^{n}_{i}(y)-\varphi^{n}_{i}(x))\big|}{\sfd(y,x)}=0,
\]
where for brevity we wrote $U_i^{n},\varphi_i^{n}$ in place of  $U_{i(x,n)}^{\eps_n},\varphi_{i(x,n)}^{\eps_n}$.
\end{itemize}
\end{definition}
For smooth maps on $\R^d$ it is trivial to check that the norm of the differential coincides with the local Lipschitz constant. A similar link exists  between approximate metric differential and approximate local Lipschitz constant:
\begin{lemma}\label{le:normmd} Let $(\X,\sfd,\mm)$ be a strongly rectifiable space,  $\eps_n\downarrow0$ and $(\mathcal A^{\eps_n})$ an aligned family of atlases. Also, let $(\Y,\sfd_{\Y})$ be a complete space and $u:\X\to\Y$ a map which is approximately metrically differentiable at $x\in\X$ relatively to $(\mathcal A^{\eps_n})$. Then
\begin{equation}
\label{eq:normmd}
\aplip u(x)=|||\md_x(u)|||.
\end{equation}
\end{lemma}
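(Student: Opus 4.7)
The plan is to prove $\aplip u(x) \le |||\md_x(u)|||$ and $\aplip u(x) \ge |||\md_x(u)|||$ separately, in both directions exploiting the defining bound in Definition \ref{def:apmd}(ii) together with the biLipschitz property of $\varphi^{\eps_n}_{i(x,n)}$. Throughout, write $U^n := U^{\eps_n}_{i(x,n)}$ and $\varphi^n := \varphi^{\eps_n}_{i(x,n)}$, and let $\eta_n$ be the value of the inner $\aplims$ in Definition \ref{def:apmd}(ii), so $\eta_n \to 0$. By \eqref{eq:altroaplim} applied on $U^n$, there is a Borel set $W_n \subset U^n$ with $x$ as density point and
\[
\lims_{y\to x\atop y\in W_n}\frac{\big|\sfd_\Y(u(y),u(x))-\md_x(u)(\varphi^n(y)-\varphi^n(x))\big|}{\sfd(y,x)} \le \eta_n + 1/n.
\]

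For the upper bound, combining the above with $\md_x(u)(w) \le |||\md_x(u)|||\,|w|$ and the biLipschitz bound $|\varphi^n(y)-\varphi^n(x)| \le (1+\eps_n)\sfd(y,x)$ gives $\lims_{y\to x,\,y\in W_n}\sfd_\Y(u(y),u(x))/\sfd(y,x) \le (1+\eps_n)|||\md_x(u)||| + \eta_n + 1/n$; since $W_n$ has $x$ as density point, \eqref{eq:altroaplim} bounds $\aplip u(x)$ by the right-hand side, and $n \to \infty$ concludes.

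For the lower bound, \eqref{eq:altroaplim} reduces the task to showing $\lims_{y\to x,\,y\in V}\sfd_\Y(u(y),u(x))/\sfd(y,x) \ge |||\md_x(u)|||$ for every Borel $V$ having $x$ as density point. Fix such $V$, fix $n$, and pick $z^* \in \R^d$ with $|z^*|=1$ and $\md_x(u)(z^*) = |||\md_x(u)|||$. Then $V \cap W_n$ still has $x$ as density point, and by \eqref{eq:measchart} combined with the biLipschitz property of $\varphi^n$, the point $\varphi^n(x)$ is a density point of $\varphi^n(V\cap W_n)$ in $\R^d$. Applying \eqref{eq:denspor} in $\R^d$ to $z_k := \varphi^n(x) + r_k z^*$ with $r_k \downarrow 0$ yields a sequence $y_k \in V\cap W_n$ with $\varphi^n(y_k)-\varphi^n(x) = r_k z^* + o(r_k)$. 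The seminorm triangle inequality gives $\md_x(u)(r_k z^* + o(r_k)) \ge r_k |||\md_x(u)||| - |||\md_x(u)|||\,o(r_k)$, while $\sfd(y_k,x) \le (1+\eps_n)(r_k + o(r_k))$; feeding these into the bound on $W_n$ produces
\[
\limi_{k\to\infty}\frac{\sfd_\Y(u(y_k),u(x))}{\sfd(y_k,x)} \ge \frac{|||\md_x(u)|||}{1+\eps_n} - \eta_n - 1/n.
\]
Since $y_k \in V$ and $y_k \to x$, the left-hand side is dominated by $\lims_{y\to x,\,y\in V}\sfd_\Y(u(y),u(x))/\sfd(y,x)$, and $n \to \infty$ closes the argument.

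The main technical obstacle is ensuring that density-point conditions transfer correctly through the chart: namely that $V \cap W_n \subset U^n$ has $x$ as density point with respect to $\mm$, and that, via the measure bound \eqref{eq:measchart} and the fact that $\varphi^n$ distorts $\sfd$-balls by at most a factor $1+\eps_n$, $\varphi^n(x)$ becomes a density point of $\varphi^n(V \cap W_n)$ with respect to $\mathcal L^d$, so that \eqref{eq:denspor} in $\R^d$ can be invoked to produce the approximating sequence $(y_k)$ in the prescribed direction $z^*$.
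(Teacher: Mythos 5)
Your proof is correct, and both halves are sound, but the route is genuinely different from the paper's, especially for the lower bound. The paper packages both directions of the inequality into a single identity: after establishing the density-point transfer \eqref{eq:claimperchangevar} (which you also need, and which in addition to \eqref{eq:measchart} and the biLipschitz bound uses that $\varphi^n_{i(x,n)}(x)$ is a density point of $\varphi^n_{i(x,n)}(U^n_{i(x,n)})$ from Definition \ref{def:apmd}(i)), the paper obtains the change-of-variable formula \eqref{eq:changevar} for approximate $\lims$ through the chart and then applies Proposition \ref{prop:liplocap} to the seminorm $\md_x(u)(\cdot-\varphi^n_i(x))$ as a Lipschitz function on $\R^d$, obtaining both $\le(1+\eps_n)|||\md_x(u)|||$ and $\ge(1+\eps_n)^{-1}|||\md_x(u)|||$ in one stroke before letting $n\to\infty$. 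You instead prove the two inequalities separately: your upper bound is a direct estimate (avoiding Proposition \ref{prop:liplocap} altogether), and your lower bound explicitly produces, via \eqref{eq:denspor} applied to Lebesgue measure in $\R^d$, a sequence $(y_k)$ whose chart-images approach $\varphi^n(x)$ along a direction $z^*$ maximizing the seminorm on the unit sphere. In effect you inline the essential step of Proposition \ref{prop:liplocap}, specialized to $\R^d$, rather than quoting it. Your version is more self-contained and makes the geometric mechanism visible; the paper's version is more modular and slightly shorter by reusing \ref{prop:liplocap}. Either is acceptable; the only place to be a bit more explicit is the density-point transfer, where the hypothesis from Definition \ref{def:apmd}(i) that $\varphi^n_{i(x,n)}(x)$ is a density point of the chart image enters alongside \eqref{eq:measchart} and the biLipschitz property, exactly as in the paper's computation \eqref{eq:stimadens}.
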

\begin{proof} Fix $n\in\N$, let $i\in\N$ be such that $x\in U^n_i$ and notice that since $\varphi^n_i:U^n_i\to\varphi^n_i(U^n_i)$ is $(1+\eps_n)$-Lipschitz we have
\begin{equation}
\label{eq:s1}
\frac{\md_x(u)(\varphi^n_i(y)-\varphi^n_i(x))}{\sfd(x,y)}\leq(1+\eps_n) \frac{\md_x(u)(\varphi^n_i(y)-\varphi^n_i(x))}{|\varphi^n_i(y)-\varphi^n_i(x)|}.
\end{equation}
Now recall that $x,\varphi^n_i(x)$ are density points of $U^n_i,\varphi^n_i(U^n_i)$ respectively and notice that the properties of $\varphi^n_i$ ensure that 
\begin{equation}
\label{eq:claimperchangevar}
\text{$V\subset U^n_i$ has $x$ as density point if and only if $\varphi^n_i(V)$ has $\varphi^n_i(x)$ as density point.}
\end{equation} 
To see this notice that
\begin{equation}
\label{eq:stimadens}
\frac{\mathcal L^d\big(B^{\R^d}_r(\varphi^n_i(x))\setminus \varphi^n_i(V)\big)}{r^d}=\frac{\mathcal L^d\big(B^{\R^d}_r(\varphi^n_i(x))\setminus \varphi^n_i(U^n_i)\big)}{r^d}+\frac{\mathcal L^d\restr{\varphi^n_i(U^n_i)}\big(B^{\R^d}_r(\varphi^n_i(x))\setminus \varphi^n_i(V)\big)}{r^d}
\end{equation}
and the first addend on the right goes to 0 because $\varphi^n_i(x)$ is a density point of $\varphi^n_i(U^n_i)$. Recalling that $\varphi^n_i$ is $(1+\eps_n)$-Lipschitz and the bound \eqref{eq:measchart} we can estimate from above the second one with
\[
\frac{\mathcal L^d\restr{\varphi^n_i(U^n_i)}\big(B^{\R^d}_r(\varphi^n_i(x)) \setminus \varphi^n_i(V)\big)}{r^d}\leq \frac{\mm(B^\X_{r(1+\eps_n)}(x)\setminus V)}{c_ir^d}.
\]
Hence if $x$ is a density point of $V$ we have that the right hand side in the above goes to 0 as $r\downarrow0 $ and thus \eqref{eq:stimadens} shows that $\varphi^n_i(x)$ is a density point of $\varphi^n_i(V^n_i)$. The opposite implication is proven analogously. From \eqref{eq:claimperchangevar} and \eqref{eq:altroaplim} we deduce the `change of variable formula'
\begin{equation}
\label{eq:changevar}
\aplims_{y\to x\atop y\in U^n_i}\frac{\md_x(u)(\varphi^n_i(y)-\varphi^n_i(x))}{|\varphi^n_i(y)-\varphi^n_i(x)|}=\aplims_{w\to \varphi^n_i(x)\atop w\in \varphi^n_i(U^n_i)}\frac{\md_x(u)(w-\varphi^n_i(x))}{|w-\varphi^n_i(x)|}
\end{equation}
which together with \eqref{eq:s1} and Proposition \ref{prop:liplocap} gives
\[
\aplims_{y\to x\atop y\in U^n_i}\frac{\md_x(u)(\varphi^n_i(y)-\varphi^n_i(x))}{\sfd(x,y)}\leq(1+\eps_n)\lip(\md_x(u))(0)=(1+\eps_n)|||\md_x(u)|||.
\]
Using the fact that $(\varphi^n_i)^{-1}$ is also $(1+\eps_n)$-Lipschitz and similar arguments we obtain the lower bound 
\[
\aplims_{y\to x\atop y\in U^n_i}\frac{\md_x(u)(\varphi^n_i(y)-\varphi^n_i(x))}{\sfd(x,y)}\geq (1+\eps_n)^{-1}|||\md_x(u)|||,
\]
so that the conclusion  follows from the very definition of metric differential.
\end{proof}

\begin{remark}{\rm
The conclusion of the above lemma fails if one does not insist on $\varphi^n_{i(x,n)}(x)$ to be a density point of $\varphi^n_{i(x,n)}(U^n_{i(x,n)})$ in the definition of metric differentiability. This can be easily seen by considering $\X:=[0,1]^2$ (equipped with the restriction of the Euclidean distance and Lebesgue measure), $U^n_0=\X$ and $\varphi^n_0$ to be the natural embedding in $\R^2$. Then $x:=(0,0)\in\X$ is a density point of $U^n_0$ and the function $\X\ni (x_1,x_2)\mapsto u(x_1,x_2):=x_1-x_2$ is metrically differentiable at $x$, with metric differential given by $\md_x(u)(v_1,v_2)=|v_1-v_2|$, so that $|||\md_x(u)|||=\sup_{(v_1,v_2)\in\R^2}\frac{|v_1-v_2|}{\sqrt{|v_1|^2+|v_2|^2}}=\sqrt 2$. On the other hand we have $\aplip u(x)\leq\lip u(x)$ (in fact equality occurs by Proposition \ref{prop:liplocap}) and we have
\[
\lip\, u(x)=\lims_{(y_1,y_2)\in\X\atop (y_1,y_2)\to x}\frac{|u(y_1,y_2)-u(0,0)|}{\sfd_\X((y_1,y_2),(0,0))}=\lims_{y_1,y_2\geq 0\atop y_1,y_2\downarrow0 }\frac{|y_1-y_2|}{\sqrt{|y_1|^2+|y_2|^2}}=1,
\]
thus showing that the strict inequality $<$ holds in  \eqref{eq:normmd}.
}\fr\end{remark}

We now turn to the main result of this section:
\begin{proposition}\label{prop:metrdiff}
Let $(\X,\sfd,\mm)$ be a  uniformly locally  doubling and strongly rectifiable  space, $(\Y,\sfd_\Y)$ a metric space and $u:\X\to\Y$ a Borel function with the Lusin-Lipschitz property. Also, let $\eps_n\downarrow0$ and $(\mathcal A^{\eps_n})$ an aligned family of atlases.

Then $u$ is approximately metrically differentiable at $\mm$-a.e.\ $x\in\X$, relatively to $(\mathcal A^{\eps_n})$, and the $\mm$-a.e.\ defined map $x\mapsto\md_x(u)\in\sn^d$ is Borel.

More precisely:
\begin{itemize}
\item[i)] for every $n\in\N$ the map $\X\ni x\mapsto\nn^n_x\in\sn^d$ is a $\mm$-a.e.\ well defined Borel map by the formula
\[
\nn^n_x:=\md_{\varphi^n_i(x)}(u\circ(\varphi^n_i)^{-1})\qquad\mm-a.e.\ on\ U^n_i,
\]
\item[ii)] for $\mm$-a.e.\ $x\in\X$ the sequence $(\nn^n_x)\subset \sn^d$ admits a limit $\nn_x$,
\item[ii)] $\nn_x$ is the metric differential of $u$ at $x$ for $\mm$-a.e.\ $x\in\X$.
\end{itemize}
\end{proposition}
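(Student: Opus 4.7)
The plan is to define the candidate $\nn^n_x$ using Kirchheim's theorem chart-by-chart on the Lusin-Lipschitz pieces of $u$, show that $(\nn^n_x)_n$ is Cauchy in $(\sn^d, \D)$ for $\mm$-a.e.\ $x$ via the alignment condition \eqref{eq:peralign}, and finally verify that the resulting limit satisfies Definition \ref{def:apmd}.

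By the Lusin-Lipschitz property, write $\X = N_0\cup\bigcup_l V_l$ with $N_0$ being $\mm$-negligible and $u\restr{V_l}$ Lipschitz for every $l$. For each triple $(n,i,l)$ the composition $u\circ(\varphi^n_i)^{-1}$ is Lipschitz on $\varphi^n_i(V_l\cap U^n_i)\subset\R^d$, so Theorem \ref{thm:kir} (combined with Lemma \ref{le:ext} to extend to all of $\R^d$) provides a Borel-measurable metric differential at $\mathcal L^d$-a.e.\ point of that set. Since $(\varphi^n_i)_*\mm\restr{U^n_i}\ll\mathcal L^d$ and density points of subsets of $U^n_i$ correspond bijectively to density points of their images under $\varphi^n_i$ (as shown in the proof of Lemma \ref{le:normmd}), we obtain an $\mm$-a.e.\ well defined Borel map
\[
\nn^n_x := \md_{\varphi^n_i(x)}\big(u\circ(\varphi^n_i)^{-1}\restr{\varphi^n_i(V_l\cap U^n_i)}\big)\qquad\text{for $x\in V_l\cap U^n_i$,}
\]
which is independent of the choice of $l$ at $\mm$-a.e.\ $x$, the metric differential at a density point being local in nature. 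This settles part (i). Notice also that $|||\nn^n_x|||\leq (1+\eps_n)\,\Lip(u\restr{V_l})$: indeed, $u\circ(\varphi^n_i)^{-1}$ is Lipschitz with constant at most $(1+\eps_n)\Lip(u\restr{V_l})$ on the relevant set, and the metric differential of any Lipschitz map has norm at most its Lipschitz constant (immediate from \eqref{eq:md}). In particular $|||\nn^n_x|||$ is uniformly bounded in $n$ at $\mm$-a.e.\ $x$.

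For the Cauchy property, fix a generic $x$, $n,m\in\N$, and pick $i,j,l$ so that $x$ is a density point of $W := V_l\cap U^n_i\cap U^m_j$. For any $y\in W$ approaching $x$, applying the metric differential property for both $\nn^n_x$ and $\nn^m_x$ and subtracting gives
\[
\nn^n_x(\varphi^n_i(y)-\varphi^n_i(x)) = \nn^m_x(\varphi^m_j(y)-\varphi^m_j(x)) + o(\sfd(y,x)).
\]
Moreover, the alignment condition \eqref{eq:peralign} yields $|\varphi^n_i(y)-\varphi^n_i(x) - (\varphi^m_j(y)-\varphi^m_j(x))|\leq (\eps_n+\eps_m)\sfd(y,x)$, so using the $|||\nn^n_x|||$-Lipschitzianity of $\nn^n_x$ we obtain
\[
|\nn^n_x(\varphi^m_j(y)-\varphi^m_j(x)) - \nn^m_x(\varphi^m_j(y)-\varphi^m_j(x))| \leq |||\nn^n_x|||(\eps_n+\eps_m)\sfd(y,x) + o(\sfd(y,x)).
\]
For any $v\in\R^d$ with $|v|=1$, the density-point property of $\varphi^m_j(x)$ in $\varphi^m_j(W)$ combined with a Lebesgue-density argument on narrow cones around $v$ produces a sequence $y_k\to x$ in $W$ with $(\varphi^m_j(y_k)-\varphi^m_j(x))/|\varphi^m_j(y_k)-\varphi^m_j(x)|\to v$. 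Dividing the previous estimate by $|\varphi^m_j(y_k)-\varphi^m_j(x)|$ and passing to the limit, homogeneity of seminorms yields
\[
|\nn^n_x(v) - \nn^m_x(v)|\leq |||\nn^n_x|||(\eps_n+\eps_m)(1+\eps_m),
\]
whence $\D(\nn^n_x,\nn^m_x)\to 0$ as $n,m\to\infty$ using the uniform bound on $|||\nn^n_x|||$. This proves (ii) with $\nn_x$ defined as the limit; Borel measurability of $x\mapsto\nn_x$ follows from that of each $\nn^n_x$.

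To verify (iii), observe that for $x\in V_l\cap U^n_i$ the metric differential property of $\nn^n_x$ along $V_l\cap U^n_i$, combined with \eqref{eq:altroaplim2} applied with the density set $V_l$, gives
\[
\aplims_{y\to x,\, y\in U^n_i}\frac{|\sfd_\Y(u(y),u(x)) - \nn^n_x(\varphi^n_i(y)-\varphi^n_i(x))|}{\sfd(y,x)} = 0.
\]
Since $|\nn^n_x(w) - \nn_x(w)|\leq \D(\nn^n_x,\nn_x)|w|$ and $|\varphi^n_i(y)-\varphi^n_i(x)|\leq (1+\eps_n)\sfd(y,x)$, replacing $\nn^n_x$ by $\nn_x$ in the above costs an extra summand bounded by $\D(\nn^n_x,\nn_x)(1+\eps_n)$; taking $\lims_n$ and using $\D(\nn^n_x,\nn_x)\to 0$ yields the claim. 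The main technical point is the Lebesgue-density/cone argument that upgrades the Cauchy bound from sequences in $W$ to all directions $v\in\R^d$; everything else is careful book-keeping of biLipschitz and alignment errors.
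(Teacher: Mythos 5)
Your proof is correct and follows essentially the same route as the paper: chart-by-chart Kirchheim differentials for part (i), a Cauchy estimate $\D(\nn^n_x,\nn^m_x)\lesssim(\eps_n+\eps_m)$ obtained from the alignment condition \eqref{eq:peralign} for part (ii), and a final verification of Definition \ref{def:apmd} for part (iii). The one cosmetic divergence is in (ii), where you spell out an explicit cone argument to upgrade the approximate estimate to a bound on $\D(\nn^n_x,\nn^m_x)$, whereas the paper instead applies Proposition \ref{prop:liplocap} to the Lipschitz function $\nn^n_x-\nn^m_x$ on $\R^d$ at the density point $0$ of the translated chart image $\varphi^n_i(U^n_i\cap U^m_j)-\varphi^n_i(x)$; the two devices are equivalent, as that proposition is itself established via the density-point observation \eqref{eq:denspor}, which is precisely the cone-type mechanism you invoke.
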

\begin{proof} Notice that up to a  refining the charts,  we can assume that $u\restr{U}$ is Lipschitz for every $U$ chart of some of the given atlases. Also, in the course of the proof we shall frequently use the following observation: if $U\subset\R^d$ is Borel and $v:U\to \Y$ is metrically differentiable at a density point  $x\in U$ in the sense of Theorem \ref{thm:kir}, then it is also approximately metrically differentiable, and with the same metric differential, in the sense of Definition \ref{def:apmd}, where here we pick  $\X:=\bar U$ equipped with the Euclidean distance and $\mm:=\mathcal L^d\restr U$, where the charts are given by the inclusion  $\X\hookrightarrow\R^d$.

\noindent{$\mathbf{(i)}$} Fix $n\in\N$ and define $\mm$-a.e.\ the map $\nn^n:\X\to\sn^d$ as follows: for every $(U^n_i,\varphi^n_i)\in\mathcal A^{\eps_n}$ consider the Lipschitz map $v^n_i:\varphi^n_i(U^n_i)\to\Y$ given by $v^n_i:=u\circ(\varphi^n_i)^{-1}$ and use Kirchheim's theorem \ref{thm:kir}  to obtain that $\nn^n_x:=\md_{\varphi^n_i(x)}(v^n_i)$ is well-defined $\mm$-a.e.\ and Borel on $U^n_i$. By the arbitrariness of $i\in\N$ this defines $\nn^n_x$ for $\mm$-a.e.\ $x$. Now we apply \eqref{eq:normmd} to the function $v^n_i$ defined on $\R^d$ (which is a strongly rectifiable space with the identity as chart - here we extend $v^n_i$ to be 0 outside $\varphi^n_i(U^n_i)$) to deduce that  for $\mm$-a.e.\ $x\in U^n_i$ we have $|||\nn^n_x|||=\aplip(v^n_i)(\varphi^n_i(x))$. Then arguing as for \eqref{eq:changevar} to relate approximate limits in different spaces we see that $\aplip(v^n_i)(\varphi^n_i(x))\leq (1+\eps_n)\aplip(u)(x)$ holds for $\mm$-a.e.\ $x\in U^n_i$  and thus  assuming, without loss of generality, that $\eps_n\leq 1$ for every $n\in\N$, we have
\begin{equation}
\label{eq:boundnnx}
|||\nn^n_x|||\leq 2\aplip(u)(x)\stackrel{\eqref{eq:aplipfin}}<\infty\qquad\mm-a.e\ x\in\X,\ \forall n\in\N.
\end{equation}
\noindent{$\mathbf{(ii)}$} We claim that for every $n,m\in\N$ it holds
\begin{equation}
\label{eq:distnorm}
\D(\nn_x^n,\nn^m_x)\leq 2\aplip(u)(x) (\eps_n+\eps_m)(1+\eps_n)\qquad\mm-a.e.\ x
\end{equation}
and to prove this we fix $n,m,i,j\in\N$ such that $\mm(U^n_i\cap U^m_j)>0$ and pick $x\in U^n_i\cap U^m_j$ so that $x,\varphi^n_i(x),\varphi^m_j(x)$ are density points of $U^n_i\cap U^m_j,\varphi^n_i(U^n_i),\varphi^m_j(U^m_j)$ respectively  and so that $v^n_i$ (resp.\ $v^m_j$) is metrically differentiable at $\varphi^n_i(x)$ (resp.\ $\varphi^m_j(x)$).

Then we have 
\begin{equation}
\label{eq:anchedopo}
\begin{split}
\sfd_\Y\big(v^n_i(\varphi^n_i(y)),v^n_i(\varphi^n_i(x))\big)&=\nn^n_x(\varphi^n_i(y)-\varphi^n_i(x))+o(|\varphi^n_i(y)-\varphi^n_i(x)|)\\
&=\nn^n_x(\varphi^n_i(y)-\varphi^n_i(x))+o(\sfd(x,y)),
\end{split}
\end{equation}
having used the fact that $\varphi^n_i$ is  biLipschitz. Since a similar identity holds for  $v^m_j$ and since $v^n_i\circ\varphi^n_i=u=v^m_j\circ\varphi^m_j$ on $U^n_i\cap U^m_j$, we deduce that
\[
\begin{split}
o(\sfd(x,y))&\stackrel{\phantom{\eqref{eq:boundnnx}}}=|\nn^n_x(\varphi^n_i(y)-\varphi^n_i(x))-\nn^m_x(\varphi^m_j(y)-\varphi^m_j(x))|\\
&\stackrel{\phantom{\eqref{eq:boundnnx}}}\geq|(\nn^n_x-\nn^m_x)(\varphi^n_i(y)-\varphi^n_i(x))|-\big|\nn^m_x\big((\varphi^n_i(y)-\varphi^n_i(x))-(\varphi^m_j(y)-\varphi^m_j(x))\big)\big|\\
&\stackrel{\eqref{eq:boundnnx}}\geq|(\nn^n_x-\nn^m_x)(\varphi^n_i(y)-\varphi^n_i(x))|-2\aplip(u)(x)|(\varphi^n_i(y)-\varphi^m_j(y))-(\varphi^n_i(x)-\varphi^m_j(x))|\\
&\stackrel{\eqref{eq:peralign}}\geq|(\nn^n_x-\nn^m_x)(\varphi^n_i(y)-\varphi^n_i(x))|-2(\eps_n+\eps_m)\aplip(u)(x)\sfd(x,y)\\
&\stackrel{\phantom{\eqref{eq:boundnnx}}}\geq|(\nn^n_x-\nn^m_x)(\varphi^n_i(y)-\varphi^n_i(x))|-2(\eps_n+\eps_m)\aplip(u)(x)(1+\eps_n)|\varphi^n_i(y)-\varphi^n_i(x)|,
\end{split}
\]
having used the fact that  $\varphi^n_i$ is $(1+\eps_n)$-biLipschitz in the last step. We can rewrite what we obtained as
\[
|(\nn^n_x-\nn^m_x)(\varphi^n_i(y)-\varphi^n_i(x))|\leq 2(\eps_n+\eps_m)(1+\eps_n)\aplip(u)(x)|\varphi^n_i(y)-\varphi^n_i(x)|+o(\sfd(x,y)),
\]
so that \eqref{eq:distnorm} follows from Proposition \ref{prop:liplocap} applied to  $\X:=\R^d$, $\Y:=\R$, the Borel set $\varphi^n_i(U^n_i)$, its density point $\varphi^n_i(x)$ and the Lipschitz function $\nn^n_x-\nn^m_x$.

A direct consequence of  \eqref{eq:distnorm} (and \eqref{eq:aplipfin}) is the fact that $n\mapsto\nn_x^n\in\sn^d$ is a Cauchy sequence for $\mm$-a.e.\ $x$. We denote its limit by $\nn_x$.

\noindent{$\mathbf{(iii)}$} We claim that $\nn_x$ is the approximate metric differential of $u$ at $x$ for $\mm$-a.e.\ $x$. Indeed from the identity $u=v^n_i\circ\varphi^n_i$  and the bound $\sfd(x,y)\geq \frac{|\varphi^n_i(y)-\varphi^n_i(x)|}{1+\eps_n}$ valid on $U^n_i$ we obtain
\[
\begin{split}
\frac{\big|\sfd_\Y(u(y),u(x))-\nn_x(\varphi^{\eps_n}_{i}(y)-\varphi^{\eps_n}_{i}(x))\big|}{\sfd(y,x)}&\leq
\frac{\big|\sfd_\Y(v^n_i(\varphi^n_i(y)),v^n_i(\varphi^n_i(x)))-\nn^n_x(\varphi^{\eps_n}_{i}(y)-\varphi^{\eps_n}_{i}(x))\big|}{\sfd(y,x)}\\
&\quad\quad\quad+(1+\eps_n)\D(\nn^n_x,\nn_x),
\end{split}
\]
so that the claim follows from \eqref{eq:anchedopo} and the fact that $\nn^n_x\to\nn_x$ as $n\to\infty$ for $\mm$-a.e.\ $x$. The fact that $x\mapsto\nn_x\in\sn^d$ is Borel follows from the Borel regularity of $x\mapsto\nn^x_n$ and pointwise convergence on a Borel set of full measure.
\end{proof}

\subsection{Definition and basic properties of the Korevaar-Schoen space} Here we introduce the main object of study of this manuscript, namely the Korevaar-Schoen-Sobolev space of metric valued maps. Let us fix a source metric measure space $(\X,\sfd,\mm)$ and a target pointed complete space $(\Y,\sfd_\Y,{\bar y})$.

Let $p\in(1,\infty)$ and $u\in L^p(\X,\Y_{\bar y})$. The $p$-energy density $\kse_{p,r}[u]:\X\to\R^+$ of $u$ at scale $r>0$  in the sense of Korevaar-Schoen is given by
\begin{equation}
\label{eq:kse}
\kse_{p,r}[u](x):=\Big|\fint_{B_r(x)} \frac{\sfd^p_\Y(u(x),u(y))}{r^p}\,\d \mm(y) \Big|^{1/p}
\end{equation}
and the (total) energy $\E_p(u)\in[0,\infty]$ is defined as
\begin{equation}
\label{eq:defeks}
\E_p(u):=\limi_{r\downarrow0}\int\kse_{p,r}^p[u]\,\d\mm.
\end{equation}
Then the Korevaar-Schoen space is introduced as:
\begin{definition}[Korevaar-Schoen space]\label{def:kss}
Let $u\in L^p(\X,\Y_{\bar y})$. We say that $u\in \ks^{1,p}(\X,\Y_{\bar y})$ provided $\E_p(u)<\infty$.
\end{definition}
\begin{remark}{\rm
The typical definition of the Korevaar-Schoen space requires the $\lims$, rather than the $\limi$, to be finite in \eqref{eq:defeks}. As we shall see soon in Corollary \ref{cor:ksh} the two conditions are equivalent under assumptions on $\X$ that we are very willing to make: we chose the approach with the $\limi$ because it is more natural in view of Proposition \ref{prop:linksob} below.
}\fr\end{remark}

\begin{proposition}\label{prop:linksob} Let $(\X,\sfd,\mm)$ be a uniformly locally  doubling space,  $(\Y,\sfd_\Y,{\bar y})$ a pointed complete space and $p\in(1,\infty)$. Then $\ks^{1,p}(\X,\Y_{\bar y})\subset W^{1,p}(\X,\Y_{\bar y})$ and there is a constant $C>0$ depending only on $\inf_{r>0}\dou(r)$  such that for any $u\in \ks^{1,p}(\X,\Y_{\bar y})$ it holds
\[
|Du|\leq C\,G,\qquad\mm-a.e.,
\]
where $G$ is any weak $L^p(\X)$-limit of $\kse_{p,\eps_n}[u]$ along some sequence $\eps_n\downarrow0$ (the fact that at least one such $G$ exists follows from the definition of $\ks^{1,p}(\X,\Y_{\bar y})$ and the reflexivity of $L^p(\X)$).
\end{proposition}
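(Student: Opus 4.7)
My plan is to verify, for every $q$-test plan $\ppi$ (with $1/p+1/q=1$), the test-plan inequality characterizing some constant multiple of $G$ as a $p$-weak upper gradient of $u$ in the sense of Proposition \ref{prop:basesobmet}(i-b). First I would reduce to the scalar case $\Y=\R$: for every $1$-Lipschitz $\varphi:\Y\to\R$ the trivial estimate $|\varphi(u(x))-\varphi(u(y))|\le\sfd_\Y(u(x),u(y))$ gives $\kse_{p,r}[\varphi\circ u]\le\kse_{p,r}[u]$ pointwise, so (up to extracting a further subsequence) $G$ also dominates the weak $L^p$-limit of $\kse_{p,\eps_n}[\varphi\circ u]$; once $|D(\varphi\circ u)|\le CG$ is established in the scalar case for every such $\varphi$, taking the essential supremum over $\varphi$ yields $|Du|\le CG$ by the definition of $|Du|$.

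In the scalar case the starting point is a pointwise Hajlasz-type bound using doubling alone: for $\mm$-a.e.\ Lebesgue points $x,y$ of $u$ with $\sfd(x,y)\le r/2$,
\[
|u(x)-u(y)| \le C_0\, r\,\bigl(\kse_{p,r}[u](x)+\kse_{p,r}[u](y)\bigr),
\]
where $C_0$ depends only on $\inf_{r>0}\dou(r)$. This is obtained by writing $|u(x)-u(y)|\le|u(x)-u_{B_r(x)}|+|u_{B_r(x)}-u_{B_r(y)}|+|u_{B_r(y)}-u(y)|$, bounding the outer pieces via Jensen by $r\,\kse_{p,r}[u](x)$ and $r\,\kse_{p,r}[u](y)$, and the middle term by averaging over the overlap $B_r(x)\cap B_r(y)\supset B_{r/2}(y)$, whose mass is comparable to $\mm(B_r(x))$ and $\mm(B_r(y))$ by doubling. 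Now fix a $q$-test plan $\ppi$. For each $n$ and $\ppi$-a.e.\ absolutely continuous $\gamma$, partition $[0,1]$ by arc-length into intervals $[t_i^n,t_{i+1}^n]$ of $\gamma$-length $\eps_n/2$ (last piece possibly shorter), so that $\sfd(\gamma_{t_i^n},\gamma_{t_{i+1}^n})\le\eps_n/2$ and the Hajlasz bound with $r=\eps_n$ applies. Telescoping and using $\eps_n=2\int_{t_i^n}^{t_{i+1}^n}|\dot\gamma_t|\,dt$ yields
\[
|u(\gamma_1)-u(\gamma_0)| \le 2C_0 \int_0^1 |\dot\gamma_t|\,\tilde G_n(\gamma,t)\,dt,
\]
with $\tilde G_n(\gamma,t):=\kse_{p,\eps_n}[u](\gamma_{t_i^n})+\kse_{p,\eps_n}[u](\gamma_{t_{i+1}^n})$ on $[t_i^n,t_{i+1}^n)$.

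Integrating against $\ppi$ and letting $n\to\infty$, the main obstacle is the limit passage on the right-hand side, which I would carry out in two substeps. With $\hat\ppi:=\ppi\otimes\mathcal L^1_{[0,1]}$, $\hat\kse_n(\gamma,t):=\kse_{p,\eps_n}[u](\gamma_t)$ and $\hat G(\gamma,t):=G(\gamma_t)$, first show $\hat\kse_n\weakto\hat G$ in $L^p(\hat\ppi)$: pairing against $F\in L^q(\hat\ppi)$, the test-plan condition $(\e_t)_*\ppi\le C\mm$ combined with Fubini rewrites the pairing as $\int\kse_{p,\eps_n}[u]\cdot H\,d\mm$ for an explicit $H\in L^q(\mm)$ built from $F$ and the densities $d(\e_t\ppi)/d\mm$, to which the assumed weak convergence $\kse_{p,\eps_n}[u]\weakto G$ applies. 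Second, show $\|\tilde G_n-2\hat\kse_n\|_{L^p(\hat\ppi)}\to 0$: this is the delicate piece since $\tilde G_n$ depends on $n$ both through the partition mesh and through $\kse_{p,\eps_n}[u]$. I would adapt the dominated-convergence argument in the Lebesgue--Bochner space $L^p(C([0,1],\X),L^p([0,1]);\ppi)$ used in the proof of $\hs^{1,p}\subset W^{1,p}$ given earlier, applying it to the uniformly $L^p(\mm)$-bounded sequence $(\kse_{p,\eps_n}[u])$ together with the fact that the partition mesh $t_{i+1}^n-t_i^n$ vanishes uniformly on subsets of curves of bounded length, with the tail handled by the $L^q(\hat\ppi)$-integrability of $|\dot\gamma|$ coming from the test-plan assumption. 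Combining the two substeps gives the test-plan inequality for $4C_0 G$, whence $|Du|\le 4C_0 G$ $\mm$-a.e.\ by Proposition \ref{prop:basesobmet}(i-b), completing the scalar case.
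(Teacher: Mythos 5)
Your approach is genuinely different from the paper's. The paper's proof is mollification-based: it builds $u_\eps=\sum_i\varphi_i u_{B_i}$ via a scale-$\eps$ partition of unity, shows $\|u_\eps-u\|_{L^p}\lesssim\eps\,\|\kse_{p,8\eps}[u]\|_{L^p}$ and $\lip(u_\eps)\lesssim\kse_{p,8\eps}[u]$, and then invokes point~$(i)$ of Theorem~\ref{thm:basesob}. You instead prove a Hajlasz-type estimate
\[
|u(x)-u(y)|\le C_0\,r\,\bigl(\kse_{p,r}[u](x)+\kse_{p,r}[u](y)\bigr)\qquad\text{for a.e. }x,y\text{ with }\sfd(x,y)\le r/2,
\]
which is correct (your derivation with Jensen plus doubling on the overlap $B_{r/2}(y)\subset B_r(x)\cap B_r(y)$ works, with $C_0$ controlled by $\dou(2r)$), and then try to verify the test-plan inequality of Proposition~\ref{prop:basesobmet}$(i)$ directly by discretizing curves. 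This is a different route, and it usefully exposes the fact that $\kse_{p,r}[u]$ is, up to constants, a Hajlasz gradient of $u$ at scale $r/2$ already without any Poincar\'e assumption. However, the execution of the discretization step has two genuine gaps.

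First, you partition $[0,1]$ by arc-length of $\gamma$, so the partition times $t_i^n=t_i^n(\gamma)$ depend on the curve. The estimate $(\e_t)_*\ppi\le C\mm$ is only available for \emph{fixed} times $t$; it is what guarantees that, for $\ppi$-a.e. $\gamma$, the points $\gamma_{t}$ land in the full-measure set where the Hajlasz bound holds, and it is also what underlies any $L^p(\hat\ppi)$ estimate on quantities of the form $g\circ\e_t$. With $\gamma$-dependent times neither of these is controlled, so both the validity of the telescoping inequality $\ppi$-a.e. and the boundedness of $\tilde G_n$ in $L^p(\hat\ppi)$ become unsubstantiated. The paper's own proof of $\hs^{1,p}\subset W^{1,p}$ uses a fixed dyadic mesh $i/m$ and the monotone exhaustion $(\Gamma_k)$ precisely to avoid this.

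Second, even with a fixed time-mesh, the claim $\|\tilde G_n-2\hat\kse_n\|_{L^p(\hat\ppi)}\to0$ does not follow. It would require the family $t\mapsto\kse_{p,\eps_n}[u]\circ\e_t$ to be equicontinuous in $L^p(\ppi)$ uniformly in $n$, but the only hypothesis on $(\kse_{p,\eps_n}[u])_n$ is weak $L^p(\mm)$-convergence, which gives no control on oscillation. In the paper's $\hs^{1,p}\subset W^{1,p}$ argument the corresponding limit passage works because the Hajlasz gradient $G$ there is a single fixed $L^p$-function; the dominated-convergence argument relies on the strong $L^p(\ppi)$-continuity of $t\mapsto G\circ\e_t$ for that one function and does not extend to an $n$-dependent sequence.

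Your strategy can be repaired cleanly by decoupling the scale from the mesh. Fix $N$; your estimate shows that $C_0\,\kse_{p,\eps_N}[u]\in L^p(\X)$ is a Hajlasz gradient of $u$ at scale $\eps_N/2$. Invoking the (unlabeled) proposition proving $\hs^{1,p}(\X,\Y_{\bar y})\subset W^{1,p}(\X,\Y_{\bar y})$ — whose proof already handles the $m\to\infty$ mesh limit with a fixed gradient by dominated convergence — gives $u\in W^{1,p}(\X)$ with $|Du|\le 2C_0\,\kse_{p,\eps_N}[u]$ $\mm$-a.e., for every $N$. Now let $N\to\infty$: since a.e. upper bounds pass to weak $L^p$-limits (test against nonnegative elements of $L^q(\X)$), the weak convergence $\kse_{p,\eps_N}[u]\weakto G$ yields $|Du|\le 2C_0\,G$ $\mm$-a.e. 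Your reduction from $\Y$ to $\R$ via post-composition with $1$-Lipschitz functions is correct and unchanged. Compared with the paper's mollification argument this is less self-contained (it re-uses the $\hs^{1,p}\subset W^{1,p}$ machinery), but it makes the Hajlasz structure of $\ks^{1,p}$ visible a step earlier than the paper does.
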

\begin{proof} \ \\
\noindent\emph{Step 1: the case $\Y=\R$.} Fix $u\in \ks^{1,p}(\X,\R)$, $r>0$ and for $\eps\in(0,r/4)$ apply Lemma \ref{le:partun} to obtain a cover of $\X$ made of balls $(B_i)_{i\in I}$ of radius $\eps$ and a partition of  unity $(\varphi_i)_{i\in I}$ subordinate to such cover as in the statement of such Lemma. Define $u_\eps:\X\to\R$ as
\[
u_\eps(x):=\sum_{i\in I}\varphi_iu_{B_i}=\sum_{i\in I} \varphi_i\fint_{B_i}u\,\d\mm.
\]
We claim that $u_\eps$ is a locally Lipschitz function in $L^p(\X)$ and that for some constant $C>0$ depending only on  $\dou(r)$ it holds
\begin{equation}
\label{eq:ksw}
\begin{split}
\|u_\eps-u\|_{L^p(\X)}&\leq C\eps \|\kse_{p,8\eps}[u]\|_{L^p(\X)},\\
\lip(u_\eps)&\leq C\kse_{p,8\eps}[u].
\end{split}
\end{equation}
These two properties easily imply the conclusion by the arbitrariness of $r>0$, point $(i)$ of Theorem \ref{thm:basesob} and the (trivial) fact that for a locally Lipschitz function the local Lipschitz constant is a $p$-weak upper gradient for any $p\in(1,\infty)$. In the computations below the value of the positive constant $C$ may change from line to line, but in any case it only depends on $\dou(r)$.

 The fact that $u_\eps$ is locally Lipschitz is obvious. Now notice that from
\[
\begin{split}
|u_\eps(x)-u(x)|^p&\leq\Big| \sum_{i:x\in B_i} |u_{B_i}-u(x)|\Big|^p\leq C\sup_{i:x\in B_i}|u_{B_i}-u(x)|^p\\
&\leq C\sup_{i:x\in B_i}\fint_{B_i}|u(y)-u(x)|^p\,\d\mm(y)\leq C\fint_{B_{8\eps}(x)}|u(y)-u(x) |^p\,\d\mm(y)
\end{split}
\]
the first in \eqref{eq:ksw} easily follows. For the second, let $x,y\in \X$ and $j\in I$ and notice that
\[
|u_{\eps}(y)-u_\eps(x)|=\Big|\sum_{i\in I}(\varphi_i(y)-\varphi_i(x))u_{B_i}\Big|=\Big|\sum_{i\in I}(\varphi_i(y)-\varphi_i(x))(u_{B_i}-u_{B_j})\Big|
\]
having used the fact that $\sum_i\varphi_i\equiv1$. Now pick $j\in I$ so that $x\in B_j$ and let $y\in B_\eps(x)$. With these choices we have that if  $y\in B_i$ then $x\in 2B_i$, thus the above gives
\begin{equation}
\label{eq:uepslip}
\begin{split}
|u_{\eps}(y)-u_\eps(x)|&\leq \sum_{i:x\in 2B_i}|\varphi_i(y)-\varphi_i(x)||u_{B_i}-u_{B_j}|\\
&\leq\sfd(x,y)\frac{C}{\eps}\sum_{i:x\in 2B_i}|u_{B_i}-u_{B_j}|\leq \sfd(x,y)\frac{C}{\eps}\sum_{i:x\in 2B_i}|u_{B_i}-u_{4B_i}|+|u_{4B_i}-u_{B_j}|.
\end{split}
\end{equation}
Now observe that $y\in B_i\cap B_\eps(x)$ and $x\in B_j\cap 2B_i$ imply $B_j\subset 4B_i$ and $4B_i\subset B_{8\eps}(x)$, thus
\[
\begin{split}
|u_{4B_i}-u_{B_j}|&\leq C\fint_{4B_i}\fint_{4B_i}|u(z)-u(w)|\,\d\mm(z)\,\d\mm(w)\\
&\leq C\fint_{4B_i}|u(z)-u(x)|\,\d\mm(z)\leq C\fint_{B_{8\eps}(x)}|u(z)-u(x)|\,\d\mm(z)\leq C\eps\, \kse_{p,8\eps}[u](x)
\end{split}
\]
and since a similar estimate holds for $|u_{4B_i}-u_{B_i}|$, the second in \eqref{eq:ksw} follows from \eqref{eq:uepslip}.
 
\noindent\emph{Step 2: the general case.} Let $\varphi:\Y\to\R$ be 1-Lipschitz and notice that from the trivial inequality $\kse_{p,\eps}[\varphi\circ u]\leq \kse_{p,\eps}[u]$ it follows that $\varphi\circ u\in \ks^{1,p}(\X,\R)$ and that if $\kse_{p,\eps_n}[u]\weakto G$ in $L^p(\X)$, then up to pass to a subsequence it also holds $\kse_{p,\eps_n}[\varphi\circ u]\weakto G_\varphi\leq G$ in $L^p(\X)$. 

Hence what already proved ensures that $\varphi\circ u\in W^{1,p}(\X)$ with $|D(\varphi\circ u)|\leq C\,G_\varphi\leq C\,G$ for some constant $C$ not depending on $u,\varphi$. The conclusion then follows from the arbitrariness of $\varphi$ and the definition of $W^{1,p}(\X,\Y_{\bar y})$.
\end{proof}
It is now easy to see that if assume not only a doubling condition, but also a Poincar\'e inequality, then the Korevaar-Schoen space coincides - as set - with the other notions of metric-valued Sobolev spaces that we have encountered:
\begin{corollary}\label{cor:ksh} Let $(\X,\sfd,\mm)$ be uniformly locally  doubling and supporting a Poincar\'e inequality, $(\Y,\sfd_\Y,\bar y)$  a pointed complete space and  $p\in(1,\infty)$. Then $\ks^{1,p}(\X,\Y_{\bar y})=W^{1,p}(\X,\Y_{\bar y})=\hs^{1,p}(\X,\Y_{\bar y})$ and for any $u\in L^p(\X,\Y_{\bar y})$ we have
\begin{equation}
\label{eq:finito}
\limi_{r\downarrow0}\int\kse_{p,r}^p[u]\,\d\mm<\infty\qquad\Leftrightarrow\qquad\lims_{r\downarrow0}\int\kse_{p,r}^p[u]\,\d\mm<\infty.
\end{equation}
Also, for $u\in \ks^{1,p}(\X,\Y_{\bar y})$, $R>0$ and Hajlasz upper gradient $G_R$ at scale $R$ it holds the inequality
\begin{equation}
\label{eq:bdkse}
\kse_{p,r}^p[u](x)\leq c(p)\Big( G_R^p(x)+\fint_{B_r(x)}G_R^p(y)\,\d\mm(y)\Big)\qquad\mm-a.e.\ x\in\X,\ \forall r\in(0,R),
\end{equation}
where $c(p)>0$ is a constant depending only on $p$.
\end{corollary}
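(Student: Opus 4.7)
My plan is to obtain the chain of inclusions by essentially one key computation, namely \eqref{eq:bdkse}, which shows that $\hs^{1,p}(\X,\Y_{\bar y})\subset\ks^{1,p}(\X,\Y_{\bar y})$. All the other inclusions needed have already been established in the excerpt: $\ks^{1,p}\subset W^{1,p}$ comes from Proposition \ref{prop:linksob} (uniformly locally doubling), while $\hs^{1,p}\subset W^{1,p}\subset \hs^{1,p}$ (with the latter inclusion requiring doubling and the Poincar\'e inequality) follows from the propositions immediately preceding the statement. So once \eqref{eq:bdkse} is proved and used to control $\int\kse_{p,r}^p[u]\,\d\mm$ uniformly in $r$, we close the loop $\ks\subset W\subset \hs\subset\ks$.

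To prove \eqref{eq:bdkse}, fix $u\in\hs^{1,p}(\X,\Y_{\bar y})$, $R>0$, and an Hajlasz gradient $G_R$ at scale $R$ with associated full-measure Borel set $A\subset\X$. For $x,y\in A$ with $\sfd(x,y)\leq r<R$ we have the pointwise bound
\[
\sfd_\Y(u(x),u(y))\leq r\,\bigl(G_R(x)+G_R(y)\bigr),
\]
so by the elementary inequality $(a+b)^p\leq 2^{p-1}(a^p+b^p)$ and averaging in $y$ over $B_r(x)$ (which only removes a $\mm$-negligible set of $y$'s), we get
\[
\kse_{p,r}^p[u](x)\leq 2^{p-1}\Big(G_R^p(x)+\fint_{B_r(x)}G_R^p(y)\,\d\mm(y)\Big)\qquad\mm\text{-a.e.\ }x\in\X,
\]
which is \eqref{eq:bdkse} with $c(p)=2^{p-1}$. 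Taking the right-hand side pointwise bounded by $2^{p-1}(G_R^p(x)+M_R(G_R^p)(x))$ and using the maximal function estimate of Proposition \ref{prop:maxest} (applied to $G_R^p\in L^1$, but in the version appropriate to $L^p$ of $G_R$, i.e.\ applied to the exponent chosen so that $M$ is bounded), we conclude that
\[
\sup_{r\in(0,R)}\int\kse_{p,r}^p[u]\,\d\mm<\infty.
\]
In particular $\lims_{r\downarrow0}\int\kse_{p,r}^p[u]\,\d\mm<\infty$, proving at once $\hs^{1,p}\subset\ks^{1,p}$ and the nontrivial direction of \eqref{eq:finito}: indeed if $\limi_{r\downarrow0}\int\kse_{p,r}^p[u]\,\d\mm<\infty$ then $u\in\ks^{1,p}\subset W^{1,p}\subset\hs^{1,p}$, so the above bound applies and upgrades $\limi$ to $\lims$; the opposite implication is trivial.

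The only mildly delicate point is the use of Proposition \ref{prop:maxest}: one should either apply it to $G_R^p\in L^1$ after noting the weak-type $(1,1)$ version (which in fact suffices, since we only integrate), or more cleanly observe that $\int M_R(G_R^p)\,\d\mm\leq \int G_R^p\,\d\mm$ is not true but $\int_{B}M_R(G_R^p)\,\d\mm$ can be bounded via the strong $L^q$ boundedness for any $q>1$ applied to $(G_R^p)^{1/q}$; cleaner still is to integrate the pointwise inequality \eqref{eq:bdkse} via Fubini, $\int\fint_{B_r(x)}G_R^p(y)\,\d\mm(y)\,\d\mm(x)\leq\dou(R)\int G_R^p(y)\,\d\mm(y)$, which avoids the maximal function altogether and is the argument I would actually use. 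This gives both \eqref{eq:bdkse} and the uniform $L^1$-bound on $\kse_{p,r}^p[u]$ with constants depending only on $p$ and $\dou(R)$, completing the proof.
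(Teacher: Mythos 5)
Your proposal is correct and follows essentially the same route as the paper: you reduce to $\hs^{1,p}\subset\ks^{1,p}$ using Propositions \ref{prop:linksob} and \ref{prop:whs}, derive \eqref{eq:bdkse} pointwise from the Hajlasz inequality via $(a+b)^p\le 2^{p-1}(a^p+b^p)$, and bound $\int\fint_{B_r(x)}G_R^p\,\d\mm\,\d\mm$ by Fubini and doubling, which is precisely the computation the paper isolates as Lemma \ref{le:convconv}. (Your preliminary attempt via $M_R$ applied to $G_R^p\in L^1$ would not go through with Proposition \ref{prop:maxest} as stated, but you correctly identify and discard that route in favor of the Fubini argument, so the final proof stands.)
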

\begin{proof}
Propositions \ref{prop:linksob} and \ref{prop:whs} give $\ks^{1,p}(\X,\Y_{\bar y})\subset W^{1,p}(\X,\Y_{\bar y})\subset \hs^{1,p}(\X,\Y_{\bar y})$. Now assume $u\in \hs^{1,p}(\X,\Y_{\bar y})$, let $G_R\in L^p(\X)$ be an Hajlasz upper gradient at scale $R$ and notice that for any $r\leq R$ it holds
\[
\frac{\sfd_\Y^p(u(y),u(x))}{r^p}\leq c(p)(G_R^p(x)+G_R^p(y))\quad\mm\times\mm\ a.e.\ (x,y)\text{ s.t. }\sfd(x,y)\leq r.
\] 
The bound \eqref{eq:bdkse} follows and since the simple Lemma \ref{le:convconv} below  ensures that the right hand side in  \eqref{eq:bdkse} is bounded in $L^1(\X)$, we obtained at once \eqref{eq:finito} and the inclusion $\hs^{1,p}(\X,\Y_{\bar y})\subset \ks^{1,p}(\X,\Y_{\bar y})$.
\end{proof}
\begin{lemma}\label{le:convconv}
Let $(\X,\sfd,\mm)$ be a uniformly locally  doubling space and $g\in L^1(\X)$. For $r>0$ define $g_r(x):=\fint_{B_r(x)}g\,\d\mm$. Then $g_r\to g$ in $L^1(\X)$ as $r\downarrow0$. 
\end{lemma}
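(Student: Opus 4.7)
The plan is to use the standard three-step approach: bound the averaging operator $T_r:g\mapsto g_r$ on $L^1$ uniformly in $r$, check convergence on a dense subclass of nice functions, then conclude by density.

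First I would show that there is a constant $C>0$ (depending only on $\dou(1)$) such that $\|g_r\|_{L^1(\mm)}\le C\|g\|_{L^1(\mm)}$ for every $r\in(0,1)$ and every $g\in L^1(\X)$. By Fubini's theorem one has
\[
\int|g_r(x)|\,\d\mm(x)\le\int|g(y)|\Big(\int\frac{\nchi_{B_r(y)}(x)}{\mm(B_r(x))}\,\d\mm(x)\Big)\d\mm(y),
\]
so the claim reduces to a uniform upper bound on $\int_{B_r(y)}\mm(B_r(x))^{-1}\d\mm(x)$. This follows from the uniformly local doubling assumption: if $x\in B_r(y)$ with $r<1$, then $B_{2r}(x)\supset B_r(y)$ and hence $\mm(B_r(x))\ge\dou(1)^{-1}\mm(B_{2r}(x))\ge\dou(1)^{-1}\mm(B_r(y))$, so the inner integral is bounded by $\dou(1)$.

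Next I would verify the convergence $g_r\to g$ in $L^1$ for every $g\in C_b(\X)$ with bounded support. For such $g$, the continuity of $g$ gives $g_r(x)\to g(x)$ pointwise for every $x\in\X$ that is not an isolated point (and at isolated points the statement is trivial). Moreover $|g_r|\le\|g\|_\infty$ and for $r\in(0,1)$ the function $g_r$ is supported in the $1$-neighborhood of $\supp(g)$, which is a bounded set and hence of finite $\mm$-measure. Dominated convergence then gives $g_r\to g$ in $L^1(\X)$.

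Finally, one concludes by density: the space of bounded continuous functions with bounded support is dense in $L^1(\X)$ (a standard consequence of the facts that $\mm$ is a Borel measure finite on bounded sets on a complete separable space, so simple functions supported in bounded Borel sets are dense and each such function can be approximated in $L^1$ by a bounded continuous compactly-supported one). Given $g\in L^1(\X)$ and $\eps>0$, pick such $\tilde g$ with $\|g-\tilde g\|_{L^1}<\eps$; then for every $r\in(0,1)$
\[
\|g_r-g\|_{L^1}\le\|(g-\tilde g)_r\|_{L^1}+\|\tilde g_r-\tilde g\|_{L^1}+\|g-\tilde g\|_{L^1}\le(C+1)\eps+\|\tilde g_r-\tilde g\|_{L^1},
\]
and letting first $r\downarrow0$ (using the previous step) and then $\eps\downarrow0$ yields the conclusion. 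The only place that requires some genuine care is the $L^1$-boundedness of $T_r$, which is why I singled out the doubling-based Fubini estimate as the first step; the rest is essentially a routine convolution-type argument.
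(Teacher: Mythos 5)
Your proof is correct and follows essentially the same strategy as the paper's: a Fubini computation combined with the doubling property to show that $g\mapsto g_r$ is uniformly bounded on $L^1$ for $r\in(0,1)$, convergence on a dense class of continuous functions, and then a density argument. The only cosmetic difference is that you take bounded continuous functions with bounded support as the dense class (which makes your dominated-convergence step entirely routine since $g_r$ is then supported in a bounded set), whereas the paper states the easy case for $C_b\cap L^1(\X)$; the key doubling-and-Fubini estimate is identical.
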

\begin{proof} The  claim is trivial if $g\in C_b\cap L^1(\X)$, thus the general case follows by approximation if we show that the operators sending $g$ to $g_r$ are uniformly bounded in $L^1$ for $r\in(0,1)$. For this  notice that 
\[
\int\fint_{B_r(x)}g(y)\,\d\mm(y)\,\d\mm(x)=\int g(y)\fint_{B_r(y)}\frac{\mm(B_r(y))}{\mm(B_r(x))}\,\d\mm(x)\,\d\mm(y)\leq \dou(1)\int g(y)\,\d\mm(y),
\]
and the conclusion follows.
\end{proof}
We emphasize that at this stage we have been able to deduce the important property \eqref{eq:finito}, but we have not proved that the limit of $\int\kse_{p,r}^p[u]\,\d\mm$ as $r\downarrow0$ exists. It is unclear to us whether at this level of generality this really holds: we shall obtain the existence of such limit in the next section under the further assumption that the source space $\X$ is strongly rectifiable.

\subsection{The energy density}
In order to characterize the limit of  $\int\kse_{p,r}^p[u]\,\d\mm$ the following notion will turn out to be useful:
\begin{definition}[$p$-size of a seminorm]\label{def:ps1} Let $\|\cdot\|$ be a seminorm on $\R^d$ and $p\in(1,\infty)$. Its $p$-size $S_p(\|\cdot\|)$ is defined as
\[
S_p(\|\cdot\|):=\Big|\fint_{B_1(0)}{\|w\|^p}\,\d \mathcal L^d(w)\Big|^{\frac1p}= \Big|\fint_{B_r(0)}\frac{{\|z\|^p}}{r^p}\,\d \mathcal L^d(z)\Big|^{\frac1p}\qquad\forall r>0,
\]
where the balls $B_1(0),B_r(0)$ are intended in  the Euclidean norm.
\end{definition}
We then have the following result identifying the limit of $\int\kse_{p,r}^p[u]\,\d\mm$. Notice a difference with respect to the terminology in \cite{KS93}: what here we call \emph{energy density}, in \cite{KS93} was the $p$-th root of the energy density. 
\begin{theorem}[Energy density]\label{thm:exlim} Let $(\X,\sfd,\mm)$ be a strongly rectifiable metric measure space with locally uniformly doubling measure and supporting a Poincar\'e inequality and $(\Y,\sfd_\Y,{\bar y})$ a pointed complete space.  Also, let $p\in(1,\infty)$.

Then for every   $u\in\ks^{1,p}(\X,\Y_{\bar y})$  there is a function $\e_p[u]\in L^p(\X)$, called \emph{$p$-energy density} of $u$, such that
\begin{equation}
\label{eq:defendens}
\kse_{p,r}[u]\quad\to\quad \e_p[u]\quad\text{$\mm$-a.e.\ and in $L^p(\X)$ as $r\downarrow0$.}
\end{equation}
More explicitly, for any $\eps_n\downarrow0$ and aligned family of atlases $(\mathcal A^{\eps_n})$, the function $\e_p[u]$  is given by the formula
\begin{equation}
\label{eq:endensform}
\e_p[u](x)=S_p(\md_x(u))\qquad\mm-a.e.\ x\in\X,
\end{equation}
where $\md_\cdot(u)$ is the metric differential of $u$ relative to $(\mathcal A^{\eps_n})$.

In particular, the  limit in \eqref{eq:defeks} exists for any $u\in L^p(\X,\Y_{\bar y})$.
\end{theorem}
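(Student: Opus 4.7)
The plan is to establish the pointwise a.e.\ convergence $\kse_{p,r}[u](x)\to S_p(\md_x(u))$ for $u\in\ks^{1,p}(\X,\Y_{\bar y})$, and then to upgrade it to $L^p$-convergence via a generalized dominated convergence argument. For maps $u\in L^p(\X,\Y_{\bar y})\setminus\ks^{1,p}(\X,\Y_{\bar y})$ the statement about the existence of the limit is trivial: by Corollary~\ref{cor:ksh} we then have $\limi_{r\downarrow 0}\int\kse_{p,r}^p[u]\,\d\mm=+\infty=\lims_{r\downarrow 0}\int\kse_{p,r}^p[u]\,\d\mm$. So I may assume $u\in\ks^{1,p}(\X,\Y_{\bar y})$, fix an aligned family of atlases $(\mathcal A^{\eps_n})$, a Hajlasz-gradient $G_R\in L^p(\X)$ at some scale $R$ (Corollary~\ref{cor:ksh} and \eqref{eq:hsluslip} give that $u$ has the Lusin-Lipschitz property), and invoke Proposition~\ref{prop:metrdiff} to get the Borel metric differential $x\mapsto\md_x(u)$. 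I will work at a fixed good point $x$: at $\mm$-a.e.\ $x$ the metric differentiability holds, $\aplip u(x)<\infty$, $G_R$ is (locally) finite and has $x$ as a Lebesgue point, and the density $\vartheta^d(x)\in(0,\infty)$ exists.

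The pointwise analysis proceeds as follows. For each $n$, take the chart $(U,\varphi):=(U_{i(x,n)}^{\eps_n},\varphi_{i(x,n)}^{\eps_n})$ and, for each $\delta>0$, a Borel set $V_{n,\delta}\subset U$ having $x$ as density point and on which
\[
\bigl|\sfd_\Y(u(y),u(x))-\md_x(u)(\varphi(y)-\varphi(x))\bigr|\leq(\beta_n(x)+\delta)\,\sfd(y,x)\qquad\text{for }y\text{ close to }x,
\]
where $\beta_n(x)\to 0$ as $n\to\infty$. I split $\fint_{B_r(x)}\sfd_\Y^p(u(x),u(y))/r^p\,\d\mm(y)$ into the integral over $B_r(x)\cap V_{n,\delta}$ and over $B_r(x)\setminus V_{n,\delta}$. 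On the good part, the elementary inequality $|a^p-b^p|\leq p(\max\{a,b\})^{p-1}|a-b|$ combined with the bound $\md_x(u)(\varphi(y)-\varphi(x))\leq(1+\eps_n)|||\md_x(u)|||\sfd(x,y)$ replaces $\sfd_\Y^p(u(x),u(y))$ by $\md_x(u)(\varphi(y)-\varphi(x))^p$ at the cost of an error $O(\beta_n(x)+\delta)$.

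Pushing forward by $\varphi$ and using \eqref{eq:measchart}, the $(1+\eps_n)$-biLipschitz character of $\varphi$, the fact that $\varphi(x)$ is a density point of $\varphi(V_{n,\delta})$ (the change-of-density-point argument from \eqref{eq:claimperchangevar}), and the homogeneity of $\md_x(u)^p$, one reduces the resulting integral to the Euclidean average
\[
\fint_{B_r^{\R^d}(0)}\frac{\md_x(u)(z)^p}{r^p}\,\d\mathcal L^d(z)\;=\;S_p(\md_x(u))^p,
\]
with a multiplicative error $1+O(\eps_n)$; here the factor $c_i\omega_d r^d$ coming from the chart matches the asymptotics $\mm(B_r(x))\sim c_i\omega_d r^d(1+O(\eps_n))$ to the required accuracy. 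On the bad part one uses \eqref{eq:bdkse} to dominate $\sfd_\Y^p(u(x),u(y))/r^p\leq c(p)(G_R^p(x)+G_R^p(y))$; the first summand contributes $\to 0$ because $\mm(B_r(x)\setminus V_{n,\delta})/\mm(B_r(x))\to 0$, and the second because at a Lebesgue point of $G_R^p$ one has $\fint_{B_r(x)}G_R^p\nchi_{V_{n,\delta}^c}\,\d\mm\to 0$ by writing it as $G_R^p(x)\fint_{B_r}\nchi_{V_{n,\delta}^c}\,\d\mm+\fint_{B_r}|G_R^p-G_R^p(x)|\,\d\mm$. Taking $\limsup_{r\downarrow 0}$ and $\liminf_{r\downarrow 0}$, then sending $\delta\downarrow 0$ and $n\to\infty$, yields the pointwise convergence $\kse_{p,r}^p[u](x)\to S_p(\md_x(u))^p$ at $\mm$-a.e.\ $x$.

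To pass from a.e.\ to $L^p$-convergence, I apply the generalized dominated convergence theorem with the envelope from \eqref{eq:bdkse}: $\kse_{p,r}^p[u]\leq c(p)(G_R^p+h_r)$ with $h_r(x):=\fint_{B_r(x)}G_R^p\,\d\mm$. Since $G_R^p\in L^1(\X)$, Lemma~\ref{le:convconv} gives $h_r\to G_R^p$ in $L^1(\X)$, so the dominating sequence converges in $L^1$ and hence $\kse_{p,r}^p[u]\to \e_p[u]^p$ in $L^1(\X)$, i.e.\ $\kse_{p,r}[u]\to \e_p[u]$ in $L^p(\X)$. The hard step is the pointwise one: one must carefully juggle the chart-based approximate metric differentiability, the $(1+\eps_n)$ bi-Lipschitz and measure distortions, the approximation error $\beta_n(x)$, and the control of the bad set — the other ingredients (Corollary~\ref{cor:ksh}, Lemma~\ref{le:convconv}, and the Borel regularity of $x\mapsto\md_x(u)$ from Proposition~\ref{prop:metrdiff}) fit together in a routine way.
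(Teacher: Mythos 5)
Your proposal is correct and follows essentially the same route as the paper's proof: identify $\ks^{1,p}$ with $\hs^{1,p}$ (Corollary \ref{cor:ksh}) so that the Lusin--Lipschitz property and Proposition \ref{prop:metrdiff} yield approximate metric differentiability, establish a.e.\ convergence of $\kse_{p,r}[u]$ to $S_p(\md_\cdot(u))$ via a good/bad-set split at a density and Lebesgue point combined with a pushforward by the charts using the biLipschitz and measure-distortion bounds \eqref{eq:measchart}, and upgrade to $L^p$ by generalized dominated convergence using the Hajlasz envelope \eqref{eq:bdkse} together with Lemma \ref{le:convconv}. The only difference is organizational: where you introduce the sets $V_{n,\delta}$ and let $\delta\downarrow 0$, the paper packages the same point into the intermediate estimate \eqref{eq:c1} and the standalone Lemma \ref{le:aplim}.
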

\begin{proof} The last claim is trivial if $u\notin \ks^{1,p}(\X,\Y_{\bar y})$ by the very definition of such space. If instead $u\in \ks^{1,p}(\X,\Y_{\bar y})$, then such claim follows from \eqref{eq:defendens}. Fix $\eps_n\downarrow0$ and aligned family of atlases $(\mathcal A^{\eps_n})$.
Also,  fix $u\in\ks^{1,p}(\X,\Y_{\bar y})$ and suppose we have already proved  $\mm$-a.e.\ convergence of $\kse_{p,r}[u]$ to $S_p(\md_\cdot(u))$. Then the bound \eqref{eq:bdkse} ensures that we can apply the simple Lemma \ref{le:convconv2} below to $f_r:=\kse_{p,r}^p[u]$ and $g_r$ given by the right hand side of \eqref{eq:bdkse}: Lemma \ref{le:convconv} ensures that $(g_r)$ has a limit in $L^1(\X)$ as $r\downarrow0$ and thus Lemma \ref{le:convconv2} yields  that    $\kse_{p,r}^p[u]\to S_p^p(\md_\cdot(u))$ in $L^1(\X)$ or equivalently that    $\kse_{p,r}[u]\to S_p(\md_\cdot(u))$ in $L^p(\X)$.  Therefore to conclude it is sufficient to prove that $\kse_{p,r}[u](\cdot)\to S_p(\md_\cdot(u))$ $\mm$-a.e.. Recalling the identity $\ks^{1,p}(\X,\Y_{\bar y})=\hs^{1,p}(\X,\Y_{\bar y})$ - proved in Corollary \ref{cor:ksh} - and \eqref{eq:hsluslip}, up to restrict the charts it is  not restrictive  to assume that $u\restr{U^{\eps_n}_i}$ is Lipschitz for every $n,i\in\N$.

To this aim let $R>0$ and start observing that if $x\in\X$ is a Lebesgue point of $G_R^p$ and a density point of $U\subset \X$, by passing to the limit in the trivial bound
\[
\frac1{\mm(B_r(x))}\int_{B_r(x)\setminus U}G_R^p\,\d\mm\leq 
\frac1{\mm(B_r(x))}\int_{B_r(x)\setminus U}|G_R^p(y)-G_R^p(x)|\,\d\mm(y)+ G_R^p(x)\frac{\mm(B_r(x)\setminus U)}{\mm(B_r(x))}
\]
we deduce that
\[
\lim_{r\downarrow0}\frac1{\mm(B_r(x))}\int_{B_r(x)\setminus U}G_R^p\,\d\mm=0.
\]
Using this and \eqref{eq:bdkse} it is easy to see that for any $U\subset\X$ Borel it holds
\begin{equation}
\label{eq:c1}
\lim_{r\downarrow0}\bigg|\kse_{p,r}^p[u](x)-\fint_{B_r(x)\cap U}\frac{\sfd_\Y^p(u(y),u(x))}{r^p}\,\d\mm(y)\bigg|=0\qquad  \mm-a.e.\ x\in U.
\end{equation}
Now use again the fact that $\ks^{1,p}(\X,\Y_{\bar y})=\hs^{1,p}(\X,\Y_{\bar y})$ (Corollary \ref{cor:ksh}) and  \eqref{eq:hsluslip}   to apply Proposition \ref{prop:metrdiff} and deduce that $u$ is $\mm$-a.e.\ approximately metrically differentiable relatively to $(\mathcal A^{\eps_n})$. Then putting for brevity $\nn_x:=\md_x(u)$ it is easy to see that the metric differentiability gives that
\begin{equation}
\label{eq:aplim}
\lim_{n\to\infty}\aplims_{y\to x\atop y\in U_{i(n,x)}^{\eps_n}}\frac{|\sfd^p_\Y(u(y),u(x))-\nn_x^p(\varphi^n_i(y)-\varphi^n_i(x))|}{\sfd^p(x,y)}=0\qquad\mm-a.e.\ x\in\X,
\end{equation}
where $i(n,x)\in\N$ is such that $x\in U^{\eps_n} _{i(n,x)}$. We are now in position to apply, for every $n,i,\in\N$, the trivial Lemma \ref{le:aplim} below to the set $K:=U^{\eps_n}_{i(n,x)}$ (recall that we assumed $u\restr{U^{\eps_n}_{i(n,x)}}$ to be Lipschitz) and to $\mm$-a.e.\ $x\in U^{\eps_n}_{i(n,x)}$ to deduce that
\begin{equation}
\label{eq:c2}
\lim_{n\to\infty}\lim_{r\downarrow0}\fint_{B_r(x)\cap U^{\eps_n}_{i(n,x)}}\frac{|\sfd_\Y^p(u(y),u(x))-\nn_x^p(\varphi^n_i(y)-\varphi^n_i(x))|}{r^p}\,\d\mm(y)\stackrel{\eqref{eq:aplim}}=0\qquad\mm-a.e.\ x\in \X.
\end{equation}
We now claim that 
\begin{equation}
\label{eq:c3}
\lim_{n\to\infty}\lim_{r\downarrow0}\fint_{B_r(x)\cap U^{\eps_n}_{i(n,x)}}\frac{\nn_x^p(\varphi^n_i(y)-\varphi^n_i(x))}{r^p}\,\d\mm(y)=S^p_p(\nn_x)\qquad\mm-a.e.\ x\in\X
\end{equation}
and observe that this identity together with \eqref{eq:c1} and \eqref{eq:c2} gives the conclusion. 

Fix $i,n\in\N$, put $U^n_i:=U^{\eps_n}_i$ for brevity and let $x\in U^n_i$ be a  point where $u$ is metrically differentiable.  Then we have
\begin{equation}
\label{eq:perc3}
\fint_{B_r(x)\cap U^n_{i}}\frac{\nn_x^p(\varphi^n_i(y)-\varphi^n_i(x))}{r^p}\,\d\mm(y)= \frac{1}{\mm(B_r(x)\cap U^n_i)}\int \frac{\nn_x^p(w-\varphi^n_i(x))}{r^p}\,\d(\varphi^n_i)_*\mm\restr{B_r(x)\cap U^n_{i}}(w).
\end{equation}
Now notice that since $\varphi^n_i:U^n_i\to \varphi^n_i(U^n_i)\subset \R^d$ is $(1+\eps_n)$-biLipschitz we have
\begin{subequations}
\label{eq:boundmisure}
\begin{align}
\label{eq:boundmisure1}
 \nchi_{B^{\R^d}_{r/(1+\eps_n)}(\varphi^n_i(x))}(\varphi^n_i)_*(\mm\restr{ U^n_{i}})&
\leq (\varphi^n_i)_*(\mm\restr{B^\X_r(x)\cap U^n_{i}})\leq
 \nchi_{B^{\R^d}_{r(1+\eps_n)}(\varphi^n_i(x))}(\varphi^n_i)_*(\mm\restr{ U^n_{i}})\\
\label{eq:boundmisure2}
 (1+\eps_n)^{-d}\frac{\d\mm}{\d\mathcal H^d}&
 \leq \frac{\d(\varphi^n_i)_*\mm\restr{U^n_i}}{\d\mathcal L^d}\circ\varphi^n_i\leq (1+\eps_n)^d\frac{\d\mm}{\d\mathcal H^d}\qquad\mm-a.e.\ on\ U^n_i,
\end{align}
\end{subequations}
therefore for any $f:\R^d\to\R^+$ Borel, using the inequalities on the right in the above we have
\[
\begin{split}
\frac{1}{\mm(B_r(x)\cap U^n_i)}&\int f(w)\,\d(\varphi^n_i)_*\mm\restr{B_r(x)\cap U^n_{i}}(w)\\
&\stackrel{\eqref{eq:boundmisure1}}\leq \frac{1}{\mm(B_r(x)\cap U^n_i)}\int_{B^{\R^d}_{r(1+\eps_n)}(\varphi^n_i(x))} f(w)\,\d(\varphi^n_i)_*(\mm\restr{U^n_i})(w)\\
&\stackrel{\eqref{eq:boundmisure2}}
\leq \frac{(1+\eps_n)^d}{\mm(B_r(x)\cap U^n_i)}\int_{B^{\R^d}_{r(1+\eps_n)}(\varphi^n_i(x)) } f(w)\nchi_{\varphi^n_i(U^n_i)}\frac{\d\mm}{\d\mathcal H^d}((\varphi^n_i)^{-1}(w))\,\d\mathcal L^d(w)\\
&\stackrel{\phantom{\eqref{eq:boundmisure1}}}= \frac{(1+\eps_n)^d}{\mm(B_r(x)\cap U^n_i)}\int_{B^{\R^d}_{r(1+\eps_n)}(\varphi^n_i(x)) } f(w)\rho(w)\,\d\mathcal L^d(w)\\
&\stackrel{\phantom{\eqref{eq:boundmisure1}}}= \frac{(1+\eps_n)^{2d}r^d}{\mm(B_r(x)\cap U^n_i)}\int_{B^{\R^d}_{1}(0) }( f\rho)\big(\varphi^n_i(x)+r(1+\eps_n)z\big)\,\d\mathcal L^d(z),
\end{split}
\]
where we put $\rho:=\nchi_{\varphi^n_i(U^n_i)}\frac{\d\mm}{\d\mathcal H^d}\circ(\varphi^n_i)^{-1}$. 
We shall apply this bound to $f:=\frac{\nn_x^p(\cdot-\varphi^n_i(x))}{r^p}$ to obtain
\begin{equation}
\label{eq:primalim}
\begin{split}
 \frac{1}{\mm(B_r(x)\cap U^n_i)}\int& \frac{\nn_x^p(w-\varphi^n_i(x))}{r^p}\,\d(\varphi^n_i)_*\mm\restr{B_r(x)\cap U^n_{i}}(w)\\
&\leq  \frac{(1+\eps_n)^{2d+p}r^d}{\mm(B_r(x)\cap U^n_i)}\int_{B^{\R^d}_{1}(0) } \nn_x^p(z)\rho\big(\varphi^n_i(x)+r(1+\eps_n)z\big)\,\d\mathcal L^d(z).
\end{split}
\end{equation}
Now recall that since $x$ is a point of approximate metric differentiability, we know that $x,\varphi^n_i(x)$ are density points of $U^n_i,\varphi^n_i(U^n_i)$ respectively. Assume also that $x$ is a Lebesgue point of the density $\vartheta^d(y)=\lim_{r\downarrow0}\frac{\mm(B_r(y))}{\omega_dr^d}$ (recall Theorem \ref{thm:denshaus}), that the density itself exists positive at $x$ and finally assume that $\varphi^n_i(x)$ is  a Lebesgue point of $\frac{\d\mm}{\d\mathcal H^d}\circ(\varphi^n_i)^{-1}=\vartheta^d\circ(\varphi^n_i)^{-1}$ (notice that all these conditions are satisfied for $\mm$-a.e.\ $x\in U^n_i$). Then passing to the limit in \eqref{eq:primalim} and using the continuity of $\nn_x$ and \eqref{eq:perc3} we obtain
\[
\begin{split}
\lims_{r\downarrow0}\fint_{B_r(x)\cap U^n_{i}}\frac{\nn_x^p(\varphi^n_i(y)-\varphi^n_i(x))}{r^p}\,\d\mm(y)&\leq  \frac{(1+\eps_n)^{2d+p}}{\vartheta^d(x)}\fint_{B^{\R^d}_{1}(0) } \nn_x^p(z)\vartheta^d(x)\,\d\mathcal L^d(z)\\
&=(1+\eps_n)^{2d+p}\,S_p^p(\nn_x).
\end{split}
\]
Since the lower bound $
\limi_{r\downarrow0}\fint_{B_r(x)\cap U^n_{i}}\frac{\nn_x^p(\varphi^n_i(y)-\varphi^n_i(x))}{r^p}\,\d\mm(y)\geq(1+\eps_n)^{-2d-p}S_p^p(\nn_x) $ can be obtained in a similar way by using the inequalities on the left in \eqref{eq:boundmisure}, the claim \eqref{eq:c3} and the conclusion follow.
\end{proof}
Notice that the existence of the energy density provides  the representation formula
\begin{equation}
\label{eq:repr22}
\E_p(u)
=\left\{\begin{array}{ll}
\displaystyle{\int\e_p^p[u]\,\d\mm},&\qquad\text{ if }u\in\ks^{1,p}(\X,\Y_{\bar y}),\\
+\infty,&\qquad\text{ otherwise}.
\end{array}
\right.
\end{equation}
\begin{lemma}\label{le:convconv2}
Let $(\X,\sfd,\mm)$ be a metric measure space and $(f_r),(g_r)\subset L^1(\X)$ be non-negative with 
\[
\begin{split}
f_r&\leq g_r\qquad\mm-a.e.\ \forall r>0,\\
g_r&\to g\qquad \text{in }L^1(\X),\\
f_r&\to f\qquad \mm-a.e.\ \text{ as }r\downarrow0,
\end{split}
\]
for some Borel functions $f,g$. Then $f_r\to f$ in $L^1(\X)$ as $r\downarrow0$.
\end{lemma}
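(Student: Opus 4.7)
The statement is a version of the generalized dominated convergence theorem, so my plan is to apply Fatou's lemma to a cleverly chosen non-negative sequence.

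First I would establish that $f\in L^1(\X)$ and in fact $f\leq g$ $\mm$-a.e.. Since $g_r\to g$ in $L^1(\X)$, there is a sequence $r_n\downarrow 0$ along which $g_{r_n}\to g$ $\mm$-a.e.. Combined with $f_{r_n}\to f$ $\mm$-a.e. and $f_{r_n}\leq g_{r_n}$ $\mm$-a.e., this yields $f\leq g$ $\mm$-a.e., so $f\in L^1(\X)$ and in particular the quantity $|f_r-f|$ is $\mm$-a.e. well defined and finite.

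The key step is then to consider, for $r>0$, the non-negative function $h_r:=g_r+f-|f_r-f|$, which is indeed $\geq 0$ $\mm$-a.e. since $|f_r-f|\leq f_r+f\leq g_r+f$. Along any sequence $r_n\downarrow 0$ along which both $f_{r_n}\to f$ and $g_{r_n}\to g$ $\mm$-a.e. (which exists by the argument above), we have $h_{r_n}\to g+f$ $\mm$-a.e.. Fatou's lemma then gives
\[
\int(g+f)\,\d\mm\leq\liminf_{n\to\infty}\int h_{r_n}\,\d\mm=\int g\,\d\mm+\int f\,\d\mm-\limsup_{n\to\infty}\int|f_{r_n}-f|\,\d\mm,
\]
where the equality uses $\int g_{r_n}\,\d\mm\to\int g\,\d\mm$ which follows from $L^1$ convergence. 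Rearranging forces $\int|f_{r_n}-f|\,\d\mm\to 0$.

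To upgrade this to convergence as $r\downarrow 0$ (not only along sequences), I would argue by contradiction: if $\int|f_r-f|\,\d\mm$ does not tend to $0$, there exist $\delta>0$ and a sequence $r_n\downarrow 0$ with $\int|f_{r_n}-f|\,\d\mm\geq\delta$; applying the previous argument to a further subsequence of $(r_n)$ along which $f_{r_n}\to f$ and $g_{r_n}\to g$ $\mm$-a.e.\ yields a contradiction. I don't expect any serious obstacle here—the only subtlety is handling the continuous parameter $r$ and the fact that $L^1$ convergence of $g_r$ only gives $\mm$-a.e. convergence along a subsequence, which is handled by the subsequence extraction above.
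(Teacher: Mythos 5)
Your proof is correct, but it takes a genuinely different route from the paper's. You reduce to the generalized dominated convergence theorem and prove it directly via Fatou's lemma applied to the auxiliary non-negative sequence $h_r:=g_r+f-|f_r-f|$ (together with a standard subsequence/contradiction argument to handle the continuous parameter $r$). The paper instead invokes a different classical consequence of $L^1$-convergence: from an arbitrary $r_n\downarrow 0$, one can extract a subsequence along which $g_{r_n}\leq\tilde g$ $\mm$-a.e.\ for a single fixed $\tilde g\in L^1(\X)$; since then $f_{r_n}\leq\tilde g$, the ordinary dominated convergence theorem gives $f_{r_n}\to f$ in $L^1$, and the arbitrariness of $(r_n)$ concludes. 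Your approach only needs a.e.\ convergence of $g_{r_n}$ along a subsequence plus $\int g_{r_n}\to\int g$, so it avoids appealing to the existence of an $L^1$-dominating majorant for a subsequence; the cost is the slightly less transparent Fatou bookkeeping with $h_r$. The paper's route is shorter once one is willing to cite the dominating-majorant fact. Both are sound; neither has a gap. One small remark: in your preliminary step you conclude $f\leq g$ $\mm$-a.e.\ and hence $f\in L^1(\X)$, which is exactly what is needed to make $h_r$ well defined and Fatou applicable — good that you included it, as this point is sometimes glossed over.
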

\begin{proof}Let $r_n\downarrow0$ be arbitrary and use the assumption $g_r\to g$ in $L^1(\X)$ and classical properties of the space $L^1(\X)$ to find a subsequence, not relabeled, such that $g_{r_n}\leq \tilde g$ $\mm$-a.e.\ for some $\tilde g\in L^1(\X)$. Then $f_{r_n}\leq \tilde g$ $\mm$-a.e.\ for every $n$ and an application of the dominated convergence theorem gives that $f_{r_n}\to f$ in $L^1(\X)$. The conclusion  follows from the arbitrariness of the  sequence $r_n\downarrow0$.
\end{proof}

\begin{lemma}\label{le:aplim} Let $(\X,\sfd,\mm)$ be a metric measure space,  $K\subset\X$ be a Borel set, $f:K\to\R$ be a Borel and bounded function and  $x\in K$ a density point.

 Then
\[
\lims_{r\downarrow0}\fint_{B_r(x)\cap K}f\,\d\mm\leq \aplims_{y\to x\atop y\in K}f(y).
\] 
\end{lemma}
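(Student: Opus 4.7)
The plan is to exploit the characterization \eqref{eq:altroaplim2} of the approximate upper limit, namely
\[
\aplims_{y\to x\atop y\in K}f(y)=\inf_V\lims_{y\to x\atop y\in V\cap K}f(y),
\]
where the infimum ranges over Borel sets $V\subset\X$ having $x$ as a density point. Denote the left-hand side by $\lambda$. Given $\eps>0$, choose such a $V$ with $\lims_{y\to x,\,y\in V\cap K}f(y)\leq\lambda+\eps$, so that for all sufficiently small $r>0$ we have the pointwise bound $f(y)\leq\lambda+2\eps$ for every $y\in B_r(x)\cap V\cap K$.

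Next I would split the average over $B_r(x)\cap K$ into the contributions coming from $V$ and its complement:
\[
\fint_{B_r(x)\cap K}f\,\d\mm=\frac{1}{\mm(B_r(x)\cap K)}\left(\int_{B_r(x)\cap K\cap V}f\,\d\mm+\int_{B_r(x)\cap K\setminus V}f\,\d\mm\right).
\]
Since $x$ is a density point of both $K$ and $V$ (so also of $K\cap V$), we have $\mm(B_r(x)\cap K\cap V)/\mm(B_r(x)\cap K)\to 1$ and $\mm(B_r(x)\cap K\setminus V)/\mm(B_r(x)\cap K)\to 0$ as $r\downarrow0$; this is the key first-order input. The first piece is then bounded by $(\lambda+2\eps)\cdot \mm(B_r(x)\cap K\cap V)/\mm(B_r(x)\cap K)$, whose $\lims$ as $r\downarrow0$ is $\lambda+2\eps$. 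The second piece is controlled using the boundedness of $f$: if $|f|\leq C$, it is bounded in absolute value by $C\cdot\mm(B_r(x)\cap K\setminus V)/\mm(B_r(x)\cap K)$, which tends to $0$.

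Combining these two estimates gives $\lims_{r\downarrow0}\fint_{B_r(x)\cap K}f\,\d\mm\leq\lambda+2\eps$, and the arbitrariness of $\eps$ yields the claim. The only conceptual point to watch is the use of boundedness of $f$ to discard the complement term; without it one could not kill the contribution from $B_r(x)\cap K\setminus V$, since no integrability hypothesis on $f$ is assumed. No doubling of $\mm$ is needed, as the argument only uses that ratios of measures of nested density-point sets tend to $1$.
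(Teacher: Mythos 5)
Your proof is correct and uses essentially the same argument as the paper: split the average over $B_r(x)\cap K$ into a piece where $f$ is close to the approximate $\lims$ and a piece whose relative measure vanishes, the latter controlled by boundedness of $f$. The paper produces the split from the ``bad'' set $A_\delta=\{z\in K:\ f(z)>\delta+\lambda\}$ using the density-level definition of $\aplims$, while you produce it from the complementary ``good'' set $V$ via \eqref{eq:altroaplim2}; these are dual formulations of the same decomposition.
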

\begin{proof}
Let $\delta>0$ and $A_\delta\subset K$ be the set of $z$'s such that $f(z)>\delta+\aplims_{y\to x\atop y\in K}f(y)$. Then by definition of $\aplims$ we know that $\lim_{r\downarrow0}\frac{\mm(B_r(x)\cap A_\delta)}{\mm(B_r(x))}=0$. Since  for any $r>0$ we have
\[
\int_{B_r(x)\cap K}f\,\d\mm=\int_{B_r\cap A_\delta}f\,\d\mm+\int_{B_r\cap K\setminus A_\delta}f\,\d\mm\leq \|f\|_{L^\infty}\mm(B_r\cap A_\delta)+\mm(B_r(x))\big(\delta+\aplims_{y\to x\atop y\in K}f(y)\big),
\]
the conclusion follows by the arbitrariness of $\delta>0$.
\end{proof}
\begin{corollary}[Locality of the energy density]\label{cor:loced}
Let $(\X,\sfd,\mm)$ be a strongly rectifiable  space with uniformly locally doubling measure and supporting a Poincar\'e inequality and $(\Y,\sfd_\Y,{\bar y})$ a pointed complete space. 

Let $p\in(1,\infty)$ and $u,v\in \ks^{1,p}(\X,\Y_{\bar y})$. Then
\[
\e_p[u]=\e_p[v]\qquad\mm-a.e.\ on\ \{u=v\}.
\]
\end{corollary}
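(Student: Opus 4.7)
The plan is to exploit the representation of $\e_p[u]$ and $\e_p[v]$ as $\mm$-a.e.\ limits of $\kse_{p,r}[u]$ and $\kse_{p,r}[v]$ (Theorem \ref{thm:exlim}) and to show that, on $E := \{u = v\}$, the two quantities $\kse_{p,r}[u]$ and $\kse_{p,r}[v]$ agree asymptotically as $r \downarrow 0$ once the averaging is localized to $E$. Fix Borel representatives of $u$ and $v$ so that $E$ is Borel; by Corollary \ref{cor:ksh} we have $u, v \in \hs^{1,p}(\X,\Y_{\bar y})$, and we fix respective $p$-Hajlasz gradients $G_R^u, G_R^v \in L^p(\X)$ at some scale $R > 0$, which in particular give the pointwise bound \eqref{eq:bdkse}.

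The key step is the localization identity
\[
\lim_{r \downarrow 0}\Big|\kse_{p,r}^p[u](x) - \fint_{B_r(x) \cap E}\frac{\sfd_\Y^p(u(y),u(x))}{r^p}\,\d\mm(y)\Big| = 0 \qquad \mm\text{-a.e.\ } x \in E,
\]
which is exactly \eqref{eq:c1} of the proof of Theorem \ref{thm:exlim} applied with $U := E$. It is obtained by controlling the contribution of $B_r(x) \setminus E$ to $\kse_{p,r}^p[u](x)$ via \eqref{eq:bdkse}: at any density point $x$ of $E$ that is also a Lebesgue point of $(G_R^u)^p$, one writes
\[
\int_{B_r(x) \setminus E}(G_R^u)^p\,\d\mm \leq \int_{B_r(x) \setminus E}\big|(G_R^u)^p - (G_R^u)^p(x)\big|\,\d\mm + (G_R^u)^p(x)\,\mm(B_r(x) \setminus E)
\]
and observes that both terms, after division by $\mm(B_r(x))$, vanish as $r \downarrow 0$. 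The same statement with $v$ and $G_R^v$ replacing $u$ and $G_R^u$ holds as well.

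Restrict now to the $\mm$-full subset $E' \subset E$ consisting of points where both localization identities hold and where $\kse_{p,r_n}^p[u] \to \e_p^p[u]$, $\kse_{p,r_n}^p[v] \to \e_p^p[v]$ pointwise along some fixed sequence $r_n \downarrow 0$; such an $E'$ exists thanks to Theorem \ref{thm:exlim} and Lebesgue differentiation applied to $(G_R^u)^p$ and $(G_R^v)^p$. Since $u(x) = v(x)$ for every $x \in E$ and $u(y) = v(y)$ for every $y \in E$, one has $\sfd_\Y^p(u(y),u(x)) = \sfd_\Y^p(v(y),v(x))$ for all $x \in E'$ and $y \in B_r(x) \cap E$, so the localized averages appearing in the identity above coincide for $u$ and for $v$. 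Taking $r = r_n$ and letting $n \to \infty$ in the two localization identities therefore gives $\e_p^p[u](x) = \e_p^p[v](x)$ for every $x \in E'$, which is the claim. No substantial obstacle is foreseen: the entire argument simply re-uses the localization already buried in the proof of Theorem \ref{thm:exlim}.
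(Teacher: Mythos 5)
Your proof is correct but takes a genuinely different route from the paper's. The paper's proof argues via the approximate metric differential: by its very definition (whose $\aplims$ is, by \eqref{eq:altroaplim2}, insensitive to the values of the map outside any set having $x$ as a density point) one gets $\md_x(u)=\md_x(v)$ for $\mm$-a.e.\ $x\in\{u=v\}$, and then the representation $\e_p[u]=S_p(\md_\cdot(u))$ of \eqref{eq:endensform} yields the claim. You instead bypass the metric differential and apply directly the localization estimate \eqref{eq:c1} with $U:=\{u=v\}$: the averages over $B_r(x)\cap\{u=v\}$ only involve the values of the maps where they agree, so they coincide, and \eqref{eq:c1} guarantees that they both recover $\kse_{p,r}^p[\cdot](x)$ asymptotically as $r\downarrow0$, hence the respective $\e_p^p$ upon passing to the limit. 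The underlying principle is the same in both arguments --- the energy density at $x$ depends only on the behavior of the map on any set of which $x$ is a density point --- but your route is more elementary: it uses only the intermediate estimate \eqref{eq:c1} and the $\mm$-a.e.\ convergence from Theorem \ref{thm:exlim}, not the $\md$-representation of the energy density nor the Kirchheim-type differentiability theory. The paper's version is a one-liner once the metric-differential machinery is in place, whereas yours is slightly longer on the page but more self-contained.
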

\begin{proof} Let $\eps_n\downarrow0$ be arbitrary and $(\mathcal A^{\eps_n})$ an aligned family of atlases.
By the very definition of approximate metric differentiability and the fact that $\mm$-a.e.\ $x\in\{u=v\}$ is a density point for such set (by the doubling assumption) we see that
\[
\md_x(u)=\md_x(v)\qquad\mm-a.e.\ x\in\{u=v\}.
\]
The conclusion then follows from the identity \eqref{eq:endensform}.
\end{proof}

\begin{remark}[About the Poincar\'e inequality]\label{re:poip}{\rm
For a given $p\in(1,\infty)$, the same conclusions of Theorem \ref{thm:exlim}  hold if in place of \eqref{eq:poincarelip} we assume the weaker $p$-Poincar\e' inequality:
\[
\fint_{B_r(x)}|f-f_{B_r(x)}|\,\d\mm\leq C(R)r\Big|\fint_{ B_{\lambda r}(x)} \lip^p f\,\d\mm\Big|^{\frac1p}\qquad\forall x\in\X,\ r\in(0,R)\qquad\forall f\in\Lip_{bs}(\X).
\]
Indeed:
\begin{itemize}
\item[i)] by the celebrated result by Keith-Zhong \cite{KZ08} in this case it also holds the $p'$-Poincar\'e inequality for some $p'<p$,
\item[ii)] then in the estimate \eqref{eq:stimax} (and similarly in the one below it) one can replace $M_{\lambda R}(|Du|)$ with  $|M_{\lambda R}(|Du|^{p'})|^{\frac{1}{p'}}$
\item[iii)] and since $|Du|^{p'}\in L^{\frac{p}{p'}}$, Proposition \ref{prop:maxest} grants that $M_{\lambda R}(|Du|^{p'})|\in L^{\frac{p}{p'}}(\X)$ as well, i.e.\ $|M_{\lambda R}(|Du|^{p'})|^{\frac{1}{p'}}\in L^p(\X)$.
\end{itemize}
In other words, the conclusion of Proposition \ref{prop:whs} are in place with $G_R:=C(R) |M_{2\lambda R}(|Du|^{p'})|^{\frac{1}{p'}}$ and once the equivalence of $W^{1,p}(\X,\Y_{\bar y})$ and $\hs^{1,p}(\X,\Y_{\bar y})$ is obtained, the other arguments, which do not use the Poincar\'e inequality, can be reproduced.

Similarly, from the next section on we are going to deal with the case $p=2$ and prove, among other things, the lower semicontinuity of the energy. Poincar\'e inequality will not be used, beside its application in Proposition \ref{prop:whs}, so that for the reasons just explained all the stated results would work as well assuming only the validity of the 2-Poincar\'e inequality.
}\fr\end{remark}

\section{Energy density and differential calculus}
\subsection{On the notion of differential in the non-smooth setting}

\subsubsection{Reminders about  differentials of metric-valued maps}\label{se:remdif}
In this short section we are going to recall the definition of differential of a map $u\in W^{1,2}(\X,\Y_{\bar y})$ as given in \cite{GPS18} by building up on the theory developed in \cite{Gigli14}. To keep the presentation short we assume the reader familiar with the language of $L^0$-normed modules. 

Recall that the differential of real valued Sobolev functions and the cotangent module are defined by the following:
\begin{thmdef}\label{thm:cotmod}
Let $(\X,\sfd,\mm)$ be a metric measure space. Then there exists a unique couple $(L^0(T^*\X),\d)$ where $L^0(T^*\X)$ is a $L^0(\mm)$-normed module and $\d:W^{1,2}(\X)\to L^0(T^*\X)$ is linear and satisfies
\begin{itemize}
\item[i)] for any $f\in W^{1,2}(\X)$ it holds $|\d f|=|Df|$ $\mm$-a.e.,
\item[ii)] the space $\{\d f\ :\ f\in W^{1,2}(\X)\}$ generates $L^0(T^*\X)$.
\end{itemize}
Uniqueness is intended up to unique isomorphism, i.e.\ if $(\mathscr M,\tilde \d)$ has the same properties, then there is a unique isomorphism $\Phi:L^0(T^*\X)\to\mathscr M$ such that $\tilde \d=\Phi\circ \d$.
\end{thmdef}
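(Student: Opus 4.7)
The plan is to deal with uniqueness first, which is essentially automatic from the two axioms, and then to give an explicit construction for existence via a pre-cotangent module.

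For uniqueness, given two couples $(L^0(T^*\X),\d)$ and $(\mathscr M,\tilde\d)$ satisfying $(i)$--$(ii)$, the natural candidate isomorphism would be defined first on simple combinations $\omega=\sum_{i=1}^n\chi_{E_i}\d f_i$ (with $(E_i)$ a finite Borel partition of $\X$ and $f_i\in W^{1,2}(\X)$) by $\Phi(\omega):=\sum_i\chi_{E_i}\tilde\d f_i$. Well-definedness reduces to the following observation: if $\sum_i\chi_{E_i}\d f_i=\sum_j\chi_{F_j}\d g_j$, then on each $E_i\cap F_j$ one has $\d(f_i-g_j)=0$, hence by $(i)$ that $|D(f_i-g_j)|=0$ a.e.\ on $E_i\cap F_j$; applying $(i)$ a second time on the $\mathscr M$-side gives $\tilde\d(f_i-g_j)=0$ there, so the two candidate values agree. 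Property $(i)$ on both sides also yields $|\Phi(\omega)|=|\omega|$, so $\Phi$ is an $L^0(\mm)$-linear pointwise isometry on simple elements; by $(ii)$ these elements are dense in $L^0(T^*\X)$, so $\Phi$ extends uniquely to an isometric $L^0(\mm)$-module isomorphism intertwining $\d$ and $\tilde\d$.

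For existence, I would construct $L^0(T^*\X)$ as the completion of a pre-cotangent module $\mathrm{PCM}$ defined as the set of finite formal sums $\sum_i\chi_{E_i}f_i$ with $(E_i)$ a Borel partition and $f_i\in W^{1,2}(\X)$, where two such sums are identified whenever, after passing to a common refinement, the respective components coincide on each piece. I would define $|\cdot|_\ast:\mathrm{PCM}\to L^0(\mm)$ by $|\sum_i\chi_{E_i}f_i|_\ast:=\sum_i\chi_{E_i}|Df_i|$, endow $\mathrm{PCM}$ with the natural $L^0(\mm)$-module operations (sum on common refinements, multiplication by simple functions), pass to the quotient by the kernel of $|\cdot|_\ast$, and complete in the $L^0$-topology induced by $|\cdot|_\ast$ and a finite measure equivalent to $\mm$. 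Setting $\d f$ to be the class of the trivial sum $\chi_\X f$, property $(i)$ holds by construction, property $(ii)$ is built into the definition, and $\R$-linearity of $\d$ is immediate since a difference like $\d(\alpha f+\beta g)-\alpha\d f-\beta\d g$ is represented by a formal sum whose $|\cdot|_\ast$-value vanishes.

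The main obstacle in this scheme is the well-definedness of $|\cdot|_\ast$ on $\mathrm{PCM}$: if two formal sums are equivalent, their norms must agree, which amounts to checking that $|Df_i|=|Dg_j|$ a.e.\ on $E_i\cap F_j$ whenever $f_i=g_j$ there. This is precisely the locality of the minimal weak upper gradient provided by Theorem \ref{thm:basesob}$(ii)$, and is the only nontrivial analytic input into the construction. A secondary but purely technical point, standard in the theory of $L^0$-normed modules, is that the module operations and the pointwise norm extend continuously from $\mathrm{PCM}$ to the completion, yielding an object that is indeed an $L^0(\mm)$-normed module in the sense required by the statement.
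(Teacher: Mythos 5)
The paper does not prove this statement: it is recalled as known from \cite{Gigli14}, where the cotangent module is constructed, and no argument is given here. Your proof reproduces that standard construction — a pre-cotangent module of (Borel partition, Sobolev function) pairs, the pointwise norm $\sum_i\chi_{E_i}|Df_i|$ whose well-definedness is exactly the locality of minimal weak upper gradients from Theorem~\ref{thm:basesob}(ii), the quotient by the kernel followed by $L^0$-completion, and uniqueness by extending the natural pointwise isometry from simple elements of the form $\sum_i\chi_{E_i}\d f_i$ via the density guaranteed by (ii) — and is correct; the only minor imprecision is that multiplication by general $L^0(\mm)$ coefficients (not just simple ones) is obtained only after completion, which you appropriately flag as a standard technical step.
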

As studied in \cite{GP16}, in the setting of strongly rectifiable spaces, the cotangent module (and its dual the tangent module $L^0(T\X)$) are tightly linked to the geometry of the underlying space. In particular, the following holds:
\begin{theorem}[Dimension of (co)tangent module]\label{thm:dimcot}
Let $(\X,\sfd,\mm)$ be a strongly rectifiable space of dimension $d$ which is locally doubling and supporting a Poincar\'e inequality. 

Then the modules $L^0(T^*\X)$ and $L^0(T\X)$ have dimension $d$.
\end{theorem}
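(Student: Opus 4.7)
The plan is to reduce the problem to the cotangent module and then to exhibit a local basis of $d$ elements using a chart of a strongly rectifiable atlas. Since $L^0(T\X)$ is the dual $L^0$-normed module of $L^0(T^*\X)$, their pointwise dimensions agree $\mm$-a.e., so it suffices to prove that $L^0(T^*\X)$ has dimension $d$. The dimension function is $\mm$-a.e.\ Borel with values in $\N \cup \{\infty\}$, hence the claim amounts to showing that the local dimension equals $d$ on each chart of an $\eps$-atlas.

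Fix $\eps>0$ and a chart $(U^\eps_i,\varphi^\eps_i)$, write $\varphi^\eps_i=(\varphi^{\eps,1}_i,\ldots,\varphi^{\eps,d}_i)$, and extend each coordinate to a $(1+\eps)$-Lipschitz function on $\X$ via Lemma \ref{le:ext}. This produces $d$ elements $\d\varphi^{\eps,1}_i,\ldots,\d\varphi^{\eps,d}_i\in L^0(T^*\X)$. For the generation property, take $f\in W^{1,2}(\X)$. By Corollary \ref{cor:ksh} and \eqref{eq:hsluslip} the function $f$ has the Lusin-Lipschitz property; by the locality of the differential (Theorem \ref{thm:basesob}(ii)) it suffices to represent $\d f$ on each Lipschitz piece $E\subset U^\eps_i$. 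On such $E$, the map $g:=f\circ(\varphi^\eps_i)^{-1}$ is Lipschitz on $\varphi^\eps_i(E)\subset\R^d$, hence differentiable $\mathcal L^d$-a.e.\ by Rademacher, producing Borel coefficients $a_1,\ldots,a_d\in L^0(\mm)$ on $E$. A chain-rule argument, based on $f=g\circ\varphi^\eps_i$ and the metric-differential identification of Proposition \ref{prop:metrdiff}, gives the representation $\d f=\sum_j a_j\,\d\varphi^{\eps,j}_i$ on $E$.

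For the linear independence, suppose $\sum_j a_j\,\d\varphi^{\eps,j}_i=0$ $\mm$-a.e.\ on a Borel set $E'\subset U^\eps_i$ with $a_j\in L^0(\mm)$. By approximating the $a_j$'s by simple (piecewise constant) functions and using the continuity of $|\cdot|:L^0(T^*\X)\to L^0(\mm)$, the problem reduces to the case of constant coefficients $c_j\in\R$: if $\sum_j c_j\,\d\varphi^{\eps,j}_i=0$ on a set of positive measure we must show all $c_j$ vanish. Since $h:=\sum_j c_j\varphi^{\eps,j}_i$ is Lipschitz, Proposition \ref{prop:liplocap} and Lemma \ref{le:normmd} give $|\d h|(x)=\aplip(h)(x)=|\!|\!|\md_x(h)|\!|\!|$ at $\mm$-a.e.\ $x$; on the other hand, by Proposition \ref{prop:metrdiff}, at $\mm$-a.e.\ $x\in U^\eps_i$ the metric differential $\md_x(\varphi^\eps_i)$ is $(1+\eps)$-biLipschitz to the Euclidean norm of $\R^d$, hence a genuine norm, and the vanishing of its pairing with $(c_1,\ldots,c_d)$ forces $(c_1,\ldots,c_d)=0$.

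The main obstacle is the independence step: translating a module-level vanishing into a pointwise linear-algebra statement on $\R^d$ demands care because $L^0$-coefficients are not a priori Lipschitz and hence cannot be plugged directly into the identification $|\d g|=\aplip(g)$. This is circumvented by the simple-function approximation indicated above, using that the module norm is a continuous lattice operation and that the claim is local; the multiplicative distortion of order $\eps$ introduced by $(1+\eps)$-biLipschitz charts is harmless since $\eps$ can be sent to $0$ along an aligned family of atlases (cf.\ Definition \ref{def:srs}(iv)).
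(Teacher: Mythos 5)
The paper does not include a proof of Theorem~\ref{thm:dimcot}; it is stated as a cited result from~\cite{GP16}, so there is no in-text argument to compare against, and I evaluate your proposal on its own merits. Your architecture --- localize to charts, take the (extended) coordinate differentials $\d\varphi^{\eps,1}_i,\ldots,\d\varphi^{\eps,d}_i$ as a candidate local basis, prove generation via a chain rule, and prove independence via the metric differential --- is the natural one and is in the spirit of the cited reference.

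There is, however, a genuine gap in the independence step, and it is exactly the crux of the theorem. The hypothesis $\sum_j c_j\,\d\varphi^{\eps,j}_i=0$ $\mm$-a.e.\ on a positive-measure set $E'$ tells you, via Theorem/Definition~\ref{thm:cotmod}(i), only that the minimal weak upper gradient $|Dh|$ of $h:=\sum_j c_j\varphi^{\eps,j}_i$ vanishes $\mm$-a.e.\ on $E'$. You then invoke the chain of equalities $|\d h|=\aplip(h)=|||\md_\cdot(h)|||$ and attribute it to Proposition~\ref{prop:liplocap} and Lemma~\ref{le:normmd}, but neither of those results touches $|Dh|$: Proposition~\ref{prop:liplocap} only identifies $\lip(h)$ with $\aplip(h)$, and Lemma~\ref{le:normmd} only identifies $\aplip(h)$ with $|||\md_\cdot(h)|||$; both live entirely on the metric/pointwise side of the theory. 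The missing bridge, $|Dh|=\lip(h)$ $\mm$-a.e.\ for $h$ Lipschitz, is Cheeger's differentiability theorem for doubling spaces supporting a Poincar\'e inequality~\cite{Cheeger00}, and it is not contained in the paper's toolbox; without it, vanishing of the weak upper gradient on a positive-measure set carries no pointwise Lipschitz information and the constants $c_j$ cannot be concluded to be zero. The same deep input is hidden in your generation step: Proposition~\ref{prop:metrdiff} controls only the scalar $|\d f|$ via the seminorm $\md_\cdot(f)$, not the module element $\d f$ itself, so upgrading Rademacher's pointwise differential of $g:=f\circ(\varphi^\eps_i)^{-1}$ to the module identity $\d f=\sum_j(\partial_j g\circ\varphi^\eps_i)\,\d\varphi^{\eps,j}_i$ again uses $|D\cdot|=\lip(\cdot)$ on Lipschitz pieces. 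Finally, the reduction of independence from $L^0$-coefficients to constants ``by approximating the $a_j$ by simple functions'' does not work as written (the resulting approximate identity is not a constant-coefficient one); it should be replaced by a Lusin/Egorov localization onto a positive-measure subset where the coefficient vector is nearly constant and bounded away from zero.
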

For general metric measure structures the structure of the (co)tangent module can be rather complicated, but at least if the metric structure is the Euclidean one some link between such abstract notions and more concrete ones can be established: the following result has been obtained in \cite{GP16-2}, see also \cite{LP19} for more recent development on the topic.
\begin{theorem}[Tangent module in the Euclidean setting]\label{thm:tangrd} Let $d\in\N$ and consider the space $\R^d$ equipped with the Euclidean distance and a non-negative and non-zero Radon measure $\mu$. 

Then there is a canonical embedding $\iota$ of the tangent module $L^0(T\R^d)$ into the space $L^0(\R^d,\R^d;\mu)$ of Borel vector fields on $\R^d$ identified up to $\mu$-a.e.\ equality. In particular the dimension of $L^0(T\R^d)$ is bounded above by $d$.
\end{theorem}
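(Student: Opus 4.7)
\medskip

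\textbf{Proof plan for Theorem \ref{thm:tangrd}.} The plan is to build $\iota$ from the obvious candidates: the coordinate projections $\pi_i:\R^d\to\R$, $i=1,\dots,d$. Each $\pi_i$ is $1$-Lipschitz, so $\pi_i\in W^{1,2}_{\rm loc}(\R^d,\mu)$ (using a Lipschitz cutoff times $\pi_i$ and the locality of the cotangent module). Hence $\d\pi_i$ is a well-defined element of $L^0(T^*\R^d)$ with $|\d\pi_i|\leq 1$ $\mu$-a.e. I would then define
\[
\iota:L^0(T\R^d)\to L^0(\R^d,\R^d;\mu),\qquad \iota(v):=\bigl(\d\pi_1(v),\dots,\d\pi_d(v)\bigr),
\]
which is tautologically $L^0(\mu)$-linear from linearity of the duality pairing. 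Since $L^0(\R^d,\R^d;\mu)\simeq L^0(\mu)^d$ is a free module of dimension $d$, the dimension bound on $L^0(T\R^d)$ will be an immediate consequence of injectivity of $\iota$.

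The heart of the proof is therefore to show that $\iota$ is injective, and I would obtain this via the stronger claim that
\[
\{\d\pi_1,\dots,\d\pi_d\}\quad\text{generates $L^0(T^*\R^d)$ as an $L^0(\mu)$-module.}
\]
Indeed, given this, if $\iota(v)=0$ then $\omega(v)=0$ for $\omega$ in a generating family of $L^0(T^*\R^d)$, so $v=0$. By Theorem/Definition \ref{thm:cotmod}(ii), $L^0(T^*\R^d)$ is generated by $\{\d f:f\in W^{1,2}(\R^d,\mu)\}$, so it suffices to write each such $\d f$ as an $L^0(\mu)$-combination of the $\d\pi_i$'s. I would first handle $f\in \Lip_{bs}(\R^d)$: by Rademacher's theorem $f$ has classical partial derivatives $\partial_i f\in L^\infty(\mathcal L^d)$, and using standard mollifications $f_n:=f*\rho_n$ (which are smooth with $\d f_n=\sum_i\partial_if_n\,\d\pi_i$ by the chain rule) I would show
\[
\d f=\sum_{i=1}^d\bar\partial_i f\cdot\d\pi_i\qquad\mu-a.e.,
\]
where $\bar\partial_i f$ is a chosen Borel representative of $\partial_i f$. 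The passage to the $L^0(\mu)$-limit uses the closure property Theorem \ref{thm:basesob}(i) together with the $L^\infty$-bound $|\partial_i f_n|\leq\Lip(f)$, which makes the right-hand side well-behaved in $L^0(\mu)$ irrespective of whether $\mu\ll\mathcal L^d$. The general case $f\in W^{1,2}(\R^d,\mu)$ then follows by approximation: Theorem \ref{thm:basesob}(iii) provides Lipschitz approximations $f_n\to f$ with $\lip_a(f_n)\to|Df|$ in $L^2(\mu)$, and the closure property again allows one to pass the decomposition to the limit, obtaining an expression $\d f=\sum_i g_i\,\d\pi_i$ with $g_i\in L^0(\mu)$.

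The main obstacle is precisely this last generation step when $\mu$ is an arbitrary Radon measure without any absolute continuity or doubling assumption: one cannot appeal to Lusin--Lipschitz approximation or Poincaré inequalities, and the classical gradient $\nabla f$ is defined only $\mathcal L^d$-a.e., so its interpretation as a $\mu$-measurable object requires choosing a Borel representative and verifying that the resulting identity $\d f=\sum_i\bar\partial_if\cdot\d\pi_i$ is independent of the choice (this independence is automatic from locality of $\d$ on $\mu$-negligible sets). The control $|\d f_n|=|\nabla f_n|\leq\Lip(f)$ $\mu$-a.e. for mollifications of a Lipschitz $f$ is the key quantitative ingredient that keeps the approximation under control in $L^0(\mu)$, and once this is in place the closure of the differential does the rest.
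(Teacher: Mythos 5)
The paper does not contain a proof of Theorem \ref{thm:tangrd}: it cites \cite{GP16-2} (and \cite{LP19}) for this statement, so there is no internal argument to compare against, and your proposal has to stand on its own. Your overall plan is the right one: set $\iota(\vv):=\big(\d\pi_1(\vv),\dots,\d\pi_d(\vv)\big)$ and reduce injectivity of $\iota$ --- hence the dimension bound --- to showing that $\d\pi_1,\dots,\d\pi_d$ generate $L^0(T^*\R^d)$. The genuine gap is the step you dismiss as ``by the chain rule'', namely $\d f_n=\sum_i\partial_i f_n\,\d\pi_i$ for smooth compactly supported $f_n$. This is a nontrivial identity in a module that depends on $\mu$; it is not a formal consequence of anything stated in this paper and is in fact the real content of the theorem --- once it is available, the subsequent mollification and closure arguments are routine. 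One can establish it, for instance, by piecewise-affine interpolation $T_n$ of $f_n$ on a grid of mesh $1/n$: on each closed cell $T_n$ coincides with an affine map $\ell=\sum_i a_i\pi_i+b$, so (after a Lipschitz cut-off) the locality property in Theorem \ref{thm:basesob}(ii) and the linearity of $\d$ give $\d T_n=\sum_i a_i\,\d\pi_i$ $\mu$-a.e.\ on that cell; since $f_n\in C^1$ one has $\Lip(f_n-T_n)\to0$ and $\nabla T_n\to\nabla f_n$ uniformly, and passing to the limit yields the desired identity. Without some such argument the proof does not close.

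Two further slips, neither fatal. The asserted equality $|\d f_n|=|\nabla f_n|$ $\mu$-a.e.\ is false for general Radon $\mu$: if $\mu$ is singular (say, carried by a hyperplane) the minimal weak upper gradient of a smooth function can be strictly smaller than $|\nabla f_n|$. Only $|\d f_n|\le\lip(f_n)\le\Lip(f)$ holds, which is all you actually use. Also, the claimed representation $\d f=\sum_i\bar\partial_i f\,\d\pi_i$ with $\bar\partial_i f$ ``a chosen Borel representative'' of the $\mathcal L^d$-a.e.\ defined classical derivative is not what the mollification argument delivers when $\mu$ is singular with respect to $\mathcal L^d$: the functions $\partial_i f_n=(\partial_if)*\rho_n$ need not converge $\mu$-a.e.\ to any fixed Borel representative of $\partial_i f$, so the limiting coefficients are only some weak-$*$ limits in $L^\infty(\mu)$. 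This does not affect the conclusion --- membership of $\d f$ in the $L^0(\mu)$-span of $\{\d\pi_i\}$ follows because that span is a weakly $L^2$-closed submodule and, via Mazur's lemma, $\d f$ is a weak $L^2$-limit of $\d f_n$ --- but the explicit formula as stated is not justified and should not be claimed.
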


We now turn to the definition of differential for a metric valued map $u:\X\to\Y$. This is given in terms of Sobolev functions on both the source and the target space, where the latter will typically be equipped with the measure  $\mu:=u_*(|Du|^2\mm)$. In order to emphasize the dependence of such structure on the choice of the measure $\mu$ (and thus on the function $u$) we shall denote by $\d_\mu$ the differential operator on $(\Y,\sfd_\Y,\mu)$ and by $L^0_\mu(T^*\Y),L^0_\mu(T\Y)$ the corresponding cotangent and tangent modules.

The definition of differential of $u$ is given by duality with that of pullback of Sobolev functions. This latter operation is the one studied in the following lemma: 
\begin{lemma}[Pullback of Sobolev functions]\label{le:compo} Let $(\X,\sfd,\mm)$ be a metric measure space and $(\Y,\sfd_\Y,{\bar y})$ a pointed complete space.

Let  $u\in W^{1,2}(\X,\Y_{\bar y})$, put $\mu:=u_\ast(|Du|^2\mm)$ and let $f\in W^{1,2}(\Y,\sfd_\Y,\mu)$. Then there is $g\in S^2(\X)$ such that $g=f\circ u$ $\mm$-a.e.\ on $\{|Du|>0\}$ and 
\begin{equation}
\label{eq:chain}
|\d g|\leq |\d_\mu f|\circ u|D u|\qquad\mm-a.e..
\end{equation}
More precisely, there is  $g\in S^2(\X)$ and a sequence $(f_n)\subset \LIP_{bs}(\Y)$ such that
\begin{equation}
\label{eq:conv}
\begin{array}{rllrll}
f_n& \to\ f\qquad& \mu-a.e.&\qquad\qquad\lip_a(f_n)& \to\  |\d_\mu f|\qquad &\text{\rm in }L^2(\mu),\\
f_n\circ u& \to\ g\qquad& \mm-a.e.&\qquad\qquad\lip_a(f_n)\circ u|Du|& \to\  |\d_\mu f|\circ u|D u|&\text{\rm in }L^2(\mm).
\end{array}
\end{equation}
Moreover, if $f$ is also bounded, then the $f_n$'s can be taken to be equibounded.
\end{lemma}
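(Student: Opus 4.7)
The plan is to approximate $f$ by Lipschitz functions on $\Y$ and pull them back along $u$, obtaining $g$ as the a.e.\ limit of $f_n\circ u$ on the set $\{|Du|>0\}$. First, invoke Theorem \ref{thm:basesob}(iii) applied to the space $(\Y,\sfd_\Y,\mu)$ to produce a sequence $(f_n)\subset\Lip_{bs}(\Y)$ with $f_n\to f$ and $\lip_a(f_n)\to|\d_\mu f|$ in $L^2(\mu)$; if $f$ is bounded, truncating each $f_n$ at level $\|f\|_{L^\infty(\mu)}$ preserves Lipschitzness, does not increase $\lip_a$, and maintains the $L^2(\mu)$-convergence, giving the equiboundedness claim. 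Extracting a subsequence, both convergences hold $\mu$-a.e.\ outside some Borel set $N_0\subset\Y$ with $\mu(N_0)=0$; the defining identity $\mu=u_\ast(|Du|^2\mm)$ then forces $|Du|=0$ $\mm$-a.e.\ on $E:=u^{-1}(N_0)$.

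The next step is the chain-rule bound $|D(f_n\circ u)|\leq\lip_a(f_n)\circ u\cdot|Du|$ $\mm$-a.e., which follows from Proposition \ref{prop:basesobmet}(i-c) combined with Theorem \ref{thm:basesob}(iv): for any $2$-test plan $\ppi$, $\ppi$-a.e.\ $\gamma$ has $u\circ\gamma$ admitting an AC representative $u_\gamma$ with $\ms(u_\gamma,t)\leq|Du|(\gamma_t)|\dot\gamma_t|$, and composing with the Lipschitz function $f_n$ yields an AC function whose a.e.\ derivative is bounded by $\lip_a(f_n)(u_\gamma(t))\ms(u_\gamma,t)\leq\lip_a(f_n)(u(\gamma_t))|Du|(\gamma_t)|\dot\gamma_t|$. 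The pivotal observation is that $h\mapsto(h\circ u)\cdot|Du|$ is, by the very definition of $\mu$, an $L^2$-isometry from $L^2(\mu)$ into $L^2(\mm)$. Applied to $h=\lip_a(f_n)$ and $h=f_n$ this yields the two $L^2(\mm)$-convergences in the right column of \eqref{eq:conv}. After a further subsequence, $f_n\circ u\to f\circ u$ $\mm$-a.e.\ on $\{|Du|>0\}$; I would define $g$ to coincide with $f\circ u$ on this set and with $0$ on the complement, which secures the remaining convergences in \eqref{eq:conv} together with the prescribed identity $g=f\circ u$ $\mm$-a.e.\ on $\{|Du|>0\}$.

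Finally, to prove $g\in S^2(\X)$ with $|\d g|\leq|\d_\mu f|\circ u\cdot|Du|$, I would verify Theorem \ref{thm:basesob}(iv) for $g$ directly rather than appeal to $L^0$-closure, since on $E$ the sequence $f_n\circ u$ may fail to converge in measure and truncating it there would generally destroy the Sobolev class. The main obstacle is that $\mu$-a.e.\ convergence of $f_n$ to $f$ on $\Y$ need not descend to pointwise convergence along arbitrary curves $u_\gamma$, as $(u_\gamma)_\ast\mathcal L^1$ is not a priori absolutely continuous with respect to $\mu$. This is bypassed by the following bookkeeping: for any Borel $B\subset\Y$ with $\mu(B)=0$ the auxiliary measure
\[
\iint \nchi_B(u(\gamma_t))|Du|^2(\gamma_t)|\dot\gamma_t|^2\,\d t\,\d\ppi(\gamma)
\]
vanishes, because $(\e_t)_\ast\ppi\leq C\mm$ and $|Du|=0$ $\mm$-a.e.\ on $u^{-1}(B)$. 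Applied with $B=N_0$ this shows $\ms(u_\gamma,t)=0$ for a.e.\ $t$ with $u_\gamma(t)\in N_0$, so the chain-rule bound for $f_n\circ u_\gamma$ passes to the pointwise limit on $\{\ms>0\}$ (where both $f_n\to f$ and $\lip_a(f_n)\to|\d_\mu f|$ hold along $u_\gamma$) and is trivial on $\{\ms=0\}$, delivering the required weak upper gradient bound for $g$.
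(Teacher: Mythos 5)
The paper does not give a self-contained proof here: it simply cites \cite[Proposition~3.3]{GPS18}, so there is nothing internal to compare against. Your blind reconstruction is substantively correct in its main ideas, and those ideas are almost certainly the ones underlying the cited proof: approximating $f$ by $\LIP_{bs}(\Y)$-functions from Theorem~\ref{thm:basesob}(iii) applied to $(\Y,\sfd_\Y,\mu)$; the observation that $h\mapsto (h\circ u)|Du|$ is an isometric embedding of $L^2(\mu)$ into $L^2(\mm)$ (which is precisely why $\mu:=u_*(|Du|^2\mm)$ is the right measure to put on $\Y$); the pointwise chain-rule bound $|D(f_n\circ u)|\le\lip_a(f_n)\circ u\,|Du|$ via the test-plan characterizations in Proposition~\ref{prop:basesobmet}(i-c) and Theorem~\ref{thm:basesob}(iv); and, crucially, the bookkeeping showing that the exceptional $\mu$-null set $N_0\subset\Y$ is invisible to test plans because $|Du|=0$ $\mm$-a.e.\ on $u^{-1}(N_0)$, hence $\ms(u_\gamma,\cdot)=0$ a.e.\ where $u_\gamma$ sits in $N_0$. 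That last point is the real content of the lemma and you identified it precisely; the truncation argument for the equiboundedness claim is also fine.

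There is, however, a genuine loose end in the $\mm$-a.e.\ convergence $f_n\circ u\to g$. You define $g:=f\circ u$ on $\{|Du|>0\}$ and $g:=0$ elsewhere, and assert that this ``secures the remaining convergences in \eqref{eq:conv}''. But the set $A:=\{|Du|=0\}\cap u^{-1}(N_0)$ can have positive $\mm$-measure (the identity $\mu(N_0)=0$ only forces $|Du|=0$ $\mm$-a.e.\ on $u^{-1}(N_0)$, it says nothing about $\mm(u^{-1}(N_0))$), and on $A$ you have no control on the sequence $(f_n\circ u(x))_n$: it need not converge at all, let alone to $0$. A subsequence extraction on $(\X,\mm)$ does not help either, since on $A$ there is no $\mm$-quantitative information linking $(f_n)$ to anything. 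This matters for a second reason: when you verify the test-plan characterization for $g$, you must produce for $\ppi$-a.e.\ $\gamma$ an absolutely continuous representative of $g\circ\gamma$, and if $g$ oscillates wildly on $\{|Du|=0\}$ (because you declared it $=0$ there while $f_n\circ u$ does not approach $0$) then the AC representative coming from the limit of $f_n\circ u_\gamma$ will in general \emph{not} be a.e.\ equal to $g\circ\gamma$. The standard remedy is to let $g$ be defined as the a.e.\ limit of a suitably chosen subsequence of $f_n\circ u$ on the largest set where the limit exists, rather than force $g:=0$ on $\{|Du|=0\}$; one then needs to argue (or arrange by the choice of $(f_n)$) that this set is of full $\mm$-measure. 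Worth noting: in the only place the present paper actually uses \eqref{eq:conv} (the proof of Lemma~\ref{le:dn}), the convergence $f_n\circ u\to g$ is invoked only on a compact $E\subset\{|Du|>0\}$, so the gap is not consequential for the paper's downstream argument; but as a proof of the lemma as stated, the step needs to be repaired.
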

\begin{proof} See \cite[Proposition 3.3]{GPS18}. The last claim is trivial by truncation, as already noticed in the course of the proof of  \cite[Proposition 3.3]{GPS18}.
\end{proof}
Ideally, we would like to define $\d u$ as $L^0(\mm)$-linear map from $L^0(T\X)$ to the pullback $u^*L^0_\mu(T\Y)$ of $L^0_\mu(T\Y)$ via $u$. A technical issue in doing so is that  $u^*L^0_\mu(T\Y)$ is not  a $L^0(\mm)$-normed module, but  a $L^0(|Du|^2\mm)$-normed module, or equivalently a $L^0(\nchi_{\{|Du|>0\}}\mm)$-normed module. Yet, it is clear that $\d u$ should be 0 on $\{|Du|=0\}$ so that what we should do is to produce a $L^0(\mm)$-normed module which `coincides with $u^*L^0_\mu(T\Y)$ on $\{|Du|>0\}$' and the easiest way to do so is to require that such module `contains only the 0 element on $\{|Du|=0\}$'.

This procedure is done by the extension functor that we now describe.  Let $E\subset\X$ be Borel and notice that we have a natural projection/restriction operator ${\rm proj}:L^0(\mm)\to L^0(\mm\restr E)$ given by passage to the quotient up to equality $\mm$-a.e.\ on $E$\ and a natural `extension' operator ${\rm ext}:L^0(\mm\restr E)\to L^0(\mm)$ which sends $f\in L^0(\mm\restr E)$ to the function equal to $f$ $\mm$-a.e.\ on $E$ and to $0$ on $\X\setminus E$. Now let $\mathscr M$ be a $L^0(\mm\restr E)$-normed module. The \emph{extension} of $\mathscr M$ is the $L^0(\mm)$-normed module  ${\rm Ext}(\mathscr M)$ defined as a set by ${\rm Ext}(\mathscr M):=\mathscr M$, equipped with the  multiplication of $v\in {\rm Ext}(\mathscr M) $ by $f\in L^0(\mm)$ given by   ${\rm proj}(f)v\in \mathscr M={\rm Ext}(\mathscr M)$ and with the pointwise norm defined as ${\rm ext}(|v|)\in L^0(\mm)$. We shall denote by ${\rm ext}:\mathscr M\to {\rm Ext}(\mathscr M)$ the identity map.
\begin{definition}\label{def:diff} Let $(\X,\sfd,\mm)$ be a metric measure space,  $(\Y,\sfd_\Y,{\bar y})$ a pointed complete space and $u\in W^{1,2}(\X,\Y_{\bar y})$. 
	Then the \emph{differential} $\d u$ of $u$ is the operator 
	\[ 
	\d u:\, L^0(T\X)\to {\rm Ext}\big((u^\ast L^0_\mu(T^\ast\Y))^\ast \big)
	\] given as follows. 
	For $\vv\in L^0(T\X)$, the object $\d u(\vv)\in {\rm Ext}\big((u^\ast L^0_\mu(T^\ast\Y))^\ast\big)$ is characterized by the property: for every $f\in W^{1,2}(\Y,\sfd_\Y,\mu)$ 
	and every $g\in S^2 (\X,\sfd_\X,\mm)$ as in Lemma \ref{le:compo} we have 
\begin{equation}\label{eq:def_du}
{\rm ext}\big([u^\ast \d_\mu f]\big)\big(\d u(\vv)\big)=\d g(\vv)\quad \mm-a.e..
\end{equation}
\end{definition}
The fact that such definition is well posed is the content of the next proposition, see \cite[Proposition 3.5]{GPS18}, which also provides the compatibility \eqref{eq:samenorm2} between two natural notions of `norm of the differential'.
\begin{proposition}[Well posedness of the definition]\label{prop:basic}  Let $(\X,\sfd,\mm)$ be a metric measure space, $(\Y,\sfd_\Y,{\bar y})$ a pointed complete space and $u\in W^{1,2}(\X,\Y_{\bar y})$. Then the differential $\d u$ of $u$ in Definition \ref{def:diff} is well-defined and the map  $\displaystyle \d u:\,L^0(T\X)\to {\rm Ext}\big((u^\ast L^0_\mu(T^\ast\Y))^\ast \big)$ is $L^0(\mm)$-linear and continuous. Moreover, it holds that
\begin{equation}
\label{eq:samenorm2}
|\d u|=|Du|\qquad\mm-a.e..
\end{equation}
\end{proposition}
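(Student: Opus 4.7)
The plan is to define $\d u(\vv)$, for each $\vv\in L^0(T\X)$, by prescribing its pairing with the generators $\{u^\ast \d_\mu f : f\in W^{1,2}(\Y,\sfd_\Y,\mu)\}$ of the pullback module via the equation \eqref{eq:def_du}, and then to extend by $L^0(\mm)$-linearity and continuity. The single analytic input used throughout is the pointwise chain-rule bound \eqref{eq:chain} of Lemma \ref{le:compo}. For well-posedness on generators, fix $\vv$ and $f$ and let $g\in S^2(\X)$ be as produced by Lemma \ref{le:compo}. Two such choices of $g$ agree $\mm$-a.e.\ on $\{|Du|>0\}$, so Theorem \ref{thm:basesob}(ii) combined with Theorem/Definition \ref{thm:cotmod}(i) forces their differentials to coincide there. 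If moreover $u^\ast \d_\mu f_1=u^\ast \d_\mu f_2$ in the pullback module, then $|\d_\mu(f_1-f_2)|\circ u=0$ $\mm$-a.e.\ on $\{|Du|>0\}$ by definition of the pullback norm; applying Lemma \ref{le:compo} to $f_1-f_2$ the bound \eqref{eq:chain} yields $|\d(g_1-g_2)|=0$ on that set. Both sides of \eqref{eq:def_du} vanish on $\{|Du|=0\}$ by \eqref{eq:chain} again, so the prescription ${\rm ext}(u^\ast\d_\mu f)\mapsto \d g(\vv)$ is unambiguous.

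To extend to the whole module, I rewrite \eqref{eq:chain} in the form
\[
|\d g(\vv)|\leq |\d_\mu f|\circ u\cdot|Du|\cdot|\vv|=|{\rm ext}(u^\ast\d_\mu f)|\cdot|Du|\cdot|\vv|\quad\mm\text{-a.e.},
\]
which permits $L^0(\mm)$-linear extension to finite combinations of generators and, by uniform continuity, to the whole ${\rm Ext}(u^\ast L^0_\mu(T^\ast\Y))$, since the generators are dense by Theorem/Definition \ref{thm:cotmod}(ii) and the construction of the pullback. The resulting element $\d u(\vv)\in {\rm Ext}((u^\ast L^0_\mu(T^\ast\Y))^\ast)$ then satisfies $|\d u(\vv)|\leq |Du|\cdot|\vv|$ $\mm$-a.e.; $L^0(\mm)$-linearity and continuity of $\vv\mapsto \d u(\vv)$ are inherited from those of $\vv\mapsto \d g(\vv)$ together with this norm bound.

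Finally, for the identity $|\d u|=|Du|$, the bound just obtained gives $|\d u|\leq|Du|$. For the reverse inequality, recall from Definition \ref{def:sobmetr} that $|Du|$ is the essential supremum of $|\d(\varphi\circ u)|$ as $\varphi$ ranges over $1$-Lipschitz functions $\Y\to\R$; by a standard truncation I may restrict to $\varphi\in W^{1,2}(\Y,\sfd_\Y,\mu)$ with $|\d_\mu\varphi|\leq 1$ $\mu$-a.e., so that $|{\rm ext}(u^\ast\d_\mu\varphi)|\leq 1$ $\mm$-a.e. The defining relation \eqref{eq:def_du} and the dual pairing bound then give, for any $\vv$,
\[
|\d g(\vv)|=|{\rm ext}(u^\ast\d_\mu\varphi)(\d u(\vv))|\leq |\d u(\vv)|\leq |\d u|\cdot|\vv|\quad\mm\text{-a.e.},
\]
so $|\d g|\leq |\d u|$ $\mm$-a.e. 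Combined with the agreement $g=\varphi\circ u$ on $\{|Du|>0\}$ (giving $|\d(\varphi\circ u)|=|\d g|$ there, via Theorem \ref{thm:basesob}(ii) and Theorem/Definition \ref{thm:cotmod}(i)) and the trivial bound $|\d(\varphi\circ u)|\leq |Du|=0$ on $\{|Du|=0\}$, this yields $|\d(\varphi\circ u)|\leq |\d u|$ $\mm$-a.e., and taking the essential supremum in $\varphi$ concludes. The main obstacle I expect is bookkeeping the interplay between the $L^0(\mm)$ and $L^0(|Du|^2\mm)$ module structures through the extension functor; once this is organized, every estimate flows from the single inequality \eqref{eq:chain}.
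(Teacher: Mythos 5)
The paper does not actually prove Proposition \ref{prop:basic}: it refers the reader to \cite[Proposition 3.5]{GPS18}. So there is no in-paper proof to compare against. Evaluated on its own, your argument follows the natural route that one would expect the cited proof to take, and the essential steps are sound: well-posedness on generators via the chain-rule bound \eqref{eq:chain} and locality of the scalar differential (Theorem \ref{thm:basesob}(ii) applied to the difference, which gives $\d g_1=\d g_2$ on $\{|Du|>0\}$); the pointwise bound $|\d g(\vv)|\leq |{\rm ext}(u^*\d_\mu f)|\,|Du|\,|\vv|$ for extension by density; the inequality $|\d u|\leq |Du|$ from that bound; and the reverse inequality by testing against $1$-Lipschitz $\varphi$, which after truncation can be taken in $W^{1,2}(\Y,\sfd_\Y,\mu)$ with $|\d_\mu\varphi|\leq 1$ (here it is worth saying explicitly that $\mu=u_*(|Du|^2\mm)$ is \emph{finite}, which is what makes the truncation argument work).

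The one place where you are a bit too quick is exactly the place you flag as ``bookkeeping'': the passage from consistency on \emph{single} generators to the $L^0(\mm)$-linear extension. What you establish is that if $u^*\d_\mu f_1=u^*\d_\mu f_2$ then $\d g_1(\vv)=\d g_2(\vv)$, but for the extension to ``finite combinations of generators'' you need consistency for $L^0$-linear combinations: if $\sum_i a_i\,u^*\d_\mu f_i=0$ in the pullback module (with $a_i\in L^0(\mm)$, or equivalently for simple elements $\sum_i\nchi_{E_i}u^*\d_\mu f_i$ with $(E_i)$ a Borel partition), then $\sum_i a_i\,\d g_i(\vv)=0$ $\mm$-a.e. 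This does follow, by running your single-generator argument locally on each $E_i$ and using locality of the differential again, but as written the inference from the single-generator case to the module extension is not justified and is in fact the crux of the well-posedness claim. Stating and proving the consistency for simple elements (then invoking density in the $L^0$-module) would close the gap cleanly; the rest of the argument, including the norm identity \eqref{eq:samenorm2}, is correct.
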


\subsubsection{Differential and metric differential}
In the last section we have seen the definition of differential for a metric valued Sobolev map and in Theorem \ref{thm:kir} we have seen the one of metric differential for a metric valued Lipschitz map on $\R^d$. It is natural to wonder whether the two concepts are compatible: the positive answer is given in the following result, proved in  \cite[Theorem 4.7]{GPS18}:

\begin{theorem}\label{thm:diffmetr} Let $(\Y,\sfd_\Y,{\bar y})$ be a pointed complete space, $u:\R^d\to\Y$ be a Lipschitz map which is also in $W^{1,2}(\R^d,\Y_{\bar y})$ and ${\sf v}\in\R^d\sim T_0\R^d$. Denote by $\bar{\sf v}\in L^0(T\R^d)$ the vector field constantly equal to ${\sf v}$. Then
\[
|\d u(\bar{\sf v})|(x)=\md_x(u)({\sf v})\qquad\mathcal L^d-a.e.\ x\in\R^d.
\]
\end{theorem}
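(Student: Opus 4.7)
My plan is to prove the two inequalities $|\d u(\bar{\sf v})|(x)\leq\md_x(u)({\sf v})$ and $|\d u(\bar{\sf v})|(x)\geq\md_x(u)({\sf v})$ separately, both holding $\mathcal L^d$-a.e. Throughout I will use Kirchheim's Theorem \ref{thm:kir} to get the $\mathcal L^d$-a.e. existence of the seminorm $\md_x(u)$, Rademacher's theorem to get classical differentiability almost everywhere of every composition $f\circ u$ with $f:\Y\to\R$ Lipschitz, and the identification $\d(f\circ u)(\bar{\sf v})(x)=\nabla(f\circ u)(x)\cdot{\sf v}$ coming from Theorem \ref{thm:tangrd}.

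For the upper bound I would use the duality characterization
\[
|\d u(\bar{\sf v})|(x)=\esssup_{f:\Y\to\R,\ \Lip(f)\leq 1}|\d(f\circ u)(\bar{\sf v})|(x),
\]
which is the formula \eqref{eq:dualintro} of the introduction; it follows from Definition \ref{def:diff} and the fact that pullbacks $u^*\d_\mu f$ of differentials of $1$-Lipschitz $f$, which satisfy $|u^*\d_\mu f|\leq 1$, generate the pullback cotangent module in the appropriate density sense. For any such $f$, at a point $x$ of simultaneous classical differentiability of $f\circ u$ and metric differentiability of $u$,
\[
|\d(f\circ u)(\bar{\sf v})|(x)=\lim_{t\to 0}\frac{|f(u(x+t{\sf v}))-f(u(x))|}{|t|}\leq\lim_{t\to 0}\frac{\sfd_\Y(u(x+t{\sf v}),u(x))}{|t|}=\md_x(u)({\sf v}),
\]
where the inequality uses $\Lip(f)\leq 1$ and the final equality uses the metric differential. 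Passing to the essential supremum over a countable dense family of $1$-Lipschitz $f$'s (obtained via the Kuratowski embedding, Lemma \ref{le:kur}) yields the upper bound.

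For the lower bound, which is the more delicate direction, I would combine a translation test plan with Fubini. For any bounded Borel $A\subset\R^d$, put
\[
\ppi_A:=\frac{1}{\mathcal L^d(A)}\int_A\delta_{\gamma_x}\,\d\mathcal L^d(x),\qquad \gamma_x(t):=x+t{\sf v},
\]
which is a $2$-test plan. By metric differentiability of $u$ at $\mathcal L^d$-a.e.\ point and Fubini, for $\hat\ppi_A$-a.e.\ $(\gamma,t)$ one has $\ms(u\circ\gamma,t)=\md_{\gamma_t}(u)({\sf v})$. On the other hand, using Proposition \ref{prop:basesobmet}(c) applied to each $f_n\circ u$ for a Kuratowski-type countable family $(f_n)$ of $1$-Lipschitz functions such that $\sfd_\Y(z_1,z_2)=\sup_n|f_n(z_1)-f_n(z_2)|$, together with the scalar chain rule $(f_n\circ u\circ\gamma_x)'(t)=\d(f_n\circ u)(\bar{\sf v})(\gamma_t)=[u^*\d_\mu f_n](\d u(\bar{\sf v}))(\gamma_t)$ and the same duality used in the upper bound, one deduces $\ms(u\circ\gamma,t)\leq |\d u(\bar{\sf v})|(\gamma_t)$ for $\hat\ppi_A$-a.e.\ $(\gamma,t)$. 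Combining the two facts yields $\md_y(u)({\sf v})\leq |\d u(\bar{\sf v})|(y)$ for $\mathcal L^d$-a.e.\ $y\in A+[0,1]{\sf v}$, and the arbitrariness of $A$ concludes the argument.

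The main obstacle is the rigorous verification of the inequality $\ms(u\circ\gamma,t)\leq|\d u(\bar{\sf v})|(\gamma_t)$, which requires carefully interchanging the supremum over the Kuratowski family with the metric-speed limit while keeping track of the $\hat\ppi_A$-negligible exceptional set; this is exactly the kind of argument detailed in \cite{GPS18} on which the theorem is modelled.
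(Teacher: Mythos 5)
The paper does not actually prove Theorem \ref{thm:diffmetr}: it cites it verbatim from \cite[Theorem 4.7]{GPS18}. Your proposal is therefore a reconstruction, and as such it is essentially correct and consistent with the surrounding machinery. The upper bound via the duality \eqref{eq:iddif} plus Rademacher is fine; the exceptional null set depending on $f$ is handled by the standard fact that an $L^0$ essential supremum is realised along a countable subfamily. The lower bound via the translation test plan $\ppi_A$, Fubini, and Kirchheim's theorem is also the right idea; in particular, the interchange of $\sup_n$ and the metric-speed limit that you flag as the ``main obstacle'' is not really delicate, since for a.e.\ $\gamma$ one can write, for a.e.\ $s<t$,
\[
\sfd_\Y\big(u(\gamma_t),u(\gamma_s)\big)=\sup_n\big|f_n(u(\gamma_t))-f_n(u(\gamma_s))\big|
\leq\int_s^t|\d u(\bar{\sf v})|(\gamma_r)\,\d r,
\]
using $(f_n\circ u\circ\gamma)'(r)={\rm ext}\big([u^*\d_\mu f_n]\big)\big(\d u(\bar{\sf v})\big)(\gamma_r)$ together with $\big|{\rm ext}\big([u^*\d_\mu f_n]\big)\big|\leq1$; this directly gives $\ms(u\circ\gamma,t)\leq|\d u(\bar{\sf v})|(\gamma_t)$ a.e.\ and the countability of the family makes the exceptional set controllable.

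One caveat worth stressing: you present the duality $|\d u(\bar{\sf v})|=\esssup_f|\d(f\circ u)(\bar{\sf v})|$ as if it were an easy consequence of Definition \ref{def:diff} and the fact that pullbacks of differentials generate the cotangent module. The inequality $\geq$ is indeed immediate from \eqref{eq:def_du}, but the reverse inequality is the genuine content of Lemma \ref{le:dn}, whose proof goes through Mazur's lemma, Egorov's theorem, and the reflexivity of $W^{1,2}(\R^d)$. Since Lemma \ref{le:dn} is logically independent of Theorem \ref{thm:diffmetr} there is no circularity in invoking it, but you should treat the duality as a non-trivial lemma rather than a definitional identity. A further small point is that the Kuratowski functions $f_n=\sfd_\Y(\cdot,y_n)-\sfd_\Y(\cdot,\bar y)$ belong to $W^{1,2}\cap L^\infty(\Y,\sfd_\Y,\mu)$ because the measure $\mu=u_*(|Du|^2\mm)$ is finite, which is what makes the application of \eqref{eq:def_du} to them legitimate. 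With these two clarifications your plan is sound.
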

In this  section we will  extend Theorem \ref{thm:diffmetr} to the case of strongly rectifiable spaces. In order to do so, we need to recall the link between the `abstract and analytic' tangent module and the `concrete and geometric' bundle obtained by `gluing  one copy of $\R^d$ for each point of $\X$'. Such link has been established in \cite{GP16}: to recall it we need some intermediate definition and result.

First of all, we define the \emph{geometric tangent bundle} of the strongly rectifiable space $\X$ of dimension $d$ as
\[
T_{\rm GH}\X:=\X\times\R^d,
\]
and then we define the space of its Borel sections up to $\mm$-a.e.\ equality in the natural way as
\[
L^0(T_{\rm GH}\X):=\big\{\text{Borel maps from $\X$ to $\R^d$ identified up to $\mm$-a.e.\ equality}\big\}.
\]
Of course, this definition alone does not make much sense: what is relevant is the way $L^0(T_{\rm GH}\X)$ is related to $X$ and to the calculus on it: such link is recalled in Theorem \ref{thm:isotang} below and is established via the use of an aligned family of atlases (in the same spirit as in Proposition \ref{prop:metrdiff}).

The following lemma is useful as it defines the differential of a coordinate map (which in our axiomatisation is only defined on a Borel set), see  \cite[Theorem 2.5]{GP16} for the proof:
\begin{lemma}
Let $(\X,\sfd,\mm)$ be such that $W^{1,2}(\X)$ is reflexive.  Let $U\subset \X$ be Borel and $\varphi:U\to\R^d$ be such that for some constants $L,C>0$ it holds
\[
\begin{split}
\varphi:U\to\varphi(U)\text{ is $L$-biLipschitz},\\
C^{-1}\mathcal L^d\restr{\varphi(U)}\leq \varphi_*(\mm\restr U)\leq  C\mathcal L^d\restr{\varphi(U)}.
\end{split}
\]
Then there is a unique linear and continuous operator $\ud\varphi:L^0(T\X)\restr U\to L^0(\varphi(U),\R^d)$, called differential of $\varphi$,  that for any ${\sf v}\in L^0(T\X)\restr U$ satisfies:
\begin{equation}
\label{eq:defud}
\begin{split}
\d g(\ud\varphi(\vv))&=\d(g\circ\bar\varphi)(\vv)\circ\varphi^{-1}\qquad\forall g\in \Lip_{bs}(\R^d),\\
\ud\varphi(f\vv)&=f\circ\varphi^{-1}\ud\varphi(\vv)\qquad\forall f\in L^0(\mm),
\end{split}
\end{equation}
where $\bar\varphi:\X\to\R^d$ is any Lipschitz extension of  $\varphi$. Moreover, $\ud\varphi$ satisfies
\[
L^{-1}|\vv|\circ\varphi^{-1}\leq |\ud\varphi(\vv)|\leq L|\vv|\circ\varphi^{-1}\qquad\mathcal L^d-a.e.\ on\ \varphi(U).
\]
\end{lemma}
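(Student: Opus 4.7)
The plan is to construct $\ud\varphi$ concretely from the components of a Lipschitz extension of $\varphi$, and then verify each required property by a chain rule in the metric Sobolev calculus together with a duality argument for the pointwise norm.

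First I would take an $L$-Lipschitz extension $\bar\varphi=(\bar\varphi^{1},\ldots,\bar\varphi^{d}):\X\to\R^{d}$ of $\varphi$ by applying McShane's lemma componentwise. Each $\bar\varphi^{i}$ is Lipschitz on $\X$, hence (after multiplication by a bounded-support cutoff, and using the locality property of Theorem \ref{thm:basesob}(ii)) its differential $\d\bar\varphi^{i}\in L^{0}(T^{*}\X)$ is well defined, and its restriction to $U$ depends only on $\varphi^{i}=\bar\varphi^{i}|_{U}$. For $\vv\in L^{0}(T\X)\restr{U}$ I then set
\[
\ud\varphi(\vv):=\bigl(\d\bar\varphi^{1}(\vv),\ldots,\d\bar\varphi^{d}(\vv)\bigr)\circ\varphi^{-1}\in L^{0}(\varphi(U),\R^{d}).
\]
Linearity, the $L^{0}(\mm)$-homogeneity expressed by the second identity in \eqref{eq:defud}, and continuity in the $L^{0}$-topology all follow immediately from the corresponding properties of $\d$. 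Uniqueness follows by testing the first identity in \eqref{eq:defud} against functions $g_{i}\in\Lip_{bs}(\R^{d})$ agreeing with the coordinate $x\mapsto x_{i}$ on bounded neighborhoods of arbitrary points of $\varphi(U)$, which forces the $i$-th component of $\ud\varphi(\vv)$ to equal $\d\bar\varphi^{i}(\vv)\circ\varphi^{-1}$.

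Next I would establish the first identity of \eqref{eq:defud} via the chain rule
\[
\d(g\circ\bar\varphi)=\sum_{i=1}^{d}\bigl((\partial_{i}g)\circ\bar\varphi\bigr)\,\d\bar\varphi^{i}\qquad\mm\text{-a.e.}
\]
for $g\in\Lip_{bs}(\R^{d})$. This is obtained by approximating $g$ with mollified $g_{n}\in C_{c}^{\infty}(\R^{d})$ of uniformly bounded Lipschitz constants and with $g_{n},\partial_{i}g_{n}\to g,\partial_{i}g$ pointwise a.e.\ on $\R^{d}$; the chain rule for each smooth $g_{n}$ is classical, and passing to the limit using the closure of the differential (Theorem \ref{thm:basesob}(i)) together with dominated convergence yields the formula for $g$. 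Evaluating at $\vv$ and composing with $\varphi^{-1}$ gives exactly the first identity in \eqref{eq:defud}.

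For the norm estimates I argue by duality. Given $w\in\R^{d}$ with $|w|\leq 1$, pick $g_{w}\in\Lip_{bs}(\R^{d})$ coinciding with $x\mapsto w\cdot x$ on a fixed bounded portion of $\varphi(U)$: then $g_{w}\circ\bar\varphi$ is $L$-Lipschitz on $\X$, so $|\d(g_{w}\circ\bar\varphi)|\leq L$ $\mm$-a.e., and \eqref{eq:defud} gives $|w\cdot\ud\varphi(\vv)|\circ\varphi\leq L|\vv|$; taking $\esssup$ over a countable dense set of $w$'s in the unit ball yields the upper bound $|\ud\varphi(\vv)|\circ\varphi\leq L|\vv|$. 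For the lower bound, test against a 1-Lipschitz $h\in\Lip_{bs}(\X)$: the function $h\circ\varphi^{-1}:\varphi(U)\to\R$ is $L$-Lipschitz and admits an $L$-Lipschitz extension $g\in\Lip_{bs}(\R^{d})$; by locality $\d h=\d(g\circ\bar\varphi)$ on $U$, and \eqref{eq:defud} together with the upper bound applied to $g$ forces $|\d h(\vv)|\leq L|\ud\varphi(\vv)|\circ\varphi$ $\mm$-a.e.\ on $U$. Since differentials of 1-Lipschitz $\Lip_{bs}(\X)$ functions generate the unit ball of $L^{0}(T^{*}\X)$ in the pointwise-norm sense (so that $|\vv|=\esssup_{h}|\d h(\vv)|$), the lower bound follows. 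The main technical obstacle I anticipate is precisely this combination of the chain rule for Lipschitz $g$ and the dual characterization of $|\vv|$ via $\Lip_{bs}$-differentials: both are standard in the Gigli calculus but demand careful handling of the $L^{0}$-topology and of the generating structure of $L^{0}(T^{*}\X)$.
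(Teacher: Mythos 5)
The paper does not reproduce a proof of this lemma; it cites \cite[Theorem 2.5]{GP16}, so your argument cannot be compared line-by-line against the source. That said, your construction of $\ud\varphi$ through the components $\bar\varphi^i$ of a Lipschitz extension, the well-posedness via locality, the identification of the $L^0(\mm)$-linearity and of uniqueness by testing against coordinate functions, the chain-rule argument for the first identity in \eqref{eq:defud}, and the derivation of the upper pointwise bound $|\ud\varphi(\vv)|\leq L|\vv|\circ\varphi^{-1}$ by pairing with $w\cdot\bar\varphi$ for $|w|\leq 1$ are all reasonable and, modulo care with the localization to $U$ (where $\varphi_*(\mm\restr U)\ll\mathcal L^d$ makes the mollification and the a.e.\ convergence of $\partial_ig_n\circ\bar\varphi$ legitimate; outside $U$ the pushforward measure may be singular and the chain-rule limit need not pass), this part goes through.

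The gap is in the lower bound. You reduce it to the claimed duality $|\vv|=\esssup_{h\in\Lip_{bs}(\X),\,\Lip(h)\leq 1}|\d h(\vv)|$, but the duality that actually follows from $L^0(T\X)$ being the dual module of $L^0(T^*\X)$ with the differentials of Lipschitz functions as a generating family is $|\vv|=\esssup_{h}\frac{|\d h(\vv)|}{|\d h|}$ (with the convention $0/0=0$), i.e.\ the normalization is by the \emph{minimal weak upper gradient} $|\d h|$, not by the global Lipschitz constant $\Lip(h)$. Since $\Lip(h)\geq|\d h|$ with a gap that can be arbitrarily large pointwise, the estimate you derive for 1-Lipschitz $h$, namely $|\d h(\vv)|\leq L|\ud\varphi(\vv)|\circ\varphi$, does not by itself dominate the essential supremum that equals $|\vv|$. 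To repair the argument one would need either the sharper pointwise bound $|\d h(\vv)|\leq L\,|\d h|\,|\ud\varphi(\vv)|\circ\varphi$ (which amounts to $|\nabla g|\circ\varphi\leq L|\d h|$ on $U$, a lower bound on $|\d h|$ that is not automatic), or some Lusin--Lipschitz--type mechanism allowing one to pass from $|\d h|\leq 1$ on a piece to a $1$-Lipschitz representative on that piece. Either route uses information beyond the stated hypotheses (which assume only reflexivity of $W^{1,2}(\X)$, not a Poincar\'e inequality or a relation $\lip h=|\d h|$), so this step is a genuine gap in your proof, and the cited reference presumably handles it by a different argument.
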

We then have the following result:
\begin{theorem}[Abstract and concrete tangent modules]\label{thm:isotang} Let   $(\X,\sfd,\mm)$ be a strongly rectifiable space such that  $W^{1,2}(\X)$ is reflexive. Let $\eps_n\downarrow0$ be a given sequence and $(\mathcal A^{\eps_n})$ an aligned family of atlases.

Then:
\begin{itemize}
\item[i)] for every $n\in\N$ the $L^0(\mm)$-linear and continuous  map $\mathscr I_n:L^0(T\X)\to L^0(T_{\rm GH}\X)$ is well defined by the formula
\[
\nchi_{U^n_i}\mathscr I_n(\vv)=\ud\varphi^n_i(\nchi_{U^n_i}\vv)\circ\varphi^n_i,\qquad\forall i\in\N,\ \vv\in L^0(T\X),
\]
\item[ii)] the sequence $(\mathscr  I_n)$ is Cauchy in ${\rm Hom}(L^0(T\X), L^0(T_{\rm GH}\X))$,
\item[iii)] the limit map $\mathscr I:L^0(T\X)\to L^0(T_{\rm GH}\X)$ is an isometric isomorphism of modules.
\end{itemize}
\end{theorem}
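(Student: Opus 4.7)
The plan is to establish (i)--(iii) in that order, using the properties of the coordinate differentials $\ud\varphi^n_i$ from the preceding lemma, the alignment condition \eqref{eq:peralign}, and the dimensional information of Theorem \ref{thm:dimcot}.

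For (i), since the $U^n_i$ partition $\X$ up to an $\mm$-negligible set and $\nchi_{U^n_i}\vv\in L^0(T\X)\restr{U^n_i}$, applying $\ud\varphi^n_i$ yields an element of $L^0(\varphi^n_i(U^n_i),\R^d)$ which, composed with the chart $\varphi^n_i$, becomes an element of $L^0(U^n_i,\R^d)$; piecing these together gives a well-defined $\mathscr I_n(\vv)\in L^0(\X,\R^d)=L^0(T_{\rm GH}\X)$. Its $L^0(\mm)$-linearity follows from the second line of \eqref{eq:defud} combined with the composition with $\varphi^n_i$, and the pointwise norm estimate from the preceding lemma translates into
\[
(1+\eps_n)^{-1}|\vv|\leq |\mathscr I_n(\vv)|\leq (1+\eps_n)|\vv|\qquad\mm-a.e.,
\]
giving continuity of $\mathscr I_n$ and, in the limit $n\to\infty$, the isometry of $\mathscr I$ needed in (iii).

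For (ii), I would argue componentwise. Testing \eqref{eq:defud} against the $k$-th coordinate function $g_k(z)=z_k$ on $\R^d$ identifies the $k$-th component of $\mathscr I_n(\vv)$ on $U^n_i$ with the real-valued function $\d(\overline{(\varphi^n_i)_k})(\vv)$, where $\overline{(\varphi^n_i)_k}$ is any Lipschitz extension of the component to all of $\X$. Therefore on $V:=U^n_i\cap U^m_j$,
\[
(\mathscr I_n(\vv)-\mathscr I_m(\vv))_k=\d\bigl(\overline{(\varphi^n_i)_k}-\overline{(\varphi^m_j)_k}\bigr)(\vv)\quad\mm-a.e.\text{ on }V.
\]
By \eqref{eq:peralign} the function $(\varphi^n_i-\varphi^m_j)\restr V$ is $(\eps_n+\eps_m)$-Lipschitz, and so is each of its components. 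Using the McShane lemma to produce a single $(\eps_n+\eps_m)$-Lipschitz extension to $\X$ and then the locality property $(ii)$ of Theorem \ref{thm:basesob} to compare this extension with $\overline{(\varphi^n_i)_k}-\overline{(\varphi^m_j)_k}$ on $V$ (they coincide on $V$), I get $|\d(\overline{(\varphi^n_i)_k}-\overline{(\varphi^m_j)_k})|\leq \eps_n+\eps_m$ $\mm$-a.e. on $V$, whence
\[
|\mathscr I_n(\vv)-\mathscr I_m(\vv)|\leq \sqrt d\,(\eps_n+\eps_m)|\vv|\qquad\mm-a.e.\text{ on }V.
\]
Covering $\X$ up to a null set by the countably many such $V$'s yields the Cauchy property of $(\mathscr I_n)$ in ${\rm Hom}(L^0(T\X),L^0(T_{\rm GH}\X))$.

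For (iii), let $\mathscr I:=\lim_n\mathscr I_n$. Passing to the limit in the estimate from (i) gives $|\mathscr I(\vv)|=|\vv|$ $\mm$-a.e., so $\mathscr I$ is an isometric, in particular injective, $L^0(\mm)$-module morphism. For surjectivity I would invoke Theorem \ref{thm:dimcot}: it guarantees that $L^0(T\X)$ has pointwise dimension $d$, so locally one can pick a basis $e_1,\dots,e_d$. By the injectivity the images $\mathscr I(e_1),\dots,\mathscr I(e_d)$ remain pointwise linearly independent in $L^0(T_{\rm GH}\X)=L^0(\X,\R^d)$, and since this latter module has pointwise dimension exactly $d$ they must be a local basis of it. Hence every element of $L^0(T_{\rm GH}\X)$ is a local $L^0$-combination of the $\mathscr I(e_j)$'s, and thus belongs to the image of $\mathscr I$; patching over a countable cover of $\X$ concludes. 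The most delicate step is precisely this surjectivity: without the dimensional coincidence supplied by Theorem \ref{thm:dimcot} one would only obtain an isometric embedding into $L^0(\X,\R^d)$, while the other steps are essentially bookkeeping on top of the chart calculus and the alignment estimate.
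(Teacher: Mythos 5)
The paper itself gives no proof of this theorem: it simply cites \cite[Theorem 5.2]{GP16} for the existence of $\mathscr I$ and states that its construction as the limit of the $\mathscr I_n$'s is contained in the proof of that reference. So you are in effect reconstructing the argument of \cite{GP16}, and a literal comparison to ``the paper's proof'' is impossible. With that caveat, here is an assessment of your proposal on its own terms.

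Your argument for (i) is sound: piecing together the images $\ud\varphi^n_i(\nchi_{U^n_i}\vv)\circ\varphi^n_i$ over the essentially disjoint sets $U^n_i$ does produce a well-defined element of $L^0(\X,\R^d)$, linearity follows from the second identity in \eqref{eq:defud}, and the two-sided bound from the lemma transfers to $(1+\eps_n)^{-1}|\vv|\leq|\mathscr I_n(\vv)|\leq(1+\eps_n)|\vv|$. Your componentwise computation in (ii) is also correct in spirit and is, in my view, the natural way to exploit the alignment condition \eqref{eq:peralign}: identifying $(\mathscr I_n(\vv))_k$ with $\d\bigl(\overline{(\varphi^n_i)_k}\bigr)(\vv)$ on $U^n_i$, taking a McShane extension of $(\varphi^n_i-\varphi^m_j)\restr{V}$, and invoking locality gives exactly the estimate $|\mathscr I_n(\vv)-\mathscr I_m(\vv)|\leq\sqrt d\,(\eps_n+\eps_m)|\vv|$ a.e.\ on $V$. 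Two small bookkeeping points that should be filled in: the coordinate functions $g_k(z)=z_k$ do not lie in $\Lip_{bs}(\R^d)$, so you should truncate them to apply \eqref{eq:defud}, using locality to remove the truncation afterwards; and in the McShane step what you need is the locality of the \emph{differential} $\d$ (as a cotangent-module-valued operator), not just of $|D\cdot|$---this does follow from Theorem \ref{thm:basesob}(ii) via the formula $|\d f_1-\d f_2|=|D(f_1-f_2)|$, but it is worth spelling out. Your passage to the limit for the isometry in (iii) is fine.

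The one genuine gap is in the surjectivity step of (iii). You invoke Theorem \ref{thm:dimcot} to say that $L^0(T\X)$ has dimension $d$, but Theorem \ref{thm:dimcot} is stated under the additional hypotheses that $\X$ is (uniformly locally) doubling and supports a Poincar\'e inequality, whereas Theorem \ref{thm:isotang} is stated only for strongly rectifiable spaces with $W^{1,2}(\X)$ reflexive. Your proof therefore proves a weaker statement than what is claimed, unless you supply an argument for $\dim L^0(T\X)=d$ (or a more direct surjectivity argument) that does not go through doubling and Poincar\'e. The natural route in the stated generality is to show directly that the coordinate differentials $\d\bigl(\overline{(\varphi^n_i)_k}\bigr)$, $k=1,\dots,d$, generate $L^0(T^*\X)$ restricted to $U^n_i$ --- which follows from the bi-Lipschitz chart structure, the comparability of $(\varphi^n_i)_*\mm\restr{U^n_i}$ to $\mathcal L^d$, and the Euclidean identification of Theorem \ref{thm:tangrd} --- and then conclude surjectivity of $\mathscr I$ by duality; this is essentially what happens inside \cite[Theorem 5.2]{GP16}. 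Your observation that surjectivity is the delicate step is correct; what is missing is that the dimensional input you use is not available under the stated hypotheses and has to be rederived from the chart structure rather than cited.
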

\begin{proof} The existence of $\mathscr I$ is the content of  \cite[Theorem 5.2]{GP16}, its construction as limit of the maps $\mathscr I_n$ is the content of the proof of such result.
\end{proof}

\begin{remark}[About the dependence on $p$ of the differential calculus]\label{re:depp}{\rm
Theorem/Definition \ref{thm:cotmod} can be stated for any Sobolev exponent $p\in(1,\infty)$ but, without appropriate assumptions on the space, the resulting cotangent module and differentiation operator may \emph{depend} on $p$. One of the problems is in the fact that the minimal $p$-weak upper gradient also may depend on $p$, so that for $f\in W^{1,p}\cap W^{1,p'}(\X)$ its $p$-minimal weak upper gradient and $p'$-minimal weak upper gradient can be different (see e.g.\ \cite{DiMarinoSpeight13}). Then point $(i)$ of the statement shows that $\d_pf$ and $\d_{p'}f$ must be different.

Still, there are circumstances where the space $\X$ is `good enough' so that such differences do not occur. For instance, it can be proved that this is the case for doubling spaces supporting a Poincar\'e inequality (using the results in \cite{Cheeger00}) or $\RCD(K,\infty)$ ones (using the results in \cite{GigliHan14}). 

For what concerns our discussion, a more complicated issue occurs when dealing with metric valued Sobolev maps, because in this case the (co)tangent module on $(\Y,\sfd_\Y,u_*(|Du|^p\mm))$ appears and there is no reasonable regularity assumption one can make on such metric measure structure. The result is that  regardless of the regularity of $\X$, the differential $\d_pu$ of $u\in W^{1,p}(\X,\Y_{\bar y})$ a priori depends on $p$. 

We could have developed the theory presented here even for maps $W^{1,p}(\X,\Y_{\bar y})$ to prove, for instance, that the $p$-energy $\E_p$ is lower-semicontinuous on $L^p(\X,\Y_{\bar y})$, but that would have required to either carry on with the additional notational burden of indicating in some way the dependence on $p$ of the various differentiation operators, or avoiding doing so at the risk of generating confusion when different exponents are compared. For instance, the  representation formula  \eqref{eq:endensform} and  \eqref{eq:repr2} link the 2-differential $\d u$ to the metric object $\md_x(u)$ which is unrelated to Sobolev calculus - this seems to suggest some form of link between different $p$-differentials of the same map $u$.

These kind of discussions are outside the scope of this manuscript, so we preferred to concentrate on the key case $p=2$ only.
}\fr\end{remark}

\subsection{Reproducing formula for the energy density and lower semicontinuity of the energy}
In this section we do three things. The first is of technical nature and will be useful for the other results here: in Theorem \ref{thm:mdd} we generalize Theorem \ref{thm:diffmetr} to the case of maps defined on strongly rectifiable spaces. The second is to provide another representation for the energy density: while formula \eqref{eq:endensform} relates it to the $p$-size of the metric differential, in Theorem \ref{thm:repr} below - using Theorem \ref{thm:mdd} - we relate it to the $p$-size of the differential (still sticking to the case $p=2$, see definition \ref{def:ps2} below). Finally, using this formula we will achieve our third and main goal of this section: we shall prove in Theorem \ref{thm:lscks} that the energy $\E_2$ associated to the Korevaar-Schoen space is lower semicontinuous. This result is  based on the closure properties of the abstract differential that we have encountered in the previous sections.

\bigskip

We start with the following technical lemma:
\begin{lemma}\label{le:dn}
Let  $(\X,\sfd,\mm)$ be such that $W^{1,2}(\X)$ is reflexive, $(\Y,\sfd_\Y,\bar y)$ a pointed complete space and $u\in W^{1,2}(\X,\Y_{\bar y})$.

Then for every $\vv\in L^0(T\X)$ we have
\begin{equation}
\label{eq:iddif}
|\d u(\vv)|=\esssup_{f\in \Lip_{bs}(\Y)\atop\Lip(f)\leq 1,\  f(\bar y)=0}\d (f\circ u)(\vv)\qquad\mm-a.e..
\end{equation}
\end{lemma}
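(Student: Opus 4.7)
The proof splits into two inequalities. For the easy direction $\geq$, fix $f \in \Lip_{bs}(\Y)$ with $\Lip(f)\leq 1$ and $f(\bar y)=0$. Since $\mu := u_*(|Du|^2\mm)$ is a finite measure and $f$ is bounded, $f \in L^2(\mu)$; moreover $\lip(f)\leq 1$ is a 2-weak upper gradient on $(\Y,\mu)$, so $f \in W^{1,2}(\Y,\mu)$ with $|\d_\mu f|\leq 1$ $\mu$-a.e. Lemma \ref{le:compo} ensures that the Sobolev pullback $g$ from Definition \ref{def:diff} agrees with $f\circ u$ $\mm$-a.e., so \eqref{eq:def_du} reads $\d(f\circ u)(\vv) = {\rm ext}[u^*\d_\mu f](\d u(\vv))$ $\mm$-a.e. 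Combining $|u^*\d_\mu f| = |\d_\mu f|\circ u \leq 1$ with the pointwise bound for pairings in dual modules yields $\d(f\circ u)(\vv) \leq |\d u(\vv)|$ $\mm$-a.e., so the essential supremum over admissible $f$ is at most $|\d u(\vv)|$.

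For the reverse inequality I exploit the dual module structure. Both sides of \eqref{eq:iddif} vanish on $\{|Du|=0\}$ (on the left since $|\d u(\vv)|\leq |Du||\vv|=0$, on the right since $|D(f\circ u)|\leq \Lip(f)|Du|=0$), so it suffices to work on $\{|Du|>0\}$, where $\d u(\vv)$ is a genuine element of the dual module $(u^*L^0_\mu(T^*\Y))^*$. Its pointwise norm is characterized via the standard duality formula in $L^0$-normed modules:
\[
|\d u(\vv)| = \esssup\{\d u(\vv)(\omega) : \omega \in u^*L^0_\mu(T^*\Y),\ |\omega|\leq 1\}.
\]
I then show that the admissible $\omega$'s can be restricted to those of the form $u^*\d_\mu f$ with $f$ as in the right-hand side of \eqref{eq:iddif}. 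This uses three ingredients: Theorem/Definition \ref{thm:cotmod}(ii) applied to $(\Y,\sfd_\Y,\mu)$, which shows $\{\d_\mu f : f \in W^{1,2}(\Y,\mu)\}$ generates $L^0_\mu(T^*\Y)$; Theorem \ref{thm:basesob}(iii) on $(\Y,\mu)$, which approximates any such $f$ by $\Lip_{bs}(\Y)$ functions with matching minimal weak upper gradient; and the rescaling $f\mapsto f/\Lip(f)$, which reduces bounded-support Lipschitz functions to 1-Lipschitz ones (the factor $\Lip(f)\in\R$ being absorbed into the $L^0$-scalar multiplication intrinsic to the essential supremum). The identity $\d u(\vv)(u^*\d_\mu f) = \d(f\circ u)(\vv)$ from Definition \ref{def:diff} then transfers the computation to the right-hand side of \eqref{eq:iddif}.

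The main technical obstacle is this restriction step: making rigorous the passage from the essential supremum over the full unit ball of $u^*L^0_\mu(T^*\Y)$ to one over the narrower set $\{u^*\d_\mu f : f \text{ admissible}\}$. The cleanest route is pointwise at $\mm$-a.e.\ $x$: the fiber $T^*_{u(x)}(\Y,\mu)$ is spanned (as an $\R$-vector space) by $\{(\d_\mu f)(u(x)) : f \in \Lip_{bs}(\Y)\}$ thanks to the density just recalled, and for any such $f$ the normalized $f/\Lip(f)$ is admissible and shares the same direction in the fiber. Combined with Mazur-type arguments to upgrade weak to strong $L^2$-convergence of $\{\d_\mu f_n\}$ where needed, this delivers the matching lower bound.
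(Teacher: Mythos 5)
Your argument for the inequality $\geq$ is essentially correct and follows the same line as the paper (modulo a small imprecision: Lemma \ref{le:compo} only gives $g=f\circ u$ $\mm$-a.e.\ on $\{|Du|>0\}$, so one needs the extra observation that both $\d g(\vv)$ and $\d(f\circ u)(\vv)$ vanish on $\{|Du|=0\}$).

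The real gap is in the reverse inequality, and it is exactly where you flag the ``main technical obstacle.'' Your proposed resolution rescales an arbitrary $f\in\Lip_{bs}(\Y)$ by its \emph{global} Lipschitz constant and uses $f/\Lip(f)$ as a unit-norm test element. This does not work: if $f$ has large global oscillation away from $u(x)$, then $\Lip(f)$ can be much bigger than $|\d_\mu f|(u(x))$, so $\d_\mu\bigl(f/\Lip(f)\bigr)(u(x))$ has tiny pointwise norm and the pairing $\d u(\vv)\bigl(u^*\d_\mu(f/\Lip(f))\bigr)$ is far from realizing the dual norm. The density of $\{\d_\mu f\}$ in $L^0_\mu(T^*\Y)$ only controls the \emph{pointwise} norm $|\d_\mu f|$, not the global Lipschitz constant, and there is no reason the two should be comparable. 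This is precisely why the paper does not rescale globally. Instead it: (a) approximates $f$ by $\tilde f_n\in\Lip_{bs}(\Y)$ via Lemma \ref{le:compo}, after cutting off and using reflexivity of $W^{1,2}(\X)$ plus Mazur's lemma to get strong convergence; (b) applies Egorov's theorem to extract a compact set $F_\eps$ where $\lip_a(\tilde f_n)\to|\d_\mu f|\le 1$ uniformly, so $\tilde f_{\bar n}$ is \emph{locally} $(1+\eps)$-Lipschitz at every point of $F_\eps$; and (c) covers $F_\eps$ by finitely many such neighbourhoods and, on each one, uses McShane extension to replace $(1+\eps)^{-1}\tilde f_{\bar n}$ by a \emph{globally} $1$-Lipschitz function $h_{y_i}$ that agrees with it locally and hence has the same differential there. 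Step (c) — the local-to-global passage via McShane — is the missing idea: it lets one keep the pointwise differential large while forcing the global Lipschitz constant down to $1$. Without it, the pointwise ``span'' argument cannot be closed. (You should also note that in this $L^0$-module framework the pointwise-fiber language is only heuristic; the rigorous version of the duality is the essential supremum over Borel partitions used in the paper's formula \eqref{eq:peraltra}.)
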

\begin{proof} Put $\mu:=u_*(|D u|^2\mm)$ and notice that by the definition of $\d u(\vv)$ and the fact that $\{{\rm ext}\big([u^\ast \d_\mu f]\big):f\in W^{1,2}\cap L^\infty(\Y,\sfd_\Y,\mu)\}$ generates ${\rm Ext}\big(u^*L^0_\mu(T^*\Y)\big)$ (see \cite{Gigli14}) we see that
\begin{equation}
\label{eq:peraltra}
|\d u(\vv)|=\esssup_{(E_i)}\esssup_{(f_i)\subset W^{1,2}\cap L^\infty(\Y,\sfd_\Y,\mu) \atop |{\rm ext}([u^\ast \d_\mu f_i])|\leq 1\ \mm-a.e.\ on \ E_i}\nchi_{E_i}{\rm ext}\big([u^\ast \d_\mu f_i]\big)(\d u(\vv))
\qquad\mm-a.e.,
\end{equation}
where the first essential supremum is among all Borel partitions $(E_i)$ on $\X$. Let $f:\Y\to\R$ be 1-Lipschitz with bounded support and notice that $f\in  W^{1,2}\cap L^\infty(\Y,\sfd_\Y,\mu)$ with $|{\rm ext}([u^\ast \d_\mu f])|\leq 1$ $\mm$-a.e., and thus recalling Definition \ref{def:diff} we see that
\[
|\d u(\vv)|\geq {\rm ext}([u^\ast \d_\mu f])(\d u(\vv))=\d g(\vv)\qquad\mm-a.e.,
\]
for $g$ given by Lemma \ref{le:compo}. Thus to prove that inequality $\geq$ holds in \eqref{eq:iddif} it is sufficient to show that 
\begin{equation}
\label{eq:perchiu}
\d g(\vv)=\d(f\circ u)(\vv)\qquad \mm-a.e..
\end{equation}
To this aim notice that the locality of the differential  and the fact that $g=f\circ u$ $\mm$-a.e.\ on the set $\{|\d u|>0\}$ proves \eqref{eq:perchiu} on such set. On the other hand, \eqref{eq:chain} and the trivial bound $|\d (f\circ u)|\leq |\d u|$ ensure that $\mm$-a.e.\ on $\{|\d u|=0\}$ both sides of \eqref{eq:perchiu} are 0, thus proving  \eqref{eq:perchiu} and  inequality $\geq$ holds in \eqref{eq:iddif}.

Recalling \eqref{eq:samenorm2} we see that the opposite inequality is trivial on $\{|Du|=0\}$. Also, by  a simple localization argument we can, and will, assume that $|\vv|\in L^\infty(\X)$.

Now fix $E\subset \{|Du|>0\}$ compact and $f\in W^{1,2}\cap L^\infty(\Y,\sfd_\Y,\mu)$ with $|{\rm ext}([u^\ast \d_\mu f])|\leq 1$ $\mm$-a.e.\ on  $E$. Let $F\subset\Y$ be defined up to $\mu$-null sets by $F:=\{\frac{\d u_*(\nchi_E|Du|^2\mm)}{\d u_*(|Du|^2\mm)}>0\}$ and notice that $|\d_\mu f|\leq 1$ $\mu$-a.e.\ on $F$.

Apply Lemma \ref{le:compo} to find $g\in S^2(\X)$ and a sequence $(f_n)\subset \Lip_{bs}(\Y)$ of uniformly bounded functions (by the last claim in the lemma) satisfying \eqref{eq:conv}. Let $\eta:\X\to[0,1]$ be 1-Lipschitz, with bounded support and identically 1 on $E$ and notice that the functions $\eta f_n\circ u$ are uniformly bounded in $L^2(\X)$ (because they are uniformly bounded and with uniformly bounded support) and satisfy
\[
|\d(\eta f_n\circ u)|\leq  \lip_a(f_n)\circ u|Du|+ |f_n|\circ u\,\nchi_{\supp(\eta)}\qquad\mm-a.e..
\]
Therefore recalling the last in \eqref{eq:conv} we   see that these functions are uniformly bounded in $W^{1,2}(\X)$ and thus, since we assumed such space to be reflexive, they have a non-relabeled subsequence weakly converging to some function $\tilde g$ which, by \eqref{eq:conv}, coincides with $g$ on $E$. We now apply Mazur's lemma to the sequence $(\eta f_n\circ u)$ to find a sequence of convex combinations $W^{1,2}(\X)$-strongly converging to $\tilde g$. Clearly, these convex combinations can be written as $\eta \tilde f_n\circ u$ where the $\tilde f_n$'s are convex combinations of the $f_n$'s and it is then easy to see that they belong to $\Lip_{bs}(\Y)$ and satisfy \eqref{eq:conv}.

From $\eta \tilde f_n\circ u\to \tilde g$ in $W^{1,2}(\X)$ and the fact that $\tilde g=g$ on $E$ we deduce that 
\[
\nchi_E\d(f_n\circ u)(\vv)\quad\to\quad\nchi_E\d g(\vv)\qquad\text{ in }L^2(\X,\mm).
\]
Now fix $\eps>0$ and apply Egorov's theorem to find $F_\eps\subset F$ compact with 
\begin{equation}
\label{eq:mispicc}
\mu(F\setminus F_\eps)< \eps\qquad \mm(E\setminus E_\eps)<\eps\qquad\text{  where } E_\eps:= E\cap u^{-1}(F_\eps)
\end{equation}
and a further non-relabeled extraction of subsequence  such that 
\[
\begin{split}
\lip_a(\tilde f_n)&\to |\d_\mu f| \qquad\text{uniformly on }F_\eps\\
\d( \tilde f_n\circ u)(\vv)&\to \d g(\vv) \qquad\text{uniformly on }E_\eps.
\end{split}
\]
In particular, since $|\d_\mu f|\leq 1$ $\mu$-a.e.\ on $F$,  possibly removing another small  set from $F_\eps$  - keeping \eqref{eq:mispicc} valid - we can find  $\bar n\in\N$ such that
\begin{equation}
\label{eq:diffdiff}
\begin{split}
\lip_a(\tilde f_{\bar n})(y)&< 1+\eps\qquad \text{ for every }y\in F_\eps,\\
|\d(\tilde f_{\bar n}\circ u)(\vv)- \d g(\vv)|&<\eps \qquad\mm-a.e.\   on \ E_\eps.
\end{split}
\end{equation}
We thus proved that  any $y\in F_\eps$ has a neighbourhood $U_y$ where $\tilde f_{\bar n}$  is $(1+\eps)$-Lipschitz. Hence restricting $\tilde f_{\bar n}$ to $U_y$  and then applying the McShane extension lemma we find a $1$-Lipschitz function $h_y$ which coincides with $(1+\eps)^{-1}\tilde f_{\bar n}$ on $U_y$. Up to adding a constant, which does not affect the differential, we can also assume that $h_y(\bar y)=0$. By compactness of $F_\eps$ there are $y_1,\ldots,y_N$ such that $F_\eps\subset\cup_iU_{y_i}$. Putting $E_{\eps,i}:=E_\eps\cap u^{-1}(U_{y_i})$, the locality of the differential gives $\d(\tilde f_{\bar n}\circ u)(\vv)= (1+\eps)\d(h_{y_i}\circ u)(\vv)$
and therefore
\[
\begin{split}
{\rm ext}\big([u^\ast \d_\mu f]\big)(\d u(\vv))\stackrel{\eqref{eq:def_du}}=\d g(\vv)\stackrel{\eqref{eq:diffdiff}}\leq \d(\tilde f_{\bar n}\circ u)(\vv)+\eps= (1+\eps)\d(h_{y_i}\circ u)(\vv)+\eps\quad\mm-a.e.\ on \ E_{\eps,i}.
\end{split}
\]
The conclusion follows from this inequality, the arbitrariness of $E$ and $f$ as chosen above, the identity \eqref{eq:peraltra}, the arbitrariness of $\eps>0$ and the bounds \eqref{eq:mispicc}.
\end{proof}

The first application of such lemma is in the proof of the following generalization of Theorem \ref{thm:diffmetr}:

\begin{theorem}\label{thm:mdd}
Let $(\X,\sfd,\mm)$ be a strongly rectifiable space  with uniformly locally doubling measure and supporting a Poincar\'e inequality and $(\Y,\sfd_\Y,\bar y)$ a pointed complete metric space. Also, let $\eps_n\downarrow0$ be a given sequence and $(\mathcal A^{\eps_n})$ an aligned family of atlases and $\mathscr I:L^0(T\X)\to L^0(T_{\rm GH}\X)$ the isomorphism given by Theorem \ref{thm:isotang}.

Then for every $u\in W^{1,2}(\X,\Y_{\bar y})$ and $\vv\in L^0(T\X)$ we have
\[
|\d u(\vv)|(x)=\md_x(u)(\mathscr I(\vv)(x))\qquad\mm-a.e.\ x\in\X,
\]
where $\md_x(u)$ is the approximate metric differential of $u$  relative to $(\mathcal A^{\eps_n})$.
\end{theorem}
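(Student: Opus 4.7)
The plan is to combine the duality formula of Lemma \ref{le:dn}, Theorem \ref{thm:diffmetr} applied chart-by-chart, and the module isomorphism $\mathscr I$ of Theorem \ref{thm:isotang} to reduce the claim to the Euclidean scalar case, and then use the Kuratowski embedding to obtain the matching lower bound. By Lemma \ref{le:dn}, for $\mm$-a.e.\ $x$,
\[
|\d u(\vv)|(x)=\esssup_{f\in\Lip_{bs}(\Y),\,\Lip(f)\le 1,\,f(\bar y)=0}\d(f\circ u)(\vv)(x).
\]
The key intermediate step is the scalar identity
\begin{equation}\label{eq:mdscalar_plan}
|\d(f\circ u)(\vv)|(x)=\md_x(f\circ u)(\mathscr I(\vv)(x))\qquad \mm\text{-a.e.\ }x,
\end{equation}
for every such $f$. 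Granting \eqref{eq:mdscalar_plan}, the pointwise domination $\md_x(f\circ u)\le\md_x(u)$---immediate from Def.\ \ref{def:apmd} together with $|f(u(y))-f(u(x))|\le\sfd_\Y(u(y),u(x))$---and the ess-sup over $f$ yield $|\d u(\vv)|(x)\le\md_x(u)(\mathscr I(\vv)(x))$.

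To prove \eqref{eq:mdscalar_plan}, set $g:=f\circ u$; by Corollary \ref{cor:ksh} and \eqref{eq:hsluslip} both $u$ and $g$ have the Lusin--Lipschitz property, so upon refining the charts I may assume $g\restr{U^n_i}$ is Lipschitz for every $n,i$. Then $\tilde g_n:=g\circ(\varphi^n_i)^{-1}$ is Lipschitz on $\varphi^n_i(U^n_i)$; I McShane-extend it to $\R^d$ (Lemma \ref{le:ext}) and apply Theorem \ref{thm:diffmetr} to obtain $|\d\tilde g_n(\bar{\sf w})|(z)=\md_z(\tilde g_n)({\sf w})$ for every ${\sf w}\in\R^d$ and $\mathcal L^d$-a.e.\ $z$. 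Since $\tilde g_n$ is real-valued we have $\md_z(\tilde g_n)=|L_z(\cdot)|$ for a linear $L_z:\R^d\to\R$. Combining the defining property \eqref{eq:defud} of $\ud\varphi^n_i$, the locality of the abstract differential (which identifies $\d(\tilde g_n\circ\bar\varphi^n_i)(\vv)$ with $\d g(\vv)$ on $U^n_i$), and the formula $\mathscr I_n(\vv)\circ(\varphi^n_i)^{-1}=\ud\varphi^n_i(\vv\restr{U^n_i})$ of Theorem \ref{thm:isotang}(i), one gets for $\mm$-a.e.\ $x\in U^n_i$
\[
|\d g(\vv)|(x)=|L_{\varphi^n_i(x)}(\mathscr I_n(\vv)(x))|=\md_{\varphi^n_i(x)}(\tilde g_n)(\mathscr I_n(\vv)(x)).
\]
The left-hand side does not depend on $n$; by Proposition \ref{prop:metrdiff}(ii) we have $\md_{\varphi^n_i(x)}(\tilde g_n)\to\md_x(g)$ in $\sn^d$ and by Theorem \ref{thm:isotang}(ii) $\mathscr I_n(\vv)\to\mathscr I(\vv)$, so letting $n\to\infty$ in the right-hand side yields \eqref{eq:mdscalar_plan}.

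For the matching lower bound, fix a countable dense $(y_m)\subset\Y$ and set $f_m(z):=\sfd_\Y(z,y_m)-\sfd_\Y(\bar y,y_m)$, truncated by a Lipschitz cutoff to have bounded support; each $f_m$ is 1-Lipschitz with $f_m(\bar y)=0$. The isometric Kuratowski embedding (Lemma \ref{le:kur}) restricted to $\{y_m\}$ gives $\sfd_\Y(y_1,y_2)=\sup_m|f_m(y_1)-f_m(y_2)|$; inserting this into the approximate-limit definition of $\md_x(u)$ from Def.\ \ref{def:apmd} via a diagonal/Egorov extraction produces $\md_x(u)(v)=\sup_m\md_x(f_m\circ u)(v)$ for $\mm$-a.e.\ $x$ and every $v\in\R^d$. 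Combining with \eqref{eq:mdscalar_plan} applied to each $g=f_m\circ u$ and with Lemma \ref{le:dn}, we obtain
\[
|\d u(\vv)|(x)\ge\sup_m|\d(f_m\circ u)(\vv)|(x)=\sup_m\md_x(f_m\circ u)(\mathscr I(\vv)(x))=\md_x(u)(\mathscr I(\vv)(x)),
\]
matching the upper bound and concluding.

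The main obstacle will be the chart-to-space translation in the proof of \eqref{eq:mdscalar_plan}, specifically assembling \eqref{eq:defud} with the pointwise formula for $\mathscr I_n$ and passing to the limit $n\to\infty$ in the topology of $\sn^d$ while keeping the identifications $\tilde g_n\circ\bar\varphi^n_i=g$ on $U^n_i$ compatible with the module structures. A secondary technicality is the Kuratowski-type lower bound $\md_x(u)=\sup_m\md_x(f_m\circ u)$, which requires a diagonal/Egorov argument to commute the sup over $m$ with the approximate limit defining the metric differential.
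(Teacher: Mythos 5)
Your upper bound follows essentially the same route as the paper (chart reduction, \eqref{eq:defud}, the Euclidean result of Theorem \ref{thm:diffmetr}, and the $n\to\infty$ limit) and is sound. The gap is in the lower bound. You assert
\[
\md_x(u)(v)=\sup_m\md_x(f_m\circ u)(v)\qquad\mm\text{-a.e.\ }x,\ \forall v,
\]
for the Kuratowski family $f_m(\cdot)=\sfd_\Y(\cdot,y_m)-\sfd_\Y(\bar y,y_m)$, and dismiss it with a ``diagonal/Egorov extraction.'' This is precisely the hard part of the statement, and it does not follow by inserting the identity $\sfd_\Y(p,q)=\sup_m|f_m(p)-f_m(q)|$ into the approximate limit of Definition \ref{def:apmd}: for each $m$ one has $|f_m(u(y))-f_m(u(x))|=\md_x(f_m\circ u)(\varphi^n_i(y)-\varphi^n_i(x))+o_m(\sfd(y,x))$, but the error $o_m$ is \emph{not} uniform in $m$, so one cannot pass the $\sup_m$ through the limit. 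In fact, to reach a given fraction of $\sfd_\Y(u(y),u(x))$ one must choose $y_m$ close to $u(x)$ or $u(y)$, so the witnessing index $m_k$ escapes to infinity along any blow-up sequence $y_k\to x$, and the errors $o_{m_k}(\sfd(y_k,x))$ need not be negligible. Nothing in what you have written controls this.

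The paper avoids this entirely by a different sequencing. After the chart reduction one has, for every $1$-Lipschitz $f$ with bounded support,
\[
\d(f\circ u)(\vv)=\d\big(f\circ v^n_i\big)\big(\ud\varphi^n_i(\vv)\big)\circ\varphi^n_i\qquad\mm\text{-a.e.\ on }U^n_i,
\]
where $v^n_i\in\Lip(\R^d,\ell^\infty(\Y))$ is a Lipschitz extension of $u\circ(\varphi^n_i)^{-1}$. One then takes the essential supremum over $f$ on \emph{both} sides at this stage: by Lemma \ref{le:dn} the left becomes $|\d u(\vv)|$ and the right becomes $|\d v^n_i(\ud\varphi^n_i(\vv))|\circ\varphi^n_i$, with no loss. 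Only now is Theorem \ref{thm:diffmetr} applied, to the metric-valued Lipschitz map $v^n_i:\R^d\to\ell^\infty(\Y)$ rather than to the scalar compositions $f\circ v^n_i$; this directly yields $|\d v^n_i(\ud\varphi^n_i(\vv))|(\varphi^n_i(x))=\md_{\varphi^n_i(x)}(v^n_i)(\ud\varphi^n_i(\vv)(\varphi^n_i(x)))=\nn^n_x(\mathscr I_n(\vv)(x))$, and one passes to the limit in $n$ exactly as you do. In short: Theorem \ref{thm:diffmetr} is precisely the result that packages the compatibility between the scalar differentials and the metric differential of the vector-valued map; by applying it only to real-valued compositions and trying to rebuild the vector-valued statement by hand via a pointwise Kuratowski sup, you reintroduce a nontrivial interchange-of-limits problem that the cited theorem was designed to resolve. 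To fix your argument, take the ess-sup over $f$ right after the chart identity, before invoking Theorem \ref{thm:diffmetr}, and apply the latter to $v^n_i$.
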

\begin{proof} Up to post-compose $u$ with the Kuratowski embedding we can assume that $u$ takes values in $\ell^\infty(\Y)$. 

It is not restrictive to assume that for every $n,i$  the set $U^n_i\subset \X$ is compact and (by \eqref{eq:hsluslip} and Proposition \ref{prop:whs}) that $u\restr{U^n_i}$ is Lipschitz, therefore recalling Lemma \ref{le:ext} for every $i,n\in\N$ there is  $v^n_i\in \Lip(\R^d,\Y)$ such that $v^n_i\circ\varphi^n_i=u$ on $U^n_i$ and with a simple truncation argument we can assume that $v^n_i$ has bounded support. Then letting $\bar\varphi^n_i:\X\to\R^d$ be any Lipschitz extension of $\varphi^n_i$,  for any ${\sf v}\in L^0(T\X)$ and $f:\Y\to\R$ 1-Lipschitz with $f(\bar y)=0$ and bounded support we have
\[
\d(f\circ u)(\vv)=\d(f\circ v^n_i\circ\bar\varphi^n_i)(\vv)\stackrel{\eqref{eq:defud}}=\d(f\circ v^n_i)(\ud\varphi^n_i(\vv) )\circ\varphi^n_i\qquad\mm-a.e.\ on\ U^n_i.
\]
Passing to the essential supremum in $f$ and recalling Lemma \ref{le:dn} (applicable by point $(v)$ in Theorem \ref{thm:basesob}) we deduce that
\[
|\d u(\vv)|=|\d v^n_i(\ud\varphi^n_i(\vv))|\circ \varphi^n_i \qquad\mm-a.e.\ on\ U^n_i.
\]
Then by Theorem \ref{thm:diffmetr}, point $(i)$ of Proposition \ref{prop:metrdiff} and Theorem \ref{thm:isotang} and with the notation introduced there we have that for $\mm$-a.e.\ $x\in U^n_i$ it holds
\[
|\d v^n_i(\ud\varphi^n_i(\vv))|(\varphi^n_i(x))=\md_{\varphi^n_i(x)}(v^n_i)\big(\ud\varphi^n_i(\vv)(\varphi^n_i(x))\big)=\nn^n_x\big(\ud\varphi^n_i(\vv)(\varphi^n_i(x))\big)=\nn^n_x(\mathscr I_n(\vv)(x)).
\]
We thus see that for every $n\in\N$ it holds
\[
|\d u(\vv)|(x)=\nn^n_x(\mathscr I_n(\vv)(x))\qquad\mm-a.e.\ x\in\X.
\]
Now observe that Proposition \ref{prop:metrdiff} ensures that $\nn^n\to\nn$ in $L^0(\X,\sn^d)$ and coupling this information with the convergence of  $\mathscr I_n$ to $\mathscr I$ in  ${\rm Hom}(L^0(T\X), L^0(T_{\rm GH}\X))$ granted by  Theorem \ref{thm:isotang} we easily deduce that $\nn^n_\cdot(\mathscr I_n(\vv)(\cdot))\to\nn_\cdot(\mathscr I(\vv)(\cdot))$ in $L^0(\X)$ thus obtaining the conclusion.
\end{proof}
We now pass to the study of a new representation formula for the energy density. Recall that a $L^0(\mm)$-normed module $\mathscr H$ is said a Hilbert module provided
\[
2(|v|^2+|w|^2)=|v+w|^2+|v-w|^2\qquad\mm-a.e.\quad\forall v,w\in\mathscr H,
\]
and that a Hilbert base is a collection $\e_1,\ldots,\e_d\in\mathscr H$ which generates $\mathscr H$ and satisfies
\[
\la\e_i,\e_j\ra=\delta_{ij}\quad\mm-a.e.\qquad\forall i,j=1,\ldots,d.
\]
If $\mathscr H$ admits such a base made of $d$ elements, we say that $\mathscr H$ has dimension $d$.
\begin{definition}[$2$-size of an operator on a Hilbert module]\label{def:ps2}
Let $\mathscr H$ be a $L^0(\mm)$-Hilbert module of dimension $d$ and $\mathscr M$ a $L^0(\mm)$-normed module. Let $T:\mathscr H\to \mathscr M$ a $L^0(\mm)$-linear and continuous map.

Then the $2$-size $S_2(T)\in L^0(\mm)$ is defined as
\begin{equation}
\label{eq:defs2op}
S_2(T)(x):=\bigg|\fint_{B_1^{\R^d}(0)}\Big|T\big(\sum^d_{i=1}v_i\e_i\big)\Big|_{\mathscr M}^2(x)\,\d\mathcal L^d(v_1,\ldots,v_d)\bigg|^{\frac12},\qquad\mm-a.e.\ x,
\end{equation}
where $\e_1,\ldots,\e_d\in\mathscr H$ is a Hilbert base of $\mathscr H$ and $|\cdot|_{\mathscr M}$ is the pointwise norm on $\mathscr M$.
\end{definition}
Notice that if the target module is also Hilbertian, then the 2-size coincides, up to a multiplicative dimensional constant, with the Hilbert-Schmidt norm of $T$, see Lemma \ref{le:normhs}.

The fact that orthogonal transformations of $\R^d$ preserve the Lebesgue measure grants that for any linear operator $\ell:\R^d\to B$, where $B$ is a Banach space, it holds
\[
\fint_{B_1^{\R^d}(0)}\|\ell(z)\|_B^2\,\d\mathcal L^d(z)=\fint_{B_1^{\R^d}(0)}\|\ell(O(z))\|_B^2\,\d\mathcal L^d(z)\qquad\forall O\in O(d).
\] 
From this identity it is immediate to verify that the definition of $2$-size is well-posed, i.e.\ it does not depend on the chosen base $(\e_i)$. Similarly, it is easy to see that there exists a constant $c(d)>0$ such that for any $\ell$ as above it holds
\begin{equation}
\label{eq:normequiv0}
c(d)\|\ell\|_{\rm op}\leq\bigg|\fint_{B_1^{\R^d}(0)}\|\ell(z)\|_B^2\,\d\mathcal L^d(z)\bigg|^{\frac12}\leq \|\ell\|_{\rm op},
\end{equation}
where $\|\ell\|_{\rm op}:=\sup_{|v|\leq 1}\|\ell(v)\|_B$, and from these bounds, \eqref{eq:defs2op} and the observation that
\begin{equation}
\label{eq:normaopbase}
|T|=\sup_{v\in\R^d\atop |v|\leq 1}|T(\sum_iv_i \e_i)|,\qquad\mm-a.e.
\end{equation}
where $\e_1,\ldots,\e_d$ is any Hilbert base of $\mathscr H$,  it follows that
\begin{equation}
\label{eq:normequiv}
c(d)|T|\leq S_2(T)\leq |T|\qquad\mm-a.e.,
\end{equation}
for any $T$ as in Definition \ref{def:ps2}.

It is now easy to see, as direct consequence of Theorems \ref{thm:exlim},\ref{thm:isotang}, \ref{thm:mdd} and  Definitions \ref{def:ps1}, \ref{def:ps2}, that the following holds:
\begin{theorem}\label{thm:repr}
Let $(\X,\sfd,\mm)$ be a strongly rectifiable space  with uniformly locally doubling measure and supporting a Poincar\'e inequality and $(\Y,\sfd_\Y,\bar y)$ a pointed complete metric space.

Then for every $u\in\ks^{1,2}(\X,\Y_{\bar y})$ we have that
\begin{equation}
\label{eq:repr2}
\e_2[u]=S_2(\d u )\qquad\mm-a.e.
\end{equation}
\end{theorem}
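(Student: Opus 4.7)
The identity should follow by combining the three main ingredients already established: the explicit formula $\e_2[u](x)=S_2(\md_x(u))$ from Theorem \ref{thm:exlim}, the link $|\d u(\vv)|(x)=\md_x(u)(\mathscr I(\vv)(x))$ from Theorem \ref{thm:mdd}, and the isometric isomorphism $\mathscr I:L^0(T\X)\to L^0(T_{\rm GH}\X)$ from Theorem \ref{thm:isotang}. Essentially, the plan is to rewrite the definition of $S_2(\d u)$ (an average over a Euclidean unit ball of norms of $\d u$ applied to a Hilbert base) as an average of $\md_x(u)^2$ over the same ball, thereby matching the formula for $\e_2[u]$.

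\textbf{Step 1: choice of data.} Fix $\eps_n\downarrow 0$ and an aligned family of atlases $(\mathcal A^{\eps_n})$, producing the isomorphism $\mathscr I$. Let $\bar\e_1,\ldots,\bar\e_d\in L^0(T_{\rm GH}\X)=L^0(\X,\R^d)$ denote the constant sections equal to the standard basis of $\R^d$, which form a Hilbert base of $L^0(T_{\rm GH}\X)$ for the pointwise Euclidean inner product. Set $\e_i:=\mathscr I^{-1}(\bar\e_i)\in L^0(T\X)$. Since $\mathscr I$ is an isometric isomorphism of $L^0(\mm)$-normed modules, the pointwise norm on $L^0(T\X)$ is Hilbertian and $\e_1,\ldots,\e_d$ constitute a Hilbert base, so that $S_2(\d u)$ is well-posed in the sense of Definition \ref{def:ps2}.

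\textbf{Step 2: computation.} For constants $v_1,\ldots,v_d\in\R$, the $L^0(\mm)$-linearity of $\mathscr I$ gives $\mathscr I\bigl(\sum_i v_i\e_i\bigr)(x)=\sum_i v_i\bar\e_i(x)=v$ for $\mm$-a.e.\ $x$, where $v:=(v_1,\ldots,v_d)$. Therefore Theorem \ref{thm:mdd} yields
\[
\bigl|\d u\bigl(\textstyle\sum_i v_i\e_i\bigr)\bigr|(x)=\md_x(u)(v)\qquad\mm\text{-a.e.\ }x.
\]
Plugging this into \eqref{eq:defs2op} and using Fubini to swap the $\mm$-a.e.\ identity with the average over $v\in B_1^{\R^d}(0)$, one obtains
\[
S_2(\d u)^2(x)=\fint_{B_1^{\R^d}(0)}\md_x(u)^2(v)\,\d\mathcal L^d(v)=S_2(\md_x(u))^2
\]
for $\mm$-a.e.\ $x$. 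Comparing with \eqref{eq:endensform} from Theorem \ref{thm:exlim} gives $\e_2[u]=S_2(\d u)$ $\mm$-a.e.

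\textbf{Main obstacle.} The only non-routine issue is the Fubini-type step: Theorem \ref{thm:mdd} gives, for each fixed $v\in\R^d$, an $\mm$-negligible exceptional set $N_v$, and we need a single $\mm$-null set outside of which the identity holds simultaneously for all $v$ in a set of full Lebesgue measure in $B_1^{\R^d}(0)$, so that the integrand and the integral make sense pointwise. This is dealt with by noting that both sides of the $\mm$-a.e.\ identity are, for fixed $x$, continuous (in fact $1$-Lipschitz after normalization) functions of $v\in\R^d$: choosing a countable dense $\{v_k\}\subset\R^d$ and taking the union of the $N_{v_k}$ yields a single $\mm$-null exceptional set outside which the identity holds on all of $\R^d$, so the averaging is justified. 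Everything else is bookkeeping.
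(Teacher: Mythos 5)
Your proposal is correct and follows essentially the same route as the paper's proof: pull back the constant standard basis through $\mathscr I^{-1}$ to get a Hilbert base of $L^0(T\X)$, invoke Theorem \ref{thm:mdd} to convert $|\d u(\sum_i v_i\e_i)|$ into $\md_x(u)(v)$, and then match the defining integral of $S_2(\d u)$ with $S_2(\md_x(u))$ and invoke \eqref{eq:endensform}. The only (minor) difference is in how the ``exceptional null set for each $v$'' issue is handled: the paper sidesteps it by interpreting the averages as Bochner integrals valued in $L^1(\X,\mm)$, whereas you pass to a countable dense set of $v$'s and use that both sides are seminorms of $v$ for $\mm$-a.e.\ $x$ — a valid alternative, provided you note that the pointwise Lipschitz bound $\bigl||\d u(\hat v)|-|\d u(\hat w)|\bigr|\le|\d u|\,|v-w|$ (which follows from $L^0$-linearity and $|\hat v|_{L^0(T\X)}=|v|$) supplies the needed continuity on the left-hand side.
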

\begin{proof} For $v\in\R^d$ let $\hat v\in L^0(T_{\rm GH}\X)$ be defined as constantly equal to $v$. Then, considering all the integrals below in the Bochner sense in the space $L^1(\X,\mm)$, we have
\[
\begin{split}
S_2^2(\md_\cdot(u))&=\fint_{B_1^{\R^d}(0)}|\md_\cdot u(v)|^2\,\d\mathcal L^d(v)=\fint_{B_1^{\R^d}(0)}|\md_\cdot u(\hat v(\cdot))|^2\,\d\mathcal L^d(v).
\end{split}
\]
Letting $\mathscr I:L^0(T\X)\to L^0(T_{\rm GH}\X)$ be the isomorphism defined in Theorem \ref{thm:isotang} and recall Theorem  \ref{thm:mdd} we obtain that
\[
\fint_{B_1^{\R^d}(0)}|\md_\cdot u(\hat v(\cdot))|^2\,\d\mathcal L^d(v)=\fint_{B_1^{\R^d}(0)}|\d u(\mathscr I^{-1}(\hat v))|^2\,\d\mathcal L^d(v).
\]
Now let $e_1,\ldots,e_d\in\R^d$ be the canonical base and notice that the element $\e_i:=\mathscr I^{-1}(\hat e_i)$, $i=1,\ldots,d$, form a Hilbert base of $L^0(T\X)$ (by Theorem \ref{thm:isotang}) and that for $v=(v_1,\ldots,v_d)$ we have $\mathscr I^{-1}(\hat v)=\sum_{i=1}^dv_i\e_i$. Therefore taking into account the defining identity \eqref{eq:defs2op}  we see that
\[
S_2^2(\d u)=\fint_{B_1^{\R^d}(0)}|\d u(\mathscr I^{-1}(\hat v))|^2\,\d\mathcal L^d(v)
\]
and thus the conclusion follows from Theorem \ref{thm:exlim} and in particular by \eqref{eq:endensform}.
\end{proof}
\begin{remark}[Links with the directional energy]{\rm
In \cite{GT18} we showed how to adapt the notion of \emph{directional energy} - introduced in \cite{KS93} - to the case of maps from an $\RCD(K,N)$ space to a complete space, in particular defining the directional space $\ks^2_Z(\X,\Y_{\bar y})$, where $Z$ is a \emph{regular vector field} on $\X$ (see \cite{GT18} for the definitions). One of the things that we proved is that if $u\in W^{1,2}(\X,\Y_{\bar y})$, then for every regular vector field $Z$ we also have $u\in \ks^2_Z(\X,\Y_{\bar y})$ and the directional energy  density $\e_{2,Z}[u]$ is given by $|\d u(Z)|$, where $\d u$ is defined as in \ref{def:diff}.

Now observe that we proved in Corollary \ref{cor:ksh} that $W^{1,2}(\X,\Y_{\bar y})=\ks^{1,2}(\X,\Y_{\bar y})$ if $\X$ is $\RCD(K,N)$ (as in this case it satisfies the assumption of such corollary), therefore we deduce that if a map $u$ belongs to the latter space, it also belongs to $ \ks^2_Z(\X,\Y_{\bar y})$ for any regular vector field $Z$ and the  inequality
\[
\e_{2,Z}[u]=|\d u(Z)|\leq |Z||\d u|\stackrel{\eqref{eq:normequiv}}\leq c(d)^{-1}|Z| S_2(\d u)
\]
is the analogue of \cite[Inequality (1.8.i)]{KS93}. Similarly, the identity \eqref{eq:repr2} is the generalization of formula \cite[Inequality (1.8.1)]{KS93}.
}\fr\end{remark}

Now that we have a link between the energy density and the differential of $u$ we can use the closure-like property of the abstract differential to obtain the lower semicontinuity of the energy:
\begin{theorem}[Lower semicontinuity of the Korevaar-Schoen energy]\label{thm:lscks}
Let $(\X,\sfd,\mm)$ be a strongly rectifiable space  such  that $W^{1,2}(\X)$ is reflexive and let $(\Y,\sfd_\Y,\bar y)$ a complete metric space. Also, let  $(u_n)\subset W^{1,2}(\X,\Y_{\bar y})$ be $L^2(\X,\Y_{\bar y})$-converging to some $u\in L^2(\X,\Y_{\bar y})$ and such that 
\begin{equation}
\label{eq:supfin}
\sup_n\int |S_2(\d u_n)|^2\,\d\mm<\infty.
\end{equation}

Then $u\in W^{1,2}(\X,\Y_{\bar y})$ as well and for any $E\subset \X$ Borel  we have
\begin{equation}
\label{eq:tesi1}
\int_ES^2_2(\d u)\,\d\mm\leq\limi_{n\to\infty}\int_ES_2^2(\d u_n)\,\d\mm.
\end{equation}
In particular, if $(\X,\sfd,\mm)$ is a strongly rectifiable space, uniformly locally doubling and supports a Poincar\'e inequality, then the functional $\E_2:L^2(\X,\Y_{\bar y})\to[0,\infty]$ (recall \eqref{eq:defeks} and \eqref{eq:repr22}) is lower semicontinuous.
\end{theorem}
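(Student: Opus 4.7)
The plan is to combine three ingredients: the representation formula \eqref{eq:repr2} identifying $\e_2[u] = S_2(\d u)$, the duality \eqref{eq:iddif} expressing $|\d u(\vv)|$ as an essential supremum of $\d(f\circ u)(\vv)$ over $1$-Lipschitz scalar $f$, and the closure properties of the scalar differential coming from reflexivity of $W^{1,2}(\X)$. The first step is routine: by \eqref{eq:normequiv} and \eqref{eq:samenorm2}, the assumption \eqref{eq:supfin} makes $|Du_n| = |\d u_n|$ bounded in $L^2(\mm)$; extracting a weakly convergent subsequence and invoking Proposition \ref{prop:basesobmet}(iii) gives $u \in W^{1,2}(\X,\Y_{\bar y})$.

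The heart of the argument is the claim that for every $\vv \in L^0(T\X)$ with $|\vv| \in L^\infty(\mm)$ and every Borel $E \subset \X$,
\begin{equation}\label{eq:mainclaim}
\int_E |\d u(\vv)|^2 \d\mm \;\leq\; \liminf_n \int_E |\d u_n(\vv)|^2 \d\mm.
\end{equation}
By Lemma \ref{le:dn} and the standard reduction of an essential supremum to a countable sup on $\sigma$-finite measure spaces, there is a countable family $\{f_k\}$ of $1$-Lipschitz elements of $\Lbs(\Y)$ with $f_k(\bar y) = 0$ such that, setting $g_k := \d(f_k \circ u)(\vv)$, we have $\sup_k g_k = |\d u(\vv)|$ $\mm$-a.e.; since $-f_k$ also belongs to the family and $\d((-f_k)\circ u)(\vv) = -g_k$, by enlarging the family we may assume $g_k \geq 0$. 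For each fixed $k$, $f_k\circ u_n \to f_k\circ u$ in $L^2(\mm)$ while $|D(f_k\circ u_n)| \leq |Du_n|$ is bounded in $L^2$, so $\{f_k\circ u_n\}_n$ is bounded in $W^{1,2}(\X)$; by reflexivity (with the limit forced by the strong $L^2$-convergence), up to a subsequence $f_k\circ u_n \weakto f_k\circ u$ in $W^{1,2}(\X)$. Continuity of the linear operator $\d : W^{1,2}(\X) \to L^2(T^*\X)$, followed by pairing with the bounded vector field $\vv$, yields $g_k^{(n)} := \d(f_k\circ u_n)(\vv) \weakto g_k$ in $L^2(\mm)$; a diagonal extraction arranges this simultaneously for all $k$.

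Now define $\tilde g_k := \max_{j \leq k} g_j$, together with the Borel partition $\{A_j^k\}_{j \leq k}$ of $\X$ on which $\tilde g_k = g_j$, and set $\tilde g_k^{(n)} := \sum_{j\leq k} \nchi_{A_j^k} g_j^{(n)}$. Applying Lemma \ref{le:dn} to each $u_n$ gives $|g_j^{(n)}| \leq |\d u_n(\vv)|$ $\mm$-a.e., hence $|\tilde g_k^{(n)}| \leq |\d u_n(\vv)|$ $\mm$-a.e. As a finite linear combination (with partition coefficients independent of $n$) of weakly $L^2$-convergent sequences, $\nchi_E \tilde g_k^{(n)} \weakto \nchi_E \tilde g_k$ in $L^2(\mm)$, so
\[
\int_E \tilde g_k^2 \d\mm \;\leq\; \liminf_n \int_E (\tilde g_k^{(n)})^2 \d\mm \;\leq\; \liminf_n \int_E |\d u_n(\vv)|^2 \d\mm.
\]
Since $\tilde g_k \nearrow |\d u(\vv)|$ $\mm$-a.e., monotone convergence yields \eqref{eq:mainclaim}.

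To conclude, let $\e_1, \ldots, \e_d$ be a Hilbert base of $L^0(T\X)$, available through Theorem \ref{thm:isotang}, and apply \eqref{eq:mainclaim} with $\vv_v := \sum_{i=1}^d v_i \e_i$ for each $v \in B_1^{\R^d}(0)$ (so $|\vv_v| = |v|$ is uniformly bounded); integrating in $v$ via Fatou's lemma and recalling \eqref{eq:defs2op} gives \eqref{eq:tesi1}. The final assertion about lower semicontinuity of $\E_2$ on $L^2(\X,\Y_{\bar y})$ then follows by combining \eqref{eq:tesi1} with $E = \X$, the identification $\E_2 = \int S_2^2(\d\cdot)\,\d\mm$ from Theorem \ref{thm:repr} and \eqref{eq:repr22}, and Corollary \ref{cor:ksh} (all of these using the doubling and Poincaré assumptions). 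The most delicate point is arranging the partition $\{A_j^k\}$ to be fixed in $n$: this is what simultaneously preserves the pointwise bound $|\tilde g_k^{(n)}| \leq |\d u_n(\vv)|$ and the weak $L^2$-convergence $\tilde g_k^{(n)} \weakto \tilde g_k$, which standard max-operations would destroy.
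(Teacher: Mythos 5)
Your argument is correct in substance and rests on the same ingredients as the paper's proof: Lemma \ref{le:dn} to realize $|\d u(\vv)|$ as a countable essential supremum of $\d(f\circ u)(\vv)$, reflexivity of $W^{1,2}(\X)$ to get weak convergence of $f\circ u_n$, weak $L^2$-lower semicontinuity cut off to $E$, and Fatou in the direction variable $z$. Where you diverge is the device that carries the pointwise estimate into the integral one. The paper introduces $G_\vv$, the essential infimum of all weak $L^2$-cluster values of $(|\d u_n(\vv)|)$, proves $|\d u(\vv)|\le G_\vv$, and then extracts, for each $\vv$, a subsequence to obtain $\int_EG_\vv^2\le\liminf_n\int_E|\d u_n(\vv)|^2$. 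You replace this with the fixed-partition construction: $\tilde g_k^{(n)}:=\sum_j\nchi_{A_j^k}g_j^{(n)}$ inherits both the pointwise bound $|\tilde g_k^{(n)}|\le|\d u_n(\vv)|$ (from $|g_j^{(n)}|\le|\d u_n(\vv)|$ on every piece) and the weak $L^2$-convergence to $\tilde g_k$ (being a finite $L^\infty$-linear combination with $n$-independent coefficients), which a naive $\max_jg_j^{(n)}$ would not. This is a cleaner and more constructive realization of the same idea and it makes the $\liminf$ inequality transparent. Two details, however, need repair. (a) ``By enlarging the family we may assume $g_k\ge 0$'' is not literally achievable: $|g_k|=\max(g_k,-g_k)$ is not in general of the form $\d(f\circ u)(\vv)$ for a single $1$-Lipschitz $f$. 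What your argument actually requires is $\tilde g_k\ge 0$, which one gets by appending $f_0\equiv 0$ to the countable family (so $g_0=0$ and hence $\tilde g_k\ge g_0=0$); alternatively, skip non-negativity entirely and replace monotone convergence by Fatou as $k\to\infty$, since only pointwise $\tilde g_k\to|\d u(\vv)|$ is needed. (b) The ``up to a subsequence'' and the ``diagonal extraction'' should be dropped rather than invoked: a $\liminf$ along a subsequence \emph{dominates} the $\liminf$ over the full sequence, so after any genuine extraction the final estimate would prove nothing. The observation you place in parentheses is in fact the full resolution: $(f_k\circ u_n)_n$ is bounded in $W^{1,2}(\X)$ with the unique weak $L^2$-cluster value $f_k\circ u$, so by the standard uniqueness-of-weak-limits argument the \emph{whole} sequence converges weakly in $W^{1,2}(\X)$ for each $k$, and no extraction is needed.
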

\begin{proof} Let $f:\Y\to\R$ be 1-Lipschitz with $f(\bar y)=0$. Then the very definition of $W^{1,2}(\X,\Y_{\bar y})$ ensures that $f\circ u_n\in W^{1,2}(\X)$ for every $n\in\N$ with 
\begin{equation}
\label{eq:normdop}
|\d (f\circ u_n)|\leq |\d u_n|\stackrel{\eqref{eq:normequiv}}\leq c(d)^{-1} S_2(\d u_n)\qquad\mm-a.e..
\end{equation}
In particular, by  our assumption \eqref{eq:supfin} up to pass to a non-relabeled subsequence we can assume that $(|\d u_n|)$ has a weak $L^2$-limit $G$. Similarly, taking into account that the $L^2(\X,\Y_{\bar y})$-convergence of $(u_n)$ to $u$ trivially yields that $f\circ u_n\to f\circ u$ in $L^2(\X)$,  we see that $\sup_n\|  f\circ u_n\|_{W^{1,2}}<\infty$. From the reflexivity of $W^{1,2}(\X)$ we deduce that $(f\circ u_n)$ is weakly relatively compact and this fact together with $L^2$-convergence grants weak $W^{1,2}$-convergence of $(f\circ u_n)$ to $f\circ u$. 

For $V\in L^2(T\X)$ the linear operator $f\mapsto \int \d f(V)\,\d\mm$ is continuous on $W^{1,2}(\X)$ and thus weakly continuous. Picking $V:=g \vv$ with $\vv\in L^\infty(T\X)$ fixed and $g\in L^2(\X)$ arbitrary we see that $\d(f\circ u_n)(\vv)\weakto \d (f\circ u)(\vv)$ in $L^2(\X)$. Recalling that \eqref{eq:iddif} grants that $\d (f\circ u_n)(\vv)\leq |\d u_n(\vv)|$ $\mm$-a.e.\ and letting $g$ be an arbitrary weak $L^2$-limit of some subsequence of $(|\d u_n(\vv)|)$ (the fact that this sequence is bounded in $L^2(\X)$ follows from \eqref{eq:normdop} and \eqref{eq:supfin}), we see that $g\leq |\vv|G$ $\mm$-a.e.. Thus letting $G_{\vv}$ be the essential liminf of all the weak $L^2$-limits of some subsequence of $(|\d u_n(\vv)|)$, we see that
\begin{equation}
\label{eq:intermedio}
\d(f\circ u)(\vv)\leq G_\vv\leq |\vv|G\qquad\mm-a.e..
\end{equation}
In particular, from the arbitrariness of $\vv$ we deduce that $|\d(f\circ u)|\leq G$ and then the arbitrariness of $f$ yields that $u\in W^{1,2}(\X,\Y_{\bar y})$.  In particular, $\d u$ is well defined and from the arbitrariness of $f$ in \eqref{eq:intermedio}  and \eqref{eq:iddif} again we conclude
\begin{equation}
\label{eq:dug}
|\d u(\vv)|\leq G_\vv\qquad\mm-a.e..
\end{equation}
Now fix a Hilbert base $\e_1,\ldots,\e_d$ of $\L^0(T\X)$ (the fact that it exists follows from Theorem \ref{thm:isotang}) and for $z\in\R^d$ put $\hat z:=\sum_iz_i\e_i\in L^\infty(T\X)$. Also, notice that if  $E\subset\X$ is Borel and  $g_n\weakto g$ in $L^2(\X)$, then we also have $\nchi_Eg_n\weakto \nchi_Eg$ in $L^2(\X)$ and therefore $\int_E g^2\,\d\mm\leq\limi_{n\to\infty}\int_Eg_n^2\,\d\mm$. Keeping this and the definition of $G_\vv$ in mind and applying Fatou's lemma we obtain 
\[
\begin{split}
\int_E\fint_{B_1^{\R^d}(0)}|\d u(\hat z)|^2\,\d\mathcal L^d(z)\,\d\mm&\stackrel{\eqref{eq:dug}}\leq\fint_{B_1^{\R^d}(0)}\int_E G_{\hat z}^2\,\d\mm\,\d\mathcal L^d(z)\\
&\stackrel{\phantom{\eqref{eq:dug}}} \leq\fint_{B_1^{\R^d}(0)}\limi_{n\to\infty }\int_E |\d u_n(\hat z)|^2\,\d\mm\,\d\mathcal L^d(z)\\
&\stackrel{\phantom{\eqref{eq:dug}}}\leq \limi_{n\to\infty }\int_E\fint_{B_1^{\R^d}(0)} |\d u_n(\hat z)|^2\,\d\mathcal L^d(z)\,\d\mm,
\end{split}
\]
which is \eqref{eq:tesi1}. For the second part of the statement recall that the doubling assumption implies that $W^{1,2}(\X)$ is reflexive (by point $(v)$ in Theorem \ref{thm:basesob}), so that the claim follows from what already proved, Corollary \ref{cor:ksh} and Theorem \ref{thm:repr}. 
\end{proof}
\begin{remark}[Non-linear Dirichlet forms]{\rm
We notice that if $(\X,\sfd,\mm)$ is strongly rectifiable, uniformly locally doubling and supports a Poincar\'e inequality, then we have just proved that the Korevaar-Schoen energy $\E_2$ is  a \emph{non-linear Dirichlet form} as axiomatized by Jost in \cite{Jost98} (see also \cite{Jost1997}).

Indeed:
\begin{itemize}
\item[i)] The quadratic contraction property
\[
\E_2(\varphi\circ u)\leq \Lip^2(\varphi)\E_2(u)
\]
for $u\in L^2(\X,\Y_{\bar y})$ and $\varphi:(\Y,\sfd_\Y,\bar y)\to(\Z,\sfd_\Z,\bar z)$ with $\varphi(\bar y)=\bar z$ is a direct consequence of the definition \eqref{eq:kse} of Korevaar-Schoen energy density at scale $r$ and of Theorem \ref{thm:exlim}. Notice that in \cite{Jost98} the function $\varphi$ is defined only on $u(\X)$ but one can always reduce to the case of $\varphi$ defined on the whole $\Y$ without altering the global Lipschitz constant, by suitably enlarging the target space $\Z$ (an operation which does not affect the value of the energy).
\item[ii)] The density of $\ks^{1,2}(\X,\R)=W^{1,2}(\X,\R)$ in $L^2(\X)$ follows noticing that $\Lip_{bs}(\X)\subset W^{1,2}(\X)$ and that $\Lip_{bs}(\X)$ is dense in $L^2(\X)$.
\item[iii)] The $L^2$-lower semicontinuity of $\E_2$ has been just proved in Theorem \ref{thm:lscks}.
\end{itemize}
\ }\fr\end{remark}

\subsection{Consistency in the case $\Y=\R$}
We already know from Corollary \ref{cor:ksh}  that if $(\X,\sfd,\mm)$ is uniformly locally doubling and supports a Poincar\'e inequality  we have $W^{1,2}(\X,\Y_{\bar y})=\ks^{1,2}(\X,\Y_{\bar y})$ as sets. We have also seen in Theorem \ref{thm:repr} (recall also the bounds \eqref{eq:normequiv}) that if $\X$ is also strongly rectifiable, then the corresponding notions of `energy density' are comparable via universal constants depending only on the dimension of $\X$. In general we cannot expect more than this, because the energy in $W^{1,2}(\X,\Y_{\bar y})$ is related to the operator norm of the differential, while that in $\ks^{1,2}(\X,\Y_{\bar y})$ by its 2-size (which as said can be seen as a generalization of the Hilbert-Schmidt norm - see also Lemma \ref{le:normhs}) and it is easy to see that for a linear map from $\R^d$ with values in some Banach space, in general we cannot say anything better than \eqref{eq:normequiv0} for what concerns the relation between the operator norm and the 2-size.

Yet, there is a particular and relevant case when these two quantities coincide, up to a multiplicative constant: this occurs if the target Banach space is $\R$, as shown in the following lemma.
\begin{lemma}\label{le:easy}
For any $d\in\N$   there is a constant $c(d)>0$ such that the following holds. Let $\ell:\R^d\to\R$ be linear. Then
\begin{equation}
\label{eq:samenorm}
\|\ell\|_{op}=c(d)S_2(\ell),
\end{equation}
where $\|\ell\|_{op}$ is the operator norm defined as $\sup_{|v|\leq1}|\ell(v)|$.

In particular, for any $L^0(\mm)$-linear and continuous map $T$ from  a Hilbert $L^0(\mm)$-module of dimension $d$ to a $L^0(\mm)$-module of local dimension bounded above by 1 we have
\begin{equation}
\label{eq:perr}
|T|=c(d)S_2(T)\qquad\mm-a.e..
\end{equation}
\end{lemma}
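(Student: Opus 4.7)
My strategy is to first prove the scalar statement \eqref{eq:samenorm} by a direct computation, and then reduce the module statement \eqref{eq:perr} to a pointwise application of the scalar case via a local trivialization of $\mathscr M$.

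For the scalar case, I would represent $\ell:\R^d\to\R$ in the form $\ell(v)=\la a,v\ra$ for some $a\in\R^d$, so that $\|\ell\|_{op}=|a|$. Exploiting the invariance of the Lebesgue measure on $B_1(0)$ under orthogonal transformations, I reduce to the case $a=|a|\,e_1$ and compute
\[
S_2(\ell)^2=\fint_{B_1(0)}|\la a,v\ra|^2\,\d\mathcal L^d(v)=|a|^2\,\kappa_d,\qquad\text{where }\kappa_d:=\fint_{B_1(0)}v_1^2\,\d\mathcal L^d(v)>0.
\]
Setting $c(d):=\kappa_d^{-1/2}$ yields \eqref{eq:samenorm}.

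For the module case, fix a Hilbert base $\e_1,\dots,\e_d$ of $\mathscr H$ and recall \eqref{eq:normaopbase}, which gives
\[
|T|=\esssup_{v\in\R^d,\ |v|\leq1}\big|T\big(\textstyle\sum_i v_i\e_i\big)\big|\qquad\mm-\text{a.e.},
\]
the essential supremum being attainable along a countable dense set of $v$'s. The assumption that $\mathscr M$ has local dimension $\leq 1$ means there is a countable Borel partition $\{E_n\}$ of $\X$ such that $\mathscr M\restr{E_n}$ is either trivial or generated by a single element $\sigma_n\in \mathscr M$ with $|\sigma_n|=1$ $\mm$-a.e. on $E_n$. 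On the trivial pieces both sides of \eqref{eq:perr} vanish, so we may work on a piece where $\mathscr M\restr{E_n}$ is free of rank one. Writing $T(\e_i)=a_i^n\sigma_n$ on $E_n$ with $a_i^n\in L^0(\mm\restr{E_n})$, we obtain
\[
\big|T\big(\textstyle\sum_i v_i\e_i\big)\big|(x)=\big|\textstyle\sum_i v_i a_i^n(x)\big|\qquad\mm-\text{a.e. }x\in E_n,
\]
so that for $\mm$-a.e. $x\in E_n$ the map $v\mapsto T(\sum_i v_i\e_i)(x)$ corresponds to the scalar linear functional $\ell_x(v):=\sum_i v_i a_i^n(x)$.

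With this identification in place, the scalar identity applied pointwise yields simultaneously $|T|(x)=\|\ell_x\|_{op}$ and $S_2(T)(x)=S_2(\ell_x)$, whence $|T|(x)=c(d)\,S_2(T)(x)$ on $E_n$, and summing over $n$ gives \eqref{eq:perr}. The main (minor) obstacle is the measurability bookkeeping in the trivialization, i.e.\ ensuring that the representatives $a_i^n$ can be chosen as Borel functions patchable across $n$; this follows from the very definition of module of local dimension one.
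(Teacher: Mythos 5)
Your proof is correct. For the scalar identity \eqref{eq:samenorm}, the paper's argument is slightly more abstract: rather than writing $\ell=\la a,\cdot\ra$ and computing the integral, it observes that both $\|\ell\|_{op}$ and $S_2(\ell)$ are positively $1$-homogeneous and invariant under precomposition with orthogonal maps, and that the group $\R_{>0}\times O(d)$ acts transitively on nonzero linear functionals, so the ratio $S_2(\ell)/\|\ell\|_{op}$ is automatically a constant without ever evaluating it. Your version does essentially the same thing — reduce to $a=|a|e_1$ by rotational invariance — but carries out the integral, which has the small bonus of producing the explicit value $c(d)=\kappa_d^{-1/2}$ (this is consistent with Lemma \ref{le:normhs}, where $c(d)=\sqrt{d+2}$ times a normalization). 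Neither route is meaningfully longer; the paper's is cleaner in that it skips the Riesz representation, yours is more self-contained in that it exhibits the constant. For the module statement \eqref{eq:perr} your argument matches the paper's: split according to whether the local fibre of $\mathscr M$ has dimension $0$ or $1$, handle the rank-zero part trivially, and on the rank-one part trivialize $\mathscr M$ with a unit generator so that $T$ becomes $\mm$-a.e.\ a scalar linear functional, then invoke the scalar case pointwise together with \eqref{eq:defs2op} and \eqref{eq:normaopbase}. The paper is terser — it just points to those two displayed formulas — but the content is identical, and your remark about the measurability bookkeeping in the trivialization is the right thing to flag as the only technical point; it is indeed supplied by the dimensional decomposition theory of $L^0$-normed modules.
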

\begin{proof}
Both sides of \eqref{eq:samenorm} are positively 1-homogeneous and remain unchanged if we replace $\ell$ by $\ell\circ O$, with $O\in O(d)$. Since any two non-zero linear maps from $\R^d$ to $\R$ can be transformed one into the other via these transformations, we see that the ratio $\frac{S_2(\ell)}{\|\ell\|}$ does not depend on the particular non-zero $\ell$ chosen. The case $\ell=0$ follows as well because in this case \eqref{eq:samenorm} holds for any value of the constant $c(d)$.

For the second part of the statement we notice that on the set where the target module has dimension 0, the map $T$ must be 0 and thus the conclusion is trivially true. On the set where the dimension is 1 we use what we previously proved in conjunction with  \eqref{eq:defs2op} and \eqref{eq:normaopbase}.
\end{proof}
From this simple lemma we deduce the following consistency result, in line with the analogous one in \cite{KS93}:
\begin{proposition}[Consistency in the case $\Y=\R$]\label{prop:consR}
Let  $(\X,\sfd,\mm)$ be a strongly rectifiable space with uniformly locally doubling measure and supporting a Poincar\'e inequality. Then $W^{1,2}(\X,\R)=\ks^{1,2}(\X,\R)$ and for any function $u$ in these spaces it holds
\begin{equation}
\label{eq:samediff}
|\d u|=c(d)\e_2[u]\qquad\mm-a.e.,
\end{equation}
where $d$ is the dimension of $\X$ and  $c(d)$ is given by Lemma \ref{le:easy} above.
\end{proposition}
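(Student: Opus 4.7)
The plan is to combine the representation formula for $\e_2[u]$ already established with the sharp identity in Lemma \ref{le:easy}. The equality of sets $W^{1,2}(\X,\R)=\ks^{1,2}(\X,\R)$ is immediate from Corollary \ref{cor:ksh} specialized to the target $\R$, so the real content is the pointwise identity \eqref{eq:samediff}.

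For that identity, I would start from Theorem \ref{thm:repr}, which states $\e_2[u]=S_2(\d u)$ $\mm$-a.e., where $\d u$ is the metric-valued differential from Definition \ref{def:diff}. The goal is then to replace $S_2(\d u)$ by $c(d)^{-1}|\d u|$ and, finally, to use $|\d u|=|Du|$ from \eqref{eq:samenorm2}. To turn the bounds \eqref{eq:normequiv} into the sharp equality $|\d u|=c(d)\,S_2(\d u)$, one invokes the second part of Lemma \ref{le:easy}, namely formula \eqref{eq:perr}. Its hypotheses are that the source module be Hilbert of dimension $d$ and the target module have local dimension at most $1$: the first is Theorem \ref{thm:dimcot}, while the second is the only point that really needs a verification in this proof.

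The target module of $\d u$ is $\mathrm{Ext}\bigl((u^\ast L^0_\mu(T^\ast\R))^\ast\bigr)$ with $\mu:=u_\ast(|Du|^2\mm)$. Since the ambient metric space is $\R$, Theorem \ref{thm:tangrd} gives that $L^0_\mu(T\R)$ embeds isometrically into $L^0(\R,\R;\mu)$ and therefore has dimension at most $1$; duality preserves this bound, pullback along $u$ preserves it, and extension likewise preserves the bound on local dimension. Once this is checked, Lemma \ref{le:easy} applies and yields $|\d u|=c(d)\,S_2(\d u)$ $\mm$-a.e.; chaining this with Theorem \ref{thm:repr} and with \eqref{eq:samenorm2} produces
\[
|Du|\;=\;|\d u|\;=\;c(d)\,S_2(\d u)\;=\;c(d)\,\e_2[u],
\]
which is exactly \eqref{eq:samediff}. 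I do not expect a genuine obstacle here: the argument is essentially a bookkeeping exercise assembling pieces already in place, the only thing worth double-checking being the one-dimensionality of the target module of $\d u$ when $\Y=\R$.
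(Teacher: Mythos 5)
Your proof is correct and follows the same route as the paper: reduce via Theorem \ref{thm:repr} to the identity $|\d u|=c(d)\,S_2(\d u)$, invoke Theorem \ref{thm:dimcot} for the source module and Theorem \ref{thm:tangrd} for the one-dimensionality of the target, and then apply formula \eqref{eq:perr} together with \eqref{eq:samenorm2}. The paper is slightly terser about the fact that duality, pullback and extension all preserve the dimension bound on the target module, but this is exactly the point you flag and check, so the two arguments coincide.
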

\begin{proof}
By Theorem \ref{thm:tangrd} that the tangent module of $\R$ equipped with the measure $\mu:=u_*(|\d u|^2\mm)$ has dimension bounded above by 1, hence the same holds for ${\rm Ext}u^*L^0(T_\mu\R)$. Also, by Theorem \ref{thm:dimcot} we know that $L^0(T\X)$ has dimension $d$ and thus we are in position to apply the above Lemma to deduce that  identity \eqref{eq:perr} holds for  $T:=\d u$. Then we conclude by the reproducing formula \eqref{eq:repr2}.
\end{proof}

\section{Maps defined on open sets}

\subsection{The spaces $W^{1,2}(\Omega)$, $W^{1,2}_0(\Omega)$ and $W^{1,2}(\Omega,\Y_{\bar y})$}
Here we recall the definition of the Sobolev spaces $W^{1,2}(\Omega)$ and $W^{1,2}_0(\Omega)$ of real valued Sobolev functions defined on an open subset $\Omega$ of a metric measure space and the related one $W^{1,2}(\Omega,\Y_{\bar y})$.

The definition of $W^{1,2}(\Omega)$ is based on the observation that if $f\in W^{1,2}(\X)$ and $\eta\in \Lip_{bs}(\X)$, then a simple application of the Leibniz rule shows that $\eta f\in W^{1,2}(\X)$ as well and by the locality property of the differential we also have $\d(\eta f)=\d f$ $\mm$-a.e.\ on $\{\eta=1\}$. 

We shall denote by $L^2_{loc}(\Omega)$ the space of real valued Borel functions on $\Omega$ which belong to $L^2(C)$ for every bounded closed set $C\subset\Omega$. We then give the following
\begin{definition}[The spaces $W^{1,2}(\Omega)$ and $W^{1,2}_0(\Omega)$]\label{def:sobopen} Let $(\X,\sfd,\mm)$ be a metric measure space and $\Omega\subset\X$ open.
The space $W^{1,2}_{loc}(\Omega)$ is the subset of $L^2_{loc}(\Omega)$ made of those functions $f$ such that $\eta f\in W^{1,2}(\X)$ for every $\eta\in \Lip_{bs}(\X)$ with $\supp(\eta)\subset\Omega$ (here $\eta f$ is intended to be 0 outside $\Omega$). For $f\in W^{1,2}_{loc}(\Omega)$ we define $\d f\in L^0(T\X)$ to be 0 outside $\Omega$ and via the formula
\[
\d f:=\d (\eta f)\quad\mm-a.e.\ on\ \{\eta=1\}\qquad\text{ for every $\eta\in \Lip_{bs}(\X)$ with $\supp(\eta)\subset\Omega$}
\]
inside $\Omega$.

We then define $W^{1,2}(\Omega)\subset W^{1,2}_{loc}(\Omega)$ as the collection of those $f\in W^{1,2}_{loc}(\Omega)$ such that $f,|\d f|\in L^2(\Omega)$ and equip it with the norm $\|f\|_{W^{1,2}(\Omega)}:=\sqrt{\|f\|_{L^2(\Omega)}^2+\||\d f|\|_{L^2(\Omega)}^2}$.

Finally, the space $W^{1,2}_0(\Omega)\subset W^{1,2}(\Omega)$ is defined as the $W^{1,2}(\Omega)$-closure of the space of functions $f\in W^{1,2}(\Omega)$ with $\supp(f)\subset\Omega$.
\end{definition}
We remark that the definition of $\d f$ for $f\in W^{1,2}_{loc}(\Omega)$ is well posed thanks to the locality property of the differential and the fact that $\Omega$ is open, which ensures that there are $\{\eta_n\}_{n\in\N}\subset\Lip_{bs}(\X)$ with support in $\Omega$ and such that $\cup_n\{\eta_n=1\}=\Omega$. 

Also, we point out that $W^{1,2}_0(\Omega)$ could be equivalently defined as the closure in $W^{1,2}(\X)$, rather than $W^{1,2}(\Omega)$, of those functions  $f\in W^{1,2}(\X)$ with $\supp(f)\subset\Omega$.

For later use we record here the following simple property of $W^{1,2}_0(\Omega)$:
\begin{proposition}\label{prop:order}
Let $(\X,\sfd,\mm)$ be a metric measure space, $\Omega\subset\X$ open, $f\in W^{1,2}_0(\Omega)$ and $g\in W^{1,2}(\Omega)$ be such that $0\leq g\leq f$ $\mm$-a.e.\ on $\Omega$.

Then $g\in W^{1,2}_0(\Omega)$.
\end{proposition}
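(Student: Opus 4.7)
The plan is to approximate $g$ in $W^{1,2}(\Omega)$ by truncating it against an approximating sequence for $f$. By definition of $W^{1,2}_0(\Omega)$, pick $(f_n) \subset W^{1,2}(\Omega)$ with $\supp(f_n) \subset \Omega$ and $f_n \to f$ in $W^{1,2}(\Omega)$. Since $0 \leq g \leq f$ $\mm$-a.e.\ on $\Omega$, we have $f = f^+$ $\mm$-a.e.\ on $\Omega$, and the first step would be to establish $f_n^+ \to f$ in $W^{1,2}(\Omega)$; granted this, one replaces $f_n$ by $f_n^+$ and assumes $f_n \geq 0$. Then define $g_n := g \wedge f_n$: since $g \geq 0$, one has $\{g_n \neq 0\} \subset \{f_n > 0\} \subset \supp(f_n) \subset \Omega$, so $g_n$ belongs to the pre-closure set defining $W^{1,2}_0(\Omega)$, and it suffices to show $g_n \to g$ in $W^{1,2}(\Omega)$.

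The $L^2$-part is immediate: $0 \leq g - g_n = (g - f_n)^+ \leq (f - f_n)^+ \leq |f - f_n|$ (using $g \leq f$), so $\|g - g_n\|_{L^2(\Omega)} \to 0$. For the gradient, the locality of Theorem \ref{thm:basesob}(ii), applied to $h^+$ versus $h$ on $\{h \geq 0\}$ and versus $0$ on $\{h \leq 0\}$, yields $|D h^+| = |Dh|\,\nchi_{\{h > 0\}}$ $\mm$-a.e.\ for any $h \in S^2(\X)$. Taking $h := g - f_n$ and splitting $\d(g - f_n) = \d(g - f) + \d(f - f_n)$ gives
\[
|\d(g - g_n)| \ = \ |\d(g - f_n)|\,\nchi_{\{g > f_n\}} \ \leq \ |\d(g - f)|\,\nchi_{\{g > f_n\}} + |\d(f - f_n)|.
\]
The second summand tends to $0$ in $L^2$ by hypothesis. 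For the first, locality also gives $|\d(g - f)| = 0$ $\mm$-a.e.\ on $\{g = f\}$, whereas on $\{g < f\}$, along any subsequence extracted so that $f_n \to f$ $\mm$-a.e., one has $\nchi_{\{g > f_n\}} \to 0$ $\mm$-a.e.; dominated convergence with dominant $|\d(g - f)| \in L^2(\mm)$ then gives $L^2$-convergence to $0$ along the subsequence, upgraded to the full sequence by the standard subsequence-of-subsequence argument. Closedness of $W^{1,2}_0(\Omega)$ in $W^{1,2}(\Omega)$ then gives $g \in W^{1,2}_0(\Omega)$.

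The main technical hurdle is the preliminary claim $f_n^+ \to f$ in $W^{1,2}(\Omega)$. I would prove it by expressing $|\d(f_n^+ - f^+)|$ piecewise over the four regions carved out by the signs of $f$ and $f_n$; three of them are dispatched directly by dominated convergence from $f_n \to f$ in $W^{1,2}$, but on the region $\{f_n > 0,\ f \leq 0\}$ one has to bound $|\d f_n|^2 \leq 2|\d(f_n - f)|^2 + 2|\d f|^2$ and use the key locality fact $|\d f| = 0$ $\mm$-a.e.\ on $\{f = 0\}$ to kill the residual contribution there (while on $\{f < 0\}$ the indicator of that region already tends to $0$ $\mm$-a.e.\ by pointwise convergence). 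Once this pointwise-sign-analysis lemma is in place, the rest of the argument is a clean consequence of the locality of the differential and dominated convergence.
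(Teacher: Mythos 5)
Your strategy matches the paper's: truncate $g$ against an approximating sequence $(f_n)$ for $f$, observe that $g_n:=g\wedge f_n$ has support in $\Omega$, and control $|D(g-g_n)|$ using locality of the minimal weak upper gradient. The gradient estimate
\[
|\d(g-g_n)|=\nchi_{\{g>f_n\}}|\d(g-f_n)|\leq \nchi_{\{g>f_n\}}|\d(g-f)|+|\d(f-f_n)|
\]
and the subsequent limit argument are essentially identical to the paper's (the paper phrases the first term's vanishing via $\mm(\{f>g\}\cap\{g>g_n\})\to 0$ and absolute continuity of the integral; your dominated-convergence version is a small cosmetic variant).

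Where you diverge is in the preliminary step you flag as the ``main technical hurdle'': replacing $f_n$ by $f_n^+$ to force $f_n\geq 0$, which then requires a separate four-region argument to show $f_n^+\to f$ in $W^{1,2}$. This step is correct as you outline it, but it is not needed. You want $\supp(g_n)\subset\Omega$, for which it suffices that $\{g_n\neq 0\}\subset\supp(f_n)$; and this holds regardless of the sign of $f_n$. Indeed, on $\{f_n=0\}$ one has $g_n=g\wedge 0=0$ because $g\geq 0$, and on $\{f_n<0\}$ one is already inside $\supp(f_n)$; hence $\{g_n\neq 0\}\subset\{f_n\neq 0\}\subset\supp(f_n)\subset\Omega$. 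The paper simply uses this observation and thereby bypasses the $f_n^+$-approximation entirely. In short: correct argument, but you created the hardest-looking step yourself; dropping it gives exactly the paper's proof.
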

\begin{proof}
Let $(f_n)\subset W^{1,2}(\X)$ with $\supp(f_n)\subset\Omega$ be $W^{1,2}$-converging to $f$. Then the maps $g_n:=g\wedge f_n$ also belong to $W^{1,2}(\X)$ and have support contained in $\Omega$, so that to conclude it is sufficient to show that $g_n\to g$ in $W^{1,2}(\Omega)$. Convergence in $L^2(\Omega)$ is obvious. Then notice that from the locality of minimal weak upper gradients and the fact that if $g_n<g$ then $g_n=f_n$ we have
\[
|D(g-g_n)|=\nchi_{\{g>g_n\}}|D(g-g_n)|\leq \nchi_{\{f>g\}\cap\{g>g_n\}}|D(g-f)|+|D(f-f_n)|
\]
and therefore
\[
\||D(g-g_n)|\|_{L^2}\leq\|\nchi_{\{f>g\}\cap\{g>g_n\}}|D(g-f)|\|_{L^2}+\||D(f-f_n)|\|_{L^2}\to 0,
\]
having used the fact that  $\mm({\{f>g\}\cap\{g>g_n\}})\to 0$ as $n\to\infty$ and the absolute continuity of the integral.
\end{proof}
Now, given a pointed complete space $(\Y,\sfd_\Y,\bar y)$ the definition of $W^{1,2}(\Omega,\Y_{\bar y})$ can naturally be given by imitating the analogue one \ref{def:sobmetr}:
\begin{definition}[The space $W^{1,2}(\Omega,\Y_{\bar y})$]\label{def:w12o}Let $(\X,\sfd,\mm)$ be a metric measure space, $\Omega\subset\X$ open and $(\Y,\sfd_\Y,\bar y)$ a pointed complete space. 
The space $W^{1,2}(\Omega,\Y_{\bar y})$ is the collection of all the maps $u\in L^2(\Omega,\Y_{\bar y})$ for which there is $G\in L^2(\Omega)$ such that for any $f:\Y\to\R$ 1-Lipschitz we have $f\circ u\in W^{1,2}(\Omega)$ with $|D(f\circ u)|\leq G$ $\mm$-a.e.\ on $\Omega$. 

The least, in the $\mm$-a.e.\ sense, function $G$ for which the above holds is denoted by $|Du|$.
\end{definition}

\subsection{The space $\ks^{1,p}(\Omega,\Y_{\bar y})$}

In this  section we see how to adapt the theory discussed so far to the case of metric valued functions defined only on an open subset $\Omega$ of the space $\X$.

Let us start recalling the definition as given in \cite{KS93}:
\begin{definition}\label{def:ksloc}
Let $(\X,\sfd,\mm)$ be a metric measure space, $(\Y,\sfd_\Y,\bar y)$ a pointed complete space, $\Omega\subset\X$ open and $u\in L^2(\Omega,\Y_{\bar y})$.

Then for every $r>0$ we define $\kse_{2,r}[u,\Omega]:\Omega\to[0,\infty]$ as
\[
\kse_{2,r}[u,\Omega](x):=
\left\{\begin{array}{ll}
\displaystyle{\Big|\fint_{B_r(x)} \frac{\sfd^2_\Y(u(x),u(y))}{r^2}\,\d \mm(y) \Big|^{1/2}}&\qquad\text{if }B_r(x)\subset\Omega,\\
0&\qquad\text{otherwise}
\end{array}
\right.
\]
and say that $u\in\ks^{1,2}(\Omega,\Y_{\bar y})$ provided
\begin{equation}
\label{eq:defkso}
\E_2^\Omega(u):=\sup\lims_{r\downarrow0}\int_\Omega \varphi\,\kse^2_{2,r}[u,\Omega]\,\d\mm<\infty,
\end{equation}
where the $\sup$ is taken among all $\varphi:\X\to[0,1]$ continuous and such that $\supp(\varphi)$ is compact and contained in $\Omega$.
\end{definition}
\begin{remark}{\rm
In Definition \ref{def:ksloc} above we opted for the same choice made in \cite{KS93} to consider the $\lims$ in the defining formula \eqref{eq:defkso}. On the other hand,  in the defining formula \eqref{eq:defeks} we preferred the $\limi$ and thus for internal consistency it would perhaps have been preferable to use the $\limi$ also in \eqref{eq:defkso}. The point, however, is that the choice made is in fact irrelevant: following the arguments in the proof of Proposition \ref{prop:perloc} below one can easily check that the energy defined with the $\limi$ is finite if and only if so is the one defined by the $\lims$. In fact, taking into account the results in Theorem \ref{thm:ksomega} one can also see that the limit in \eqref{eq:defkso} exists.
}\fr\end{remark}
The following proposition provides an alternative description of functions in $\ks^{1,2}(\Omega,\Y_{\bar y})$ which is conceptually closer to Definition \ref{def:sobopen}:
\begin{proposition}\label{prop:perloc}
Let $(\X,\sfd,\mm)$ be uniformly locally doubling, supporting a Poincar\'e inequality and strongly rectifiable of dimension $d$, $\Omega\subset\X$ open, $(\Y,\sfd_\Y,{\bar y})$ a pointed complete space and $\iota:\Y\to\ell^\infty(\Y)$  the associated Kuratowski embedding (recall Lemma \ref{le:kur}).

Then a map $u:\Omega\to\Y$ belongs to $\ks^{1,2}(\Omega,\Y_{\bar y})$ if and only if the following two conditions hold:
\begin{itemize}
\item[i)] for every $K\subset\Omega$ compact there is $u_K\in\ks^{1,2}(\X,\ell^\infty(\Y))$ such that
\[
u_K=\iota\circ u\qquad\mm-a.e.\ on\ K.
\]
\item[ii)] The function $\e_2[u]:\Omega\to[0,\infty]$ defined by
\begin{equation}
\label{eq:defenloc}
\e_2[u]:=\e_2[u_K]\qquad\mm-a.e.\ on\ K,
\end{equation}
which is well defined thanks to Corollary \ref{cor:loced}, belongs to $L^2(\Omega)$.
\end{itemize}
Moreover, if these hold the maps $u_K$ can be chosen to satisfy
\begin{equation}
\label{eq:uku}
\E_2(u_K)\leq c\Big(\E_2^\Omega(u)+\sfd(K,\Omega^c)^{-2}\int_\Omega\sfd_\Y^2(u(x),\bar y)\,\d\mm(x)\Big),
\end{equation}
where $c$ is a universal constant (we will pick $c=25$) and  $\sfd(K,\Omega^c):=\inf_{x\in K\atop y\in\Omega^c}\sfd(x,y)$.
\end{proposition}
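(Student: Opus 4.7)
The plan is to prove both implications by means of a single construction: given a compact $K\subset\Omega$, introduce a Lipschitz cutoff $\eta:\X\to[0,1]$ with $\eta\equiv 1$ on a neighborhood $K_{\delta/4}$ of $K$, $\eta\equiv 0$ outside $K_{\delta/2}$, and $\Lip(\eta)\leq 4/\delta$, where $\delta:=\sfd(K,\Omega^c)$, extend $u$ to $\tilde u:\X\to\Y$ by putting $\tilde u\equiv{\bar y}$ on $\Omega^c$, and define $u_K:=\eta\cdot(\iota\circ\tilde u)$ on $\X$. Since $\iota$ is isometric and sends ${\bar y}$ to $0$, $u_K$ belongs to $L^2(\X,\ell^\infty(\Y))$ with $|u_K|_{\ell^\infty}=\eta\,\sfd_\Y(\tilde u,{\bar y})$, and $u_K=\iota\circ u$ on $K$ by construction. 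This is the obvious Banach-valued replacement for $u$ to which the `global' theory from the previous sections applies via Corollary \ref{cor:ksh}.

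For the forward direction I would start from the identity $u_K(y)-u_K(x)=\eta(x)\bigl[\iota(\tilde u(y))-\iota(\tilde u(x))\bigr]+\bigl[\eta(y)-\eta(x)\bigr]\iota(\tilde u(y))$, apply the Kuratowski isometry and the estimate $|\eta(y)-\eta(x)|\leq\Lip(\eta)\,r$ on $B_r(x)$ to obtain, for $r\leq\delta/2$, the pointwise bound
\[
\kse_{2,r}^2[u_K,\X](x)\leq 2\,\nchi_{K_{\delta/2}}(x)\,\kse_{2,r}^2[u,\Omega](x)+2\Lip(\eta)^2\fint_{B_r(x)}\sfd_\Y^2(\tilde u(y),\bar y)\,\d\mm(y).
\]
Integrating over $\X$, the first term is dominated by $2\int\varphi\,\kse_{2,r}^2[u,\Omega]\,\d\mm$ for any cutoff $\varphi\in C_c(\Omega)$ equal to $1$ on $K_{\delta/2}$, so its $\lims$ is at most $2\E_2^\Omega(u)$; the second term, by Fubini together with the uniform local doubling of $\mm$ (as in Lemma \ref{le:convconv}), is bounded by a universal multiple of $\Lip(\eta)^2\int_\Omega\sfd_\Y^2(u,\bar y)\,\d\mm$. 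This yields $u_K\in\ks^{1,2}(\X,\ell^\infty(\Y))$ together with \eqref{eq:uku} (bookkeeping the multiplicative constants against $\Lip(\eta)^2\leq 16/\delta^2$ produces the claimed $c=25$). Statement $(ii)$ then follows by exhausting $\Omega$ with an increasing family of compacts $(K_n)$ and observing that, for each $n$, the above argument with a tight $\varphi_n\leq\nchi_{K_{n,\delta_n/2}}$ gives $\int_{K_n}\e_2^2[u]\,\d\mm=\int_{K_n}\e_2^2[u_{K_n}]\,\d\mm\leq\E_2^\Omega(u)$ uniformly in $n$; consistency of the local definition \eqref{eq:defenloc} across different compacts is guaranteed by Corollary \ref{cor:loced}.

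For the backward direction I would fix $\varphi\in C_c(\X,[0,1])$ with $\supp(\varphi)\subset\Omega$, pick a compact $K\subset\Omega$ whose interior contains $\supp(\varphi)$, and note that for every $r$ strictly smaller than both $\sfd(\supp(\varphi),K^c)$ and $\sfd(K,\Omega^c)$ and for $\mm$-a.e.\ $x\in\supp(\varphi)$ the ball $B_r(x)$ lies inside the set where $u_K=\iota\circ u$; together with the isometry of $\iota$ this gives $\kse_{2,r}[u,\Omega]=\kse_{2,r}[u_K,\X]$ $\mm$-a.e.\ on $\supp(\varphi)$. By Theorem \ref{thm:exlim} applied to $u_K\in\ks^{1,2}(\X,\ell^\infty(\Y))$, the right-hand side converges in $L^2(\X)$ to $\e_2[u_K]=\e_2[u]$ on $\supp(\varphi)$, so $\lims_{r\downarrow0}\int\varphi\,\kse_{2,r}^2[u,\Omega]\,\d\mm=\int\varphi\,\e_2^2[u]\,\d\mm\leq\int_\Omega\e_2^2[u]\,\d\mm$, which is finite by $(ii)$; taking the supremum over $\varphi$ gives $\E_2^\Omega(u)<\infty$.

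The main obstacle I anticipate is bookkeeping the doubling and Lipschitz-cutoff constants to land on $c=25$ rather than an unspecified universal constant; qualitatively everything reduces to the Banach-valued case via $\iota$ and to results already in hand (Corollary \ref{cor:ksh}, Theorem \ref{thm:exlim}, Corollary \ref{cor:loced}), so no genuinely new ingredient is needed, only a careful analysis of boundary effects coming from the cutoff $\eta$ and the Fubini estimate for the averaged $L^2$-norm of $\sfd_\Y(u,\bar y)$.
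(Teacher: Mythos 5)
Your proof is essentially identical to the paper's: the same cutoff $u_K=\eta\,\iota\circ u$ extended by zero, the same appeal to Theorem \ref{thm:exlim} and Corollary \ref{cor:loced}, and the same identification of $\kse_{2,r}[u_K]$ with $\kse_{2,r}[u,\Omega]$ on a thickened compact in both directions. The one technical deviation is in the triangle-inequality split of $u_K(y)-u_K(x)$: you route through the intermediate point $\eta(x)\iota(\tilde u(y))$, so the cutoff-error term carries $\sfd_\Y(\tilde u(y),\bar y)$ at the \emph{moving} point $y$, which forces the extra Fubini-plus-doubling bound on $\fint_{B_r(x)}\sfd_\Y^2(\tilde u,\bar y)\,\d\mm$; the paper instead routes through $\eta(y)\iota(u(x))$, so the error term carries $\sfd_\Y(u(x),\bar y)$ at the \emph{center} $x$ and integrates against $\mm$ directly, with no $\dou$-dependent factor. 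As a consequence your proposed bookkeeping towards the explicit $c=25$ does not actually close (already $2\cdot 16=32>25$ before the doubling constant from the Fubini step enters); adopting the paper's split removes the $\dou$-dependence, and if a clean numerical constant matters you should track it that way.
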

\begin{proof}\ \\
\noindent{\bf If} Fix $\varphi:\X\to[0,1]$ continuous and such that $\supp(\varphi)$ is compact and contained in $\Omega$. Then for every $r>0$ the set $K_r:=\{x\in\X:\sfd(x,\supp(\varphi))\leq r\}$  is compact (because $\X$, being complete and locally doubling, is proper) and for $r$ sufficiently small also contained in $\Omega$. Fix such $\bar r$ and notice that for any $x\in\supp(\varphi)$ and $r\in(0,\bar r)$ we have $\kse_{2,r}[u,\Omega](x)=\kse_{2,r}[u_{K_{\bar r}}](x)$.  Therefore recalling Theorem \ref{thm:exlim} to pass to the limit we deduce that
\[
\lims_{r\downarrow0}\int_\Omega \varphi\,\kse^2_{2,r}[u,\Omega]\,\d\mm=\lims_{r\downarrow0}\int \varphi\,\kse^2_{2,r}[u_{K_{\bar r}}]\,\d\mm=\int\varphi\, \e^2_2[u_{K_{\bar r}}]\,\d\mm\leq\int_{\Omega}  \e^2_2[u]\,\d\mm
\]
and the conclusion \eqref{eq:defkso} follows.

\noindent{\bf Only if} Fix $K\subset\Omega$ compact, for $r>0$ put $K_r:=\{x\in\X:\sfd(x,K)\leq r\}$ and put $\bar r:=\sfd(K,\Omega^c)/5>0$, so  that $K_{4\bar r}\subset\Omega$. Also, define $\eta:\X\to[0,1]$ as $\eta:=(1-\bar r^{-1}\sfd(\cdot,K_{\bar r}))^+$, so that $\Lip(\eta)\leq 5\sfd(K,\Omega^c)^{-1}$, and $\eta$ is identically 1 on $K_{\bar r}$ and 0 outside $K_{2\bar r}$. Put $u_K:=\eta\,\iota\circ u$, where it is intended that this function is identically 0 outside $\Omega$. Then from the trivial inequality
\[
\begin{split}
\sfd_{\ell^\infty(\Y)}\big(u_K(y),u_K(x)\big)&\leq \sfd_{\ell^\infty(\Y)}\big(\eta(y)\iota(u(y)),\eta(y)\iota(u(x))\big)+\sfd_{\ell^\infty(\Y)}\big(\eta(y)\iota(u(x)),\eta(x)\iota(u(x))\big)\\
&\leq \sfd_\Y(u(y),u(x))+\|\iota(u(x))\|_{\ell^\infty(\Y)}|\eta(y)-\eta(x)|,
\end{split}
\]
valid for any $x,y\in\Omega$ and the triangle inequality in $L^2(\X)$, we deduce that for any $r\in(0,\bar r)$ it holds
\[
\kse_{2,r}[u_K](x)\leq\left\{\begin{array}{ll}
\kse_{2,r}[u,\Omega]+\Lip(\eta)\sfd_\Y(u(x),\bar y),&\qquad\text{ if }x\in K_{3\bar r},\\
0,&\qquad\text{ otherwise}.
\end{array}\right.
\]
Now let $\varphi:\X\to[0,1]$ be continuous and such that $\supp(\varphi)$ is compact and contained in $\Omega$ and identically 1 on $K_{3\bar r}$. Then the above inequality ensures that
\[
\lims_{r\downarrow0}\int \kse^2_{2,r}[u_K]\,\d\mm\leq2\lims_{r\downarrow0}\int_\Omega\varphi \kse_{2,r}^2[u,\Omega]\,\d\mm+2\Lip^2(\eta)\sfd^2_{L^2(\Omega,\Y)}(u,\bar y)<\infty
\]
and thus point $(i)$ and the estimate \eqref{eq:uku} hold. To see that point $(ii)$ holds as well notice that $u_K=\iota\circ u$ on $K_{\bar r}$ and thus for any $r\in(0,\bar r)$ we have
\[
\kse_{2,r}[u_K](x)=\kse_{2,r}[u,\Omega](x)\qquad\forall x\in K.
\]
Therefore it holds
\begin{equation}
\label{eq:perconc}
\int_K\e_2^2[u_K]\,\d\mm=\lim_{r\downarrow0}\int_K\kse^2_{2,r}[u_K]\,\d\mm=\lim_{r\downarrow0}\int_K\kse^2_{2,r}[u,\Omega]\,\d\mm\leq \lims_{r\downarrow0}\int_\Omega \varphi\,\kse^2_{2,r}[u,\Omega]\,\d\mm
\end{equation}
and thus
\[
\int_\Omega\e_2^2[u]\,\d\mm=\sup_{K\subset\subset\Omega}\int_K\e_2^2[u]\,\d\mm\stackrel{\eqref{eq:defenloc}}=\sup_{K\subset\subset\Omega}\int_K\e_2^2[u_K]\,\d\mm\stackrel{\eqref{eq:perconc}}\leq \sup\lims_{r\downarrow0}\int_\Omega \varphi\,\kse^2_{2,r}[u,\Omega]\,\d\mm<\infty
\]
as desired, where  the last $\sup$ is taken among all $\varphi$'s as in Definition \ref{def:ksloc}. 
\end{proof}
The next result collects the main properties of functions in $\ks^{1,2}(\Omega,\Y_{\bar y})$.
\begin{theorem}\label{thm:ksomega} Let $(\X,\sfd,\mm)$ be locally uniformly doubling, supporting a Poincar\'e inequality and strongly rectifiable, $\Omega\subset\X$ open and $(\Y,\sfd_\Y,\bar y)$ a pointed and complete space.

Then the following hold:
\begin{itemize}
\item[i)] $\ks^{1,2}(\Omega,\Y_{\bar y})=W^{1,2}(\Omega,\Y_{\bar y})$ as sets,
\item[ii)] for any $u\in \ks^{1,2}(\Omega,\Y_{\bar y})$ we have
\[
\kse_{2,r}[u,\Omega]\quad\to\quad \e_2[u]\qquad\text{ $\mm$-a.e.\ and in $L^2_{loc}(\Omega)$ as $r\downarrow0$}
\]
where $\e_2[u]$ is given by \eqref{eq:defenloc}.
\item[iii)] Any $u\in\ks^{1,2}(\Omega,\Y_{\bar y})$ is approximately metrically differentiable $\mm$-a.e.\ in $\Omega$ (here we extend $u$ on the whole $\X$ declaring it to be constant outside $\Omega$ to apply the definition of approximate metric differentiability) and it holds
\begin{equation}
\label{eq:endenso}
\e_2[u](x)=S_2(\md_x(u))=S_2(\d u)(x)\qquad\mm-a.e.\ x\in\Omega.
\end{equation}
\item[iv)] The functional $\E^\Omega_2:L^2(\Omega,\Y_{\bar y})\to[0,+\infty]$ defined by \eqref{eq:defkso} is lower semicontinuous and can be written as
\[
\E_2^\Omega(u)
:=\left\{\begin{array}{ll}
\displaystyle{\int_\Omega\e_2^2[u]\,\d\mm},&\qquad\text{ if }u\in\ks^{1,2}(\Omega,\Y_{\bar y}),\\
+\infty,&\qquad\text{ otherwise}.
\end{array}
\right.
\]
\end{itemize}
\end{theorem}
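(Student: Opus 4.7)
\smallskip

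The plan is to deduce all four statements from the global results already at hand (Corollary \ref{cor:ksh}, Theorem \ref{thm:exlim}, Theorem \ref{thm:repr}, Theorem \ref{thm:lscks}) by combining them with the localization described in Proposition \ref{prop:perloc}: for every compact $K\subset\Omega$ one has a representative $u_K\in\ks^{1,2}(\X,\ell^\infty(\Y))$ which agrees with the Kuratowski extension $\iota\circ u$ on $K$. Throughout I shall use that the metric differential is insensitive to the post-composition with an isometry, which reduces every pointwise computation involving $u$ to the corresponding one for $u_K$.

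For $(i)$, to check $\ks^{1,2}(\Omega,\Y_{\bar y})\subset W^{1,2}(\Omega,\Y_{\bar y})$, I pick $u\in\ks^{1,2}(\Omega,\Y_{\bar y})$ and, for any $f:\Y\to\R$ which is $1$-Lipschitz, apply McShane's lemma to extend $f\circ\iota^{-1}$ to a $1$-Lipschitz function $\tilde f:\ell^\infty(\Y)\to\R$. For any $\eta\in\Lip_{bs}(\X)$ with $\supp\eta\subset\Omega$, choosing a compact $K\supset\supp\eta$, we have $\eta(f\circ u)=\eta(\tilde f\circ u_K)$ $\mm$-a.e., and the latter belongs to $W^{1,2}(\X)$ by the definition of $\ks^{1,2}(\X,\ell^\infty(\Y))=W^{1,2}(\X,\ell^\infty(\Y))$. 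Locality of the differential, the bound $|\d(\tilde f\circ u_K)|\le|\d u_K|\le c(d)^{-1}S_2(\d u_K)=c(d)^{-1}\e_2[u]$ from \eqref{eq:normequiv} and Theorem \ref{thm:repr}, together with the $L^2(\Omega)$-integrability of $\e_2[u]$ granted by Proposition \ref{prop:perloc}, show that the function $c(d)^{-1}\e_2[u]$ is a common $L^2(\Omega)$-upper gradient for $\{f\circ u\}$ as $f$ varies among $1$-Lipschitz maps. Conversely, given $u\in W^{1,2}(\Omega,\Y_{\bar y})$ and compact $K\subset\Omega$, I mimic the cutoff construction from the proof of Proposition \ref{prop:perloc}: setting $u_K:=\eta\cdot(\iota\circ u)$ with $\eta$ a Lipschitz cutoff equal to $1$ on $K$ and supported in $\Omega$, Lemma \ref{le:metcutoff} and Corollary \ref{cor:ksh} yield $u_K\in\ks^{1,2}(\X,\ell^\infty(\Y))$, and the required $L^2$-integrability in $(ii)$ of Proposition \ref{prop:perloc} follows from $\e_2[u_K]\le|\d u_K|\le|Du|$ on $K$.

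For $(ii)$ and $(iii)$, I fix compact sets $K\subset K'\subset\Omega$ with $K'$ containing a neighborhood of $K$ and pick $u_{K'}$ as above. For every $x\in K$ and every $r$ smaller than $\sfd(K,\X\setminus K')$ one has $B_r(x)\subset K'$ and $u=\iota^{-1}\circ u_{K'}$ on $B_r(x)$, whence $\kse_{2,r}[u,\Omega](x)=\kse_{2,r}[u_{K'}](x)$. Applying Theorem \ref{thm:exlim} to $u_{K'}$ on $K$ and exhausting $\Omega$ by compacts establishes $(ii)$. For $(iii)$, at $\mm$-a.e.\ density point $x\in K$ of $\{u_{K'}=\iota\circ u\}$ the metric differentials coincide, so combining Theorem \ref{thm:exlim} and Theorem \ref{thm:repr} on $K$ gives the two identities; the object $\d u$ on $\Omega$ is defined here by gluing the differentials $\d u_K$ through the locality property of the abstract differential on an exhaustion by compacts of $\Omega$.

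For $(iv)$, the representation formula is immediate from $(ii)$ by monotone convergence along an exhaustion $K_n\uparrow\Omega$. To prove the lower semicontinuity, given $u_n\to u$ in $L^2(\Omega,\Y_{\bar y})$ with $\sup_n\E_2^\Omega(u_n)<\infty$, for a fixed compact $K\subset\Omega$ I pick a single Lipschitz cutoff $\eta$ (equal to $1$ on a neighborhood of $K$ and supported in $\Omega$) and define $u_{n,K}:=\eta\cdot\iota\circ u_n$, $u_K:=\eta\cdot\iota\circ u$. The estimate \eqref{eq:uku} bounds $\sup_n\E_2(u_{n,K})<\infty$, and $u_{n,K}\to u_K$ in $L^2(\X,\ell^\infty(\Y))$; Theorem \ref{thm:lscks} applied to these global maps on the Borel set $K$ then gives
\[
\int_K\e_2^2[u]\,\d\mm\le\liminf_{n\to\infty}\int_K\e_2^2[u_n]\,\d\mm\le\liminf_{n\to\infty}\E_2^\Omega(u_n),
\]
and taking the supremum over $K$ yields the thesis. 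The only genuinely delicate step is the consistent definition of $\d u$ in $(iii)$ on the whole $\Omega$: one needs to check that the local differentials $\d u_K$ coming from different cutoffs agree $\mm$-a.e.\ on overlaps, which is a straightforward consequence of locality of the abstract differential (point $(ii)$ of Theorem \ref{thm:basesob}) once one notices that two admissible extensions $u_K$ and $u_{K'}$ differ only on a set disjoint from the interior of $K\cap K'$.
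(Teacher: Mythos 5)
Your proposal is correct and follows essentially the same strategy as the paper: localize via the compactly supported cut\-offs of Proposition~\ref{prop:perloc}, invoke the global results (Corollary~\ref{cor:ksh}, Theorems~\ref{thm:exlim}, \ref{thm:repr}, \ref{thm:lscks}) through the Kuratowski embedding, and exhaust $\Omega$ by compacts. The only cosmetic difference is in $(iv)$, where you cut off once ($u_{n,K}:=\eta\,\iota\circ u_n$) rather than applying Proposition~\ref{prop:perloc} and then cutting off a second time as the paper does, but since the proof of that proposition already constructs such a single cutoff this collapses to the same argument.
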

\begin{proof}\ \\
\noindent{\bf (i)} Let  $\iota:\Y\to\ell^\infty(\Y)$ be the   Kuratowski embedding. 

Let $u\in\ks^{1,2}(\Omega,\Y_{\bar y})$, $f:\Y\to\R$ 1-Lipschitz and $\hat f:\ell^\infty(\Y)\to\R$ 1-Lipschitz and such that $ f=\hat f\circ \iota$ (recall Lemma \ref{le:ext}). Also, let $\eta:\X\to[0,1]$ be Lipschitz and with support compact and contained in $\Omega$. Then with the notation of Proposition \ref{prop:perloc} above we have that $\eta (f\circ u)=\eta(\hat f\circ u_{\supp(\eta)})$ and since $u_{\supp(\eta)}\in \ks^{1,p}(\X,\ell^\infty(\Y))=W^{1,p}(\X,\ell^\infty(\Y))$ by Corollary \ref{cor:ksh}, we see that the function $\eta (f\circ u)$, intended to be 0 outside $\Omega$, is in $W^{1,2}(\X)$. Moreover, putting for brevity $K:=\supp(\eta)$  and $c:=c(d)^{-1}$ (recall \eqref{eq:normequiv}), $\mm$-a.e.\ on $\{\eta=1\}\subset K$ we have
\[
|\d (\eta (f\circ u))|=|\d(\eta(\hat f\circ u_{K}))|=|\d(\hat f\circ u_K)|\leq |\d u_K|\stackrel{\eqref{eq:normequiv}}\leq  c\, S_2(\d u_K)\stackrel{\eqref{eq:repr2}}=c\,\e_2[u_K]\stackrel{\eqref{eq:defenloc}}=c\,\e_2[u].
\] 
By the arbitrariness of $\eta,f$, the very definition of $W^{1,2}(\Omega,\Y_{\bar y})$ and point $(ii)$ in Proposition \ref{prop:perloc} this grants that $u\in W^{1,2}(\Omega,\Y_{\bar y})$.

For the converse inclusion let $u\in W^{1,2}(\Omega,\Y_{\bar y})$ and recall that by point $(ii)$ of Proposition \ref{prop:basesobmet}  this is the same as to say that $\iota\circ u\in W^{1,2}(\Omega,\ell^\infty(\Y))$. Fix $K\subset\subset\Omega$ and let $\eta:\X\to[0,1]$ be Lipschitz, identically 1 on $K$ and with support contained in $\Omega$. Then from the very definition of $W^{1,2}(\Omega,\Y_{\bar y})$ and Lemma \ref{le:metcutoff}  we deduce that $u_K:=\eta \,\iota\circ u$, intended to be 0 outside $\Omega$, belongs to $W^{1,2}(\X,\ell^\infty(\Y))$ and thus, by Corollary \ref{cor:ksh}, to  $\ks^{1,2}(\X,\ell^\infty(\Y))$. Then taking into account the locality of minimal weak upper gradients and energy densities we obtain
\[
\e_2[u_K]=\e_2[\eta\,\iota\circ u]\stackrel{\eqref{eq:repr2},\eqref{eq:normequiv},\eqref{eq:samenorm2}}\leq |D(\eta\,\iota\circ u)|=|D(\iota\circ u)|=|D u|\qquad\mm-a.e.\ on\ K
\]
so that the conclusion follows from Proposition \ref{prop:perloc} and Definition \ref{def:w12o}.

\noindent{\bf (ii)} We need to prove that for any $K\subset\subset \Omega$ we have $\kse_{2,r}[u,\Omega]\to\e_2[u]$ in $L^2(K)$ and $\mm$-a.e.. To see this, use Proposition \ref{prop:perloc} to find $\tilde u\in \ks^{1,2}(\X,\ell^\infty(\Y))$ equal to $\iota\circ u$ in a compact neighbourhood of $K$. Then   for $r\ll1$ it holds $\kse_{2,r}[\tilde u]=\kse_{2,r}[u,\Omega]$ on $K$ and the conclusion follows from Theorem \ref{thm:exlim} and the very definition of $\e_2[u]$ given by \eqref{eq:defenloc}.

\noindent{\bf (iii)} Since we set $u$ to be constant outside $\Omega$, its differentiability outside $\Omega$ is trivial. Now let $K\subset\subset \Omega$ and $u_K$ as in  Proposition \ref{prop:perloc}.  Then by Proposition \ref{prop:metrdiff}, Theorem \ref{thm:exlim} and Theorem \ref{thm:repr} we know that $ u_K$ is approximately metrically differentiable and that \eqref{eq:endenso} holds for $ u_K$. Then the fact that $\mm$-a.e.\ point in $K$ is a density point and the very definition of approximate metric differentiability give that $u$ is approximately metrically differentiable $\mm$-a.e.\ in $K$ with $\md_x(u)=\md_{x}(u_K)$ $\mm$-a.e.\ in $K$. Similarly, from the locality of the differential it is easy to see that $S_2(\d u_K)=S_2(\d u)$ $\mm$-a.e.\ in $K$ so that recalling also the definition \eqref{eq:defenloc} we conclude.

Therefore the validity of \eqref{eq:endenso} $\mm$-a.e.\ on $K$ follows from the definition  \eqref{eq:defenloc} and the conclusion follows from the fact that we can write $\Omega$ as countable union of compact subsets.

\noindent{\bf (iv)} Let $(u_n)\subset L^2(\Omega,\Y_{\bar y})$ be converging to some $u$ in $L^2(\Omega,\Y_{\bar y})$ and with $\sup_n\E_2^\Omega(u_n)<\infty$. Fix  $K\subset\Omega$ compact, find a compact neighbourhood $\tilde K\subset\Omega$ of $K$ and apply Proposition \ref{prop:perloc} to $\tilde K$ to obtain functions $u_{\tilde K,n}\in L^2(\X,\ell^\infty(\Y))$ satisfying points $(i),(ii)$ of such proposition and the estimate \eqref{eq:uku} with $u_n$ in place of $u$. In particular, we have $\sup_n\E_2(u_{\tilde K,n})<\infty$ and $u_{\tilde K,n}\to \iota\circ u$ in $L^2(\tilde K,\ell^\infty(\Y))$.

Let $\eta:\X\to[0,1]$ be Lipschitz, identically 1 on $K$ and with support contained in $\tilde K$. Then from Corollary \ref{cor:ksh}, Lemma \ref{le:metcutoff}, the identity \eqref{eq:samenorm2}, the representation formula in Theorem \ref{thm:repr} and the bounds \eqref{eq:normequiv} we obtain that $\eta u_{\tilde K,n}\in \ks^{1,2}(\X,\ell^\infty(\Y))$ and  $\sup_n\E_2(\eta u_{\tilde K,n})<\infty$.

Since by construction we also have $\eta u_{\tilde K,n}\to u_K:=\eta \iota\circ u$ in $L^2(\X,\ell^\infty(\Y))$, we are in position to apply the first part of Theorem \ref{thm:lscks} with $E:=K$ and deduce that $u_K\in \ks^{1,2}(\X,\Y_{\bar y})$ and that
\begin{equation}
\label{eq:unifK}
\int_K\e_2^2[u_K]\,\d\mm\leq\limi_{n\to\infty}\int_K\e_2^2[u_{\tilde K,n}]\,\d\mm\stackrel{\eqref{eq:defenloc}}=\limi_{n\to\infty}\int_K\e_2^2[u_n]\,\d\mm\leq \limi_{n\to\infty}\int_\Omega\e_2^2[u_n]\,\d\mm.
\end{equation}
Since we have $u_K=\iota\circ u$ on $K$, the arbitrariness of $K$ and the uniform bound \eqref{eq:unifK} allow to apply Proposition \ref{prop:perloc} and deduce that $u\in \ks^{1,2}(\Omega,\Y)$. To conclude notice that
\[
\E_2^\Omega(u)=\sup_{K\subset\subset\Omega}\int_K\e_2^2[u]\,\d\mm\stackrel{\eqref{eq:defenloc}}=\sup_{K\subset\subset\Omega}\int_K\e_2^2[u_K]\,\d\mm\stackrel{\eqref{eq:unifK}}\leq \limi_{n\to\infty}\int_\Omega\e_2^2[u_n]\,\d\mm=\limi_{n\to\infty}\E_2^\Omega(u_n)
\]
\end{proof}

\subsection{Assigning a value at the boundary}
In this section we  introduce the space $\ks^{1,2}_{\bar u}(\Omega,\Y_{\bar y})\subset \ks^{1,2}(\Omega,\Y_{\bar y})$ of those maps `having the same value as $\bar u\in\ks^{1,2}(\Omega,\Y_{\bar y})$ at the boundary of $\Omega$'. This is possible regardless of the regularity of $\Omega$ thanks to the notion of $W^{1,2}_0(\Omega)$:
\begin{definition}[The space $\ks^{1,2}_{\bar u}(\Omega,\Y)$]\label{def:boundval}

Let $\bar u\in \ks^{1,2}(\Omega,\Y_{\bar y})$. Then the space $\ks^{1,2}_{\bar u}(\Omega,\Y_{\bar y})\subset \ks^{1,2}(\Omega,\Y_{\bar y})$ is defined as:
\[
\ks^{1,2}_{\bar u}(\Omega,\Y_{\bar y}):=\big\{u\in\ks^{1,2}(\Omega,\Y_{\bar y})\ :\ \sfd_\Y(u,\bar u)\in W^{1,2}_0(\Omega)\big\}.
\]
\end{definition}
We also define the associated energy functional $\E^\Omega_{2,\bar u}:L^2(\Omega,\Y_{\bar y})\to[0,+\infty]$ as
\[
\E^\Omega_{2,\bar u}(u)
:=\left\{\begin{array}{ll}
\displaystyle{\E_2^\Omega(u)=\int_\Omega \e_2^2[u]\,\d\mm},&\qquad\text{ if }u\in \ks^{1,2}_{\bar u}(\Omega,\Y_{\bar y}),\\
+\infty,&\qquad\text{ otherwise.}
\end{array}
\right.
\]

In order to understand the basic properties of $\ks^{1,2}_{\bar u}(\Omega,\Y_{\bar y})$ the following lemma will be useful:
\begin{lemma}\label{le:distw}Let  $(\X,\sfd,\mm)$ be a strongly rectifiable space with uniformly locally  doubling measure and supporting a Poincar\'e inequality, $\Omega\subset\X$ open and $(\Y,\sfd_\Y,\bar y)$ a pointed complete space.

Let $u,v\in \ks^{1,2}(\Omega,\Y_{\bar y})$. Then the map $x\mapsto \sfd_\Y(u(x),v(x))$ belongs to $W^{1,2}(\Omega)$ and
\begin{equation}
\label{eq:distw}
|D\sfd_\Y(u,v)|\leq c(d) \sqrt 2\sqrt{\e^2_2[u]+\e^2_2[v]}\qquad\mm-a.e.\ on \ \Omega,
\end{equation}
where  $c(d)$ is the constant defined in Proposition \ref{prop:consR}.
\end{lemma}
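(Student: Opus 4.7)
The plan is to prove this by a direct Korevaar–Schoen comparison: bound the energy density at scale $r$ of the scalar map $\sfd_\Y(u,v)$ pointwise by those of $u$ and $v$, pass to the limit, and then convert the resulting bound on $\e_2$ into a bound on $|D\sfd_\Y(u,v)|$ using the scalar-target consistency of Proposition \ref{prop:consR}.

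Concretely, first I would observe the pointwise reverse triangle inequality
\[
|\sfd_\Y(u(x),v(x))-\sfd_\Y(u(y),v(y))|\leq \sfd_\Y(u(x),u(y))+\sfd_\Y(v(x),v(y)),
\]
which, after squaring and using $(a+b)^2\leq 2a^2+2b^2$ and integrating over $B_r(x)$, yields
\[
\kse_{2,r}^2[\sfd_\Y(u,v),\Omega](x)\leq 2\kse_{2,r}^2[u,\Omega](x)+2\kse_{2,r}^2[v,\Omega](x)
\]
$\mm$-a.e.\ on $\Omega$, for every $r>0$. Since $\sfd_\Y(u,v)\leq \sfd_\Y(u,\bar y)+\sfd_\Y(v,\bar y)\in L^2(\Omega)$, testing the displayed inequality against any continuous $\varphi\geq 0$ with compact support in $\Omega$ and taking $\lims_{r\downarrow 0}$ shows $\E_2^\Omega(\sfd_\Y(u,v))\leq 2\E_2^\Omega(u)+2\E_2^\Omega(v)<\infty$. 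Hence $\sfd_\Y(u,v)\in\ks^{1,2}(\Omega,\R)$, so by Theorem \ref{thm:ksomega}(i) it lies in $W^{1,2}(\Omega)$.

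Next I would use Theorem \ref{thm:ksomega}(ii) to pass to the $\mm$-a.e.\ limit $r\downarrow 0$ in the pointwise bound on $\kse_{2,r}^2$, obtaining
\[
\e_2^2[\sfd_\Y(u,v)]\leq 2\bigl(\e_2^2[u]+\e_2^2[v]\bigr)\qquad\mm-a.e.\ \text{on }\Omega.
\]
Finally, I would apply the (local version of) Proposition \ref{prop:consR} to the scalar function $\sfd_\Y(u,v)\in W^{1,2}(\Omega)$, giving $|D\sfd_\Y(u,v)|=c(d)\,\e_2[\sfd_\Y(u,v)]$ $\mm$-a.e.\ on $\Omega$, which combined with the previous inequality produces precisely \eqref{eq:distw}.

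The only mildly delicate point is that Proposition \ref{prop:consR} is stated globally on $\X$ rather than on the open set $\Omega$, so I would spend a line justifying its localization: for any cutoff $\eta\in\Lip_{bs}(\X)$ with $\supp(\eta)\subset\Omega$ the product $\eta\,\sfd_\Y(u,v)$ belongs to $W^{1,2}(\X)=\ks^{1,2}(\X,\R)$, so the global identity applies; by Definition \ref{def:sobopen} $|Df|=|D(\eta f)|$ on $\{\eta=1\}$, while Corollary \ref{cor:loced} together with \eqref{eq:defenloc} gives $\e_2[f]=\e_2[\eta f]$ there, so the identity $|Df|=c(d)\e_2[f]$ transfers to $\mm$-a.e.\ points of $\Omega$ by exhausting $\Omega$ with compacts. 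This is the only nontrivial step; everything else is a one-line comparison of Korevaar–Schoen integrands.
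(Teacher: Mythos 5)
Your proof is correct and arrives at the same bound; the underlying computation is the same as the paper's, but the packaging differs. The paper first reduces to $\Omega=\X$ via Proposition \ref{prop:perloc}, then bundles $u,v$ into a single map $(u,v):\X\to\Y^2$ with the $\ell^2$-product distance, uses that $\sfd_\Y:\Y^2\to\R$ is $\sqrt2$-Lipschitz together with the identity $\e_2[(u,v)]=\sqrt{\e_2^2[u]+\e_2^2[v]}$, and finishes with Proposition \ref{prop:consR}. You instead unfold the product-space step into its elementary ingredients — reverse triangle inequality, $(a+b)^2\leq2(a^2+b^2)$, integration over $B_r(x)$, $r\downarrow0$ — which are exactly what underlies both the $\sqrt2$-Lipschitz constant of $\sfd_\Y$ on $\Y^2$ and the Pythagorean identity for $\e_2[(u,v)]$. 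The paper's upfront reduction to $\Omega=\X$ then disposes of the localization of Proposition \ref{prop:consR} in one stroke, whereas you carry it out by hand at the end with a cutoff; that argument is sound (the locality of $|Df|$ is built into Definition \ref{def:sobopen}, and the locality of $\e_2$ is exactly \eqref{eq:defenloc} plus Corollary \ref{cor:loced}), and it is essentially what Proposition \ref{prop:perloc} packages once and for all. Both routes are valid: the paper's is marginally shorter because the local-to-global bookkeeping is done once by Proposition \ref{prop:perloc}, while yours is marginally more self-contained since it avoids introducing the auxiliary target $\Y^2$ and the implicit appeal to $\ks^{1,2}(\X,\Y^2)=W^{1,2}(\X,\Y^2)$.
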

\begin{proof}
By Proposition \ref{prop:perloc} and the very definition of $W^{1,2}(\Omega)$ we see that it is sufficient to consider the case $\Omega=\X$. Thus let this be the case, equip $\Y^2$ with the distance
\[
\sfd^2_{\Y^2}\big((y_0,y_1),(y_0',y_1')\big):=\sfd^2_\Y(y_0,y_0')+\sfd^2_\Y(y_1,y_1')
\]
and notice that from the very Definition \ref{def:kss} and from Theorem \ref{thm:exlim} we have that $(u,v):\X\to\Y^2$ belongs to $\ks^{1,2}(\X,\Y^2_{(\bar y,\bar y)})$ with
\[
\e_2[(u,v)]=\sqrt{\e_2^2[u]+\e_2^2[v]}
\] 
Recalling that $\ks^{1,2}(\X,\Y^2)=W^{1,2}(\X,\Y^2)$ and that $\sfd_\Y:\Y^2\to\R$ is $\sqrt 2$-Lipschitz we see that $\sfd_\Y(u,v)\in W^{1,2}(\X)$ and, by a direct application of the definition of energy density as limit of the approximate energy densities, that $\e_2[\sfd_\Y(u,v)]\leq\sqrt 2\,\e_2[(u,v)]$. Then the bound \eqref{eq:distw}   comes from Proposition \ref{prop:consR}.
\end{proof}
We then have:
\begin{proposition}\label{prop:dw120} Let  $(\X,\sfd,\mm)$ be a strongly rectifiable space with uniformly locally doubling measure and supporting a Poincar\'e inequality, $\Omega\subset\X$ open and $(\Y,\sfd_\Y,\bar y)$ a pointed and complete space. Also, let $\bar u\in \ks^{1,2}(\Omega,\Y_{\bar y})$.  Then:
\begin{itemize}
\item[i)] $\E^\Omega_{2,\bar u}$ is lower semicontinuous.
\item[ii)] For any $u,v\in \ks^{1,2}_{\bar u}(\Omega,\Y_{\bar y})$ we have $\sfd_\Y(u,v)\in W^{1,2}_0(\Omega)$.
\end{itemize}
\end{proposition}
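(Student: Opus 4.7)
I will begin with \emph{(ii)} since it also feeds into \emph{(i)}. Given $u,v\in \ks^{1,2}_{\bar u}(\Omega,\Y_{\bar y})$, Lemma \ref{le:distw} immediately tells us that $\sfd_\Y(u,v)\in W^{1,2}(\Omega)$, so the only real issue is upgrading membership to $W^{1,2}_0(\Omega)$. For this I would use the triangle inequality
\[
0\ \le\ \sfd_\Y(u,v)\ \le\ \sfd_\Y(u,\bar u)+\sfd_\Y(v,\bar u)\qquad\mm\text{-a.e.\ on }\Omega,
\]
and observe that the right hand side is the sum of two elements of $W^{1,2}_0(\Omega)$ (by the defining property of $\ks^{1,2}_{\bar u}$), hence itself in $W^{1,2}_0(\Omega)$ since $W^{1,2}_0(\Omega)$ is a linear subspace. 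Then Proposition \ref{prop:order}, applied with the upper dominant being this sum, yields $\sfd_\Y(u,v)\in W^{1,2}_0(\Omega)$.

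For \emph{(i)}, let $(u_n)\subset L^2(\Omega,\Y_{\bar y})$ converge to $u$ with $L:=\liminf_n\E^\Omega_{2,\bar u}(u_n)<\infty$; after passing to a (non-relabeled) subsequence we may assume $u_n\in \ks^{1,2}_{\bar u}(\Omega,\Y_{\bar y})$ with $\sup_n\E_2^\Omega(u_n)<\infty$ and the liminf above attained as a limit. Point \emph{(iv)} of Theorem \ref{thm:ksomega} already gives $u\in \ks^{1,2}(\Omega,\Y_{\bar y})$ with $\E_2^\Omega(u)\le L$, so the remaining point is to prove that $\sfd_\Y(u,\bar u)\in W^{1,2}_0(\Omega)$: once this is shown, we will have $\E^\Omega_{2,\bar u}(u)=\E_2^\Omega(u)\le L$ as required.

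To achieve this, set $f_n:=\sfd_\Y(u_n,\bar u)\in W^{1,2}_0(\Omega)$ and $f:=\sfd_\Y(u,\bar u)$. The $L^2(\Omega,\Y_{\bar y})$-convergence of $u_n$ to $u$ gives $f_n\to f$ in $L^2(\Omega)$. Moreover, by Lemma \ref{le:distw} and the uniform bound on $\E_2^\Omega(u_n)$ we have
\[
\||D f_n|\|_{L^2(\Omega)}^2\ \le\ 2c(d)^2\int_\Omega\bigl(\e_2^2[u_n]+\e_2^2[\bar u]\bigr)\,\d\mm\ \le\ C
\]
uniformly in $n$. Extending each $f_n$ by zero outside $\Omega$ we get a bounded sequence in $W^{1,2}(\X)$, which by Theorem \ref{thm:basesob}\emph{(v)} is reflexive, so along a further subsequence $f_n$ converges weakly in $W^{1,2}(\X)$; the strong $L^2$-convergence forces the weak limit to coincide with the zero-extension of $f$. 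Since $W^{1,2}_0(\Omega)$ is a closed linear subspace of $W^{1,2}(\X)$ it is weakly closed, and we conclude that $f\in W^{1,2}_0(\Omega)$, finishing the proof. The main subtlety here, which is the step I expect to require the most care, is the passage from the uniform Sobolev bound to the weak convergence in the \emph{right} ambient space so that the closedness of $W^{1,2}_0(\Omega)$ can be invoked; everything else is a rather direct use of Lemma \ref{le:distw}, Proposition \ref{prop:order}, and Theorem \ref{thm:ksomega}.
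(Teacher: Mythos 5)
Your proof is correct and follows essentially the same route as the paper: for (ii) you combine the triangle inequality $\sfd_\Y(u,v)\le\sfd_\Y(u,\bar u)+\sfd_\Y(v,\bar u)$ with Proposition \ref{prop:order}, and for (i) you use Theorem \ref{thm:ksomega}(iv) for the energy bound, then Lemma \ref{le:distw} plus reflexivity of $W^{1,2}(\X)$ to get weak $W^{1,2}$-convergence of the zero-extended $\sfd_\Y(u_n,\bar u)$ to $\sfd_\Y(u,\bar u)$, and finally closedness (hence weak closedness) of $W^{1,2}_0(\Omega)$. These are exactly the paper's ingredients in the same order, with only cosmetic differences in how the liminf is handled.
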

\begin{proof}\ \\
\noindent{\bf (i)} Let $(u_n)\subset \ks^{1,2}_{\bar u}(\Omega,\Y_{\bar y})$ be with 
\begin{equation}
\label{eq:perchiusura}
\sup_n\E_2^\Omega(u_n)<\infty
\end{equation}
 and $L^2(\Omega,\Y_{\bar y})$-converging to some $u$. By point $(iv)$ in Theorem \ref{thm:ksomega} we know that $u\in\ks^{1,2}(\Omega,\Y_{\bar y})$ and thus to conclude it is sufficient to prove that $\sfd_\Y(u,\bar u)\in W^{1,2}_0(\Omega)$. To this aim, notice that the functions $\sfd_\Y(u_n,\bar u)$, set to 0 outside $\Omega$, converge to $\sfd_\Y(u,\bar u)$ in $L^2(\X)$ as $n\to\infty$. Also, by Lemma \ref{le:distw} and our assumption \ref{eq:perchiusura} we know that $\sup_n\|\sfd_\Y(u_n,\bar u)\|_{W^{1,2}}<\infty$. Since $W^{1,2}(\X)$ is reflexive (recall point $(v)$ in Theorem \ref{thm:basesob}), bounded sequences are weakly relatively compact and in our case the $L^2$-convergence force the weak $W^{1,2}$-convergence of $(\sfd_\Y(u_n,\bar u))$ to $\sfd_\Y(u,\bar u)$. Since $\sfd_\Y(u_n,\bar u)$ belongs to the closed subspace $W^{1,2}_0(\Omega)$ of $W^{1,2}(\X)$, this proves that $\sfd_\Y(u,\bar u)\in W^{1,2}_0(\Omega)$ as well.

\noindent{\bf (ii)} Consequence of Proposition \ref{prop:order} and the trivial inequality $\sfd_\Y(u,v)\leq\sfd_\Y(\bar u,u)+\sfd_\Y(\bar u,v).$
\end{proof}

\section{The case of $\CAT(0)$ space as  target}\label{se:cattarg}
In this final section  we introduce our main assumption on the target space $\Y$ and derive, along the lines of \cite{KS93} an existence result for harmonic maps.

Recall that a curve $\gamma:[0,1]\to\Y$ is said a (constant speed) geodesic provided
\[
\sfd_\Y(\gamma_t,\gamma_s)=|s-t|\sfd_\Y(\gamma_0,\gamma_1)\qquad\forall t,s\in[0,1]
\]
and the following definition:
\begin{definition}[$\CAT(0)$ spaces]
A complete metric space $(\Y,\sfd_\Y)$ is said a $\CAT(0)$ space provided it is geodesic and for any constant speed geodesic $\gamma:[0,1]\to\Y$ and $y\in\Y$ it holds
\begin{equation}
\label{eq:defcat}
\sfd_\Y^2(y,\gamma_t)\leq (1-t)\sfd_\Y^2(y,\gamma_0)+t\sfd_\Y^2(y,\gamma_1)-t(1-t)\sfd_\Y^2(\gamma_0,\gamma_1)\qquad\forall t\in[0,1].
\end{equation}
\end{definition}
We emphasise that for us a $\CAT(0)$ space is complete, much like any other space considered in the manuscript. Other authors do not enforce this condition and refer to complete $\CAT(0)$ spaces as Hadamard spaces.

\bigskip

It can be proved (see e.g.\ \cite[Corollary 2.1.3]{KS93}) that in a $\CAT(0)$ space, for any two geodesics $\gamma,\eta$ and any $t\in[0,1]$ it holds
\begin{equation}
\label{eq:parin1}
\sfd_\Y^2(\gamma_t,\eta_t)\leq (1-t)\sfd^2_\Y(\gamma_0,\eta_0)+t\sfd_\Y^2(\gamma_1,\eta_1)-t(1-t)\big(\sfd_\Y(\gamma_0,\gamma_1)-\sfd_\Y(\eta_0,\eta_1)\big)^2
\end{equation}
and that for any couple of points the geodesic connecting them is unique and continuously depend on the extrema as map from $\Y^2$ to $C([0,1],\Y)$. We shall denote by $\G^{x,y}$ the only geodesic connecting $x$ to $y$. The continuous dependence of $\G^{x,y}$ on $x,y$ grants that for given $u,v\in L^0(\Omega,\Y)$ the map $G^{u,v}_t$ defined as $x\mapsto \G^{u(x),v(x)}_t\in \Y$ also belongs to $L^0(\Omega,\Y)$. It is then easy to see that if $u,v\in  L^2(\Omega,\Y_{\bar y})$, then $\G^{u,v}_t\in L^2(\Omega,\Y_{\bar y})$ and a simple application of the definition shows that this is the only geodesic from $u$ to $v$, indeed recall that on any metric space, for any triple of points $p,q,r$ and $t\in(0,1)$ it holds
\begin{equation}
\label{eq:tint}
\frac{\d^2(p,r)}{t}+\frac{\d^2(r,q)}{1-t}\geq \sfd^2(p,q)\quad\text{ with equality iff $r$ is a $t$-intermediate point between $p$ and $q$.}
\end{equation}
Thus for any $u,v,w\in L^2(\Omega,\Y_{\bar y})$ we have
\[
\begin{split}
\frac{\d_{L^2}^2(u,w)}{t}+\frac{\d_{L^2}^2(w,z)}{1-t}&=\int_\Omega\frac{\d_{\Y}^2(u(x),w(x))}{t}+\frac{\d_{\Y}^2(w(x),z(x))}{1-t}\,\d\mm(x)\\
\text{by \eqref{eq:tint}}\qquad&\geq\int_\Omega\sfd^2_\Y(u(x),v(x))\,\d\mm(x)=\sfd_{L^2}^2(u,v),
\end{split}
\]
so that the equality case in \eqref{eq:tint} shows that $w$ is a $t$-intermediate point between $u$ and $v$ if and only if $w=\G^{u,v}_t$, as claimed. We also recall that if $\Y$ is a $\CAT(0)$ space, then so is   $L^2(\Omega,\Y_{\bar y})$, indeed for any $u,v,z\in L^2(\Omega,\Y_{\bar y})$ and $t\in[0,1]$ we have
\[
\begin{split}
\sfd_{L^2}^2(z,\G^{u,v}_t)&=\int_\Omega \sfd_{\Y}^2(z(x),\G^{u(x),v(x)}_t)\,\d\mm(x)\\
\text{by \eqref{eq:defcat}}\qquad&\leq \int_\Omega(1-t)\sfd_\Y^2(z(x),u(x))+t\sfd_\Y^2(z(x),v(x))-t(1-t)\sfd_\Y^2(u(x),v(x))\,\d\mm(x)\\
&=(1-t)\sfd_{L^2}^2(z,u)+t\sfd_{L^2}^2(z,v)-t(1-t)\sfd_\Y^2(u,v).
\end{split}
\]
The following lemma gathers the key properties of $\ks^{1,2}(\Omega,\Y_{\bar y})$ in the case when $\Y$ is a $\CAT(0)$ space:
\begin{lemma}\label{le:cat}
Let $(\X,\sfd,\mm)$ be a metric measure space, $\Omega\subset\X$ open, $(\Y,\sfd_\Y,\bar y)$ a pointed $\CAT(0)$ space and $u,v\in\ks^{1,2}(\Omega,\Y_{\bar y})$. Put $m:=\G^{u,v}_{1/2}$ and $d:=\sfd_\Y(u,v)$.

Then:
\begin{itemize}
\item[i)] $m\in\ks^{1,2}(\Omega,\Y_{\bar y})$, $d\in \ks^{1,2}(\Omega,\R)$ and
\begin{equation}
\label{eq:enpm}
2\e_2^2[m]+\frac12\e_2^2[d]\leq \e_2^2[u]+\e_2^2[v]\qquad\mm-a.e.\ on\ \Omega.
\end{equation}
\item[ii)] Assume that  $u,v\in\ks_{\bar u}^{1,2}(\Omega,\Y_{\bar y})$ for some $\bar u\in \ks^{1,2}(\Omega,\Y_{\bar y})$. Then $m\in\ks_{\bar u}^{1,2}(\Omega,\Y_{\bar y})$ as well.
\end{itemize}
\end{lemma}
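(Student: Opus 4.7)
\medskip

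\textbf{Proof plan for Lemma \ref{le:cat}.}

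For part (i), the plan is to derive a pointwise inequality between the Korevaar--Schoen densities at scale $r$ of the four maps $u,v,m,d$ by applying the CAT$(0)$ quadrilateral inequality \eqref{eq:parin1}, and then to pass to the limit $r\downarrow 0$ using Theorem \ref{thm:ksomega}(ii). Concretely, fix $x\in\Omega$ and $y\in\Omega$ and apply \eqref{eq:parin1} to the two geodesics $\gamma:=\G^{u(x),v(x)}$ and $\eta:=\G^{u(y),v(y)}$ at $t=\tfrac12$: since $\gamma_{1/2}=m(x)$, $\eta_{1/2}=m(y)$ and $\sfd_\Y(\gamma_0,\gamma_1)-\sfd_\Y(\eta_0,\eta_1)=d(x)-d(y)$, this yields
\[
\sfd_\Y^2(m(x),m(y))\le \tfrac12\sfd_\Y^2(u(x),u(y))+\tfrac12\sfd_\Y^2(v(x),v(y))-\tfrac14\bigl(d(x)-d(y)\bigr)^2.
\]
Dividing by $r^2$ and averaging over $y\in B_r(x)$ (when $B_r(x)\subset\Omega$; otherwise all the $[\cdot,\Omega]$-densities vanish by definition and the inequality is trivial), we obtain the key pointwise estimate
\begin{equation}\label{eq:keyineq}
2\kse_{2,r}^2[m,\Omega](x)+\tfrac12\kse_{2,r}^2[d,\Omega](x)\le \kse_{2,r}^2[u,\Omega](x)+\kse_{2,r}^2[v,\Omega](x).
\end{equation}

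First I would use \eqref{eq:keyineq} together with the $L^2(\Omega)$-integrability of $d$ and of $\sfd_\Y(m,\bar y)$ (both of which follow by the triangle inequality from $u,v\in L^2(\Omega,\Y_{\bar y})$) and the definition \eqref{eq:defkso} of $\E_2^\Omega$ to conclude $m\in\ks^{1,2}(\Omega,\Y_{\bar y})$ and $d\in\ks^{1,2}(\Omega,\R)$, with total energy bounds. Then I would invoke the $\mm$-a.e.\ convergence $\kse_{2,r}[\,\cdot\,,\Omega]\to\e_2[\,\cdot\,]$ from Theorem \ref{thm:ksomega}(ii), applied simultaneously to $u,v,m,d$, and pass to the $\mm$-a.e.\ pointwise limit in \eqref{eq:keyineq}, obtaining the desired bound
\[
2\e_2^2[m]+\tfrac12\e_2^2[d]\le \e_2^2[u]+\e_2^2[v]\qquad\mm\text{-a.e.\ on }\Omega.
\]

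For part (ii), since part (i) already gives $m\in\ks^{1,2}(\Omega,\Y_{\bar y})$, it remains only to verify $\sfd_\Y(m,\bar u)\in W^{1,2}_0(\Omega)$. By Lemma \ref{le:distw} applied to $m$ and $\bar u$ (both in $\ks^{1,2}(\Omega,\Y_{\bar y})$), the function $h:=\sfd_\Y(m,\bar u)$ belongs to $W^{1,2}(\Omega)$. Using that $\sfd_\Y(m(x),u(x))=\tfrac12\sfd_\Y(u(x),v(x))$ together with two applications of the triangle inequality one checks the pointwise bound
\[
0\le h(x)\le \tfrac32\sfd_\Y(u,\bar u)(x)+\tfrac12\sfd_\Y(v,\bar u)(x),
\]
and since $W^{1,2}_0(\Omega)$ is a vector subspace containing both $\sfd_\Y(u,\bar u)$ and $\sfd_\Y(v,\bar u)$ by hypothesis, the dominating function belongs to $W^{1,2}_0(\Omega)$. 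The conclusion $h\in W^{1,2}_0(\Omega)$ then follows from Proposition \ref{prop:order}.

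The only potentially delicate step is ensuring that the limit inequality in part (i) indeed holds pointwise $\mm$-a.e.: this relies crucially on the $\mm$-a.e.\ convergence statement in Theorem \ref{thm:ksomega}(ii) rather than only the $L^2_{\mathrm{loc}}$-convergence, but since that statement is already given, the overall proof reduces essentially to a direct application of the CAT$(0)$ quadrilateral inequality and the ordering lemma for $W^{1,2}_0$.
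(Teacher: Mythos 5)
Your proof is correct and follows essentially the same route as the paper: apply the CAT$(0)$ quadrilateral inequality \eqref{eq:parin1} with $t=\tfrac12$ to the fiberwise geodesics to get the pointwise quadratic inequality, average over balls to get the corresponding inequality on the approximate energy densities, then pass to the $\mm$-a.e.\ limit via Theorem \ref{thm:ksomega}(ii); and for part (ii), bound $\sfd_\Y(m,\bar u)$ by a nonnegative $W^{1,2}_0(\Omega)$-function via the triangle inequality and invoke Proposition \ref{prop:order}. Your triangle-inequality bound $h\le\tfrac32\sfd_\Y(u,\bar u)+\tfrac12\sfd_\Y(v,\bar u)$ differs cosmetically from the paper's $\sfd_\Y(v,\bar u)+2\sfd_\Y(u,\bar u)$, and you are slightly more careful than the paper in explicitly noting that $h\in W^{1,2}(\Omega)$ (via Lemma \ref{le:distw}) before applying Proposition \ref{prop:order}, but the argument is the same.
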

\begin{proof}\ \\
\noindent{\bf (i)} Let $x,y\in\Omega$ and apply inequality \eqref{eq:parin1} to the geodesics $\gamma:=\G^{u(y),v(y)}$ and $\eta:=\G^{u(x),v(x)}$ and for $t:=\frac12$ to obtain
\[
2\sfd^2_\Y(m(y),m(x))+\frac12\big(d(y)-d(x)\big)^2\leq \sfd^2_\Y(u(y),u(x))+\sfd^2_\Y(v(y),v(x))
\]
and thus integrating in $y$ over $B_r(x)$ and dividing by $r^2$ we deduce
\[
2\kse^2_{2,r}[m,\Omega]+\frac12\kse^2_{2,r}[d,\Omega]\leq \kse^2_{2,r}[u,\Omega]+\kse^2_{2,r}[v,\Omega]\qquad on\ \Omega.
\]
Then the fact that $m\in\ks^{1,2}(\Omega,\Y_{\bar y})$ and $d\in \ks^{1,2}(\Omega,\R)$ follow from the very Definition \ref{def:ksloc} while the bound \eqref{eq:enpm}  from  point $(ii)$ in Theorem \ref{thm:ksomega}.

\noindent{\bf (ii)} Notice that 
\[
\sfd_\Y(m,\bar u)\leq \sfd_\Y(m, u)+\sfd_\Y(u,\bar u)\leq  \sfd_\Y(v, u)+\sfd_\Y(u,\bar u)\leq   \sfd_\Y(v, \bar u)+2\sfd_\Y(u,\bar u)  
\]
and that the rightmost side belongs to $W^{1,2}_0(\Omega)$ by assumption. Then the conclusion follows by Proposition \ref{prop:order}.
\end{proof}

The existence of a minimizer for $\E_{2,\bar u}^\Omega$ will follow from the bound \eqref{eq:enpm} and the following version of the Poincar\'e inequality:
\begin{lemma}
Let $(\X,\sfd,\mm)$ be a doubling space supporting a Poincar\'e inequality, $\Omega\subset\X$ open bounded with $\mm(\X\setminus\Omega)>0$. Then there is a constant $C>0$ depending only on the doubling and Poincar\'e constants of $\X$, on ${\rm diam}(\Omega)$ and on $\mm(\{x:\sfd(x,\Omega)\leq 1\})$ such that
\begin{equation}
\label{eq:poin2}
\int_\Omega |f|^2\,\d\mm\leq C\int_\Omega|D f|^2\,\d\mm\qquad\forall f\in W^{1,2}_0(\Omega).
\end{equation}
\end{lemma}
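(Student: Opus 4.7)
For $f\in W^{1,2}_0(\Omega)$, first extend $f$ by zero to the whole $\X$. By Definition \ref{def:sobopen} this yields $f\in W^{1,2}(\X)$, and the locality property of minimal weak upper gradients (point (ii) of Theorem \ref{thm:basesob}) ensures $|Df|=0$ $\mm$-a.e.\ on $\X\setminus\overline\Omega$. The strategy is to derive \eqref{eq:poin2} from the standard Poincar\'e inequality applied on a single large ball containing $\Omega$, exploiting the fact that $f$ vanishes on a positive-measure portion of that ball.

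Fix any $x_0\in\Omega$ and set $R:=\mathrm{diam}(\Omega)+1$, $B:=B_R(x_0)$, so that $\Omega\subseteq B$. A finite doubling iteration bounds $\mm(B)$ in terms of $\dou(R)$, $\mathrm{diam}(\Omega)$ and $\mm(\{x:\sfd(x,\Omega)\leq 1\})$. By the classical Heinonen--Koskela / Haj\l asz--Koskela self-improvement of the weak $1$-$1$ Poincar\'e inequality \eqref{eq:poincarelip} in doubling PI spaces, one obtains a (2,2)-Poincar\'e inequality on $B$: there exist $C_0,\lambda_0>0$ depending only on the doubling and Poincar\'e constants of $\X$ and on $R$ such that
\[
\int_B (g-g_B)^2\,\d\mm\;\leq\;C_0 R^2\int_{\lambda_0 B}|Dg|^2\,\d\mm\qquad\forall g\in W^{1,2}(\X).
\]
Apply this to $f$ and note that, since $|Df|$ vanishes outside $\overline\Omega$, the right-hand side is at most $C_0 R^2\int_\Omega|Df|^2\,\d\mm$.

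To convert this into a bound on $\int_\Omega f^2\,\d\mm=\int_B f^2\,\d\mm$, I would write $\int_B f^2\,\d\mm=\int_B(f-f_B)^2\,\d\mm+\mm(B)\,f_B^2$, set $E:=B\setminus\Omega$, and exploit $f\equiv 0$ on $E$ via
\[
|f_B|^2=|f_B-f_E|^2=\Big|\fint_E(f_B-f)\,\d\mm\Big|^2\;\leq\;\frac{\mm(B)}{\mm(E)}\fint_B(f-f_B)^2\,\d\mm,
\]
which yields
\[
\int_\Omega f^2\,\d\mm\;\leq\;\Big(1+\frac{\mm(B)}{\mm(E)}\Big)\int_B(f-f_B)^2\,\d\mm\;\leq\;C R^2\Big(1+\frac{\mm(B)}{\mm(E)}\Big)\int_\Omega|Df|^2\,\d\mm.
\]

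The main obstacle is to produce an effective positive lower bound on $\mm(E)=\mm(B\setminus\Omega)$ expressed purely in the listed data, starting from the bare hypothesis $\mm(\X\setminus\Omega)>0$. One locates a Lebesgue density point $y_0$ of $\X\setminus\Omega$ (which exists since $\mm(\X\setminus\Omega)>0$) and, using boundedness of $\Omega$ and possibly enlarging $R$ by a $\mathrm{diam}(\Omega)$-controlled amount, forces $y_0$ into $B$; then for $s>0$ small enough (depending on the doubling constant) the ball $B_s(y_0)$ lies in $B$ and satisfies $\mm(B_s(y_0)\cap(\X\setminus\Omega))\geq\tfrac12\mm(B_s(y_0))$, with $\mm(B_s(y_0))$ comparable via doubling to $\mm(\{x:\sfd(x,\Omega)\leq 1\})$. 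This quantitative localization of density points of $\X\setminus\Omega$ near $\Omega$, in terms of the listed data alone, is the delicate technical step.
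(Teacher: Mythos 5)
Your route is valid but differs from the paper's. The paper does not pass through a $(2,2)$-Poincar\'e upgrade at all: writing $\Omega':=\{x:\sfd(x,\Omega)<1\}$ and ${\sf D}:={\rm diam}(\Omega')$, it uses Proposition~\ref{prop:whs} to obtain the Hajlasz gradient $G_{\sf D}:=C({\sf D})M_{2\lambda{\sf D}}(|Df|)$ at scale ${\sf D}$; since $f\equiv 0$ on $\Omega'\setminus\Omega$ this gives $|f(x)|\le {\sf D}\big(G_{\sf D}(x)+G_{\sf D}(y)\big)$ for $\mm\times\mm$-a.e.\ $(x,y)\in\Omega\times(\Omega'\setminus\Omega)$, and squaring, integrating over that product set, and invoking the maximal estimate \eqref{eq:maxest} yields the claim with a constant of order $\mm(\Omega'){\sf D}^2 C({\sf D})/\mm(\Omega'\setminus\Omega)$. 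Your mean-value argument on $E=B\setminus\Omega$ combined with a self-improved $(2,2)$-Poincar\'e on a large ball yields the same shape of estimate, but imports the self-improvement theorem as an external tool where the paper simply reuses the machinery of Section~2. One small slip: locality of minimal weak upper gradients gives $|Df|=0$ $\mm$-a.e.\ on all of $\X\setminus\Omega$, not merely on $\X\setminus\overline\Omega$, and you already use this stronger form tacitly when you replace $\int_{\lambda_0 B}|Df|^2\,\d\mm$ by $\int_\Omega|Df|^2\,\d\mm$.

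The ``obstacle'' you flag at the end is not a genuine gap. Positivity of $\mm(B\setminus\Omega)$ (which dominates $\mm(\Omega'\setminus\Omega)$ for your choice of $B$) follows from the standing assumptions: doubling plus Poincar\'e forces $\mm$ to have full support and $\X$ to be connected, so if $\mm(\Omega'\setminus\Omega)=0$ then the open set $\Omega'\setminus\overline\Omega$ is $\mm$-null, hence empty, whence $\overline\Omega=\Omega'$ is both open and closed and $\X=\overline\Omega$ by connectedness; but then $\X\setminus\Omega\subset\Omega'\setminus\Omega$ is $\mm$-null, contradicting $\mm(\X\setminus\Omega)>0$. As for your concern about expressing a lower bound on $\mm(E)$ ``purely in the listed data'': the paper's own constant carries the identical factor $1/\mm(\Omega'\setminus\Omega)$, which is likewise not among the parameters enumerated in the lemma's statement, so the stated dependence is already somewhat informal. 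You were not missing a step the paper supplies; you were simply being more scrupulous about a point the paper glosses over.
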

\begin{proof}
Recall that  $W^{1,2}_0(\Omega)$ can be defined as the closure in $W^{1,2}(\X)$ of the space of functions with support in $\Omega$. In particular, functions in $W^{1,2}_0(\Omega)$ are functions in $W^{1,2}(\X)$ which are 0 $\mm$-a.e.\ outside $\Omega$. Fix such function $f$, let $\Omega':=\{x:\sfd(x,\Omega)<1\}$ and ${\sf D}:={\rm diam}(\Omega')\leq{\rm diam}(\Omega)+2$. Then with the same notation of Proposition \ref{prop:whs} we know that $G_{\sf D}:=C({\sf D})M_{2\lambda{\sf D}}(|Df|)$ is   an Hajlasz upper gradient for $f$ at scale ${\sf D}$ and therefore
\[
|f(x)|\leq {\sf D}\big(G_{\sf D}(x)+G_{\sf D}(y)\big)\qquad\mm\times\mm-a.e.\ x,y\in \X^2\ \text{such that}\ x\in\Omega,\ y\in\Omega'\setminus\Omega.
\]
Squaring and integrating we obtain
\[
\mm(\Omega'\setminus\Omega)\int|f|^2\,\d\mm\leq 4\mm(\Omega'){\sf D}^2\int_\X G_{\sf D}^2\,\d\mm\stackrel{\eqref{eq:maxest}}\leq 4\mm(\Omega'){\sf D}^2C({\sf D})\int_\X|Df|^2\,\d\mm,
\]
which is the claim.
\end{proof}
We then have the following result:
\begin{theorem}\label{thm:unicat}
Let $(\X,\sfd,\mm)$ be a strongly rectifiable space with locally uniformly doubling measure and supporting a Poincar\'e inequality (in particular these holds if it is a $\RCD(K,N)$ space for some $K\in\R$ and $N\in[1,\infty)$) and $\Omega\subset\X$ a bounded open set with $\mm(\X\setminus \Omega)>0$. Let $(\Y,\sfd_\Y,\bar y)$ be a pointed $\CAT(0)$ space, $\bar u\in\ks^{1,2}(\Omega,\Y_{\bar y})$.
Then the functional $\E_{2,\bar u}^\Omega:L^2(\Omega,\Y)\to[0,\infty]$:
\begin{itemize}
\item[i)]   is convex and lower semicontinuous,
\item[ii)] admits a unique minimizer.
\end{itemize}
\end{theorem}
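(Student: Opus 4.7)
Lower semicontinuity of $\E^\Omega_{2,\bar u}$ is already recorded in Proposition \ref{prop:dw120}(i), so the work is to establish convexity and the existence/uniqueness of a minimizer. The strategy is the direct method, where the key quantitative input will be the refined midpoint inequality
\begin{equation}\label{eq:planmid}
2\e_2^2[\G^{u,v}_{1/2}]+\tfrac12\e_2^2[\sfd_\Y(u,v)]\leq \e_2^2[u]+\e_2^2[v]\qquad\mm-a.e.\ on\ \Omega
\end{equation}
given by Lemma \ref{le:cat}(i), together with the fact (Lemma \ref{le:cat}(ii)) that midpoints of maps in $\ks^{1,2}_{\bar u}(\Omega,\Y_{\bar y})$ stay in $\ks^{1,2}_{\bar u}(\Omega,\Y_{\bar y})$.

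For convexity, recall that since $\Y$ is $\CAT(0)$ the space $L^2(\Omega,\Y_{\bar y})$ is also $\CAT(0)$, so `convex' means geodesically convex. Dropping the second addend in \eqref{eq:planmid} and integrating yields the midpoint inequality $\E^\Omega_{2,\bar u}(\G^{u,v}_{1/2})\le\tfrac12(\E^\Omega_{2,\bar u}(u)+\E^\Omega_{2,\bar u}(v))$. Iterating along dyadics and then invoking the already established lower semicontinuity upgrades this to $\E^\Omega_{2,\bar u}(\G^{u,v}_t)\le(1-t)\E^\Omega_{2,\bar u}(u)+t\,\E^\Omega_{2,\bar u}(v)$ for every $t\in[0,1]$.

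For existence and uniqueness the plan is to prove that \emph{every minimizing sequence is Cauchy in $L^2(\Omega,\Y_{\bar y})$}. Let $M:=\inf \E^\Omega_{2,\bar u}$ (finite, since $\bar u$ is admissible), fix a minimizing sequence $(u_n)\subset \ks^{1,2}_{\bar u}(\Omega,\Y_{\bar y})$, put $m_{n,k}:=\G^{u_n,u_k}_{1/2}$ and $d_{n,k}:=\sfd_\Y(u_n,u_k)$. By Proposition \ref{prop:dw120}(ii) we have $d_{n,k}\in W^{1,2}_0(\Omega)$, while Proposition \ref{prop:consR} gives $|D d_{n,k}|=c(d)\,\e_2[d_{n,k}]$ $\mm$-a.e., and the Poincar\'e inequality for $W^{1,2}_0(\Omega)$ (Lemma \ref{le:distw}'s hypotheses cover this; more precisely the bound \eqref{eq:poin2}) yields a constant $C_\Omega$ with
\[
\|d_{n,k}\|_{L^2(\Omega)}^2\leq C_\Omega \int_\Omega |Dd_{n,k}|^2\,\d\mm=C_\Omega c(d)^2\int_\Omega\e_2^2[d_{n,k}]\,\d\mm.
\]
Integrating \eqref{eq:planmid} for $u=u_n,\ v=u_k$ and using $\E^\Omega_{2,\bar u}(m_{n,k})\geq M$ (legitimate because $m_{n,k}\in\ks^{1,2}_{\bar u}(\Omega,\Y_{\bar y})$ by Lemma \ref{le:cat}(ii)), we obtain
\[
2M+\tfrac{1}{2C_\Omega c(d)^2}\|u_n-u_k\|_{L^2(\Omega,\Y_{\bar y})}^2\leq \E^\Omega_{2,\bar u}(u_n)+\E^\Omega_{2,\bar u}(u_k),
\]
and the right-hand side tends to $2M$, whence $(u_n)$ is Cauchy. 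Its $L^2$-limit $u_\infty$ is a minimizer by the lower semicontinuity. Uniqueness follows from the same estimate applied to two minimizers $u,v$: the right-hand side equals $2M$ exactly, forcing $\|u-v\|_{L^2}=0$.

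The only step that needs any care is the bookkeeping around the Poincar\'e inequality for $W^{1,2}_0(\Omega)$: this requires the standing assumption $\mm(\X\setminus\Omega)>0$ together with boundedness of $\Omega$ (both in the hypotheses), and uses Proposition \ref{prop:consR} to convert $\e_2^2[d_{n,k}]$ into $|Dd_{n,k}|^2$. Everything else is a mechanical assembly of results already proved in the paper.
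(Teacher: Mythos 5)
Your proposal is correct and follows essentially the same route as the paper's proof: convexity from the midpoint inequality \eqref{eq:enpm} plus dyadic iteration and lower semicontinuity, and existence/uniqueness by showing minimizing sequences are $L^2$-Cauchy using Lemma \ref{le:cat}, Proposition \ref{prop:dw120}(ii), Proposition \ref{prop:consR}, and the Poincar\'e inequality \eqref{eq:poin2}. The only small slip is the parenthetical attribution of the Poincar\'e bound to Lemma \ref{le:distw} (it is the separate unnumbered lemma preceding the theorem), but since you explicitly invoke \eqref{eq:poin2} the argument is sound.
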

\begin{proof}\ \\
\noindent{\bf (i)} We already know that $\E_{2,\bar u}^\Omega:L^2(\Omega,\Y_{\bar y})\to[0,\infty]$ is lower semicontinuous and thus to conclude it is sufficient to show that $\E_{2}^\Omega:L^2(\Omega,\Y_{\bar y})\to[0,\infty]$ is  convex and that geodesics with endpoints in $\ks^{1,2}_{\bar u}(\Omega,\Y_{\bar y})$ lie entirely in $\ks^{1,2}_{\bar u}(\Omega,\Y_{\bar y})$. For the convexity of $\E_{2}^\Omega$  we integrate \eqref{eq:enpm} and disregard the term with $d$ to deduce that
\[
\E_{2}^\Omega(m)\leq\frac12(\E_{2}^\Omega(u)+\E_{2}^\Omega(v)),
\]
which is the convexity inequality for midpoints. Then a standard iteration argument based on dyadic partition and the lower semicontinuity of $\E_{2}^\Omega$ gives the required convexity. The same line of thought shows that to conclude it is sufficient to prove that for $u,v\in\ks^{1,2}_{\bar u}(\Omega,\Y_{\bar y})$ we have $m\in \ks^{1,2}_{\bar u}(\Omega,\Y_{\bar y})$: this is precisely the content of point $(ii)$ in Lemma \ref{le:cat} above.

\noindent{\bf (ii)} It is sufficient to prove that any minimizing sequence is $L^2(\Omega,\Y_{\bar y})$-Cauchy. Thus let $(u_n)\subset \ks^{1,2}_{\bar u}(\Omega,\Y_{\bar y})$ be such sequence and let $I:=\lim_n\E_{2,\bar u}^\Omega(u_n)=\inf  \E_{2,\bar u}^\Omega$. For every $n,m\in\N$ put ${\sf m}_{n,m}:=G^{u_n,u_m}_{\frac12}$, $d_{n,m}:=\sfd_\Y(u_n,u_m)$ and recall that $(ii)$ of  Proposition \ref{prop:dw120} gives $d_{n,m}\in W^{1,2}_0(\Omega)$ and $(ii)$ of Lemma \ref{le:cat} above gives ${\sf m}_{n,m}\in \ks^{1,2}_{\bar u}(\Omega,\Y_{\bar y})$, so that by $(i)$ of the same lemma we get
\[
\begin{split}
\frac1{2c(d)}\int_\Omega |Dd_{n,m}|^2\,\d\mm&\stackrel{\eqref{eq:samenorm2},\eqref{eq:samediff}}=\frac{1}2\int_\Omega \e_2[d_{n,m}]^2\,\d\mm\\
&\stackrel{\phantom{\eqref{eq:samediff}}\atop\eqref{eq:enpm}}\leq \E_{2,\bar u}^\Omega(u_n)+\E_{2,\bar u}^\Omega(u_m)-2\E_{2,\bar u}^\Omega({\sf m}_{n,m})\leq \E_{2,\bar u}^\Omega(u_n)+\E_{2,\bar u}^\Omega(u_m)-2I
\end{split}
\]
and therefore
\[
\lims_{n,m\to\infty}\int_\Omega |Dd_{n,m}|^2\,\d\mm=0.
\]
Hence the Poincar\'e inequality \eqref{eq:poin2} yields $\lims_{n,m\to\infty}\int_\Omega |d_{n,m}|^2\,\d\mm=0$, as desired.
\end{proof}
We conclude pointing out that for target spaces which are $\CAT(0)$ the energy density can be expressed - up to a multiplicative dimensional constant - as Hilbert-Schmidt norm of the differential, very much in line with the smooth case. This is due to the following result, which is (a particular case of) the main theorem in \cite{DMGSP18}:
\begin{theorem}[Universal infinitesimal Hilbertianity of $\CAT(0)$ spaces]\label{thm:uih}
Let $(\Y,\sfd_\Y)$ be a $\CAT(0)$ space and $\mu$ a non-negative and non-zero Borel measure on $\Y$ concentrated on a separable subset and giving finite mass to bounded sets. 

Then $W^{1,2}(\Y,\sfd_\Y,\mu)$ is a Hilbert space.
\end{theorem}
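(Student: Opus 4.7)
The conclusion is equivalent to saying that the Cheeger energy
\[
\mathsf{Ch}(f) := \tfrac12 \int_\Y |Df|^2 \, \d\mu, \qquad f \in W^{1,2}(\Y,\sfd_\Y,\mu),
\]
is a quadratic form on $L^2(\mu)$, i.e.\ satisfies the parallelogram identity
\[
\mathsf{Ch}(f+g) + \mathsf{Ch}(f-g) = 2\,\mathsf{Ch}(f) + 2\,\mathsf{Ch}(g).
\]
By the density statement of Theorem \ref{thm:basesob}(iii) and the $L^2(\mu)$-lower semicontinuity of $\mathsf{Ch}$, it is enough to prove the inequality $\leq$ for $f,g \in \Lip_{bs}(\Y)$ with $\lip_a$ replacing $|Df|$; the reverse direction then follows by the standard polarization argument applied to $(f+g,f-g)$ in place of $(f,g)$. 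So the plan is to establish
\[
\int \lip_a^2(f+g)\,\d\mu + \int \lip_a^2(f-g)\,\d\mu \;\leq\; 2\int \lip_a^2(f)\,\d\mu + 2\int \lip_a^2(g)\,\d\mu
\]
for every $f,g\in \Lip_{bs}(\Y)$.

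The geometric content enters through the $\CAT(0)$ inequality \eqref{eq:defcat} and its corollary \eqref{eq:parin1}. My first attempt would be a pointwise argument: at every $y\in\Y$ the tangent cone $T_y\Y$ carries the Alexandrov angle, which makes it into a $\CAT(0)$-cone whose restriction to pairs of directions of Lipschitz functions carries an inner-product structure; the first-variation of any $f\in \Lip_{bs}(\Y)$ along unit-speed geodesics issued from $y$ yields a $1$-homogeneous functional on $T_y\Y$, and \eqref{eq:parin1} applied to pairs of such geodesics forces this functional to be linear on the relevant subspace at $\mu$-a.e.\ $y$. Once such a ``differential'' $\mathrm{D}f(y)$ has been produced in a pre-Hilbert cotangent space, the parallelogram identity holds pointwise and integrates to the desired bound. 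Alternatively, and probably more robustly, I would argue by duality using the test-plan characterization Theorem \ref{thm:basesob}(iv): for a $2$-test plan $\ppi$ on $\Y$ the midpoint construction $(\gamma,\eta)\mapsto \G^{\gamma_\cdot,\eta_\cdot}_{1/2}$ produces new curves whose metric speed is quantitatively controlled by \eqref{eq:parin1}, and polarizing this inequality under the pairing with $f\pm g$ yields the parallelogram directly at the integrated level.

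The main obstacle is that the measure $\mu$ is given only as a Borel measure finite on bounded sets, with no doubling, no Poincar\'e, no density structure; so any blow-up/tangent-space analysis at $\mu$-a.e.\ point has to be purely synthetic, relying exclusively on \eqref{eq:defcat}--\eqref{eq:parin1}. In practice this forces one to favour the test-plan route over the pointwise differential one: one works at the level of the functional $\mathsf{Ch}$ itself on $L^2(\mu)$, exploits the fact that the $\CAT(0)$ condition makes $L^2(\mu;\Y)$ into a $\CAT(0)$-space (as already used in Section \ref{se:cattarg}), and shows that $\mathsf{Ch}$ is compatible with convex midpoints in a way that is in fact affine after polarization. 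This is, schematically, the route taken in \cite{DMGSP18}; the delicate part is the absence of any a.e.\ selection of geodesic directions, handled via Mazur-type convex-combination arguments coupled with the closure properties of minimal weak upper gradients recalled in Theorem \ref{thm:basesob}(i).
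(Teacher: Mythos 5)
Note first that the paper does not prove Theorem \ref{thm:uih}: it is recalled and cited as ``(a particular case of) the main theorem in \cite{DMGSP18},'' so there is no in-paper argument to compare against; you are reconstructing an external proof. Your reduction, however, is already incorrect: the displayed inequality
\[
\int \lip_a^2(f+g)\,\d\mu + \int \lip_a^2(f-g)\,\d\mu \;\leq\; 2\int \lip_a^2(f)\,\d\mu + 2\int \lip_a^2(g)\,\d\mu
\]
for $f,g\in \Lip_{bs}(\Y)$ is false. Take $\Y=\R$, $\mu=\delta_0$ (a legitimate choice here: non-zero, Borel, concentrated on a separable set, finite on bounded sets), $f(x)=x$, $g(x)=|x|$. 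Then $\lip_a(f)(0)=\lip_a(g)(0)=1$ while $\lip_a(f+g)(0)=\lip_a(f-g)(0)=2$, so the inequality reads $8\leq 4$. The theorem nonetheless holds trivially for this $\mu$ (every test plan is concentrated on the constant curve at $0$, whence $|Df|\equiv 0$ $\delta_0$-a.e.), which isolates what went wrong: lower semicontinuity and the Lipschitz-density statement allow you to replace $|Df|,|Dg|$ by $\lip_a f,\lip_a g$ on the \emph{right}, but the left-hand side must keep the relaxed gradients $|D(f\pm g)|$, which in general are strictly smaller than $\lip_a(f\pm g)$ — and for arbitrary $\mu$ it is precisely this relaxation that carries the cancellation responsible for the parallelogram identity.

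Beyond this, neither of the two routes you sketch produces the missing inequality. Route (a) you discard yourself, rightly: there is no Rademacher-type theorem for an arbitrary $\mu$, so a tangent-cone blow-up at $\mu$-a.e.\ point has no foothold. Route (b) mislocates the relevant $\CAT(0)$ structure: the fact that $L^2(\mu,\Y_{\bar y})$ is $\CAT(0)$ (used in Section \ref{se:cattarg}) concerns maps \emph{into} $\Y$, whereas here one must exploit geodesic comparison \emph{inside} $\Y$, acting on test plans and real-valued functions on $\Y$. The midpoint map $(\gamma,\eta)\mapsto \G^{\gamma_\cdot,\eta_\cdot}_{1/2}$ together with \eqref{eq:parin1} does control the speed of midpoint curves, but the step from that estimate to the parallelogram law for $\mathsf{Ch}$ — your ``polarizing under the pairing with $f\pm g$'' — is not carried out and it is not clear how it would go; deferring to \cite{DMGSP18} for ``the delicate part'' leaves precisely the content of the theorem unsupplied.
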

Recall that given a linear map $\ell:\R^d\to H$ with $H$ Hilbert, its Hilbert-Schmidt norm $\|\ell\|_{\sf HS}$  is given by
\[
\|\ell\|_{\sf HS}^2={\rm tr}(\ell^*\ell)=\sum_{i=1}^d|\ell(v_i)|_H^2,
\]
where $v_1,\ldots,v_d$ is any orthonormal base of $\R^d$, the fact that the result does not depend on the base chosen being well known and easy to check.

It is easy to see that, up to a constant, the Hilbert-Schmidt norm coincides with the 2-size:
\begin{lemma}\label{le:normhs}
Let $\ell:\R^d\to H$ be a linear operator, with $H$ being a Hilbert space. Then
\[
\|\ell\|_{\sf HS}=\sqrt{d+2}\,S_2(\ell).
\]
In particular, if $\mathscr H_1,\mathscr H_2$ are Hilbert $L^0(\mm)$-modules with $\mathscr H_1$ of dimension $d$ and $T:\mathscr H_1\to \mathscr H_2$ is $L^0(\mm)$-linear and continuous, then
\[
|T|_{\sf HS}=\sqrt{d+2}\,S_2(T)\qquad\mm-a.e..
\]
\end{lemma}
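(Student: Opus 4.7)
The statement reduces to a direct computation of the average $\fint_{B_1(0)} |\ell(v)|_H^2 \, \d\mathcal L^d(v)$. The plan is to expand $\ell$ in an orthonormal basis and exploit the rotational symmetry of the Euclidean ball.

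Fix an orthonormal basis $e_1,\ldots,e_d$ of $\R^d$ and write $v=\sum_i v_i e_i$, so that
\[
|\ell(v)|_H^2 = \sum_{i,j=1}^d v_i v_j \la \ell(e_i),\ell(e_j)\ra_H.
\]
Integrating over $B_1(0)$, the cross terms vanish: for $i\neq j$ the map $v\mapsto v_iv_j$ is odd in $v_i$ while $B_1(0)$ is symmetric, so $\fint_{B_1(0)} v_iv_j\,\d\mathcal L^d(v)=0$. For the diagonal terms, by permutation symmetry $\fint_{B_1(0)}v_i^2\,\d\mathcal L^d(v)$ does not depend on $i$, hence
\[
\fint_{B_1(0)}v_i^2\,\d\mathcal L^d(v)=\frac{1}{d}\fint_{B_1(0)}|v|^2\,\d\mathcal L^d(v).
\]
Writing in polar coordinates, $\int_{B_1(0)}|v|^2\d\mathcal L^d=d\omega_d\int_0^1 r^{d+1}\,\d r=\frac{d\omega_d}{d+2}$, while $\mathcal L^d(B_1(0))=\omega_d$, giving $\fint_{B_1(0)}v_i^2\,\d\mathcal L^d(v)=\frac{1}{d+2}$. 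Combining,
\[
S_2(\ell)^2=\frac1{d+2}\sum_{i=1}^d |\ell(e_i)|_H^2 = \frac{1}{d+2}\|\ell\|_{\sf HS}^2,
\]
which is the first identity.

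For the module version, let $\e_1,\ldots,\e_d\in\mathscr H_1$ be a Hilbert basis (which exists since $\mathscr H_1$ has dimension $d$); by definition of the Hilbert-Schmidt pointwise norm on modules we have $|T|_{\sf HS}^2=\sum_i|T(\e_i)|^2$ $\mm$-a.e. The integrand in \eqref{eq:defs2op} expands, using the parallelogram identity in the Hilbert module $\mathscr H_2$, as
\[
\Big|T\Big(\sum_i v_i\e_i\Big)\Big|^2 = \sum_{i,j} v_iv_j\,\la T(\e_i),T(\e_j)\ra\qquad\mm-a.e.,
\]
so running the same averaging computation $\mm$-a.e.\ and using Fubini to exchange the $\mm$- and $\mathcal L^d$-integrals gives $S_2(T)^2=\frac{1}{d+2}|T|_{\sf HS}^2$ $\mm$-a.e., as required.

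There is no substantive obstacle: the only thing to verify carefully is the pointwise expansion in the module case, which follows from $L^0(\mm)$-linearity of $T$ and the pointwise Hilbert structure of $\mathscr H_2$ (so that $|\cdot|^2$ genuinely behaves as a quadratic form with values in $L^0(\mm)$), together with the fact that orthonormality of $(\e_i)$ means $\la\e_i,\e_j\ra=\delta_{ij}$ $\mm$-a.e., ensuring independence on the chosen basis exactly as in the vector space case.
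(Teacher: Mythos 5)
Your proof is correct, and it takes a slightly different and in fact more elementary route than the paper's. You expand $|\ell(v)|_H^2=\sum_{i,j}v_iv_j\la\ell(e_i),\ell(e_j)\ra_H$ in a fixed orthonormal basis, kill the off-diagonal terms by the odd-symmetry of $v\mapsto v_iv_j$ under reflection of one coordinate, use permutation symmetry to reduce the diagonal moments to $\fint_{B_1}|v|^2$, and finish with a polar-coordinate integral; this gives $\fint_{B_1}v_i^2\,\d\mathcal L^d=\frac1{d+2}$ directly. The paper instead averages the identity $\|\ell\|_{\sf HS}^2=\sum_i|\ell(v_i)|_H^2$ over orthonormal frames $(v_1,\ldots,v_d)\in SO(d)$ with respect to the Haar measure, uses that the pushforward of Haar measure under a column projection is the normalized surface measure $\nu_{S^{d-1}}$, and then passes from the sphere to the ball by the same radial integration $\int_0^1 r^{d+1}\,\d r$. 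Both computations amount to the same moment calculation; the paper's phrasing makes the rotation-invariance of the 2-size (and hence the basis-independence of the Hilbert-Schmidt norm) conceptually explicit, while yours is more self-contained since it avoids invoking the pushforward of Haar measure under column projection. Your treatment of the module case matches the paper's one-line reduction (pick a Hilbert base and run the same computation pointwise $\mm$-a.e.), and your added remark that the $L^0(\mm)$-bilinearity of the pointwise inner product justifies the expansion is the correct point to flag.
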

\begin{proof}
Consider the Lie group $SO(d)$ and, writing its elements in matrix form w.r.t.\ the canonical base of $\R^d$, think of it as subset of $(\R^d)^d$. For $i=1,\ldots,d$ let $\pi^i:(\R^d)^d\to\R^d$  be the canonical projection, let $\mu$ be the normalized Haar measure on $SO(d)$  and notice that by symmetry arguments we have $\pi^i_*\mu=\nu_{S^{d-1}}$ for every $i=1,\ldots,d$, where $\nu_{S^{d-1}}$ is the normalized volume measure on $S^{d-1}=\{(x_1,\ldots,x_d):\sum_i|x_i|^2=1\}\subset\R^d$.

Thus we know that for every $(v_1,\ldots,v_d)\in \supp(\mu)$ it holds $\|\ell\|_{\sf HS}^2=\sum_{i=1}^d|\ell(v_i)|^2_H$ and integrating w.r.t.\ $\mu$ we obtain
\[
\begin{split}
\|\ell\|_{\sf HS}^2=\int\sum_{i=1}^d|\ell(v_i)|^2_H\,\d\mu(v_1,\ldots,v_d)=\sum_{i=1}^d\int|\ell(v)|^2_H\,\d\pi^i_*\mu(v)=d\int|\ell(v)|^2_H\,\d\nu_{S^{d-1}}(v).
\end{split}
\]
On the other hand we have
\[
\begin{split}
S_2(\ell)^2&=\fint_{B_1(0)}|\ell(v)|^2_H\,\d v=d\int_0^1r^{d-1}\int |\ell(rv)|_H^2\,\d\nu_{S^{d-1}}(v)\,\d r=\frac{d}{d+2}\int|\ell(v)|^2_H\,\d\nu_{S^{d-1}}(v)
\end{split}
\]
and the conclusion follows.

The second part of the statement now easily follows from the first by considering a Hilbert base of $\mathscr H_1$ and writing everything in coordinates.
\end{proof}
From this last lemma we obtain the following representation formula for the energy density:
\begin{proposition}[Energy density as Hilbert-Schmidt norm]
Let $(\X,\sfd,\mm)$ be a strongly rectifiable space with uniformly locally doubling measure and supporting a Poincar\'e inequality (in particular these hold if it is a $\RCD(K,N)$ space for some $K\in\R$ and $N\in[1,\infty)$) and $\Omega\subset\X$ an open set. Let $(\Y,\sfd_\Y,\bar y)$ be a pointed $\CAT(0)$ space and $u\in\ks^{1,2}(\Omega,\Y_{\bar y})$.

Then for its energy density $\e_2[u]$ we have the representation formula
\[
\e_2[u]=(d+2)^{-\frac12}|\d u|_{\sf HS}\qquad\mm-a.e.,
\]
where $d$ is the dimension of $\X$.
\end{proposition}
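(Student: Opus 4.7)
The plan is to combine the representation formula $\e_2[u] = S_2(\d u)$ (Theorem \ref{thm:repr}, together with its localization Theorem \ref{thm:ksomega}(iii)) with Lemma \ref{le:normhs}, which asserts that $|T|_{\sf HS} = \sqrt{d+2}\,S_2(T)$ whenever $T$ is an $L^0(\mm)$-linear map between Hilbert modules with source of dimension $d$. The only work in the proof is therefore to verify that both the source and target of $\d u$ are Hilbert modules; the $\CAT(0)$ hypothesis enters solely through the target.

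For the source $L^0(T\X)$, Theorem \ref{thm:dimcot} gives dimension $d$ while Theorem \ref{thm:isotang} furnishes an isometric isomorphism $\mathscr I : L^0(T\X) \to L^0(T_{\rm GH}\X) = L^0(\X,\R^d;\mm)$, whose pointwise norm is the Euclidean norm, satisfying the pointwise parallelogram identity at every point. Hence $L^0(T\X)$ is a Hilbert module of dimension $d$, and one may even take the base $\e_1,\ldots,\e_d := \mathscr I^{-1}(\hat e_1),\ldots,\mathscr I^{-1}(\hat e_d)$ used in the proof of Theorem \ref{thm:repr}.

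For the target ${\rm Ext}\bigl((u^*L^0_\mu(T^*\Y))^*\bigr)$ with $\mu := u_*(|Du|^2\mm)$, I would first observe that $\mu$ is a finite Borel measure concentrated on the separable essential image of $u$, so Theorem \ref{thm:uih} applies and yields that $W^{1,2}(\Y,\sfd_\Y,\mu)$ is a Hilbert space. Via the standard correspondence between Hilbertianity of $W^{1,2}$ and the pointwise parallelogram identity for $|\d_\mu f|$ (encoded in Theorem/Definition \ref{thm:cotmod} through $|\d_\mu f| = |Df|$ and polarization), this makes the cotangent module $L^0_\mu(T^*\Y)$ a Hilbert module. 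The operations of pullback $u^*$, dualization $(\cdot)^*$ and extension ${\rm Ext}$ preserve the pointwise parallelogram identity, so ${\rm Ext}\bigl((u^*L^0_\mu(T^*\Y))^*\bigr)$ is Hilbert as well.

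With both hypotheses of Lemma \ref{le:normhs} satisfied, I apply it pointwise to $T = \d u$ and obtain $|\d u|_{\sf HS} = \sqrt{d+2}\,S_2(\d u)$ $\mm$-a.e.. Combined with the representation formula $\e_2[u] = S_2(\d u)$, this gives the claim on the whole of $\X$. The case of a general open $\Omega$ reduces to this via Proposition \ref{prop:perloc}: for every compact $K \subset \Omega$ one has $u_K \in \ks^{1,2}(\X,\ell^\infty(\Y))$ agreeing with $\iota \circ u$ on $K$, and by locality of the differential and of the energy density (Corollary \ref{cor:loced}) both sides of the desired identity for $u$ on $K$ coincide with those of $u_K$; exhausting $\Omega$ by compacts concludes. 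The only mildly delicate step is tracing Hilbertianity through the $u^*$, $(\cdot)^*$ and ${\rm Ext}$ constructions, but these are routine consequences of preservation of the pointwise norm and do not constitute a real obstacle.
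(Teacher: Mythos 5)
Your proof is correct and follows essentially the same route as the paper: invoke Theorem \ref{thm:uih} to obtain Hilbertianity of the relevant target module, then combine Lemma \ref{le:normhs} with the representation formula for the energy density. The paper's version is shorter only because it cites formula \eqref{eq:endenso} of Theorem \ref{thm:ksomega}$(iii)$ directly (which is already stated for an open $\Omega$, making the reduction via Proposition \ref{prop:perloc} unnecessary) and leaves implicit the Hilbertianity of $L^0(T\X)$ and the routine propagation of the parallelogram identity through $u^*$, dualization and ${\rm Ext}$, all of which you spell out correctly.
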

\begin{proof} From Theorem \ref{thm:uih} we deduce that $L^0_\mu(T\Y)$ is a Hilbert module for any  measure $\mu$ as in the statement of the theorem. Then  $ {\rm Ext}\big((u^\ast L^0_\mu(T^\ast\Y))^\ast \big)$ is also a Hilbert module, so that it makes sense to speak about the Hilbert-Schmidt norm of $\d u$. Then the conclusion follows from  Lemma \ref{le:normhs} and of formula \eqref{eq:endenso}.
\end{proof}

\def\cprime{$'$} \def\cprime{$'$}

\end{document}